\documentclass[12 pt,reqno]{amsart}
\pagestyle{plain}

\usepackage{pdfpages}
\usepackage{amsmath,amsthm,amssymb,amscd}
\usepackage{setspace}
\usepackage{upgreek}
\usepackage[utf8]{inputenc}
\usepackage{cite}
\usepackage{url}
\usepackage{mathtools}
\usepackage{fullpage}
\usepackage{float}
\usepackage{comment}
\usepackage{enumitem}
\usepackage{tikz-cd}
\usepackage{rotating}
\usepackage{color, soul}
\usepackage{nicefrac,xfrac}
\usepackage{diagbox}
\usepackage{quiver}

\usepackage[bookmarksopen,bookmarksdepth=3]{hyperref}

\setlist[enumerate,1]{label={(\alph*)}}

\input xy
\xyoption{all}

\newtheorem{tm}{Theorem}
\newtheorem{cj}[tm]{Conjecture}
\newtheorem{lem}[tm]{Lemma}
\newtheorem{pr}[tm]{Proposition}
\newtheorem{crl}[tm]{Corollary}

\newtheorem{thm}{Theorem}[section]
\newtheorem{prop}[thm]{Proposition}
\newtheorem{lm}[thm]{Lemma}
\newtheorem{cor}[thm]{Corollary}

\theoremstyle{definition}
\newtheorem{df}[tm]{Definition}

\newtheorem{dfn}[thm]{Definition}

\theoremstyle{remark}
\newtheorem{rmk}[tm]{Remark}

\newtheorem{rem}[thm]{Remark}

\newcommand{\interior}[1]{%
  {\kern0pt#1}^{\mathrm{o}}%
}
\newcommand{\R}{\mathbb{R}}
\newcommand{\Z}{\mathbb{Z}}
\newcommand{\F}{\mathbb{F}}

\newcommand{\bcc}{\widetilde{c}}
\newcommand{\e}{e}
\newcommand{\bcb}{\widetilde{b}}
\newcommand{\bcm}{\widetilde{m}}

\newcommand{\mD}{\mathcal{D}}

\newcommand{\mA}{\mathcal{A}}
\newcommand{\mQ}{\mathcal{Q}}
\newcommand{\mG}{\mathcal{G}}

\newcommand{\mR}{\mathcal{R}}

\newcommand{\mM}{\mathcal{M}}

\newcommand{\fRs}{\mathfrak{R}^s}
\newcommand{\fU}{\mathfrak{A}}
\newcommand{\fS}{\mathfrak{S}}
\newcommand{\fUs}{\mathfrak{A}^s}

\newcommand{\fB}{\mathfrak{B}}

\newcommand{\fR}{\mathfrak{R}}

\newcommand{\fpar}{\mathfrak{d}}

\newcommand{\eval}
{\mathfrak{eval}}
\newcommand{\C}{\mathbb{C}}
\newcommand{\zzs}{\mathbb{Z}_{m,n+1}^\sigma}
\newcommand{\zz}{\mathbb{Z}_{m,l}}
\newcommand{\K}{\mathbb{K}}

\newcommand{\fCC}{\widecheck{\CC}}
\newcommand{\fmR}{\widecheck{\mathcal{R}}}
\newcommand{\fmQ}{\widecheck{\mathcal{Q}}}

\newcommand{\s}{\mathfrak{s}}
\newcommand{\deltr}{\widetilde{\triangle}}

\newcommand{\oogw}{\text{OGW}}

\newcommand{\dcc}{d_\text{cc}}

\newcommand{\cp}{\mathbb{C}P}
\newcommand{\Q}{\mathbb{Q}}
\newcommand{\rp}{\mathbb{R}P}

\newcommand{\Mms}{M_n^s}
\newcommand{\Ms}{M^s}
\newcommand{\TT}{\overline{T}}
\newcommand{\Rs}{R^s}

\newcommand{\CC}{C}
\newcommand{\HH}{H}
\newcommand{\Rtr}{R_\triangle}
\newcommand{\Str}{S_\triangle}
\newcommand{\Mtr}{M_\triangle}
\newcommand{\Wtr}{W_\triangle}
\newcommand{\Msf}{M_n}
\newcommand{\Ssf}{S_n}
\newcommand{\Dsf}{D_n}
\newcommand{\Rsf}{R_n}

\newcommand{\ems}{\End(M)^s}
\newcommand{\Ems}{\End(M_n)^s}
\newcommand{\EEms}{\End(M_n)^s}
\newcommand{\eems}{\End(M)^s}
\newcommand{\Rms}{R_n^s}
\newcommand{\Sms}{S_n^s}
\newcommand{\ccl}{\mathrm{Cl}(\triangle)}
\newcommand{\Mmm}{\widehat{\mM}}
\newcommand{\Wsf}{W_n}
\newcommand{\gto}{\tau}
\newcommand{\simp}{\deltr^n}
\newcommand{\Cl}{\mathrm{Cl}(\triangle^n)}

\newcommand{\ccle}{\mathrm{Cl}^+_{\mathrm{even}}(\triangle^n)}
\newcommand{\thet}{\widetilde{\tau}}
\newcommand{\trian}{\triangle^n}

\newcommand{\expp}{\widetilde{\exp}}
\newcommand{\bgc}{\sigma}
\newcommand{\Zt}{{\Z/2}}
\newcommand{\Wtrb}{{W_{\triangle,\bgc}}}
\newcommand{\Spt}{\mathrm{Sp}_{\triangle,\bgc}}
\newcommand{\Strb}{{S_{\triangle,\bgc}}}

\newcommand{\wtr}{{w_{\triangle}}}
\newcommand{\Dtr}{{D_\triangle}}
\newcommand{\Xtr}{{X_\triangle}}
\newcommand{\Xtrr}{X_\triangle^\R}
\newcommand{\wtrb}{{w_{\triangle,\bgc}}}
\newcommand{\Mtrb}{{M_{\triangle,\bgc}}}
\newcommand{\Dtrb}{{D_{\triangle,\bgc}}}
\newcommand{\Ltrb}{{L_{\triangle,\bgc}}}
\newcommand{\cclb}{{\mathrm{Cl}(\triangle,\bgc)}}

\DeclareMathOperator{\ima}{Im}
\DeclareMathOperator{\id}{Id}

\DeclareMathOperator{\Fuk}{Fuk}
\DeclareMathOperator{\Coh}{Coh}
\DeclareMathOperator{\rank}{rank}
\DeclareMathOperator{\MF}{MF}

\DeclareMathOperator{\End}{End}

\DeclareMathOperator{\Hom}{Hom}

\DeclareFontFamily{U}{mathx}{}
\DeclareFontShape{U}{mathx}{m}{n}{<-> mathx10}{}
\DeclareSymbolFont{mathx}{U}{mathx}{m}{n}
\DeclareMathAccent{\widehat}{0}{mathx}{"70}
\DeclareMathAccent{\widecheck}{0}{mathx}{"71}

\title{Numerical invariants of normed matrix factorizations}
\author[M. Sela]{May Sela}
\address{Institute of Mathematics, Hebrew University}
\email{mayysela@gmail.com}
\author[J. Solomon]{Jake P. Solomon}
\address{Institute of Mathematics, Hebrew University}
\email{jake@math.huji.ac.il}
\begin{document}

\subjclass[2020]{53D37,14J33}
\keywords{mirror symmetry, open Gromov-Witten invariants, Welschinger invariants, matrix factorization, non-Archimedean norm, Calabi-Yau structure}
\date{December 2024}
\begin{abstract}
We define a normed matrix factorization category and a notion of bounding cochains for objects of this category.
We classify bounding cochains up to gauge equivalence for spherical objects and use this classification to define numerical invariants. 
These invariants are expected to correspond under mirror symmetry to
the open Gromov-Witten invariants with only boundary constraints of Lagrangian rational cohomology spheres defined by the second author and Tukachinsky.

For each Delzant polytope, we construct a normed matrix factorization category. For Delzant polytopes satisfying a combinatorial relative spin condition, we construct an object of this category called the Dirac factorization. The Dirac factorization is expected to correspond under mirror symmetry to the Lagrangian submanifold given by the real locus of the toric symplectic manifold associated to the Delzant polytope. In the case of the $n$-simplex for $n$ odd, we show that the Dirac factorization is spherical, mirroring the fact that $\R P^n$ is a rational cohomology sphere. For $n = 1,$ we show the numerical invariants of the Dirac factorization coincide with the open Gromov-Witten invariants of $\R P^1 \subset \C P^1.$ For $n = 3$ in low degrees, computer calculations verify that the numerical invariants of the Dirac factorization coincide with the open Gromov-Witten-Welschinger invariants of $\R P^3 \subset \C P^3.$  
Although $\R P^n$ is trivial in the Fukaya category of $\C P^n$ over any field of characteristic zero, the above results can be seen as a manifestation of mirror symmetry over a Novikov ring.

\end{abstract}
\maketitle
\tableofcontents
\section{Introduction}
Our starting point is the question of how to recover from mirror symmetry the open Gromov-Witten invariants~\cite{Cho,Georgieva,Solomon,Point_like_bc} of a Lagrangian submanifold $L \subset X,$ or the Welschinger invariants~\cite{Welschinger2,Welschinger} of a real symplectic manifold $X$, in the case $X$ is a Fano manifold. This question has remained open for over fifteen years since the Calabi-Yau case of $X$ the quintic threefold and $L$ its real locus was treated in~\cite{Morrison_Walcher,PSW,Walcher}. For example, consider the case $(X,L) = (\C P^3, \R P^3).$ A major obstacle arises from the fact that $\R P^3$ is the trivial object in the Fukaya category of $\C P^3$ over any field of characteristic zero. So, one does not expect to find a meaningful mirror for $\R P^3$ in the usual matrix factorization category that is equivalent to the Fukaya category of $\C P^3$ according to homological mirror symmetry. On the other hand, $\R P^3$ has non-trivial Floer cohomology over an appropriate Novikov ring. This leads us to consider an enhanced version of the matrix factorization category where objects and morphism complexes come equipped with non-Archimedean norms. We call this the normed matrix factorization category. Taking unit balls with respect to the norms, one obtains a category over a Novikov ring. Thus one can hope to find normed matrix factorizations mirror to the relevant Lagrangian submanifolds. We define numerical invariants of normed matrix factorizations and present initial evidence for a mirror correspondence with open Gromov-Witten and Welschinger invariants. The chain level Calabi-Yau structure on the category of matrix factorizations plays a key role in the definition and computation of invariants.

\subsection{Background and overview}

Matrix factorizations were first introduced by Eisenbud~\cite{eisenbud} in order to study maximal Cohen-Macaulay modules. Later, the category of matrix factorizations played a role in Kontsevich's
homological mirror symmetry conjecture~\cite{kontsevich1994homological}.
Roughly speaking, mirror symmetry is a duality between the symplectic geometry of a Calabi-Yau manifold $X$ and the complex geometry of a mirror Calabi-Yau manifold $X^\vee.$ 
One way to understand this duality is through the homological mirror symmetry conjecture~\cite{kontsevich1994homological}, which asserts an equivalence of categories
\[
D^\pi\Fuk(X)\cong D^b\Coh(X^\vee), \qquad D^\pi\Fuk(X^\vee)\cong D^b\Coh(X).
\] 
Mirror symmetry has been extended to some families of non-Calabi-Yau manifolds as well. In particular, mirror symmetry for Fano manifolds has been studied extensively. A K\"ahler manifold  is called Fano if its anti-canonical bundle is ample. 
In this case, the mirror of $X$ is given by a Landau-Ginzburg model $(X^\vee, W),$ consisting a Calabi-Yau manifold $X^\vee$ and a holomorphic function $W$ called the superpotential. 
For more details see~\cite{auroux2007mirror}. 

The homological mirror symmetry conjecture extends to the Fano case. It asserts~\cite{Kapustin_2003} that the derived Fukaya category $D^\pi\Fuk(X)$ is equivalent to the derived category of singularities $D^b_{\mathrm{sing}}(X^\vee)$. 
In~\cite{orlov2004triangulated}, Orlov showed that $D^b_{\mathrm{sing}}(X^\vee)\cong H^0\MF(W),$ where $\MF(W)$ is the matrix factorization category.
Hence, the homological mirror symmetry conjecture can be reformulated as $D^\pi\Fuk(X)\cong H^0\MF(W).$
In this paper, we will focus on the algebraic side of the mirror, that is the matrix factorization category associated to a Landau-Ginzburg model. 

We define the notion of a normed matrix factorization category $\MF(W).$ 
This definition refines the one given by Orlov~\cite{orlov2004triangulated}. In our definition, the objects of the category are $\Z$-graded modules equipped with a family of valuations, the sets of morphisms are valued DG modules, and we require the category to be equipped with a normed $\infty$-trace.
The latter is an operator defined on the cyclic complex of $\MF(W).$ 
It refines the notion of a Calabi-Yau structure~\cite{KS} by requiring 
it to satisfy a certain compatibility condition with the valuations on the sets of morphisms. 

Building on the general framework of deformation theory, Fukaya-Oh-Ohta-Ono introduced the notion of bounding cochains to address the problem of obstructions in Lagrangian Floer theory~\cite{FOOO1}. 
A bounding cochain is a solution to Maurer-Cartan equation in the Fukaya $A_\infty$-algebra of a Lagrangian submanifold satisfying conditions involving the relevant grading and valuation. In~\cite{Point_like_bc}, the second author and Tukachinsky used bounding cochains in order to define open Gromov-Witten invariants of $(X,L)$ where the Lagrangian submanifold $L\subset X$ is a rational cohomology sphere.
These are numerical invariants of the symplectic manifold $X$ and the Lagrangian submanifold $L$ that are preserved under deformations of the symplectic form.
The bounding cochains have two geometric roles in the definition of open Gromov-Witten invariants. 
The first role is to cancel 
the bubbling of $J$-holomorphic disks,
and the second is to constrain the boundary of $J$-holomorphic disks. 
There is a natural equivalence relation on bounding cochains known as gauge equivalence.
The main result of~\cite{Point_like_bc} is a classification of bounding cochains up to gauge equivalence for Lagrangian rational cohomology spheres $L$ in terms of the integral over $L$. This classification determines a canonical bounding cochain, called point-like, that mimics the constraint of passing through a given point on $L.$ Point-like bounding cochains are universal in the sense that open Gromov-Witten invariants extracted from a generic bounding cochain can also obtained from the point-like one~\cite{relative_quantum_co}.  

The main goal of this paper is to define a bounding cochain for an object of the category of normed matrix factorizations, and to classify the bounding cochains of objects that correspond under mirror symmetry to Lagrangian rational cohomology spheres.
These objects are the normed Calabi-Yau spherical objects. The normed $\infty$-trace on the normed matrix factorization category plays the role of the integral over the Lagrangian submanifold in the classification of bounding cochains.

Given a bounding cochain $b,$ the Maurer-Cartan equation associates to $b$ an element of the form 
\[
c=\sum_{k\ge0} \sum_{ d\in G} N_{d,k+1}\frac{s^k}{k!}T^d,\qquad N_{d,k+1}\in \R,
\]
where $s,$ $ T$ are formal variables and $G$ is an abelian group. We call $c$ the potential of $b.$  
Using the normed $\infty$-trace on $\MF(W,w),$ we define a notion for a matrix factorization analogous
to a point-like bounding cochain in a Fukaya $A_\infty$-algebra.
Let $b_{pt}$ be a point-like bounding cochain of a normed Calabi-Yau spherical object, and let $c_{pt}$ be its potential.
We define numerical invariants of normed Calabi-Yau spherical object to be the numbers $N_{d,k}$ induced by $c_{pt}.$
The classification of bounding cochains shows that the invariants $N_{d,k}$ are independent of all choices.
The ``mirror" of the definition of these numerical invariants yields the open Gromov-Witten invariants with only boundary constraints of a spherical object in a Fukaya $A_\infty$-algebra.

An important family of examples of Fano manifolds arises from toric geometry. For each Delzant polytope $\triangle$, there is an associated toric manifold by the work of Delzant~\cite{Delzant}.
For each $\triangle,$ we construct a normed matrix factorization category by refining the construction of Hori and Vafa~\cite{Hori_Vafa}. When the Delzant polytope satisfies a combinatorial relative spin condition, we construct an object of the matrix factorization category called the Dirac factorization. The Dirac factorization is expected to correspond under mirror symmetry to the real locus of the toric manifold associated to $\triangle.$
We illustrate this construction with the example of $\cp^n$ where $n$ is odd, which has Delzant polytope the $n$-simplex $\triangle^n$. 
In~\cite{Shklyarov}, Shklyarov defines a Calabi-Yau structure $\Theta$ which supplements the Kapustin-Li formula~\cite{kapustin2003topological} with higher order correction terms. 
We prove that $\Theta$ is a normed $\infty$-trace on the normed matrix factorization category $\MF(\Wsf)$ associated to $\triangle^n.$ 
We show that the Dirac factorization object $\Msf$ of $\MF(\Wsf)$ is spherical, which corresponds by mirror symmetry to the fact that $\R P^n$ is a rational cohomology sphere. 
In the case of $n=1,$ we compute the numerical invariants $N_{d,k}$ of $\Msf$. These invariants coincide with the open Gromov-Witten invariants of the Lagrangian submanifold $\rp^1 \subset \cp^1.$ In the case of $n=3,$ based on computer calculations we show the numerical invariants of $\Msf$ in low degrees coincide with the open Gromov-Witten invariants of $\rp^3 \subset \cp^3,$ which in turn coincide with the real enumerative invariants of Welschinger~\cite{Welschinger} up to a factor of $2$.

For $n$ odd, the Lagrangian submanifold $\rp^n \subset \cp^n$ is trivial in the Fukaya category of $\cp^n$ over any field of characteristic zero. Thus, one doesn't expect to find an interesting mirror in the category of matrix factorizations. However, in the normed matrix factorization category, one can take unit balls to obtain a category over a Novikov ring. In the Fukaya category over this Novikov ring, $\rp^n$ is no longer trivial.

\subsection{Statement of results}
For a graded ring $\Upsilon,$ we denote by $(\Upsilon)^i$ the set of homogeneous elements of degree $i$ in $\Upsilon.$ Let $\K$ be a field of characteristic zero.

\subsubsection{Normed DG category}\label{section:1.2.1}

We will mainly focus on algebraic objects equipped with valuations and norms. 
In Section~\ref{Section:valuation} we recall the notions of valued and normed  rings, modules and algebras.

Below all the tensor products and direct sums will be implicitly completed with respect to the relevant valuation or norm. In addition, all the tensor products will be the graded tensor products.
\begin{df}
Let $(R,d_R, \nu_R)$ be a valued DGA over a field $\K$. We say that $\mA$ is
a \textbf{normed DG category} \textbf{over} $R$ 
if:
\begin{enumerate}[label=(\arabic*)]
    \item for all objects $X^1,X^2,$ the morphism set $\Hom(X^1,X^2)$ is equipped with a valuation $\nu_1$ such that $(\Hom(X^1,X^2),d,\nu_1)$ is a valued DG module over $R$ satisfying $\nu_1\big(d(f)\big)\ge\nu_1(f),$ for all $f\in\Hom(X^1,X^2),$
    \item for all objects $X^1, X^2, X^3$ and $f\in (\Hom(X^1,X^2), \nu_1),$   $g\in (\Hom(X^2,X^3),\nu_2),$ the composition $g\circ f\in (\Hom(X^1,X^3),\nu_3)$ satisfies
    \[
    \nu_3(g\circ f)\ge \nu_2(g) +\nu_1(f),
    \]
    \begin{equation}\label{eq:leib}
        d(g\circ f)= d(g)\circ f+ (-1)^{|g|}g\circ d(f),
    \end{equation}
where we assume in~\eqref{eq:leib} that $g$ is homogeneous.   
\end{enumerate} 
\end{df}

\subsubsection{The category of normed matrix factorizations}\label{Section:normed_mf_category_def}
We introduce a normed DG category called the normed matrix factorization category.
Most of the work in this paper will be related to this category.
In Section~\ref{Section:toric_construction_sub} we will describe a construction of a special case of this category motivated by toric geometry. This special case will be used later in this paper in order to construct examples of normed matrix factorization categories. 

We first give the notion of an orthogonal basis of a valued module. 
Let $I$ be an index set of at most countable cardinality, and
let $(N,\nu_N)$ be a free valued module over $(Q,\nu_Q).$ 
    A Schauder basis $\{n_i\}_{i\in I}$ of $N$ is called $\nu_N$-orthogonal if
\[
\nu_N(\sum_{i\in I} q_in_i)=\inf_{i\in I}\{\nu_Q(q_i)+\nu_N(n_i)\},\quad q_i\in Q.
\]   
Let $G$ be an abelian group equipped with homomorphisms $g_G : G \to 2\Z$ and $h_G:G\to \R.$ 
We require the set $\ima h_G$ to be discrete. Let 
\begin{equation*}
R=\bigg\{\sum_{i=0}^\infty a_iT^{\beta_i}| \ a_i\in\R, \ \beta_i\in G, \ \lim_ih_G(\beta_i)=\infty \bigg\}
\end{equation*}
be the Novikov field associated to $G$ equipped with the trivial differential $d_R = 0.$ Thus, $R$ is a completion of the group ring of $G.$ A grading on $R$ is defined by declaring $|T^\beta| = g_G(\beta).$
Equip $R$ with the valuation $\nu_R$ given by
\[
\nu_R(\sum_{i=0}^\infty a_iT^{\beta_i})=\inf_{a_i\ne0}\{h_G(\beta_i)\}. 
\]
Let $n\in\Z_{>0}$ be odd, and let $\Lambda$ be a bounded (possibly not closed) $n$-dimensional polytope with vertices $p_1,\ldots, p_c.$ 
Let $S$ be a finitely generated graded unital $R$-algebra
equipped with a family of valuations $\{\nu^S_\lambda\}_{\lambda\in \Lambda}$ satisfying the following conditions.
\begin{enumerate}[label=(\arabic*)]\label{dfn:S}
    \item $(S, \nu^S_\lambda)$ is a valued algebra over $(R,\nu_R)$ for all $\lambda\in \Lambda,$
    \item\label{1_condition} there exist $s_1,\ldots, s_b \in S$
     such that $\{\prod_{i=1}^bs_i^{d_i}\}_{(d_1,\ldots,d_b)\in\Z^b_{\ge0}}$ is a $\nu_\lambda^S$-orthogonal Schauder basis of $S$ as an $R$-module for all $\lambda \in \Lambda$. In addition, 
     $\nu^S_\lambda(s_i)$ are affine maps and 
    $\lim_{\lambda\to p_l}\nu^S_\lambda(s_i)\in\ima\nu_R$ for all $l\in\{1,\ldots, c\}$ and $i\in\{1,\ldots, b\}.$
\end{enumerate} 
Let $W\in S$ such that $\inf_{\lambda\in\Lambda}\nu^S_\lambda(W)\ge0.$ The element $W$ is called the superpotential.
\begin{df}
    The data $\big((R,\nu_R),(S,\{\nu^S_\lambda\}_{\lambda\in \Lambda}) , W\big)$ is called a \textbf{valued Landau-Ginzburg model.}
\end{df}
We define the \textbf{normed matrix factorization category} $\MF(W,w)$ of a valued Landau-Ginzburg model $\big((R,\nu_R),(S,\{\nu^S_\lambda\}_{\lambda\in \Lambda}) , W \big)$ as follows.
Let $w\in R$ satisfying $\nu_R(w)\ge\sup_{\lambda\in\Lambda}\nu_\lambda^S(W).$ 
The objects of the category are ordered triples $(M,D_M,\{\nu^M_\lambda\}_{\lambda\in \Lambda})$ where
\begin{enumerate}[label=(\arabic*)]\label{dfn:M}
    \item $M$ is a finitely generated free $\Z$-graded $S$-module,
    \item $D_M:M\to M[1]$ is a homomorphism of $S$-modules such that $D_M^2=(W-w)\cdot\id_M,$ and $\nu_\lambda^M(D(m))\ge\nu_\lambda^M(m)$ for all $\lambda\in\Lambda$ and $m\in M,$
    \item\label{03_condition} $\{\nu^M_\lambda\}_{\lambda\in  \Lambda}$ is a family of valuations on $M$ such that $(M,\nu^M_\lambda )$ is a valued module over $(S,\nu^S_\lambda)$ for all $\lambda\in \Lambda.$ 
    In addition, there exists a $\nu_\lambda^M$-orthogonal basis $m_1,\ldots,m_a$ of $M$ for all $\lambda\in \Lambda$ satisfying that $\nu^M_\lambda(m_i)$ are affine maps and
     $\lim_{\lambda\to p_l}\nu^M_\lambda(m_i)\in\ima\nu_R$ for all $i\in\{1,\ldots,a\},$ $l\in\{1,\ldots, c\}.$ 
\end{enumerate}
Abbreviate $\Hom(M^1, M^2)=\Hom\big((M^1, D_{M^1}, \{\nu^{M^1}_\lambda\}_{\lambda\in \Lambda}),(M^2, D_{M^2}, \{\nu^{M^2}_\lambda\}_{\lambda\in \Lambda}) \big).$
The morphism set $\Hom(M^1, M^2)$ is the set of $S$-module homomorphisms between $M^1$ and $M^2.$
This set forms a $\Z$-graded $S$-module equipped with the differential $\delta$ acting on homogeneous elements as
\begin{equation}\label{differential_mf}
    \delta (\Phi):=D_{M^2}\circ \Phi-(-1)^{|\Phi|}\Phi\circ D_{M^1}, \quad \Phi\in \Hom(M^1, M^2).
\end{equation}
The composition of morphisms is given by the composition of homomorphisms of $S$-modules.
In Section~\ref{Section_valuations} we show that the families of valuations $\{\nu_\lambda^{M^i}\}_{\lambda\in\Lambda}$ for $i=1,2$ give rise to a norm $\nu_{M^1,M^2}$ on $\Hom(M^1,M^2).$ 
Lemma~\ref{lm:valuation_algebra_end} shows that $(\Hom(M^1, M^2),\delta,\nu_{M^1, M^2})$ is a valued DG module over $R,$
and Lemma~\ref{lm:valuation_delta} shows that   
the differential $\delta$ satisfies $\nu_{M^1,M^2}(\delta(\Phi))\ge\nu_{M^1,M^2}(\Phi)$ for all $\Phi\in\Hom(M^1,M^2).$ 
By Lemma~\ref{lm_2.14}, the composition between the Hom-complexes respects the valuations.
Hence, $\MF(W,w)$ is a normed DG category over $R.$
We call the objects of $\MF(W,w)$ matrix factorizations. We will also denote the 
objects of $\MF(W,w)$
by $M,$ leaving the homomorphism $D_M$ and the valuations $\{\nu^M_\lambda\}_{\lambda\in \Lambda}$ implicit. 
Define
\[
\MF(W):=\prod_{\substack{w\in R\\ \nu_R(w)\ge\sup_{\lambda\in\Lambda}\nu^S_\lambda(W)}}\MF(W,w).
\]
\subsubsection{The \texorpdfstring{$\infty$}{infinity}-trace}\label{section:normed_cy_str}
Let $(R,d_R,\nu_R)$ be a valued DGA over $\K,$ and let
$\mA$ be a small normed DG category over $R.$
The \textbf{Hochschild complex} $\CC_*(\mA)=\bigoplus_{l\ge1} \CC_l(\mA)$ is given by 
\[
\CC_l(\mA)=\bigoplus \Hom(X^l, X^1)\otimes  \Hom(X^{l-1}, X^l)[1]\otimes\ldots \otimes \Hom(X^1, X^2)[1]
\]
where the sum is over all the collections $X^1,\ldots,X^l$ of objects of $\mA$ of length $l$.
For $a_l\in\Hom(X^l, X^1),$ and $a_i\in\Hom(X^i, X^{i+1})$ where $1\le i\le l-1,$
the corresponding element in $C_l(\mA)$ is denoted by $a_l\otimes \ldots\otimes a_1.$ For $a_i$ homogeneous elements, we write $\epsilon_1=\sum_{i=1}^l|sa_i|,$ where $|sa_i|=|a_i|-1.$ Note that $|a_l\otimes\ldots\otimes a_1|=|a_l|+\sum_{i=1}^{l-1}|sa_i|.$
The complex is equipped with a differential $\dcc$
defined in Section~\ref{subsection_cy} 
and with a valuation $\nu$ induced by the valuations on the Hom-complexes of $\mA.$ 
Let
\begin{equation}\label{eq:cyclic_pernutation}
t: (\CC_*(\mA),\dcc)\to (\CC_*(\mA),\dcc),
\end{equation}
\[t(a_l\otimes a_{l-1}\otimes \ldots\otimes a_1)=(-1)^{|sa_l|(\epsilon_1-|sa_l|)}a_{l-1}\otimes a_{l-2}\otimes\ldots\otimes a_1\otimes a_l\]
be the cyclic permutation.
\begin{df} 
The \textbf{cyclic complex} $(\CC^\lambda_*(\mA),\dcc)$ of $\mA$ is defined by \[
\CC^\lambda_*(\mA):=\CC_*(\mA)/(\ima(\id-t)).\]
The cohomology of this complex, the cyclic homology of $\mA$, is denoted by $\HH_*^\lambda(\mA).$
\end{df}
For a DGA $A$, we denote by $\CC_*(A)$ and $\CC_*^\lambda(A)$ the Hochschild and cyclic complexes of the category consisting of one object with endomorphism algebra $A.$ 
See Section~\ref{subsection_cy} for further details. 
By an operator $\gto$ on $\CC_*^\lambda(\mA),$ we mean that $\gto$ is a sum of operators $\gto=\sum_{l\ge1}\gto_l$ such that for all $l\ge1,$ 
$\gto_l$ is an operator on $\CC_l^\lambda(\mA).$ 
Let $d$ stands for the differential on the $\Hom$-complexes of $\mA.$
Let $  X^1, X^2$ be two objects of $\mA,$ and let $\nu_1$ denote the valuation on the $\Hom(X^1,X^2).$
Define
\[
\Hom(X^1,X^2)_0:=\{\Phi\in \Hom(X^1,X^2)| \ \nu_1(\Phi)\ge0\}/ \{\Phi\in \Hom(X^1,X^2)| \ \nu_1(\Phi)>0\},
\]
\[
R_0:=\{r\in R| \ \nu_R(r)\ge0\}/ \{r\in R| \ \nu_R(r)>0\}.
\]
The requirement $\nu_1\big(d(f)\big)\ge\nu_1(f)$ implies that $d$ descends to $\Hom(X^1,X^2)_0.$
We say $\mA$ is a proper category if for all $X^1,X^2$ objects of $\mA$ we have
$\rank H^*(\Hom(X^1,X^2)_0,d)<\infty.$ 
\begin{df}\label{def:cy_str}
Let $\mA$ be a proper normed DG category over a valued DGA $R$. An \textbf{$\infty$-trace} of dimension $m$ on $\mA$ is an operator 
$\gto: \CC_*^\lambda(\mA)\to R[-m]$ such that
$\gto$ descends to $\HH^\lambda_*(\mA).$ That is,
    \[
    \gto\circ (1-t)=0, \quad \gto\circ\dcc=0.
    \]
We say $\tau$ is \textbf{cohomologically unital} if
\begin{enumerate}[label=(\arabic*)]
\item for every object $A$ of $\mA,$
$\gto$ vanishes on 
the image of the natural embedding
$\CC_*^\lambda(R) \hookrightarrow \CC_*^\lambda(A).$
\end{enumerate}
We say that $\gto$ is \textbf{gapped} if
\begin{enumerate}[label=(\arabic*),resume]
   \item\label{eq:preserves_val_D}
$\mD:=\{\nu_R(\gto(a))- \nu(a)| \  a\not\in \ker\gto \}$
is a discrete subset of $\R.$
\end{enumerate}
We say $\tau$ is \textbf{normed} if it is gapped and
\begin{enumerate}[label=(\arabic*),resume]
   \item
$\nu_R(\gto(a))\ge\nu(a)$
for every $a\in\CC_*^\lambda(\mA).$
\end{enumerate}
We say that $\gto$ is a \textbf{normed chain level Calabi-Yau} structure if it is normed and
\begin{enumerate}[label=(\arabic*),resume]
\item $\gto$ is non-degenerate. That is, for any objects $X^1$ and $X^2$ the induced pairing
\[
H^*(\Hom(X^2,X^1)_0,d)\otimes H^{*-m}(\Hom(X^1,X^2)_0,d)\to R_0, \quad [a_2]\otimes [a_1]\mapsto [\gto_1(a_2a_1)]
\]
is non-degenerate. 
\end{enumerate}
\end{df}
\begin{rmk}
The definition of a chain level Calabi-Yau structure used here is an adaptation of the definition given by Shklyarov~\cite{Shklyarov} to the normed $\Z$-graded setting.
\end{rmk}
\begin{rmk}
In the following, a normed $\infty$-trace is implicitly cohomologically unital. A normed Calabi-Yau structure means a cohomologically unital normed chain level Calabi-Yau structure.
\end{rmk}

\begin{df}\label{dfn:normed_cy_object}
Let $\mA$ be a proper normed DG category over a valued DGA $R$ equipped with a normed $\infty$-trace $\gto$. An object $X$ of $\mA$ is called \textbf{normed Calabi-Yau} if the restriction of $\gto$ to the full subcategory of $\mA$ consisting of the single object $X$ is a normed Calabi-Yau structure.
\end{df}

\subsubsection{Bounding cochains}\label{section:1.2.2}

\begin{df}\label{dfn:bc}
Let $(R,d_R, \nu_R)$ be a commutative valued DGA over $\K.$ Let $(A,d_A,1_A,\nu_A)$ be a normed unital DGA over $R,$ and let $c\in (R)^2$ with $d_R(c)=0$ and $\nu_R(c)>0.$ An element $b\in (A)^1$ is called a \textbf{bounding cochain} if $\nu_A(b)>0,$ and $b$ satisfies the Maurer-Cartan equation
\begin{equation}\label{eq:Marer-Cartan}
d_A(b)-b^2=c\cdot 1_A.
\end{equation}
We denote by $\Mmm(A,c)$ the set of all bounding cochains in $A$ with a fixed $c\in (R)^2$ satisfying $d_R(c)=0$ and $\nu_R(c)>0.$ In addition, we denote by $\Mmm(A)$ the set of all bounding cochains in $A.$
\end{df}
\begin{rmk}
    Bounding cochains were first introduced in~\cite{FOOO1}.
    A bounding cochain in our terminology translates in the terminology of~\cite{FOOO1} to a weak bounding cochain.
\end{rmk}
Let $n\in \Z_{>0}$ be odd, and let $\big((R,\nu_R),(S,\{\nu^S_\lambda\}_{\lambda\in \Lambda}) , W \big)$ be a valued Landau-Ginzburg model. Assume the associated normed matrix factorization category $\MF(W,w)$ is equipped with a dimension $n$ normed $\infty$-trace  
\[
\gto: \CC_*^\lambda(\MF(W,w))\rightarrow R.
\]
\begin{df}\label{dfn:spherical_object}
    An object $(M,D_M,\{\nu^M_\lambda\}_{\lambda\in \Lambda})$ of $\MF(W,w)$ is called a \textbf{spherical object} if
    $H^*(\End(M)_0)\cong H^*(S^n;R_0).$
\end{df}

Let $s$ be a formal variable of degree $1-n.$ Consider the ring $\R[[s]]$ which we equip with the trivial differential and the valuation $\nu_s$ given by
\begin{equation}\label{eq:valuation_s}
\nu_s(\sum_{i=0}^\infty x_is^i)=\min\{i| \ x_i\ne0\}. 
\end{equation}
Define
\begin{equation}\label{eq:tensor_product}
\Rs:=\R[[s]]\otimes R, \quad \ems:=\R[[s]]\otimes\End(M),
\end{equation}
and abbreviate $\nu_M=\nu_{M,M}.$
Equip $\Rs$ with the valuation $\nu_{\Rs}=\nu_s\otimes \nu_R,$ and $\ems$ with the norm 
$\nu_{\Ms}=\nu_s\otimes \nu_M.$ 
The differential $\delta$ and the normed $\infty$-trace $\gto$ are extended to $\ems$ by extension of scalars.
It follows by Corollary~\ref{lm:end_val_dga_2} that $(\ems, \delta, \id_{\Ms},\nu_{\Ms})$ is a normed unital DGA over $\Rs.$ 

Let $\mR_*^\lambda\subset \CC_*^\lambda(\ems)$ be the subcomplex consisting of infinite sums of  elements of the form $a_l\otimes\ldots\otimes a_1$ such that there exists $1\le i\le l$ such that $a_i\in \Rs.$ 
Let 
$b\in \Mmm(\ems,c),$ 
 define
\[
\exp(b):=\id_M \oplus b \oplus \frac{b\otimes b}{2}\oplus\frac{b\otimes b\otimes b}{3}\oplus\ldots\in \CC_*^\lambda(\ems).
\] 
In Corollary~\ref{lm_b(exp)_is_closed} we show that $\dcc(\exp(b))\in\mR_*^\lambda.$
Let $y_b\in \mR_*^\lambda$ be an element satisfying
\[
    \dcc(y_b)=\dcc(\exp(b))+z_b,
\] 
for some $z_b\in\CC_*^\lambda(\Rs),$ and define $\expp(b)=\exp(b)-y_b.$
The element $\expp(b)$ depends on the choice of $y_b,$ but by 
Lemma~\ref{lm:y_b_doesnt_dep}, we obtain that $\gto(\expp(b))$ does not depend on $y_b.$ 
Throughout this paper we will mainly be interested in the value $\gto(\expp(b)),$ and not in the element $\expp(b)$ itself. Hence, we can work with a specific choice of $y_b.$ Section~\ref{section:constructing_y_b} is devoted for the construction of a canonical choice of $y_b,$ and in Corollary~\ref{cor:y_b} we give an explicit formula for this canonical choice. 

Definition~\ref{dfn_gauge_equivalence} gives a notion of gauge equivalence between bounding cochains. Let $\sim$ denote the resulting equivalence relation.
Define a map
\[
\varrho:\Mmm(\ems)/ \sim\longrightarrow (\Rs)^{1-n}
\]
by
\[
\varrho([b]):=\gto(\expp(b)).
\]
Lemma~\ref{building_equiv_2} shows that $\varrho$ is well defined. 
The following theorem is the main result of this paper.
\begin{tm}
\label{thm:bc_equivalence}
    Assume $M$ is a spherical object which is normed Calabi-Yau.  Then $\varrho$ is bijective.
\end{tm}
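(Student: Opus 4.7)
The plan is to establish bijectivity of $\varrho$ by an inductive argument organized by the valuation filtration on $\Rs$ and $\ems$, exploiting the gappedness of $\gto$ to proceed one valuation level at a time and the non-degeneracy of the normed Calabi-Yau pairing to solve the obstruction equations that arise at each level. The sphericity hypothesis $H^*(\End(M)_0,\delta)\cong H^*(S^n;R_0)$ concentrates all non-trivial cohomology in degrees $0$ and $n$, while non-degeneracy of $\gto_1$ from Definition~\ref{def:cy_str} produces a perfect pairing between these two classes over $R_0$; these two facts together control all obstructions.

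For surjectivity, fix a target $\gamma\in(\Rs)^{1-n}$. Using the discreteness of $\ima h_G$ and of the gap set $\mD$, I would enumerate the valuation levels that can possibly appear as $0<\lambda_1<\lambda_2<\cdots$ and construct a sequence $b^{(k)}\in\Mmm(\ems)$ with $\nu_{\Ms}(b^{(k+1)}-b^{(k)})\ge\lambda_{k+1}$ and $\nu_{\Rs}\bigl(\gto(\expp(b^{(k)}))-\gamma\bigr)>\lambda_k$. Given $b^{(k)}$, I look for a correction $\eta\in(\ems)^1$ of valuation $\lambda_{k+1}$ so that $b^{(k+1)}:=b^{(k)}+\eta$ is still a bounding cochain (for a possibly modified $c$) and better matches $\gamma$. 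Modulo higher valuation the Maurer-Cartan equation reduces to $\delta(\eta)\equiv\rho$ for a $\delta$-cocycle $\rho$ of degree $2$; since $H^2(\End(M)_0,\delta)=0$ for $n\ne 2$ and its degree $n$ counterpart is accounted for by $\gto_1$, sphericity guarantees solvability of this linearized equation up to a free degree $0$ and degree $n$ cohomology class. The leading change in $\gto(\expp(b))$ under $b\mapsto b+\eta$ is $\gto_1(\eta)$ up to higher valuation, so non-degeneracy of $\gto_1$ lets me fix the cohomology class of $\eta$ to cancel the leading discrepancy with $\gamma$. The limit $b=\lim_k b^{(k)}$ exists by the discreteness of the valuation filtration and satisfies $\varrho([b])=\gamma$.

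For injectivity, suppose $b_1,b_2\in\Mmm(\ems)$ satisfy $\varrho([b_1])=\varrho([b_2])$. I would build a gauge equivalence inductively on valuation levels using Definition~\ref{dfn_gauge_equivalence}: having arranged that $b_1$ and $b_2$ agree modulo valuation $\lambda_k$, I apply a gauge transformation with infinitesimal parameter $\xi\in(\ems)^0$ of valuation $\lambda_k$ to cancel the leading discrepancy. The linearization is $\delta(\xi)\equiv b_2-b_1$ modulo higher valuation, so the obstruction lies in $H^1(\End(M)_0,\delta)$. For odd $n\ge 3$ sphericity makes this vanish outright. For $n=1$, where the obstruction sits in the top class detected by the Calabi-Yau pairing with degree $0$, the hypothesis $\varrho([b_1])=\varrho([b_2])$ together with the first-order expansion of $\gto\circ\expp$ forces the obstruction to vanish via non-degeneracy of $\gto_1$. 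Iterating and taking the limit produces the required gauge equivalence $b_1\sim b_2$.

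The main obstacle is the top-degree step in each direction: sphericity alone is insufficient there, and one must match the degree $n$ piece of the obstruction precisely against the $\gto$-value through the Calabi-Yau pairing. The key technical input is a clean first-order expansion of $\gto\circ\expp$ of the schematic form
\[
\gto(\expp(b+\eta))-\gto(\expp(b))=\gto_1(\eta)+(\text{higher valuation})
\]
together with a proof that this expansion is compatible with a canonical choice of $y_b$ from Section~\ref{section:constructing_y_b}; the $y_b$-independence of $\gto(\expp(b))$ granted by Lemma~\ref{lm:y_b_doesnt_dep} eliminates ambiguity in the final value, but the bookkeeping required to keep the valuation estimates consistent along the induction is the most delicate part of the argument.
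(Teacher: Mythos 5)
Your surjectivity argument is essentially the paper's own (Proposition~\ref{theorem:existence_bc}): an induction over valuation levels in which the closed obstruction cochain is trivialized using sphericity, the potential $c$ absorbs the $[\id]$-component, and a correction proportional to the degree-$n$ generator $h_0$ is chosen via the first-order expansion $\gto(\expp(b+\eta))\equiv\gto(\expp(b))+\gto_1(\eta)$ to keep $\gto(\expp(b_{(l)}))\equiv r$; this half is sound.

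The injectivity half has a genuine gap, in two respects. First, in this paper gauge equivalence is \emph{defined} through a unital pseudoisotopy (Definition~\ref{dfn_gauge_equivalence}): you must exhibit $\bcb\in\Mmm(\fUs,\bcc)$ in $A^*(I;\R)\otimes\ems$ with $\eval^0(\bcb)=b_0$, $\eval^1(\bcb)=b_1$. Your plan of acting by a ``gauge transformation with infinitesimal parameter $\xi\in(\ems)^0$'' is never connected to this definition; no group action on $\Mmm(\ems)$ is set up in the paper, and showing that such a transformation yields the required pseudoisotopy is essentially the content of Proposition~\ref{building_equiv_1}, which constructs the interpolating Maurer--Cartan element $\widetilde b=t\,b_1+(1-t)\,b_0+dt\cdot\eta+\cdots$ order by order. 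Second, and more seriously, your claim that for odd $n\ge 3$ ``sphericity makes this vanish outright'' is false, because the relevant cohomology is taken over $\Rs=\R[[s]]\otimes R$ with $|s|=1-n$: the degree-$1$ cohomology of $\ems_E$ contains $(\Rs_E)^{1-n}\cdot[h_0]\ne 0$ (e.g.\ $s\,[h_0]$) for \emph{every} odd $n$, so neither the initial discrepancy $[b_1-b_0]$ nor the subsequent obstructions are killed by sphericity alone. The paper handles $[b_1-b_0]$ for all $n$ by non-degeneracy of $\gto_1$ together with $\gto_1(b_1-b_0)\equiv 0$, and then must kill, at every later stage, a degree-$2$ obstruction class in the \emph{relative} complex $\fUs_{\mathrm{rel}}$ (which corresponds to $dt$ times a degree-$1$ class, again potentially a multiple of $s^k[h_0]$); this is done via Lemmas~\ref{lm:o_is_closed_reduced}--\ref{lm:primitive_form}, using the operator $\int_0^1\thet_1$ on the pseudoisotopy and the full, all-orders equality $\gto(\expp(b_0))=\gto(\expp(b_1))$, not merely its leading term. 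Your proposal invokes the Calabi--Yau pairing only for $n=1$ and only at first order, so the inductive step for injectivity does not close as written.
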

We think of Theorem~\ref{thm:bc_equivalence} as the ``mirror" of Theorem 1 in~\cite{Point_like_bc}. The latter theorem gives a classification of bounding cochains in the Fukaya $A_\infty$-algebra associated to a Lagrangian submanifold which is a rational cohomology sphere. This type of Lagrangian submanifolds can be considered as the spherical objects of the Fukaya category. Section~\ref{section_bc_mf} is dedicated for the proof of Theorem~\ref{thm:bc_equivalence}. 
 
\subsubsection{Numerical invariants}\label{section:numerical_invariants}
Our goal is to extract numerical invariants from bounding cochains in matrix factorizations that are normed Calabi-Yau spherical objects. 

Let $n\in\Z_{>0}$ be odd. Let $\big((R,\nu_R),(S,\{\nu^S_\lambda\}_{\lambda\in \Lambda}) , W \big)$ be a valued Landau-Ginzburg model, and
let $\MF(W,w)$ be the associated normed matrix factorization category.
Assume $\MF(W,w)$ is equipped with a dimension $n$ normed $\infty$-trace $\gto.$
Let $M$ be a spherical object which is normed Calabi-Yau.
Proposition~\ref{theorem:existence_bc} shows that there exists a bounding cochain $b\in\Mmm(\ems)$ satisfying $\gto(\expp(b))=s.$ We call such $b$ a \textbf{point-like} bounding cochain.

Let $b\in\Mmm(\ems,c).$ We call to $c$ the potential associated to $b.$
The following lemma asserts that the potential of a bounding cochain depends only on the equivalence class of the bounding cochain. 
\begin{lem}\label{building_equiv_2}
Let $b_i\in \Mmm(\ems,c_i)$ for $i=0,1$ such that
$b_0\sim b_1.$ Then $c_0=c_1$ and $\gto(\expp(b_0))=\gto(\expp(b_1)).$  
\end{lem}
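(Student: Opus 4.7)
The plan is to realize the gauge equivalence $b_0 \sim b_1$ by a smooth one-parameter family of bounding cochains and to show that both $c$ and $\gto(\expp(b))$ are constant along the family. By Definition~\ref{dfn_gauge_equivalence}, we obtain smooth families $b(t) \in (\ems)^1$ and $h(t) \in (\ems)^0$ for $t \in [0,1]$, with $b(0)=b_0$, $b(1)=b_1$, $\nu_{\Ms}(h(t))>0$, $b(t) \in \Mmm(\ems)$ for every $t$, satisfying $\dot b(t) = \delta(h(t)) + [h(t), b(t)]$. Writing $c(t) \cdot \id_M := \delta(b(t)) - b(t)^2$, a direct Leibniz computation using $|h|=0$ and $|b|=1$ gives
\[
\dot c(t) \cdot \id_M = \delta(\dot b) - \dot b\, b - b\, \dot b = [h,\, \delta(b) - b^2] = [h,\, c(t) \cdot \id_M] = 0,
\]
since $c(t) \in \Rs$ is central in $\ems$. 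Hence $c_0 = c_1$.

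For the second assertion, the plan is to choose $y_{b(t)}$ smoothly so that $\frac{d}{dt}\expp(b(t)) \in \ima \dcc$ in $\CC_*^\lambda(\ems)$, and then invoke $\gto \circ \dcc = 0$. Since $|sb(t)| = |s\dot b(t)| = 0$, all cyclic-permutation signs on tensor monomials in $b$ and $\dot b$ vanish, so in $\CC_*^\lambda(\ems)$
\[
\frac{d}{dt}\exp(b(t)) = \sum_{k \geq 1} \dot b(t) \otimes b(t)^{\otimes(k-1)}.
\]
Set $H(t) := \sum_{k \geq 0} h(t) \otimes b(t)^{\otimes k} \in \CC_*^\lambda(\ems)$. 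A direct expansion of $\dcc(H(t))$ using the cyclic Hochschild formula, combined with the substitution $\delta(b) = b^2 + c \cdot \id_M$, produces the internal-differential term $\delta(h) \otimes b^{\otimes k}$, the multiplication terms $hb \otimes b^{\otimes(k-1)}$ and $-bh \otimes b^{\otimes(k-1)}$ (the latter coming from the cyclic wrap-around), and internal $b^2$-insertions that cancel the $b^2$-portion of the $\delta(b)$-contributions. Collecting the $c \cdot \id_M$-remainders into an element $R(t) \in \mR_*^\lambda$ yields
\[
\dcc(H(t)) = \frac{d}{dt}\exp(b(t)) + R(t).
\]

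Given any initial valid choice $y_{b_0}$ with $\dcc(y_{b_0}) = \dcc(\exp(b_0)) + z_{b_0}$ and $z_{b_0} \in \CC_*^\lambda(\Rs)$, define $y_{b(t)} := y_{b_0} - \int_0^t R(s)\,ds \in \mR_*^\lambda$. Applying $\dcc$ to both sides of the previous display shows $\dcc(R(s)) = -\frac{d}{ds}\dcc(\exp(b(s)))$, so integration gives $\dcc(y_{b(t)}) = \dcc(\exp(b(t))) + z_{b_0}$, confirming that $y_{b(t)}$ is a valid choice at every $t$. Then $\frac{d}{dt}\expp(b(t)) = \frac{d}{dt}\exp(b(t)) - \frac{d}{dt}y_{b(t)} = \dcc(H(t))$, and by Lemma~\ref{lm:y_b_doesnt_dep} we may use this $y_{b(t)}$ to compute, obtaining
\[
\frac{d}{dt}\gto(\expp(b(t))) = \gto(\dcc(H(t))) = 0.
\]
Integration yields $\gto(\expp(b_0)) = \gto(\expp(b_1))$. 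The main obstacle is the explicit computation of $\dcc(H(t))$ in the middle paragraph: one must track the cyclic-permutation signs produced by the cyclic Hochschild differential of a DGA and verify that, after substituting $\delta(b) = b^2 + c \cdot \id_M$, the $b^2$-contributions from the internal differential cancel the adjacent-multiplication contributions exactly, leaving $\frac{d}{dt}\exp(b(t))$ up to the explicit $\mR_*^\lambda$-correction $R(t)$.
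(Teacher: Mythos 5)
There is a genuine gap, and it is at the very first step: your reading of Definition~\ref{dfn_gauge_equivalence} is not what the paper defines. In this paper, $b_0\sim b_1$ does not mean there is a smooth path $b(t)$ in $(\ems)^1$ with a gauge flow $\dot b=\delta(h)+[h,b]$; it means there is a unital pseudoisotopy $\fU$ (Definition~\ref{dfn:pseudoisotopy}) -- an abstract normed DGA over an abstract valued DGA $\fR$ with two homotopic evaluation morphisms and a compatible operator $\thet$ -- together with a bounding cochain $\bcb\in\Mmm(\fU,\bcc)$ whose evaluations are $b_0,b_1$. For a general pseudoisotopy there is no time parameter at all, so your strategy of differentiating in $t$, integrating $R(t)$, and choosing $y_{b(t)}$ along a path cannot even be formulated; your argument would at best treat the special pseudoisotopy $\fUs=A^*(I;\R)\otimes\ems$ constructed in Section~\ref{section:pseudoisotopy}. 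Even in that special case the flow equation you start from is not given by the definition but must be extracted from the Maurer--Cartan equation $\widetilde\delta(\bcb)-\bcb^2=\bcc\cdot\id$ by decomposing into $0$-form and $dt$-components (and one must observe, e.g.\ by a degree-parity argument, that the $dt$-component of $\bcc$ vanishes and that $\fpar\bcc=0$ forces the $0$-form part to be $t$-independent); none of this is carried out, and the asserted smoothness of $b(t),h(t)$ is likewise unjustified. Since the lemma is exactly what makes $\varrho$ well defined on $\Mmm(\ems)/\sim$, it must be proved for whatever pseudoisotopy realizes the equivalence, not only for $\fUs$.

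The paper's proof is purely formal and avoids any path or flow: $c_0=c_1$ follows from Lemma~\ref{remain_c} ($[c_0]=[c_1]\in H^*(\Rs,d_{\Rs})$) together with the triviality of $d_{\Rs}$; and the equality of traces follows from the pseudoisotopy axioms alone, namely $\gto\circ\eval^i=\mathrm{eval}^i\circ\thet$, the homotopy identity $\mathrm{eval}^1-\mathrm{eval}^0=h\circ\fpar$, the chain-map property $\fpar\circ\thet=\thet\circ\dcc$, the fact that $\dcc(\expp(\bcb))=-z_{\bcb}\in\CC_*^\lambda(\fR)$ by the definition of $\expp$, and cohomological unitality of $\thet$ to kill $\thet(z_{\bcb})$. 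If you want to salvage your approach, you would have to restrict the equivalence relation to the concrete pseudoisotopy $\fUs$ (which is not how the lemma is used) and supply the missing derivation of the flow equation plus the analytic bookkeeping ($t$-smoothness, term-by-term integration of $R(t)$ inside $\mR_*^\lambda$, and interchanging $\gto$ with $d/dt$); the paper's argument renders all of this unnecessary.
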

The proof of Lemma~\ref{building_equiv_2}
is given in Section~\ref{section:gauge}. 
Hence, we can define invariants of $M$ as follows. 

\begin{df}
\label{def:numerical_invariants}
    Let $M$ be a spherical object which is normed Calabi-Yau. Let $b\in\Mmm(\ems)$
    be a point-like bounding cochain, and let $c$ be the associated potential.
    Write
    \[
    c=\sum_{k\ge0} \sum_{ d\in G} N^M_{d,k+1}\frac{s^k}{k!}T^d,\qquad N^M_{d,k+1}\in \R.
    \]
The constants $N^M_{d,k+1}$ are called the \textbf{numerical invariants} associated to $M.$    
\end{df}

\subsubsection{Construction from toric geometry }\label{section:lg_model}

Delzant~\cite{Delzant} proved a bijective correspondence between symplectic toric manifolds and what are now known as Delzant polytopes. In Section~\ref{Section_toric_cons} we construct from each Delzant polytope $\triangle$ a valued Landau-Ginzburg model $LG(\triangle) = ((R_\triangle,\nu_{R_\triangle}),(S_\triangle,\{\nu^{S_\triangle}_\lambda\}_{\lambda \in \interior{\triangle}}),W_\triangle)$. We equip $\MF(W_\triangle)$ 
with the $\infty$-trace $\Theta$ defined by Shklyarov in~\cite{Shklyarov}, which refines the Kapustin-Li trace given in~\cite{kapustin2003topological}. The definition of $\Theta$ is recalled in Theorem~\ref{Theta} below. We prove that $\Theta$ is of dimension $\dim \triangle$ in Lemma~\ref{lm:degree_theta}. When $\dim \triangle$ is odd, we prove that $\Theta$ is cohomologically unital in Lemma~\ref{lm:vanish_theta}.
\begin{cj}\label{conj:cy}
For each Delzant polytope $\triangle,$ the $\infty$-trace $\Theta$ on $MF(W_\triangle)$ is a normed Calabi Yau structure.    
\end{cj}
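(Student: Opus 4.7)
The plan is to decompose the conjecture into the three properties of Definition~\ref{def:cy_str} not already established by Lemmas~\ref{lm:degree_theta} and~\ref{lm:vanish_theta}: the valuation estimate $\nu_{R_\triangle}(\Theta(a)) \ge \nu(a)$, the gapped property, and non-degeneracy. Cohomological unitality and dimension are already in hand (for odd $\dim \triangle$), so the conjecture reduces to these three points.

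For the valuation estimate, I would work directly with the explicit Kapustin-Li formula with higher-order Shklyarov corrections recalled in Theorem~\ref{Theta}. This formula expresses $\Theta(a_l \otimes \cdots \otimes a_1)$ as a residue/supertrace built from the $a_i$'s, derivatives $\partial_{s_j} D_{M^i}$, and derivatives of $W_\triangle$. Fix $\lambda \in \interior{\triangle}$ and, using condition~\ref{03_condition}, pick $\nu_\lambda^{M^i}$-orthogonal bases of each $M^i$. Condition~\ref{1_condition} then expresses every entry of the matrices representing the $a_i$ and the $D_{M^i}$ as a series in the $\nu_\lambda^S$-orthogonal Schauder basis $\{\prod s_j^{d_j}\}$ of $S_\triangle$. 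Since the valuations of the basis elements are affine in $\lambda$, taking a partial derivative $\partial_{s_j}$ shifts valuation by $-\nu_\lambda^{S_\triangle}(s_j)$, taking traces adds no valuation, and the residue operation gathers precisely the monomial whose valuation is most negative. A careful bookkeeping, together with taking the infimum over $\lambda \to p_l$ using $\lim_{\lambda \to p_l}\nu_\lambda^{S_\triangle}(s_j) \in \ima \nu_{R_\triangle}$, should show that every term entering $\Theta$ satisfies $\nu_{R_\triangle} \ge \nu$. Gappedness then follows from discreteness of $\ima h_G$ together with the fact that only finitely many shift values arise from the valuations of the $s_j$ at the vertices $p_l$.

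For non-degeneracy, the plan is a standard filtration-and-lift argument. Filter $\End(M)_0$ and $R_{\triangle,0}$ by the valuations $\nu_M$ and $\nu_{R_\triangle}$; the associated graded is a DG algebra over the associated graded Novikov ring, which by the structure of $R_\triangle$ is a Laurent-type extension of $\K$ by the group $G$. Modulo positive-valuation terms, the Shklyarov trace degenerates to the classical Kapustin-Li residue pairing for the Landau-Ginzburg model $(S_\triangle/\mathfrak{m}, W_\triangle \bmod \mathfrak{m})$, whose non-degeneracy on cohomology is Shklyarov's original theorem~\cite{Shklyarov}. Properness of $\MF(W_\triangle)$ gives finite rank of $H^*(\End(M)_0, d)$, so the filtration on cohomology is finite, the spectral sequence degenerates at a finite page, and a standard lifting argument upgrades non-degeneracy of the associated graded pairing to non-degeneracy of the pairing induced by $\Theta_1$ itself.

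The main obstacle I expect is the non-degeneracy step, specifically the interchange of taking cohomology with taking associated graded. Even with properness ensuring finite rank, one must verify that the filtration on $\Hom(M^1, M^2)_0$ is exhaustive and Hausdorff in a sense compatible with the completion built into the definition of the morphism complex, and that no infinite cascade of higher differentials prevents degeneration. The cleanest route is probably to first establish that the $\nu_\lambda^{M^i}$-orthogonal bases provide a strict filtration with finitely many jumps in each cohomological degree, using the discreteness assumption on $\ima h_G$ combined with finite generation of $M^i$ as an $S_\triangle$-module; this should reduce the spectral-sequence argument to a finite stable computation that can be handled by induction on the filtration level.
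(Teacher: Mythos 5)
The statement you are addressing is a conjecture in the paper: the authors do not prove it for general Delzant polytopes. What they establish is only a collection of partial results: $\Theta$ has dimension $\dim\triangle$ (Lemma~\ref{lm:degree_theta}), is cohomologically unital when $\dim\triangle$ is odd (Lemma~\ref{lm:vanish_theta}), is normed for $\triangle = \triangle^n$ with $n$ odd (Theorem~\ref{thm:norm}), and restricts to a normed Calabi--Yau structure on the single Dirac object $M_{\triangle^n}$ (Theorem~\ref{thm:cy}), all by explicit residue and supertrace computations specific to the superpotential $W_{\triangle^n}$. So your proposal should be judged as an attempt at an open problem, and as such it has two genuine gaps. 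First, in the valuation estimate your key step --- that ``the residue operation gathers precisely the monomial whose valuation is most negative'' --- is not correct as stated. The local residue at a single critical point of $W_\triangle$ need not satisfy the bound $\nu_{R_\triangle}(\Theta_{l,x}(a)) \ge \nu(a)$: in the paper's own proof of Theorem~\ref{thm:norm} the estimate is obtained only for the \emph{sum} over all critical points, by trading it for residues at infinity and at the origin (Lemmas~\ref{lm:residue_1},~\ref{lm:residue3},~\ref{lm:residue4}), followed by a three-case analysis that exploits the special monomial form of $W_{\triangle^n}$, the relation $z_0\cdots z_n = T$, and the discreteness $\ima\nu = \tfrac{1}{n+1}\Z$ to round a strict inequality to the needed one (equation~\eqref{eq:a_a_a}). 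For a general Delzant polytope the critical locus of $W_\triangle$ need not be so well controlled, and no analogue of this computation is supplied by affine-valuation bookkeeping alone; this is precisely why the statement is left as Conjecture~\ref{conj:cy}.

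Second, the non-degeneracy step as you set it up would fail. Passing to the associated graded of the unit-ball filtration does not produce ``the classical Kapustin--Li pairing for $(S_\triangle/\mathfrak{m}, W_\triangle \bmod \mathfrak{m})$'': since $\nu^{S_\triangle}_\zeta(z_i) > 0$ on $\interior{\triangle}$, the superpotential has strictly positive valuation and hence vanishes in the degree-zero associated graded, so Shklyarov's non-degeneracy theorem (which requires isolated critical points) cannot be invoked there. Relatedly, $\Hom(M^1,M^2)_0$ is not the morphism complex of matrix factorizations over a residue field of $S_\triangle$; the paper emphasizes in the introduction that over a characteristic-zero field the mirror of $\R P^n$ is trivial, whereas $H^*(\End(M_{\triangle^n})_0) \cong H^*(S^n;\R)$ --- exactly the discrepancy your reduction would have to bridge, and the reason the Novikov-norm structure is introduced in the first place. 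The paper's actual verification of non-degeneracy, available only for the Dirac object over $\triangle^n$, proceeds instead by computing $H^*(\End(M_{\triangle^n})_0)$ directly (Koszul complexes and central simple graded algebra arguments in Sections~\ref{section:azumaya_algebra} and~\ref{section:spherical_object}) and exhibiting an explicit class $h_0 = -\tfrac{z_1\cdots z_n}{T^{1/2}}e_0$ with $\Theta_1(h_0) = 1$ (Lemma~\ref{lm:theta_of_h_0}). If you want to pursue your filtration strategy, the missing ingredient is a replacement for the degenerate associated-graded superpotential --- for instance an identification of the level-zero pairing with something computable --- rather than the finiteness and strictness issues you flag, which are real but secondary.
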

Let $\triangle^n = \{(\zeta_1,\ldots,\zeta_n) \in \R^n\,|\, \sum_{i=1}^n \zeta_i \leq 1, \, \zeta_i \geq 0 \,\forall i\}$ denote the standard $n$-simplex.
\begin{tm}\label{thm:norm}
The $\infty$-trace $\Theta$ on $MF(W_{\triangle^n})$ is normed for $n$ odd.
\end{tm}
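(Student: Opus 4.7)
The plan is to verify the two conditions in Definition~\ref{def:cy_str} that constitute the adjective ``normed'': gappedness of the set $\mD$, and the inequality $\nu_{R_\triangle}(\Theta(a))\ge \nu(a)$ for every $a\in \CC_*^\lambda(\MF(W_{\triangle^n}))$. I would work directly with Shklyarov's explicit formula for $\Theta$ recalled in Theorem~\ref{Theta}, together with the orthogonal-basis structure that the toric construction of Section~\ref{Section_toric_cons} supplies for both $S_{\triangle^n}$ and for each object $M$ of $\MF(W_{\triangle^n})$.

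The first step is to reduce to estimating $\Theta$ on pure Hochschild tensors $\Phi_l\otimes\ldots\otimes \Phi_1$. Using the $\nu^M_\lambda$-orthogonal basis $m_1,\ldots,m_a$ of each module supplied by condition~(3) in the definition of $\MF(W,w)$, each $\Phi_i$ is a matrix with entries in $S_{\triangle^n}$. Each entry in turn expands in the $\nu^S_\lambda$-orthogonal Schauder basis $\{\prod_j s_j^{d_j}\}$ from condition~(2). Because both families are orthogonal for \emph{every} $\lambda\in \interior{\triangle^n}$, it suffices to control the $\lambda$-valuation of $\Theta$ applied to tensors whose entries are pure basis monomials; the inequality then propagates to completed sums by taking infima.

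The heart of the argument is to track valuations through the Kapustin--Li/Shklyarov formula. Each term in $\Theta$ applies to a Hochschild tensor the operation of forming a supertrace of an ordered product of the $\Phi_i$ interleaved with iterated derivatives of the $D_M$, followed by a residue projection determined by $\partial_{s_1}W_{\triangle^n},\ldots,\partial_{s_n}W_{\triangle^n}$. Since $W_{\triangle^n}$ has the explicit Hori--Vafa form adapted from~\cite{Hori_Vafa}, each partial derivative is a Laurent monomial in the generators $s_j$ whose $\nu^S_\lambda$-valuation is affine in $\lambda$ with vertex limits in $\ima \nu_{R_{\triangle^n}}$. Using the Leibniz-type inequality $\nu^S_\lambda(fg)\ge \nu^S_\lambda(f)+\nu^S_\lambda(g)$ inductively on the derivative expressions, each summand has $\lambda$-valuation at least the sum of its input valuations, and the residue projection shifts the total by a controlled amount determined purely by the vertex data of $\triangle^n$. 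Taking infima over $\lambda$ yields $\nu_{R_{\triangle^n}}(\Theta(a))\ge \nu(a)$. Gappedness is then automatic: every valuation in the computation is an affine function on the polytope with vertex limits in the discrete set $\ima h_G=\ima \nu_{R_{\triangle^n}}$, so $\nu_{R_{\triangle^n}}(\Theta(a))-\nu(a)$ takes values in a discrete subset of $\R$.

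The main obstacle is the bookkeeping for Shklyarov's higher-order correction terms beyond the leading Kapustin--Li piece. These involve iterated covariant-type derivatives of $D_M$ together with a combinatorial sum over connection-like contractions, and each summand must be checked separately against the valuation inequality rather than deduced from a universal principle. The parity assumption $n$ odd enters through the supertrace sign-cancellation pattern responsible for the trace landing in the cohomological degree matched to $\Theta$'s dimension, which was established in Lemma~\ref{lm:degree_theta}; without it the residue projection degenerates and the valuation estimate cannot be stated in the clean form used above.
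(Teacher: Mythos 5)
Your outline identifies the right two conditions to check, and your reduction to monomial data via the orthogonal bases (the paper likewise reduces to $f_{l,\vec\jmath,\vec\imath,\vec\Phi}=T^a/\prod_j z_j^{m_j}$) and your treatment of gappedness (which in the paper is immediate from $\ima\nu_{M^1,M^2}=\ima\nu_{R_{\triangle^n}}=\tfrac{1}{n+1}\Z\cup\{\infty\}$, Lemma~\ref{lm:ima_m_eq_ima_r}) are fine. The genuine gap is at the central step: you assert that a Leibniz-type valuation bookkeeping shows each supertrace summand has valuation at least the sum of the input valuations, and that ``the residue projection shifts the total by a controlled amount determined purely by the vertex data of $\triangle^n$.'' Neither half survives inspection. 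First, $\nu_\zeta(\partial_j D)=\min_i\{\tfrac12\zeta_i-\zeta_j\}$ is typically negative, and the denominators $(\partial_i W_{\triangle^n})^{r_i+1}$ vanish exactly at the critical points where the residues are taken, so there is no a priori lower bound on the valuation of an individual local residue in terms of the valuation of the numerator; the shift also depends on the Hochschild length $l$ and on the monomial exponents $m_j$, not only on the polytope. This is precisely where the paper's work lies: one converts $\sum_{x\in C_{W_{\triangle^n}}}\mathrm{Res}_x$ into residues at $\infty$ and at $0$ via the global residue theorem, computes those residues by explicit Laurent expansion of the forms $g_{\vec m}(\vec z)$ (Lemmas~\ref{lm:residue_1},~\ref{lm:residue3},~\ref{lm:residue4} and Corollary~\ref{cor:residue_1}), obtaining the exact power $T^{-(n+l-1)+\frac{n+\sum_j m_j}{n+1}}$, and then derives the inequalities $a\ge\sum_j\nu_j(\Phi_j)$ and $a+l_k\ge m_k+\sum_j\nu_j(\Phi_j)$ by letting $\zeta$ tend to the vertices of $\simp$, followed by a three-case analysis; in the last case one needs the limit $\zeta\to(0,\tfrac1n,\ldots,\tfrac1n)$ and a discreteness/integrality argument in $\tfrac{1}{n+1}\Z$ to upgrade a strict inequality before Lemma~\ref{lm:residue4} can be applied. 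None of this is replaceable by termwise valuation estimates, so your proposal assumes the conclusion at the crucial step.

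Two smaller inaccuracies: $\partial_j W_{\triangle^n}$ in the coordinates of Lemma~\ref{lm:u_coordinates} is $1-\overline{T}/(z_j\prod_k z_k)$, a binomial rather than a Laurent monomial, so even the input to your bookkeeping is misdescribed; and the parity of $n$ is not what keeps the residue projection nondegenerate (Lemma~\ref{lm:degree_theta} holds for all $n$) --- oddness enters the framework through cohomological unitality (Lemma~\ref{lm:vanish_theta}) and the overall setup, not through the valuation estimate in the way you suggest.
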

Let $(\Xtr,\omega_\triangle)$ be the symplectic toric manifold associated to the polytope $\triangle$ by Delzant's construction. The manifold $\Xtr$ is endowed with a canonical real structure, that is, an involution $\phi_\triangle : X \to X$ such that $\phi_\triangle^* \omega_\triangle = -\omega_\triangle.$ Let $\Xtrr = Fix(\phi_\triangle)$ denote the real locus of $\Xtr.$ In particular, $\Xtrr\subset \Xtr$ is a Lagrangian submanifold. For example, $X_{\triangle^n} = \C P^n$ and $X_{\triangle^n}^\R = \R P^n.$ A submanifold $M \subset N$ is called \textbf{relatively spin} if it is orientable and the second Stiefel-Whitney class $w_2(M)$ belongs to the image of the restriction map $H^2(N) \to H^2(M).$ The relative spin condition plays an important role in the definition of the Floer cohomology of a Lagrangian submanifold~\cite{FOOO1}. 

In Section~\ref{Section_toric_cons}, we define a \textbf{combinatorial relative spin} condition for a Delzant polytope, which we expect to be connected to $\Xtrr \subset \Xtr$ being relatively spin when $\Xtr$ is Fano. For example, when $n$ is odd, $\triangle^n$ is combinatorially relatively spin and $\rp^n \subset \cp^n$ is relatively spin. For each combinatorially relatively spin Delzant polytope, we construct an object $(M_\triangle,D_\triangle, \{\nu_\zeta^{M_{\triangle}}\}_{\zeta\in\interior{\triangle}})$ of the normed matrix factorization category $MF(W_\triangle),$ which we call the Dirac matrix factorization. 
\begin{tm}\label{thm:cy}
The object $M_{\triangle^n}$ of $MF(W_{\triangle^n})$ is normed Calabi-Yau for $n$ odd.
\end{tm}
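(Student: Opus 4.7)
The plan is to verify each of the four conditions in Definition~\ref{def:cy_str} for the restriction of $\Theta$ to the full subcategory of $\MF(W_{\triangle^n})$ consisting of the single object $M_{\triangle^n}$. Three of these conditions follow from general results already established in the paper: the dimension condition holds by Lemma~\ref{lm:degree_theta}, since $\dim\triangle^n = n$; cohomological unitality is Lemma~\ref{lm:vanish_theta}, which applies when $n$ is odd; and the normed and gapped properties are Theorem~\ref{thm:norm}, also for $n$ odd. So the heart of the proof is establishing non-degeneracy of the induced pairing on cohomology for the single pair $(M_{\triangle^n},M_{\triangle^n})$.

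Since $M_{\triangle^n}$ has been shown to be spherical, we have $H^*(\End(M_{\triangle^n})_0,d) \cong H^*(S^n; R_0)$, which is free of rank one over $R_0$ in degrees $0$ and $n$ and zero otherwise. The class $[\id]$ generates $H^0$; let $[\alpha]$ denote a generator of $H^n$. The only potentially nontrivial pairing is
\[
H^n(\End(M_{\triangle^n})_0)\otimes H^0(\End(M_{\triangle^n})_0)\longrightarrow R_0,
\]
and by $R_0$-bilinearity it is determined by $[\alpha]\otimes[\id]\mapsto[\Theta_1(\alpha)]$. Non-degeneracy therefore reduces to exhibiting a top-degree cocycle $\alpha$ whose image $\Theta_1(\alpha)$ is a unit in $R_0$.

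To produce such an $\alpha$ and evaluate $\Theta_1(\alpha)$, I would use the explicit Clifford-type description of the Dirac factorization $M_{\triangle^n}$ from Section~\ref{Section_toric_cons}. The endomorphism algebra $\End(M_{\triangle^n})$ inherits Clifford-type generators from the combinatorial relative spin data on $\triangle^n$, and a natural top-degree cocycle can be built as an appropriate iterated product of these generators combined with derivatives of the Dirac operator $D_{\triangle^n}$. Shklyarov's formula for $\Theta_1$, recalled in Theorem~\ref{Theta}, yields the Kapustin-Li supertrace together with higher correction terms involving further derivatives. The strategy is to show that on this specific $\alpha$ the higher corrections either vanish or combine with the leading term to produce a concrete residue, exploiting the $S_{n+1}$-symmetry of $\triangle^n$ and the anticommutation relations built into the Dirac construction.

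The main obstacle will be the explicit computation: identifying the correct top-degree generator $\alpha$, carrying out the Kapustin-Li supertrace against the concrete form of $D_{\triangle^n}$, and tracking the higher correction terms of $\Theta$ carefully enough to see that their total contribution is a nonzero scalar modulo positive-valuation elements. Once the leading term is shown to reduce to a classical Jacobian or residue pairing which, for the simplex, evaluates to a nonzero rational constant, the gapped property supplied by Theorem~\ref{thm:norm} together with the nontriviality of this scalar will imply that $\Theta_1(\alpha)\in R_0$ is a unit, completing the verification of non-degeneracy and hence the proof.
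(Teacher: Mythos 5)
Your reduction is sound and follows the same route as the paper: dimension from Lemma~\ref{lm:degree_theta}, cohomological unitality from Lemma~\ref{lm:vanish_theta}, the normed (hence gapped) property from Theorem~\ref{thm:norm}, and then, using sphericality (Theorem~\ref{lm:cohomology}) and the fact that $(R_{\triangle^n})_0=\R$ is a field, the observation that non-degeneracy is equivalent to finding a degree-$n$ cocycle $\alpha$ in $\End(M_{\triangle^n})_0$ with $\Theta_1(\alpha)$ nonzero in $R_0$. But at exactly this point your proposal stops being a proof: you never exhibit a candidate $\alpha$, never check that it is closed modulo $F^0\End(M_{\triangle^n})$, and never compute its pairing. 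The sentence claiming that ``the higher corrections either vanish or combine with the leading term to produce a concrete residue'' is precisely the assertion that needs proof, and as stated it is also slightly off target: for $\Theta_1$ there are no higher Hochschild corrections (those enter only in $\Theta_l$, $l\ge 2$); the genuine difficulty is the combinatorial supertrace--residue evaluation of $\mathrm{str}\bigl(\alpha\,\partial_i D\,\partial_{j_1}D\cdots\partial_{j_{n-1}}D\bigr)$ summed over permutations and critical points of $W_{\triangle^n}$.

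The paper resolves this by the explicit choice $h_0=-\frac{z_1\cdots z_n}{T^{1/2}}\,e_0$, checking $\delta(h_0)=-2T^{1/2}\equiv 0 \pmod{F^0\End(M_{\triangle^n})}$ and $\nu_{M_{\triangle^n}}(h_0)=0$, and then proving $\Theta_1(h_0)=1$ (Lemma~\ref{lm:theta_of_h_0}). That computation is the heart of the matter: one works in the Clifford basis $\{e_I\}$, observes that only two types of products contribute to the supertrace (the terms with $\partial_{a_k}e_{a_k}$ matching a permutation $\sigma$, and the terms built entirely from $\partial_{a_k}e_0$), and evaluates the resulting residues separately (Lemmas~\ref{lm:G_residues_x} and~\ref{lm:F_residues_x}), the first family summing to $(-1)^{1+\mathrm{sgn}(\sigma)}$ and the second vanishing, so that the permutation signs cancel and the total is $1$. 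Without producing such an $\alpha$ and carrying out this evaluation (or an equivalent argument that the residue pairing is nonzero), the non-degeneracy condition — and hence the theorem — remains unproved; your proposal correctly frames the problem but defers its only nontrivial step.
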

The proof of Theorems~\ref{thm:norm} and~\ref{thm:cy} are given in Section~\ref{section:tight_mf_cpn}. They depend on extensive manipulations of the residues and supertraces that are used to define $\Theta.$

\begin{cj}\label{conj:cohom}
For each combinatorially spin Delzant polytope $\triangle$ such that $\Xtrr$ is Fano, we have 
\[
H^*(\End(M_{\triangle})_0)\cong H^*(\Xtrr; (\Rtr)_0).
\]
\end{cj}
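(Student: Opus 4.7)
The plan is to prove Conjecture~\ref{conj:cohom} via a spectral sequence associated to the valuation filtration on $\End(\Mtr),$ reducing the computation to a combinatorial Morse-theoretic model on the real toric variety $\Xtrr.$ The underlying heuristic is that $\End(\Mtr)_0$ should be a Novikov-deformed enhancement of a cellular cochain complex for $\Xtrr,$ in parallel to how mirror symmetry deforms singular cohomology of a Lagrangian into its Floer cohomology.

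First, I would unpack the Clifford/Koszul structure of $\Mtr$ coming from the combinatorial relative spin condition. Writing $\Mtr \cong \Str \otimes V,$ where $V$ is an exterior module whose generators are indexed by facets of $\triangle,$ the endomorphism algebra inherits a Clifford-algebra decomposition over $\Str,$ and the induced differential $\delta$ splits as a sum of Koszul-type operators, one per facet. I would then compute $\nu_{\Mtr}$ on the resulting basis elements using the affine data in condition~\ref{03_condition} of the definition of an object of $\MF(\Wtr,w),$ and identify the subquotient $\End(\Mtr)_0$ as a combinatorial object whose structure is controlled by the face lattice of $\triangle.$

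Second, I would set up the filtration $F^p = \{\Phi \in \End(\Mtr) \mid \nu_{\Mtr}(\Phi) \geq p\}$ and consider its induced spectral sequence converging to $H^*(\End(\Mtr)_0).$ The $E_0$-page is the associated graded complex, and by condition~\ref{1_condition} it is a monomial object with a Clifford-type differential whose cohomology should localize at the vertices $p_1,\ldots,p_c$ of $\triangle,$ i.e.\ at the critical points of $\Wtr.$ I expect each vertex to contribute a local Koszul complex whose cohomology is a single copy of $(\Rtr)_0$ concentrated in a well-defined degree, so that the $E_1$-page decomposes as a direct sum indexed by the vertices. Assembling these local contributions with connecting maps read off from edges and higher faces should recover the cellular cochain complex coming from the toric CW decomposition of $\Xtrr,$ in which $2^n$ simplicial copies of $\triangle$ (indexed by $\pi_0(T^n_\R)$) are glued along their lower-dimensional faces.

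Third, the combinatorial relative spin condition would be invoked to trivialize the orientation cocycle on $\Xtrr,$ ensuring that the signs in the Koszul connecting maps on the $E_1$-page match those of the standard cellular differential on $\Xtrr$ with untwisted coefficients in $(\Rtr)_0.$ Finally, the Fano hypothesis would be used to argue that the spectral sequence degenerates at $E_1$ via a valuation-and-degree count that rules out higher differentials. The main obstacle is precisely this degeneration: higher differentials would correspond on the symplectic side to genuine quantum corrections, and showing they vanish on the $B$-model side by purely algebraic means is likely delicate and is where Fano-ness must do the work. A secondary difficulty is the sign matching in step three; the combinatorial relative spin condition must translate into an explicit cocycle identity for the Koszul connecting maps between vertices. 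As sanity checks, the case $\triangle = \triangle^n$ for $n$ odd follows from Theorem~\ref{thm:cy}, which identifies the left side with $H^*(S^n;(\Rtr)_0)$ matching $H^*(\rp^n;(\Rtr)_0);$ products of such simplices could then be attacked by a K\"unneth-type argument to build confidence before tackling the general Delzant case.
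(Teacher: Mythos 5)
You are proposing a proof of a statement that the paper itself records only as a conjecture: nowhere does the paper prove Conjecture~\ref{conj:cohom} in general, and the only case it establishes is $\triangle = \triangle^n$ with $n$ odd (Theorem~\ref{lm:cohomology}, proved in Section~\ref{section:spherical_object}); incidentally, that case is Theorem~\ref{lm:cohomology}, not Theorem~\ref{thm:cy} as you cite. Your text is therefore a programme rather than a proof, and its two pivotal steps are exactly the open content of the conjecture: (i) the identification of the first page of your spectral sequence with a cellular cochain complex of $\Xtrr$, and (ii) the degeneration statement. Neither is carried out, and as stated both are problematic. The spectral sequence of the filtration $F^p\End(\Mtr)$ abuts to $H^*(\End(\Mtr))$, not to $H^*(\End(\Mtr)_0)$; the complex $\End(\Mtr)_0$ is itself a single subquotient of the associated graded, so the valuation filtration is already ``used up'' in defining it and cannot also serve as the filtration computing its cohomology. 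The vertices of $\triangle$ are not the critical points of $\Wtr$ (they are points of the polytope, not of the mirror torus; only their count matches in the Fano case), and a page with one copy of $(\Rtr)_0$ per vertex has rank equal to the number of vertices, which in general strictly exceeds $\dim_\R H^*(\Xtrr;\R)$ --- already for $\triangle^3$ one has four vertices against total Betti number two for $\R P^3$. So massive cancellation by further differentials is forced, which contradicts your hoped-for early degeneration (and if your $E_1$ really were the cellular cochain complex, you would in any case need degeneration at $E_2$, not $E_1$). Finally, the sign/spin step is asserted, not argued: in the paper the combinatorial relative spin condition enters through the construction of $\Mtrb$ itself (Lemma~\ref{lm:mrs}), and extracting from it a cocycle identity for your connecting maps is genuine work.

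For comparison, in the one case the paper does prove, the method is quite different from vertex localization: it establishes the norm-preserving isomorphism $\End(\Msf)\cong \Cl\otimes\ccle^{\mathrm{op}}$ by graded central simple algebra arguments plus Nakayama (Lemma~\ref{lm:F_isomorphism}), decomposes the unit ball into summands $(\Cl\otimes e_J)_{\ge 0}$, computes each via a short exact sequence against two explicit Koszul-type complexes $E_J$ and $F_J$ whose acyclicity comes from regular sequences (Lemmas~\ref{lm:co_e},~\ref{lm:co_f},~\ref{lm:coho_cl_e_j}, Proposition~\ref{prop:coho}), and only then passes from $\End(\Msf)_{\ge 0}$ to $\End(\Msf)_0$ via the long exact sequence for multiplication by $T^{\frac{1}{n+1}}$. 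No cellular model of $\Xtrr$ and no degeneration argument appear; the comparison with $H^*(\R P^n;\R)$ is made only after both sides have been computed. If you want to pursue your approach for general $\triangle$, the missing ideas you must supply are a correct filtration on $\End(\Mtr)_0$ itself (or on its unit ball, with a separate argument for the passage to the quotient, as in the paper), an actual comparison map to a complex known to compute $H^*(\Xtrr;(\Rtr)_0)$, and a mechanism --- not just the word ``Fano'' --- controlling the cancellation; the paper's Koszul-complex technique is the closest available template for the local computations.
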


\begin{tm}\label{lm:cohomology}
Conjecture~\ref{conj:cohom} holds for $\triangle = \triangle^n$ for $n$ odd. In particular,  
 \[
H^*(\End(M_{\triangle^n})_0)\cong H^*(\R P^n; (R_{\triangle^n})_0),
\] 
so $M_{\triangle^n}$ is a spherical object.
\end{tm}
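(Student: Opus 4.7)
The plan is to compute the cohomology $H^*(\End(M_{\triangle^n})_0,\delta)$ directly and match it with $H^*(\R P^n;(R_{\triangle^n})_0)$, which for $n$ odd and any field of characteristic zero is a copy of $(R_{\triangle^n})_0$ concentrated in degrees $0$ and $n$. Sphericalness would then follow by definition.

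First, I would unpack the explicit description of the Dirac factorization $M_{\triangle^n}$ from Section~\ref{Section_toric_cons}. Based on the toric Hori-Vafa model for $\cp^n$, the underlying module should be the exterior algebra $\Lambda^\bullet V$ on a free $S_{\triangle^n}$-module $V$ of rank $n$ (so of total rank $2^n$, matching the Dirac spinor rank in odd dimension), with $D_\triangle = \sum_i(f_i\, e_i^\wedge + g_i\, \iota_{e_i^\vee})$ a Koszul-type differential whose factors satisfy $\sum_i f_ig_i = W_\triangle - w$. The endomorphism algebra $\End(M_{\triangle^n})$ then takes the form of a Clifford superalgebra over $S_{\triangle^n}$ with an explicit basis of Clifford monomials, and the differential $\delta$ is graded-commutation with $D_\triangle$.

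Next I would determine the unit-ball subalgebra $\End(M_{\triangle^n})_0$ using the valuations. Because each $\nu_\zeta^{M_\triangle}$ is affine in $\zeta \in \interior{\triangle^n}$ and the norm on $\End$ is the infimum over $\zeta$ of the operator valuations, the condition $\nu \geq 0$ picks out those Clifford monomials whose tropical symbol is nonnegative over the entire simplex, with equality occurring at one or more vertices $p_l$. Exploiting the vertex-limit hypothesis $\lim_{\zeta\to p_l}\nu_\zeta^M(m_i)\in \ima\nu_R$ and the condition~\ref{03_condition} in the definition of $M$, one identifies $\End(M_{\triangle^n})_0$ as a free $(R_{\triangle^n})_0$-module on a finite combinatorial basis naturally indexed by the face structure of $\triangle^n$, with the Clifford product reducing to a combinatorial product on faces.

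Finally I would compute the reduced differential. Modulo strictly positive valuation, $\delta$ becomes a sum of local Koszul differentials, one per vertex of $\triangle^n$, with the leading term at vertex $p_l$ governed by the critical-point data of $W_\triangle$ there. Each local Clifford-Koszul complex has cohomology concentrated in a single degree of rank one, and a spectral sequence associated to the exterior-degree filtration glues these into the cellular cochain complex of $\R P^n$ tensored with $(R_{\triangle^n})_0$. The non-degenerate pairing supplied by $\Theta$ via Theorem~\ref{thm:cy} furnishes Poincaré duality $H^i \cong H^{n-i}$, so it suffices to verify $H^0 \cong (R_{\triangle^n})_0$ (generated by $\id_{M_\triangle}$) and $H^i = 0$ for $0 < i \leq (n-1)/2$; these middle-degree vanishings reduce to the fact that the local Koszul cohomology at each vertex is concentrated in top degree.

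The main obstacle I anticipate is the combinatorial bookkeeping in the second and third steps: precisely determining which Clifford monomials survive the passage to $\End(M_{\triangle^n})_0$ and tracking how the reduced differential couples the vertex-local contributions into a global complex matching $\R P^n$. The parity assumption that $n$ is odd enters crucially here, both for the combinatorial relative spin condition that makes $D_\triangle$ well-defined and for the vanishing of middle cohomology (since $\R P^n$ is a rational cohomology sphere only for $n$ odd). If the direct spectral sequence becomes unwieldy, a fallback is to use the $\infty$-trace $\Theta$ and the Jacobian-ring description of $\End(M_\triangle)$ modulo $\delta$-exact terms, comparing dimensions of the residue complex at each critical point of $W_\triangle$ to the Betti numbers of $\R P^n$ via the number of real critical points of the superpotential.
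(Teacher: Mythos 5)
Your proposal has two genuine gaps. The first is a circularity: you invoke the non-degenerate pairing ``supplied by $\Theta$ via Theorem~\ref{thm:cy}'' to obtain Poincar\'e duality $H^i\cong H^{n-i}$ on $H^*(\End(M_{\triangle^n})_0)$, but in the paper Theorem~\ref{thm:cy} is \emph{deduced from} Theorem~\ref{lm:cohomology}: non-degeneracy of the pairing is verified only after one already knows $H^*(\End(\Msf)_0)\cong\R\oplus\R[n]$ and that $H^n$ is generated by the class of $h_0=-\tfrac{z_1\cdots z_n}{T^{1/2}}e_0$ with $\Theta_1(h_0)=1$ (Lemma~\ref{lm:theta_of_h_0}). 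You have not given an independent proof of non-degeneracy, so duality cannot be used to halve the range of degrees; the vanishing in \emph{all} degrees $0<i<n$ must be established directly.

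The second gap is that the actual computation is only asserted. Your reduction of $\End(M_{\triangle^n})_0$ to ``vertex-local Koszul complexes'' glued into the cellular cochain complex of $\R P^n$ is a heuristic, not an argument: since $\nu_{\Msf}$ is an infimum over $\zeta\in\interior{\simp}$, every generator $e_i$ (and many monomials $z^de_I$) has norm exactly $0$, so the reduced differential is not localized at vertices, and the associated graded ring is a degenerate object whose products frequently vanish; nothing in your sketch controls the spectral sequence you propose or justifies the identification with cellular cochains. You also take for granted an explicit Clifford-monomial basis of $\End(M_{\triangle^n})$, whereas this itself requires proof --- the paper establishes $\End(\Msf)\cong\Cl\otimes\ccle^{\mathrm{op}}$ via central simple graded algebra theory and Nakayama's lemma (Lemma~\ref{lm:F_isomorphism}). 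The paper's route avoids working with the reduced algebra directly: it first computes $H^*(\End(\Msf)_{\ge0})$ by splitting $\Cl\otimes\ccle^{\mathrm{op}}$ into pieces $(\Cl\otimes e_J)_{\ge0}$, comparing each with exact Koszul complexes over $\Rsf\langle x_0,\ldots,x_n,y_0,\ldots,y_n\rangle$ (Lemmas~\ref{lm:co_e},~\ref{lm:co_f},~\ref{lm:coho_cl_e_j}), and then passes to $\End(\Msf)_0$ via the long exact sequence of $0\to\End(\Msf)_{\ge0}[-2]\xrightarrow{\,T^{1/(n+1)}\,}\End(\Msf)_{\ge0}\to\End(\Msf)_0\to0$. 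If you want to salvage your approach, you would need either to carry out that filtered computation rigorously (the bookkeeping you flag as a worry is precisely where the content lies) or to adopt something like the paper's two-step strategy, which sidesteps the reduced ring altogether.
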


\subsubsection{Mirror symmetry}
Conjecture~\ref{conj:cohom} and Theorem~\ref{lm:cohomology} reflect mirror symmetry between the Landau Ginzburg model $LG(\triangle)$ and the symplectic manifold $X_\triangle$ at the level of cohomology. We discuss next mirror symmetry at the level of enumerative geometry.

Let $(X,\omega)$ be a symplectic manifold and $L \subset X$ be a Lagrangian submanifold such that $H^*(L;\R) = H^*(S^n;\R).$ Let $J$ be an $\omega$-tame almost complex structure on $X$, and let $\beta \in H_2(X,L;\Z).$ 
In~\cite{Point_like_bc}, a definition is given for open Gromov-Witten invariants of $L$ that count $J$-holomorphic maps $u:(D^2,\partial D^2) \to (X,L)$ representing $\beta$ satisfying various constraints, along with corrections terms that are needed to make the count independent of the choice of $J$. Following~\cite{Point_like_bc}, let $\oogw^L_{\beta,k}$ denote the invariant where the constraints are taken to be $k$ points on the Lagrangian $L.$ Let $G_\triangle$ denote the group underlying the completed group ring $R_\triangle.$
Our main motivation for the preceding constructions is the following conjecture.
\begin{cj}\label{conj:ms}
Suppose $\Xtr$ is Fano toric and $H^*(\Xtrr;\R) = H^*(S^n;\R).$ Then there is a homomorphism $\varphi_\triangle : H_2(\Xtr,\Xtrr;\Z) \to G_\triangle$ such that for all $k \geq 1$ and $\beta \in G_\triangle,$ we have
\[
N_{\beta,k}^{M_\triangle} = \sum_{\tilde \beta \in \varphi_\triangle^{-1}(\beta)}\oogw^{\Xtrr}_{\tilde\beta,k}.
\]
\end{cj}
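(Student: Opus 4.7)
The plan is to establish the conjectured equality by constructing a chain-level mirror correspondence between the Fukaya subcategory of $\Xtr$ generated by $\Xtrr$ and the full subcategory of $MF(W_\triangle)$ generated by $M_\triangle$, and then invoke the classification of bounding cochains on both sides. Because point-like bounding cochains are characterized by their image under $\varrho$, and $\varrho$ is bijective by Theorem~\ref{thm:bc_equivalence}, any $A_\infty$-quasi-isomorphism that respects the normed Calabi-Yau structures must send point-like bounding cochains to point-like bounding cochains, and hence must identify their potentials.

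First I would construct the homomorphism $\varphi_\triangle : H_2(\Xtr,\Xtrr;\Z) \to G_\triangle$ using the Hori-Vafa correspondence between facets of $\triangle$ and disk classes. The monomials $s_1,\ldots,s_b$ from condition~\ref{1_condition} in the definition of $S_\triangle$ have valuations $\nu^{S_\triangle}_\lambda(s_i)$ that are affine in $\lambda$ and recover, in the limit $\lambda \to p_l$, the symplectic areas of the toric disk classes passing through the facet opposite $p_l$. This datum, together with the combinatorial relative spin condition that pins down the Maslov grading, specifies $\varphi_\triangle$ uniquely on a generating set of disk classes, and one then checks it extends to a homomorphism on $H_2(\Xtr,\Xtrr;\Z)$.

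Next I would match the numerical invariants. On the matrix factorization side, the invariants $N^{M_\triangle}_{d,k+1}$ are the coefficients of the potential of a point-like bounding cochain $b_{pt} \in \Mmm(\End(M_\triangle)^s)$, and by Theorem~\ref{thm:bc_equivalence} they are determined by $\varrho(b_{pt})=s$ together with $\Theta$ and the $A_\infty$-structure. On the symplectic side, the invariants $\oogw^{\Xtrr}_{\tilde\beta,k}$ are the coefficients of the potential of a point-like bounding cochain in the Fukaya $A_\infty$-algebra of $\Xtrr$, by the construction of~\cite{Point_like_bc}. The strategy is to produce an $A_\infty$-quasi-isomorphism $\Psi : Fuk(\Xtr,\Xtrr) \to \End(M_\triangle)^s$ that preserves the $\Z$-grading, intertwines the valuation on the Fukaya side (given by symplectic area composed with $\varphi_\triangle$) with $\nu_{M^s}$, and intertwines the natural Fukaya trace given by integration over $\Xtrr$ with $\Theta$. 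Applying $\Psi$ to the Fukaya point-like bounding cochain produces a point-like bounding cochain in $\End(M_\triangle)^s$, and the uniqueness provided by both classification theorems then forces the equality of potentials after the substitution $T^{\tilde\beta}\mapsto T^{\varphi_\triangle(\tilde\beta)}$, which is exactly the claimed identity.

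The hard part will be producing the chain-level equivalence $\Psi$ with all the required compatibilities. Homological mirror symmetry for toric Fano manifolds is known at the level of derived categories~\cite{Kapustin_2003,orlov2004triangulated}, but here one needs a lift that preserves three additional structures simultaneously: the $\Z$-grading rather than a $\Z/2$-grading, the valuations coming from the toric moment map on both sides, and Shklyarov's $\infty$-trace $\Theta$ against integration over $\Xtrr$. Controlling the signs so that the combinatorial relative spin condition on $\triangle$ matches an honest relative spin structure on $\Xtrr\subset\Xtr$ appears delicate. As an intermediate step that avoids constructing $\Psi$ in full generality, one could verify Conjecture~\ref{conj:ms} for the family $\triangle = \triangle^n$ with $n$ odd, using Theorem~\ref{thm:cy}, Theorem~\ref{lm:cohomology}, and an explicit computation of $N^{M_{\triangle^n}}_{d,k+1}$ derived from the Dirac factorization, comparing against known formulas for $\oogw^{\rp^n}_{\tilde\beta,k}$; this generalizes the cases $n=1,3$ already established in the introduction.
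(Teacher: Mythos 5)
The statement you are trying to prove is stated in the paper as a conjecture (Conjecture~\ref{conj:ms}); the paper offers no proof of it. What the paper does prove is only the special case $\triangle = \triangle^1$ (Theorem~\ref{thm:gw_inv_numerical_invariants}), and it does so not by any mirror functor but by brute force on both sides: Proposition~\ref{prop:bounding cochain} and Corollary~\ref{cor:n_d_k} compute a point-like bounding cochain and its potential for the Dirac factorization explicitly via residues, while on the symplectic side the invariants of $\R P^1 \subset \C P^1$ are computed directly from the superpotential of the bounding cochain $b = \frac{s}{2\pi}d\theta$, and the two answers are compared after summing over the fibers of $\varphi_{\triangle^1}$ (note that $\beta_N$ and $\beta_S$ both map to $\tfrac12$, which is why the sum over $\varphi_\triangle^{-1}(\beta)$ is genuinely needed). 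For $\triangle^3$ only computer evidence in low degrees is given. So there is no paper proof for your proposal to be measured against, and your proposal cannot be accepted as a proof of the conjecture.

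As a strategy, your outline is reasonable and in fact coincides with what the authors themselves defer to future work ("We expect homological mirror symmetry to manifest itself as an equivalence of categories between $MF(W_\triangle)$ and an appropriate version of the Fukaya category of $X_\triangle$"), but the central step is entirely missing: the chain-level $A_\infty$-equivalence $\Psi$ that simultaneously preserves the $\Z$-grading, intertwines the area valuation with $\nu_{M_\triangle}$, and matches integration over $\Xtrr$ with Shklyarov's $\Theta$ is precisely the content of the conjecture, and nothing in the paper supplies it. Even granting $\Psi$, two further points need arguments you have not given: (i) Theorem~\ref{thm:bc_equivalence} classifies bounding cochains in a fixed normed unital DGA up to gauge equivalence via pseudoisotopies of that DGA; to conclude that $\Psi$ carries point-like bounding cochains to point-like ones and identifies potentials, you must show that pushforward of bounding cochains along an $A_\infty$-quasi-isomorphism respecting the normed $\infty$-trace preserves both the value $\gto(\expp(b))$ and the potential $c$ — this requires a compatibility statement (a relative version of Lemma~\ref{building_equiv_2}) that is not in the paper; (ii) the homomorphism $\varphi_\triangle$ must be shown to convert the Fukaya Novikov variable correctly, including the possible non-injectivity that forces the sum over $\varphi_\triangle^{-1}(\beta)$, and in the non-Fano or general Fano setting one must rule out mirror-map corrections, which the paper explicitly warns about. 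Your fallback of verifying the case $\triangle^n$, $n$ odd, by explicit computation is exactly what the paper does for $n=1$, but for general odd $n$ neither side is computed in closed form, so this does not yield the conjecture either.
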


\begin{tm}\label{thm:gw_inv_numerical_invariants}
Conjecture~\ref{conj:ms} holds for $\triangle = \triangle^1.$
\end{tm}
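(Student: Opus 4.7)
The plan is to compute both sides of the equality in Conjecture~\ref{conj:ms} explicitly and match them. I begin by writing $LG(\triangle^1)$ down concretely: the polytope $\triangle^1 = [0,1]$ gives $S_{\triangle^1} = R_{\triangle^1}[x, x^{-1}]$ and a Hori-Vafa type superpotential of the form $W_{\triangle^1} = x + q x^{-1}$, where $q = T^\ell$ for an appropriate generator $\ell$ of the underlying group $G_{\triangle^1}$. The Dirac factorization $M_{\triangle^1}$ is then a rank-$2$ graded module with an explicit anti-diagonal differential $D_{\triangle^1}$ whose off-diagonal entries multiply to $W_{\triangle^1} - w$, so that $\End(M_{\triangle^1})^s$ is a free $R_{\triangle^1}^s$-module on an explicit four-element basis. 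By Theorems~\ref{thm:norm},~\ref{thm:cy} and~\ref{lm:cohomology}, $M_{\triangle^1}$ is a spherical normed Calabi-Yau object, so Theorem~\ref{thm:bc_equivalence} guarantees existence and uniqueness up to gauge equivalence of a point-like bounding cochain.

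Next, I would construct the point-like bounding cochain $b \in \Mmm(\End(M_{\triangle^1})^s)$ by solving the Maurer-Cartan equation $\delta(b) - b^2 = c \cdot \id_{M_{\triangle^1}}$ order by order in the $(s, T)$-bifiltration, normalized so that $\gto(\expp(b)) = s$. Since $H^*(\End(M_{\triangle^1})_0) \cong H^*(S^1; R_0)$ is concentrated in degrees $0$ and $1$, the obstruction at each bidegree lies in a one-dimensional space; one adjusts $b$ in its degree $0$ component and $c$ in its degree $2$ component, with the $\gto$-normalization fixing the constant of integration at each step. The trace here is completely explicit: $\gto = \Theta$ is a sum of residues at the two critical points $x = \pm \sqrt{q}$ of $W_{\triangle^1}$, which renders the recursion amenable to closed-form computation of $c = \sum_{k, d} N^{M_{\triangle^1}}_{d, k+1} \frac{s^k}{k!} T^d$.

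For the geometric side, the open Gromov-Witten invariants $\oogw^{\R P^1}_{\tilde\beta, k}$ are computed in~\cite{Point_like_bc}. The relative homology $H_2(\C P^1, \R P^1; \Z) \cong \Z$ is generated by the class $\tilde\beta_0$ of a minimal disk bounding $\R P^1$, and $\varphi_{\triangle^1}$ is defined by sending $\tilde\beta_0$ to the generator of $G_{\triangle^1}$ corresponding to its symplectic area; preimages under $\varphi_{\triangle^1}$ account for the two orientations of real disks representing each algebraic class on the mirror side.

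The main obstacle will be verifying numerical agreement between the two explicit formulas for all $(d, k)$. A term-by-term induction is tractable but tedious; a cleaner approach is to observe that both sides satisfy the same characterizing properties, namely the Maurer-Cartan equation together with the $\gto$-normalization on the algebraic side, and the open WDVV-type recursion of~\cite{Point_like_bc} with the same initial data on the geometric side. Establishing this dictionary, and verifying that the identification $\varphi_{\triangle^1}$ intertwines the two structural equations, is the heart of the argument.
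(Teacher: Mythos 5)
There are two genuine gaps. First, your geometric input is wrong: $H_2(\C P^1,\R P^1;\Z)$ is not $\Z$ generated by a single minimal disk class; the long exact sequence of the pair gives $H_2(\C P^1,\R P^1;\Z)\cong\Z\oplus\Z$, generated by the two hemisphere classes $\beta_N,\beta_S$. This is not a cosmetic point: the identity in Conjecture~\ref{conj:ms} is a sum over $\varphi_{\triangle^1}^{-1}(\beta)$, and the value $N_{1/2,k}=2$ on the algebraic side is matched precisely because the two distinct classes $\beta_N,\beta_S$ both map to $\tfrac12$ and each contributes $\oogw^{\R P^1}_{\beta_N,k}=\oogw^{\R P^1}_{\beta_S,k}=1$ for $k$ even (and the signs $(-1)^k$ make them cancel for $k$ odd). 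Your substitute explanation that preimages ``account for the two orientations of real disks'' does not reproduce this bookkeeping, and with $H_2\cong\Z$ your $\varphi_{\triangle^1}$ could not produce the required factor of $2$ from a sum over classes.

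Second, neither side of the identity is actually computed in your plan. On the algebraic side you propose an order-by-order solution of the Maurer--Cartan equation normalized by $\Theta(\expp(b))=s$ and assert it is ``amenable to closed-form computation,'' but you never produce the potential $c$; the paper instead exhibits the closed-form bounding cochain $b=(e^{-s}-e^s)\frac{z_1}{2T^{1/2}}e_0-(e^{-s}+e^s-2)\frac{e_1}{2}$ with $c=(e^s-e^{-s})T^{1/2}$ and verifies both the Maurer--Cartan equation and $\Theta(\expp(b))=s$ directly, the nontrivial steps being the residue computations of Lemmas~\ref{lm:res1},~\ref{lm:res2} and the vanishing $\Theta(y_b)=0$ of Lemma~\ref{lm:exp_y_b_compute} (the correction term $y_b$ is absent from your outline altogether). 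On the geometric side the paper likewise computes $\Omega(b)$ for the top-degree form $b=\frac{s}{2\pi}d\theta$ explicitly, using $\mathfrak{m}_k(b^{\otimes k})=(T^{\beta_N}+(-1)^kT^{\beta_S})\frac{s^k}{k!}$, rather than quoting a recursion. Your proposed ``cleaner approach'' --- that both sides satisfy the same characterizing structural equations (Maurer--Cartan plus normalization versus an open WDVV-type recursion) and that $\varphi_{\triangle^1}$ intertwines them --- is exactly the step you would need to prove, and no such uniqueness/characterization statement is available in the paper or supplied by you; as written, the heart of the argument is missing.
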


Computer calculations, explained in greater detail in Section~\ref{sssec:ccalc}, provide evidence for Conjecture~\ref{conj:ms} for $\triangle = \triangle^3.$ When $\Xtr$ is not Fano, we expect a similar conjecture to hold but with corrections coming from the mirror map. We expect homological  mirror symmetry to manifest itself as an equivalence of categories between $MF(W_\triangle)$ and an appropriate version of the Fukaya category of $X_\triangle.$ We plan to address homological mirror symmetry in this context in future work.

\subsection{The example of projective space}
\subsubsection{Matrix factorizations}\label{sectin:lg_model_introduction}
We describe equivalent valued Landau-Ginzburg model and matrix factorization category to those associated to $\triangle^n,$ the Delzant polytope of $\cp^n$, when $n$ is odd.
This equivalence is shown in Section~\ref{section:changing_coordinates}.

Define
\[
\simp= \{(\zeta_0,\ldots, \zeta_n)\in \R^{n+1}| \ \zeta_0+\ldots+\zeta_n=1, \  0\le \zeta_i \ \forall i \}.
\]
We identify $\simp$ with $\triangle^n$ by the map $(\zeta_0,\ldots,\zeta_n) \mapsto (\zeta_1,\ldots,\zeta_n).$
Let $\interior{\simp}$ denote the interior of $\simp.$ Define
\[
\Rsf:=\bigg\{\sum_{i=0}^\infty a_iT^{\beta_i} | \ a_i\in\R, \ \beta_i\in \frac{1}{n+1}\Z, \ \lim_i\beta_i=\infty \bigg\}.
\]
A grading on $\Rsf$ is defined by $|T^\beta|=2(n+1)\beta.$ 
Let $\nu_{\Rsf}$ be a valuation on $\Rsf$ given by
\begin{equation}\label{eq:val_r_cpn}
\nu_{\Rsf}(\sum_{i=0}^\infty a_iT^{\beta_i})=\inf_{a_i\ne0}\{\beta_i\}.
\end{equation}
Let $\Rsf\langle z_0,\ldots,z_n\rangle$ denote the Tate algebra generated by $z_0,\ldots, z_n.$
Define 
\[
 \Ssf:=\Rsf\langle z_0,\ldots,z_n\rangle/\langle z_0\cdots z_n- T\rangle. 
\]
A grading on $\Ssf$ is defined by declaring $|z_i|=2.$ Equip $\Ssf$ with a family of valuations parameterized by $\interior{\simp}$ as follows.
For $\zeta=(\zeta_0,\ldots , \zeta_n)\in \interior{\simp},$ define
\begin{equation}\label{eq:val_s_cpn}
    \nu_\zeta^{\Ssf}(z_i)=\zeta_i, \quad 0\le i\le n
\end{equation}
and require the Schauder basis $\{\prod_{i=0}^n z_i^{d_i}\}_{(d_0,\ldots,d_n)\in\Z_{\ge0}^{n+1}}$ to be $\nu_\zeta^{\Ssf}$-orthogonal.
Note that $(\Ssf,\nu_\zeta^{\Ssf})$ is a valued algebra over $\Rsf$ for all $\zeta\in\interior{\simp}.$
Let $\Wsf=(-1)^{\frac{n(n+1)}{2}}z_0+\ldots+ z_n\in \Ssf$ be the superpotential and let $w\in\Rsf$ such that $\nu_{\Rsf}(w)\ge \sup_{\zeta\in\interior{\simp}}\nu^{\Ssf}_\zeta(\Wsf).$ Consider the normed matrix factorization category $\MF(\Wsf,w)$ associated to the valued Landau-Ginzburg model $\big((\Rsf,\nu_{\Rsf}),(\Ssf,\{\nu^{\Ssf}_\zeta\}_{\zeta\in \interior{\simp}}) , \Wsf \big).$
We equip $\MF(\Wsf,w)$ 
with the $\infty$-trace $\Theta$ defined in Theorem~\ref{Theta}.
Theorem~\ref{thm:norm} asserts that $\Theta$ is normed.

Consider the case where $w=0.$
Let $e_0,\ldots, e_n$ denote a basis of $\Ssf^{\oplus n+1},$
and let $T(\Ssf^{\oplus n+1})$ denote the tensor algebra.
Let $I$ be the ideal generated by the sets $\{e_0^2-(-1)^{\frac{n(n+1)}{2}}z_0\},$ $\{e_i^2-z_i| \ 1\le i\le n\},$ $\{e_ie_j+e_je_i| \ 0\le i<j\le n\}.$
Let $\Cl$ be the $\Ssf$-algebra defined by  
$\Cl:=T(\Ssf^{\oplus n+1})/I.$ Since $z_i$ are elements of degree two, it follows that $e_i$ are elements of degree one.
Equip $\Cl$ with the family of valuations $\{\nu_\zeta^{\Cl}\}_{\zeta\in \interior{\simp}}$ defined by
\[
\nu_\zeta^{\Cl}(e_i)=\frac{1}{2}\zeta_i
\]
and by requiring the basis $\{e_{i_0}\cdots e_{i_k}\}_{i_0<\ldots<i_k}$ to be $\nu_\zeta^{\Cl}$-orthogonal.
Consider the left ideal $\langle e_0\cdots e_n- T^{1/2}\rangle$ in $\Cl.$ 
Define the left $\Cl$-module $\Msf:=\Cl/\langle e_0\cdots e_n- T^{1/2}\rangle.$ In particular, $\Msf$ is a free $\Ssf$-module.
The family of valuations $\{\nu_\zeta^{\Cl}\}_{\zeta\in \interior{\simp}}$ descends to a family of valuations $\{\nu^{\Msf}_\zeta\}_{\zeta\in \interior{\simp}}$ on $\Msf.$
    \begin{rmk}\label{rem:cl_endomorphism}
Every element $m\in \Cl$ induces an endomorphism $m: \Cl\to \Cl$ given by $x\mapsto m\cdot x.$ So, we consider $\Cl$ as a subalgebra of $\End(\Msf).$
\end{rmk}
Let $\Dsf\in \End(\Msf)$ be given by left multiplication by $e_0+\ldots +e_n$ and note that $\Dsf^2=\Wsf\cdot \id_{\Msf}.$
Equip $\End(\Msf)$ with the norm $\nu_{\Msf}$ defined by
\[
\nu_{\Msf}(\Phi)=\inf_{\zeta\in\interior{\simp}}\inf_{m\in \Msf\backslash\{0\}}\{\nu_\zeta^{\Msf}(\Phi(m))-\nu_\zeta^{\Msf}(m)\}, \quad \Phi\in\End(\Msf).
\]
Write 
\[ \Rms=\R[[s]]\otimes\Rsf, \quad\Ems=\R[[s]]\otimes\End(\Msf).
\]
By Theorems~\ref{thm:cy},~\ref{lm:cohomology} the objects $\Msf$ are normed Calabi-Yau spherical objects.
Hence, Theorem~\ref{thm:bc_equivalence} gives a full classification of bounding cochains in $\Ems$ for all $n>1$ odd. 
We compute the numerical invariants of $\Msf$ defined in Definition~\ref{def:numerical_invariants} where $n=1$.

\begin{pr}\label{prop:bounding cochain}
   Let $n=1.$ Let
    \[
    b=(e^{-s}-e^s)\frac{z_1}{2T^{1/2}}\cdot e_0- (e^{-s}+e^s-2)\cdot\frac{e_1}{2},\qquad c=(e^s-e^{-s})T^{1/2}.
    \]
Then, $b\in \Mmm(\Ems,c)$ such that $\Theta(\expp(b))=s.$
\end{pr}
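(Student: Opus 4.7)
\emph{Proof plan.}

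There are two things to verify: that $b \in \Mmm(\Ems, c)$, and that $\Theta(\expp(b)) = s$. The first splits into the Maurer--Cartan identity and positivity of $\nu_{\Ms}(b)$, both of which are direct computations; the substance of the proposition lies in the trace evaluation. For the Maurer--Cartan equation $\delta(b) - b^2 = c \cdot \id_\Msf$, write $b = \alpha z_1 e_0 + \gamma e_1$ with $\alpha = (e^{-s}-e^s)/(2T^{1/2})$ and $\gamma = -(e^{-s}+e^s-2)/2$. Since $|b| = 1$, $\delta(b) = \{\Dsf, b\}$, and using $\Dsf = e_0 + e_1$, the Clifford relations $e_0^2 = -z_0$, $e_1^2 = z_1$, $\{e_0,e_1\} = 0$, and $z_0 z_1 = T$, a short calculation gives
\[
\delta(b) = -2\alpha T + 2\gamma z_1, \qquad b^2 = z_1(\gamma^2 - \alpha^2 T).
\]
The hyperbolic identity $(e^{-s}+e^s-2)^2 - (e^{-s}-e^s)^2 = 4(e^{-s}-1)(e^s-1) = 4(2 - e^s - e^{-s})$ then forces exact cancellation of the $z_1$--terms, leaving $(e^s-e^{-s})T^{1/2}\cdot \id_\Msf = c\cdot\id_\Msf$.

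For the positivity of $\nu_\Ms(b)$, I would use the free $\Ssf$--basis $\{m_0 = 1,\; m_1 = e_0\}$ of $\Msf$, in which the module relation $e_0 e_1 \equiv T^{1/2}$ forces $e_1 = -z_1 T^{-1/2}\, m_1$. Direct computation then gives $b \cdot m_0 = z_1 T^{-1/2}(e^{-s}-1)\, m_1$ and $b \cdot m_1 = T^{1/2}(e^s - 1)\, m_0$. Combining $\nu_s(e^{\pm s}-1) \geq 1$ with the affine data $\nu^\Msf_\zeta(m_0) = 0$ and $\nu^\Msf_\zeta(m_1) = \zeta_0/2$, the valuation of each action is bounded below by $1/2$ uniformly in $\zeta \in \interior{\simp}$, hence $\nu_\Ms(b) \geq 1/2 > 0$.

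The main obstacle is showing $\Theta(\expp(b)) = s$. My plan is to first exploit the MC equation to show $\dcc(\exp(b)) \in \mR^\lambda_*$: each application of $\dcc$ produces a sum of insertions of $\delta(b) - b^2 = c \cdot \id_\Msf$ into tensor powers of $b$, and these insertions are by definition in $\mR^\lambda_*$ since $c \cdot \id_\Msf$ is the image of $c \in \Rs$ under the unit embedding. Next I would fix $y_b$ via the canonical construction of Section~\ref{section:constructing_y_b} (Corollary~\ref{cor:y_b}), which gives an explicit $\expp(b) = \exp(b) - y_b$ whose combinatorics mirror those of $\dcc(\exp(b))$. By Lemma~\ref{lm:y_b_doesnt_dep} the value $\Theta(\expp(b))$ is independent of this choice, so any explicit $y_b$ will do.

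The computation of $\Theta(\expp(b))$ then proceeds term by term via the length--stratified formula for $\Theta$ from Theorem~\ref{Theta}. The length--one contribution $\Theta_1(b)$ is the Kapustin--Li residue evaluated at the two critical points $z_1 = \pm i T^{1/2}$ of $\Wsf = -T/z_1 + z_1$, using the $2\times 2$ matrix representation of $b$ and $\Dsf$ in the basis $\{m_0,m_1\}$. The higher--length pieces $\Theta_l(b^{\otimes l})/l$ for $l \geq 2$, together with the subtractions coming from $y_b$ and the vanishing of $\Theta$ on inserted units guaranteed by cohomological unitality (Lemma~\ref{lm:vanish_theta}) and its gapped behavior (Theorem~\ref{thm:norm}), produce a doubly indexed series in $s$ and $T^{1/2}$. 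The hard part will be organizing these contributions and verifying the resulting cancellations collapse to exactly $s$; this is essentially a functional--equation identity between the hyperbolic series defining $b$ and the residue data of $(\Msf,\Dsf)$, and is where the bulk of the analytic work lies.
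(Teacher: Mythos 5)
Your verification of the Maurer--Cartan equation and of $\nu_{\Mms}(b)>0$ is correct and essentially identical to the paper's argument (the paper computes $\delta(b)-b^2$ directly in $\Cl$ and records the same cancellation of the $z_1$-terms; your bookkeeping in the basis $\{1,e_0\}$ versus the paper's $\{1,e_1\}$ is immaterial). The problem is the second half: the identity $\Theta(\expp(b))=s$, which is the actual content of the proposition, is not proved --- your proposal explicitly defers ``organizing these contributions and verifying the resulting cancellations collapse to exactly $s$'' to future analytic work. The paper's proof consists precisely of carrying out that computation, and it has two concrete ingredients that your plan is missing. First, one needs the closed form $\Theta_l(b^{\otimes l})=(1-e^{-s})^l$ for every $l\ge1$: in the $2\times2$ representation, $(b\,\partial_1\Dsf)^l$ is diagonal with entries $(1-e^s)^lT^{l/2}z_1^{-l}$ and $(1-e^{-s})^l$, and the two residue identities $\sum_{x\in C_{\Wsf}}\mathrm{Res}_x\,z^m/(z^2+T)^l=0$ for $-1<m<2l-1$ (Lemma~\ref{lm:res1}) and $\sum_{x}\mathrm{Res}_x\,z^{2l-1}/(z^2+T)^l=1$ (Lemma~\ref{lm:res2}) kill the first entry and normalize the second; the series then collapses via $\sum_{l\ge1}(1-e^{-s})^l/l=-\ln(e^{-s})=s$. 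Without this closed form and the logarithm identity there is no reason the answer is exactly $s$.

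Second, your proposed mechanism for disposing of the $y_b$-correction is wrong as stated. Cohomological unitality (Lemma~\ref{lm:vanish_theta}) only gives vanishing of $\Theta$ on $\CC_*^\lambda(\Rms)$, i.e.\ on pure tensors of units; the terms of the canonical $y_b$ from Corollary~\ref{cor:y_b} have the form $\bigotimes_k\big((c\cdot\id\otimes\id)^{\otimes j_k}\otimes b^{\otimes i_k}\big)$ and contain $b$-factors, so they are not in the image of $\CC_*^\lambda(\Rms)$ and unitality says nothing about them. In the paper this is Lemma~\ref{lm:exp_y_b_compute}: one uses $\Rms$-multilinearity to strip off the $c$'s, observes that $(\partial_1\Dsf)^{2j}(b\,\partial_1\Dsf)^{a}$ is again diagonal, and checks that every resulting residue is of the form covered by Lemma~\ref{lm:res1}, hence $\Theta(y_b)=0$. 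So the gap is not merely that the computation is long; the one shortcut you propose for it does not apply, and the residue lemmas that actually make the cancellations happen are absent from your plan.
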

\begin{crl}\label{cor:n_d_k}
Let $n=1.$ Then,
    \[N_{d,k}=
         \left\{\begin{array}{ll}
        2, &  d=\frac{1}{2},\  k\in 2\Z_{\ge0} ,\\
        0, & \mathrm{otherwise}.\\
        \end{array} \right.\]
\end{crl}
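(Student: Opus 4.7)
The plan is to derive Corollary \ref{cor:n_d_k} as an immediate consequence of Proposition \ref{prop:bounding cochain} together with Definition \ref{def:numerical_invariants}, using only the fact that the potential associated to a point-like bounding cochain is uniquely determined.

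First, I would invoke Proposition \ref{prop:bounding cochain} to obtain the explicit element $b \in \Mmm(\Ems,c)$ with $c = (e^s-e^{-s})T^{1/2}$ and $\Theta(\expp(b)) = s$. The condition $\Theta(\expp(b)) = s$ is exactly the defining property of a point-like bounding cochain in Section~\ref{section:numerical_invariants}, so $b$ is a point-like bounding cochain and $c$ is its associated potential. By Lemma~\ref{building_equiv_2}, the potential depends only on the gauge equivalence class of $b$, and by Theorem~\ref{thm:bc_equivalence} this class is canonical, so the numerical invariants $N_{d,k}^{\Msf}$ are well-defined and can be read off from $c$.

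Next, I would expand $c$ in the form prescribed by Definition~\ref{def:numerical_invariants}, namely $c = \sum_{k\ge 0}\sum_{d\in G_{\triangle^1}} N_{d,k+1}\frac{s^k}{k!}T^d$. Only a single power of $T$ appears in $c$, namely $T^{1/2}$, so $N_{d,k+1} = 0$ whenever $d \neq 1/2$. Writing the Taylor expansion
\[
e^s - e^{-s} = 2\sinh(s) = \sum_{k\ge 0}\frac{\bigl(1-(-1)^k\bigr)s^k}{k!} = \sum_{j\ge 0}\frac{2\,s^{2j+1}}{(2j+1)!},
\]
and comparing coefficients of $\frac{s^k}{k!}$, I conclude $N_{1/2,k+1} = 2$ when $k$ is odd and $N_{1/2,k+1} = 0$ when $k$ is even. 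Re-indexing $k \mapsto k-1$ yields the formula stated in the corollary.

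There is essentially no obstacle in this argument; it is a direct series expansion once Proposition~\ref{prop:bounding cochain} is in hand. The real content, which is the verification that the explicit $b$ satisfies both the Maurer-Cartan equation with the stated potential and the normalization $\Theta(\expp(b)) = s$, is carried out in the proof of Proposition~\ref{prop:bounding cochain}; the corollary merely re-packages that computation in the language of Definition~\ref{def:numerical_invariants}.
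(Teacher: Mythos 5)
Your proposal is correct and follows essentially the same route as the paper: quote Proposition~\ref{prop:bounding cochain} to get a point-like bounding cochain with potential $c=(e^s-e^{-s})T^{1/2}$, then read off the $N_{d,k}$ from Definition~\ref{def:numerical_invariants}; you merely make explicit the $2\sinh(s)$ expansion that the paper leaves implicit. Note only that the paper's proof text writes $c=(e^{-s}-e^s)T^{1/2}$, a sign slip relative to the proposition; your sign convention is the one consistent with the stated value $N_{1/2,k}=2$.
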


\subsubsection{Construction of bounding cochains}\label{section:construction_of_bc}
The norm $\nu_{\Mms}=\nu_s\otimes\nu_{\Msf}$ on $\EEms$ induces a filtration $F^E$ on $\EEms$ defined by
\[
F^E\EEms=\{\Phi\in \EEms|\nu_{\Mms}(\Phi)>E \}.
\]
In order to compute the invariants $N_{d,k}$ of $\Msf,$ where $n$ is odd, it suffices to compute a bounding cochain $b$ modulo $F^{d+k-1}\EEms$ that satisfies $\Theta(\expp(b))\equiv s \pmod{F^{d+k-1}\EEms}.$ 

We describe briefly the construction of a bounding cochain modulo $F^7\EEms$ following the inductive process described in Proposition~\ref{theorem:existence_bc}.
We begin with a bounding cochain $b_{(1)}:=-\frac{sz_1\cdots z_n}{T^{1/2}}\cdot e_0$ modulo $F^1\EEms.$ 
It satisfies 
\[
\delta(b_{(1)})-b_{(1)}^2\equiv 0 \pmod{F^1\EEms}, \]
and by Lemma~\ref{lm:theta_of_h_0} we get
\[
\Theta(\expp(b_{(1)}))\equiv\Theta_1(b_{(1)})= s  \pmod{F^1\EEms}.
\]
The idea of the inductive step of this construction goes as follows. Assume we build $b_{(l)}\in\End^1(\Msf)$  satisfying 
\[
\delta(b_{(l)})-b_{(l)}^2\equiv c_{(l)}\cdot\id_{\Msf} \pmod{F^{E_l}\EEms}, \]
\[
\Theta(\expp(b_{(l)}))\equiv s \pmod{F^{E_l}\EEms},
\] 
where $c_{(l)}\in\Rms.$
Consider the Maurer-Cartan equation modulo $
F^{E_l+1}\EEms:$ 
\[
\delta(b_{(l)})-b_{(l)}^2\equiv c_{(l)}\cdot\id_{\Msf}+ o_l \pmod{F^{E_l+1}\EEms},
\]
where $o_l\in\End^1(\Msf)^s.$ If $o_l\in \Rms\cdot \id_{\Msf},$ then $b_{(l)}$ is also a bounding cochain modulo $F^{E_l+1}\EEms.$ Otherwise, since $\Msf$ is a spherical object, we will obtain a correction term $b_l\in\EEms$ such that $b_{(l)}+b_l$ is a bounding cochain modulo $F^{E_l+1}\EEms.$ In addition, the properties of $\Theta$ given in Definition~\ref{def:cy_str} together with $\Msf$ being a normed Calabi-Yau object will guarantee there exists a closed element $\alpha_l\in\EEms$ such that 
\[
\Theta_1(\alpha_l)\equiv\Theta(\exp(b_{(6)})+\Theta_1(b_l)-\Theta(y_{b_{(l)}})- s\pmod{F^{E_l}\EEms}, 
\]
and $b_{(l+1)}:=b_{(l)}+b_{l+1}-\alpha_l$ satisfies  
\[
\delta(b_{(l+1)})-b_{(l+1)}^2\equiv \delta(b_{(l)}+b_l)-(b_{(l)}+b_l)^2 \pmod{F^{E_l+1}\EEms},
\]
\[
\Theta(\expp(b_{(l+1)}))\equiv s  \pmod{F^{E_l+1}\EEms}.
\]
So, $b_{(l+1)}$ is a bounding cochain modulo $F^{E_l+1}\EEms.$
\subsubsection{Computer calculation of \texorpdfstring{$N_{d,k}$}{the numerical invariants}} \label{sssec:ccalc}
Consider the object $\Msf$ where $n=3.$ Computing the first steps in the inductive process described in Section~\ref{section:construction_of_bc}, we obtain
\[
b_1=\frac{s^2z_2z_3}{2}\cdot e_1, \quad b_2=0,\quad  b_3=\frac{s^4z_1z_2z_3^2}{8}\cdot e_2,\quad  b_l=0,\quad l=4,5,6
\]
and 
\[
\alpha_l=0,\quad l=1,2,3,4, \quad \alpha_5=\frac{s^5T^{1/2}z_1z_2z_3}{120}\cdot e_0, \quad \alpha_6=0. 
\]
Hence,
\[
b_{(7)}=-\frac{sz_1z_2z_3}{T^{1/2}}\cdot e_0+\frac{s^2z_2z_3}{2}\cdot e_1+\frac{s^4z_1z_2z_3^2}{8}\cdot e_2+\frac{s^5T^{1/2}z_1z_2z_3}{120}\cdot e_0,
\]
\[
c_{(7)}=-2sT^{1/2}+\frac{s^5T^{3/2}}{60}.
\]
Therefore, 
    \begin{equation}\label{eq:n_d_k_3}
    N_{\frac{1}{2},2}=-2,\quad  N_{\frac{3}{2},6}=2, \quad N_{d,k}=0,\quad d+k\le7.
      \end{equation}
      
There are two main difficulties in the computation of these numerical invariants $N_{d,k}.$ 
The first difficulty is to construct $y_{b_{(l)}}.$
Section~\ref{section:defining_the_homotopy} and Section~\ref{section:constructing_y_b} are devoted to the construction of $y_{b_{(l)}},$ and the explicit formula of $y_{b_{(l)}}$ is given in equation~\eqref{eq:y_b_l}.
Roughly speaking, the element $y_{b_{(l)}}$ modulo $F^{E_l}\EEms$ can be written as a linear combination of elements of the form \[
-\bigotimes_{k=1}^m(c_{(l)}\cdot\id_{\Msf}\otimes\id_{\Msf})^{\otimes j_k}\otimes b_{(l)}^{\otimes i_k}.
\]
For example,
\begin{align*}
y_{b_{(5)}}&\equiv -(c_{(5)}\cdot\id_{\Msf}\otimes\id_{\Msf})\otimes b_{(5)}- (c_{(5)}\cdot\id_{\Msf}\otimes\id_{\Msf})\otimes b_{(5)}^{\otimes 2}-(c_{(5)}\cdot\id_{\Msf}\otimes\id_{\Msf})\otimes b_{(5)}^{\otimes 3}\\
&-2(c_{(5)}\cdot\id_{\Msf}\otimes\id_{\Msf})^{\otimes2} \otimes b_{(5)}^{\otimes2}\\
&-\frac{1}{2}(c_{(5)}\cdot\id_{\Msf}\otimes\id_{\Msf})\otimes b_{(5)}\otimes (c_{(5)}\cdot\id_{\Msf}\otimes\id_{\Msf})\otimes b_{(5)} \pmod{F^5\EEms}.
\end{align*}

The second difficult part  is to find the elements $\alpha_l.$ The elements $\alpha_l$ are determined by the values of $\Theta(\exp(b_{(l)})),$ $\Theta(y_{b_{(l)}})$ and $\Theta_1(b_l)$ modulo $F^{E_l+1}\EEms.$ 
In order to compute these values, we need to use the formula of $\Theta$ given in 
Theorem~\ref{Theta}.
In order to use this formula,
we represent the elements of $\EEms$ and the operator $\Dsf$ by $8\times 8$ matrices with coefficients in $\Sms.$ 
For $\Phi_1,\ldots,\Phi_m\in\EEms,$ the value of $\Theta_l(\Phi_m\otimes\ldots\otimes\Phi_1)$ is based on calculation of residues of complex functions. These functions arise as supertraces of products of the matrices that represent $\Phi_1,\ldots, \Phi_m$ and the derivatives with respect to $z_1,\ldots,z_n$ of the matrix that represents $\Dsf$.
The number of the residues we need to calculate in the formula for $\Theta_m$ grows exponentially with $m.$
For $m=1$ there are six residues we need to calculate, for $m=4$ there are $3,900$ residues, and for $m=7$ there are $345,912$ residues we need to calculate.

Until the fifth step of the inductive process, there are no contributions coming from $\Theta,$ that is $\alpha_l=0$ for $l=1,\ldots,4.$ Consider the bounding cochain modulo $F^6\EEms$
\[
b_{(5)}=-\frac{sz_1z_2z_3}{T^{1/2}}\cdot e_0+\frac{s^2z_2z_3}{2}\cdot e_1+\frac{s^4z_1z_2z_3^2}{8}\cdot e_2.
\]
There no correction terms coming from the Maurer-Cartan equation, that is $b_5=0.$ However, in the sixth step we get non-trivial contributions from $\Theta(\exp(b_{(5)}))$ and $\Theta(y_{b_{(5)}}).$ Based on computer computations, the non-trivial contributions from $\Theta(\exp(b_{(5)}))$ are given modulo $F^6\EEms$ as follows:
\begin{gather*}
\Theta_1({b_{(5)}}) \equiv s, \qquad \qquad \Theta_2(\frac{b_{(5)}^{\otimes 2}}{2}) \equiv \frac{s^5T}{8}, \qquad\qquad
    \Theta_3(\frac{b_{(5)}^{\otimes 3}}{3}) \equiv \frac{s^5T}{24},  \\
    \Theta_4(\frac{b_{(5)}^{\otimes 4}}{4}) \equiv -\frac{3s^5T}{10}, \qquad\qquad
    \Theta_5(\frac{b_{(5)}^{\otimes 5}}{5}) \equiv \frac{s^5T}{6}.
\end{gather*}
and the non-trivial contributions of $\Theta(y_{b_{(5)}})$ modulo $F^6\EEms$ are:
\begin{gather*}
\Theta_3\big(-(c_{(5)}\cdot\id_{\Msf}\otimes \id_{\Msf}\otimes b_{(5)})\big) \equiv \frac{s^5T}{8},  \quad \Theta_4\big(-(c_{(5)}\cdot\id_{\Msf}\otimes \id_{\Msf}\otimes b_{(5)}^{\otimes3})\big) \equiv -\frac{11s^5T}{30}, \\
\quad \Theta_6\big(-(c_{(5)}\cdot\id_{\Msf}\otimes \id_{\Msf}\otimes b_{(5)}^{\otimes4})\big)\equiv\frac{4s^5T}{15}.
\end{gather*}
Hence, $\alpha_5$ is a closed element satisfies
\[
\Theta_1(\alpha_l)\equiv\Theta(\exp(b_{(5)})-\Theta(y_{b_{(l)}})-s\equiv\frac{s^5T}{120}. 
\]

\subsection{Outline}
In Section~\ref{section:background} we recall basic definitions such as valued rings and algebras, pseudoisotopy and gauge equivalence.

In Section~\ref{Section_MF} we construct valuations on the Hom-complexes and norms on the End-complexes in normed matrix factorization categories. Then, we construct the normed matrix factorization category associated to a toric manifold, and define the Dirac matrix factorization. 

Section~\ref{Section:hochschild} aims to show the existence of the canonical choice
of the element $y_b.$  

The goal of Section~\ref{section_bc_mf} is to prove Theorem~\ref{thm:bc_equivalence}. 
The proof of this theorem is divided into three parts. First, we prove Proposition~\ref{theorem:existence_bc} which shows that $\varrho$ is surjective. Then, we construct a pseudoisotopy between a normed Calabi-Yau spherical object and itself. Finally, we use this pseudoisotopy in order to prove Proposition~\ref{building_equiv_1} which asserts that $\varrho$ is injective. 

In Section~\ref{Section:example} we discuss the example of the normed matrix factorization category $\MF(\Wsf,w).$  In Sections~\ref{section:4},~\ref{section:spherical_object} we describe the construction of $\MF(\Wsf,w)$ and prove that $\Msf$ is a spherical objects for all $n$ odd.
Then, in Sections~\ref{section:tight_mf_cpn},~\ref{section:normed_cy_object} we show that $\Theta$ is a normed $\infty$-trace on $\MF(\Wsf),$ and that $\Msf$ is normed Calabi-Yau.
In Section~\ref{section:example} we prove Proposition~\ref{theorem:existence_bc} which yields the numerical invariants of $\Msf$ for $n=1.$

\subsection{Acknowledgments} 
The authors would like to thank P. Giterman, who played an important role in helping them understand the significance of Calabi-Yau structures in open Gromov-Witten theory. The authors would like to thank M. Abouzaid, D. Kazhdan, M.~Temkin and D.~Tonkonog for helpful conversations.
The authors were partially supported by ISF grant 1127/22.

\subsection{Notations}
For a graded ring, module or an algebra $\Upsilon$ and a homogeneous element $\alpha\in\Upsilon,$ we denote by $|\alpha|$ the degree of $\alpha$ in $\Upsilon$ and by $(\Upsilon)^i$ the degree $i$ part of $\Upsilon.$ 
let $\Upsilon'$ be a $\R$-vector space and let $\Upsilon$ be a ring or a module. For $x\in \Upsilon'\otimes \Upsilon$ and a monomial $\lambda\in \Upsilon,$ denote by $[\lambda](x)\in \Upsilon'$ the coefficient of $\lambda$ in $x.$ 

For a graded vector space $V,$ let $V[p]$ denote the graded vector space $V$ where its grading is shifted by $p,$ that is $(V[p])^i= (V)^{p+i}.$ 
For an homogeneous element $v\in V,$ let $sv$ denote the element $v\in V$ with degree shifted by $-1.$ So, $|sv|=|v|-1.$ 

Throughout this paper, let $\K$ denote a field of characteristic zero. All modules are taken to be left modules.

\section{Background}\label{section:background}
\subsection{Valuations}\label{Section:valuation}
We give a few definitions concerning valuations and norms on algebraic objects. Some of the definitions are slightly different from those given in the literature. 
\begin{dfn}\label{dfn:valuation_def}
 Let $R$ be a ring. A map $\nu: R\to \R\cup\{\infty\}$ is called a \textbf{valuation} on $R$ if
  \begin{enumerate}[label=(\arabic*)]
    \item $\nu(r)=\infty$ if and only if $r=0,$
      \item $\nu(r_1+r_2)\ge \min\{\nu(r_1),\nu(r_2)\},$
      \item  $\nu(r_1r_2)= \nu(r_1)+\nu(r_2).$
  \end{enumerate}
We call the pair $(R,\nu)$ a \textbf{valued ring}.  
\end{dfn}
If we replace the equality in the third condition with the inequality $\nu(r_1r_2)\ge \nu(r_1)+\nu(r_2),$ then the map $\nu$ is called a \textbf{norm} and the pair $(R, \nu)$ is called a \textbf{normed ring}.

We will also speak about valued fields. These are fields which as rings are valued rings. 

\begin{dfn}\label{3}
    Let $(R,\nu_R)$ be a valued ring, and let $M$ be an $R$-module. We say that $M$ is a \textbf{valued module over} $R,$ if it admits a map $\nu_M:M\to \R \cup \{\infty\}$ satisfying
    \begin{enumerate}[label=(\arabic*)]
        \item $\nu_M(m)=\infty$ if and only if $m=0,$
        \item $\nu_M(m_1+m_2)\ge \min\{\nu_M(m_1),\nu_M(m_2)\},$
        \item $\nu_M(r\cdot m)= \nu_R(r)+\nu_M(m).$
    \end{enumerate}
\end{dfn}
\begin{dfn}
    Let $(R, \nu_R)$ be a valued ring, and let $A$ be an $R$-algebra. We say that $A$ is a \textbf{valued (resp. normed) algebra over} $R,$ if it admits a map $\nu_A: A\to \R\cup \{\infty\}$ such that
\begin{enumerate}[label=(\arabic*)]
        \item $(A,\nu_A)$ is a valued module over $R,$  \item $(A,\nu_A)$ is a valued (resp. normed) ring.
    \end{enumerate}

\end{dfn}
\begin{rem}
Let $(A,1_A, \nu_A)$ be a valued unital algebra over a valued field $(\K,\nu_\K).$ Thus, it follows from the third condition in Definition~\ref{3} that  $\nu_A|_\K=\nu_\K.$        
\end{rem}

\subsection{Gauge equivalence}\label{Subsection_algebraic_frame}
We
recall definitions and properties concerning gauge equivalence. We follow~\cite{pavel}.  
\begin{dfn} 
Let $(R,d_R)$ be a commutative DGA over $\K.$
Let $(A_0,d_{A_0})$ and $(A_1,d_{A_1})$ be DGAs over 
$R.$ A morphism $\varphi:A_0\to A_1$ between DGAs is a graded algebra morphism which satisfies $d_{A_1}\circ\varphi=\varphi\circ d_{A_0}.$
\end{dfn}
\begin{rem}
Let $A_0$ and $A_1$ be unital DGAs, and let $\varphi:A_0\to A_1$ be a morphism of DGAs. If $\varphi(1_{A_0})=1_{A_1}$, then $\varphi$ is called unital.     
\end{rem}

\begin{dfn}
    Let $R_0$ and $R_1$ be two graded commutative $\K$-algebras and let $\varphi:R_0\to R_1$ be a $\K$-algebra morphism. Given an $R_0$-module $C$ and an $R_1$-module $D,$ a $\K$-linear map $f:C\to D$ which satisfies 
    \[
    f(r\cdot c)= \varphi(r)\cdot f(c)
    \]
    for all $r\in R_0$ and $c\in C$ will be called a map over $\varphi.$
\end{dfn}

\begin{dfn}\label{dfn:pseudoisotopy}
Let $(R,d_R,\nu_R)$ be a commutative valued DGA over $\K.$ Let $(A_0,d_{A_0}, \nu_{A_0})$ and $(A_1,d_{A_1},\nu_{A_1})$ be two normed DGAs over $R$ equipped with cohomologically unital $\infty$-traces $\tau^{A_0}$ and $\tau^{A_1}$ respectively. A \textbf{pseudoisotopy} between $A_0$ and $A_1$ is given by the following data:
\begin{enumerate}[label=(\arabic*)]
    \item a valued DGA $(\fR, \fpar,\nu_{\fR})$ over $R$ together with two homotopic DGA morphisms $\mathrm{eval}^i:\fR\to R$ where $i=0,1,$
    \item a normed DGA $(\fU,\bcm,\nu_{\fU})$ over $\fR,$
    \item DGA morphisms $\eval^i:\fU\to A_i$ over $\mathrm{eval}^i$ where $i=0,1,$ 
    \item a cohomologically unital operator $\thet:\CC_*^\lambda(\fU)\to \fR$ satisfying $\fpar\circ\thet=\thet\circ \dcc,$  such that the following diagrams commute.
\[\begin{tikzcd}[ampersand replacement=\&]
	\CC_*^\lambda(\fU)  \&  \fR[m]  \\
	\CC_*^\lambda(A) \& R[m]
	\arrow["\thet", from=1-1, to=1-2]
	\arrow["\mathrm{eval}^i",  , from=1-2, to=2-2]
	\arrow["\mathfrak{eval}^i", from=1-1, to=2-1]
	\arrow["\gto^{A_i}", from=2-1, to=2-2] 
\end{tikzcd}\]
\end{enumerate}
\end{dfn}
We will denote the data of a pseudoisotopy between $A_0$ and $A_1$ by $\fU,$ leaving the base DGA $\fR,$ the operator $\thet$ and the morphisms implicit. 
    If the DGAs $A_0, A_1$ are unital, we require $\fU$ and the morphisms $\eval^0, \eval^1$ to be unital. In this case we say the pseudoisotopy is unital.

\begin{dfn}\label{dfn_gauge_equivalence}
Let $(R,d_R,\nu_R)$ be a commutative valued DGA over $\K.$
Let $(A_0,d_{A_0},1_{A_0},\nu_{A_0})$ and $(A_1,d_{A_1},1_{A_1},\nu_{A_1})$ be two normed unital DGAs over $R$, and let $\fU$ be a unital pseudoisotopy between $A_0$ and $A_1.$
Let $b_0\in\Mmm(A_0,c_0), $ and  $b_1\in\Mmm(A_1,c_1).$  
We say that $b_0$ and $b_1$ are $\fU$\textbf{-gauge equivalent} if there exists an element $\bcc\in (\fR)^2$ with
\[
\fpar\bcc=0,\quad \nu_{\fRs}(\bcc)>0,\quad \mathrm{eval}^0(\bcc)=c_0, \quad \mathrm{eval}^1(\bcc)=c_1,
\] and $\bcb\in \Mmm(\fU,\bcc)$ such that 
\[
\eval^0(\bcb)=b_0, \quad \eval^1(\bcb)=b_1.
\]
\end{dfn}
We call $\widetilde{b}$ a pseudoisotopy between $b_0$ and $b_1.$ If $b_0$ and $ b_1$ are $\fU$-gauge equivalent, then we denote $b_0\sim_\fU b_1.$ 
The following lemma is proved in~\cite{pavel}.
\begin{lm}\label{remain_c}
Let $(R,d_R,\nu_R)$ be a commutative valued DGA over $\K.$
Let $(A_0,d_{A_0},1_{A_0},\nu_{A_0})$ and $(A_1,d_{A_1},1_{A_1},\nu_{A_1})$ be two normed unital DGAs over $R$.
Let $b_0\in \Mmm(A_0, c_0)$ and $b_1\in \Mmm(A_1,c_1)$ be two $\fU$-gauge equivalent bounding cochains. Then $[c_0]=[c_1]\in H^*(R,d_R).$   
\end{lm}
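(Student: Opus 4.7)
The plan is to extract the conclusion almost directly from the definition of a pseudoisotopy, using the fact that $\bcc$ is $\fpar$-closed together with the homotopy between the two evaluation morphisms on $\fR$.

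First I would unpack the hypothesis. Since $b_0$ and $b_1$ are $\fU$-gauge equivalent, Definition~\ref{dfn_gauge_equivalence} gives an element $\bcc \in (\fR)^2$ with $\fpar \bcc = 0$ and $\mathrm{eval}^0(\bcc) = c_0,$ $\mathrm{eval}^1(\bcc) = c_1.$ In particular, $\bcc$ is a cocycle in $(\fR, \fpar).$ Moreover, Definition~\ref{dfn:pseudoisotopy} specifies that the two DGA morphisms $\mathrm{eval}^0, \mathrm{eval}^1 : \fR \to R$ are homotopic.

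Next I would use that homotopy. Being homotopic as DGA morphisms, $\mathrm{eval}^0$ and $\mathrm{eval}^1$ are in particular homotopic as chain maps, so there exists a $\K$-linear map $h : \fR \to R[-1]$ satisfying
\[
\mathrm{eval}^0 - \mathrm{eval}^1 = d_R \circ h + h \circ \fpar.
\]
Evaluating both sides on $\bcc$ and using $\fpar \bcc = 0,$ we obtain
\[
c_0 - c_1 = \mathrm{eval}^0(\bcc) - \mathrm{eval}^1(\bcc) = d_R\bigl(h(\bcc)\bigr),
\]
so $c_0 - c_1$ is $d_R$-exact, giving $[c_0] = [c_1] \in H^*(R, d_R).$

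I do not expect a real obstacle here; the only point to be careful about is checking that the notion of ``homotopic DGA morphisms'' used in the definition of pseudoisotopy does furnish a chain homotopy on underlying complexes (which is automatic in any of the standard formulations, for instance via a path object like $R \otimes \Omega^*[0,1]$). Granted that, the argument is a one-line consequence of $\fpar \bcc = 0$ and the homotopy formula.
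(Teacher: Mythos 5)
Your argument is correct and is exactly the expected one: the paper itself defers the proof to the cited reference, but the intended reasoning is precisely what you wrote — $\fpar\bcc=0$ together with the chain homotopy between $\mathrm{eval}^0$ and $\mathrm{eval}^1$ forces $c_0-c_1=d_R\bigl(h(\bcc)\bigr)$, hence $[c_0]=[c_1]$. Indeed, the same computation (a homotopy $h$ with $h\circ\fpar+d_R\circ h=\mathrm{eval}^1-\mathrm{eval}^0$ applied to a $\fpar$-closed element) is the one the authors use later when they exploit this lemma, so your caveat about ``homotopic DGA morphisms'' furnishing a chain homotopy is consistent with how the paper uses the notion.
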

\subsection{Central simple graded algebras}\label{section:azumaya_algebra}
Let $A$ be a graded $\K$-algebra. Let $h(A)$ denote the set of homogeneous elements of $A.$ Recall the center of a graded algebra $A$ is defined by 
\[
\widehat{Z}(A):=\{a\in A| \ aa'=(-1)^{|a|\cdot|a'|}a'a, \ \forall a'\in h(A)\}.
\]
\begin{dfn}
    Let $A$ be a finite dimensional graded $\K$-algebra. We say that $A$ is a \textbf{central simple graded algebra} (CSGA) over $\K$ if $A$  has no proper graded ideal and $\widehat{Z}(A)=\K.$
\end{dfn}
The following is Theorem 2.1 in~\cite{quadratic_forms}.
\begin{lm}\label{lm:clifford_is_csga}
    Let $V$ be a vector space over $\K$ equipped with a quadratic form $q.$ The Clifford algebra $\mathrm{Cl}(V,q)$ is a CSGA over $\K.$
\end{lm}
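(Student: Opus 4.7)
The plan is to reduce everything to manipulations in the standard monomial basis coming from an orthogonal decomposition of $V$. First I would fix an orthogonal basis $e_1, \dots, e_n$ of $V$ with $q(e_i) = a_i$; the standing hypothesis that $q$ is non-degenerate guarantees $a_i \in \K^\times$. This yields the $\K$-basis $\{e_S = e_{i_1} \cdots e_{i_k} : S = \{i_1 < \cdots < i_k\} \subseteq \{1,\dots,n\}\}$ of $\mathrm{Cl}(V,q)$, graded by $|e_S| = |S|$. The entire proof hinges on the combinatorial identity
\[
e_j e_S = (-1)^{|S|} e_S e_j \quad (j \notin S), \qquad e_j e_S = (-1)^{|S|-1} e_S e_j \quad (j \in S),
\]
obtained by commuting $e_j$ past the factors of $e_S$ using the defining relations.

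For the graded center, I would take a homogeneous $x = \sum_S c_S e_S$ (so $|S| = |x|$ throughout the sum) and impose the defining relation $x e_j - (-1)^{|x|} e_j x = 0$ for each generator $e_j$. The identity above makes all terms indexed by $S$ with $j \notin S$ cancel automatically, while the $S$ with $j \in S$ contribute $2 c_S e_S e_j$; since $\mathrm{char}\,\K = 0$, this forces $c_S = 0$ whenever $j \in S$. Ranging $j$ over $\{1, \dots, n\}$ kills every $c_S$ with $S \neq \emptyset$, so $x \in \K \cdot 1$, giving $\widehat{Z}(\mathrm{Cl}(V,q)) = \K$.

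For graded simplicity, I would take a nonzero graded two-sided ideal $I$ and a nonzero homogeneous $x \in I$. Conjugation by $e_j$ preserves $I$ because $e_j$ is a unit with inverse $a_j^{-1} e_j$, and by the combinatorial identity it acts on each basis monomial $e_S$ of degree $|x|$ as the scalar $(-1)^{|x|}$ or $-(-1)^{|x|}$ according as $j \notin S$ or $j \in S$. Hence the two eigenprojections
\[
\tfrac{1}{2}\bigl(x \pm (-1)^{|x|} e_j x e_j^{-1}\bigr) \in I
\]
split $x$ according to whether $j$ is in the support. Iterating over all $j \in \{1, \dots, n\}$ isolates a nonzero scalar multiple of a single basis monomial $e_{S_0} \in I$, and since $e_{S_0}^2 = \pm \prod_{i \in S_0} a_i \in \K^\times$, the element $e_{S_0}$ is a unit, so $1 \in I$ and $I = \mathrm{Cl}(V,q)$.

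The only mildly subtle point is that the paper's $\Z$-grading with $|e_i| = 1$ differs from the more familiar $\Z/2$-grading on Clifford algebras, but $\widehat{Z}$ depends only on the parities of degrees, so the argument is insensitive to this choice. In particular, the \emph{ungraded} center contains the volume element $\omega = e_1 \cdots e_n$ when $n$ is odd, but a direct computation gives $\omega e_j = (-1)^{n+1} e_j \omega$, which conflicts with the graded-center requirement $\omega e_j = (-1)^n e_j \omega$ whenever $\mathrm{char}\,\K \neq 2$. Thus no exceptional cases arise and the conclusion of the lemma is uniform in $n$.
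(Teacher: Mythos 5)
Your argument is correct, and it fills a genuine gap in the text: the paper does not prove this lemma at all, but simply quotes it as Theorem 2.1 of its quadratic-forms reference, so what you have written is a self-contained verification of the cited result by the classical method (orthogonal basis, the sign rule for moving a generator past a monomial, and conjugation/eigenprojection to isolate an invertible monomial inside a graded ideal). What the citation buys the paper is brevity; what your proof buys is transparency about exactly which hypotheses are used, and on that score two points deserve to be made explicit. First, nondegeneracy of $q$ is genuinely needed and is implicit in the cited theorem: for $q=0$ the Clifford algebra is the exterior algebra, which has proper graded ideals and graded center larger than $\K$. You invoke it to get $a_i\in\K^\times$, but note that it is also what justifies the step ``$2c_Se_Se_j=0$ forces $c_S=0$'': one needs $e_Se_j=\pm a_je_{S\setminus\{j\}}$ to be a nonzero multiple of a basis monomial, and characteristic zero alone would not give this if $a_j=0$. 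The same invertibility of the $a_i$ underlies $e_{S_0}^2\in\K^\times$ in the simplicity argument. Second, over the field $\K$ the Clifford algebra carries only a $\Z/2$-grading, since $e_i^2=a_i\in\K$ is incompatible with a $\Z$-grading in which $|e_i|=1$ (the paper's $\Z$-graded Clifford algebras live over $\Ssf$, where $e_i^2=z_i$ has degree $2$); consequently ``homogeneous $x=\sum_S c_Se_S$'' should mean that all $|S|$ have the same parity rather than $|S|=|x|$. Your computations are insensitive to this, exactly as you observe, because only the parity of $|S|$ enters the signs, both in the graded-center calculation and in the eigenvalues $\pm(-1)^{|x|}$ of conjugation by $e_j$; with that reading the proof, including the closing sanity check on the volume element, is complete.
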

The following is part of Theorem 2.3 in~\cite{quadratic_forms}.
\begin{lm}\label{lm:tensor_producy_csga}
    If $A$ and $B$ are both CSGAs over $\K,$ so is $A\otimes_\K B.$ 
\end{lm}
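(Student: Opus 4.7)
The plan is to adapt the classical proof that the tensor product of two central simple algebras over a field is central simple, keeping careful track of the Koszul signs introduced by the graded tensor-product multiplication $(a\otimes b)(a'\otimes b')=(-1)^{|a'||b|}(aa')\otimes(bb')$ on $A\otimes_\K B$. Two things must be checked for $A\otimes_\K B$: that $\widehat{Z}(A\otimes_\K B)=\K$, and that it has no proper graded two-sided ideal.

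For the centre, I would take a homogeneous $x\in\widehat{Z}(A\otimes_\K B)$ and write $x=\sum_{i=1}^n a_i\otimes b_i$ with the $b_i$ linearly independent over $\K$ and each summand homogeneous. Expanding the graded commutation $(a'\otimes 1)x=(-1)^{|a'||x|}x(a'\otimes 1)$ for an arbitrary homogeneous $a'\in A$ and using $|x|+|b_i|\equiv|a_i|\pmod 2$, one obtains $\sum_i\bigl(a'a_i-(-1)^{|a'||a_i|}a_ia'\bigr)\otimes b_i=0$. Linear independence of the $\{b_i\}$ forces each $a_i\in\widehat{Z}(A)=\K$, and the symmetric argument using elements $1\otimes b'$ then forces each $b_i\in\widehat{Z}(B)=\K$, so $x\in\K$.

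For the ideal claim, I would let $I\ne 0$ be a graded two-sided ideal, choose a homogeneous $0\ne x\in I$ of minimal length $n$, and write $x=\sum_{i=1}^n a_i\otimes b_i$ with all $a_i\ne 0$ and the $\{b_i\}$ linearly independent. The strategy is first to reduce to the case $a_1=1_A$ and then kill the remaining coefficients via commutators. Since $A$ is graded simple, the sub-bimodule of $A^n$ generated by $(a_1,\dots,a_n)$ under the sign-twisted action pulled back from $(\alpha\otimes 1)\,(\cdot)\,(\beta\otimes 1)$ on $x$ projects onto an entire graded two-sided ideal of $A$ in the first coordinate, hence equals all of $A$ there and in particular contains $1_A$. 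This produces $x'\in I$ of length at most $n$ with first coefficient $1_A$, and by minimality the length is exactly $n$. Then for any homogeneous $c\in A$ the graded commutator $[c\otimes 1,x']\in I$ has vanishing first coefficient, hence vanishes by minimality, forcing each remaining coefficient into $\widehat{Z}(A)=\K$. So $x'=1\otimes b$ with $0\ne b\in B$, and applying the graded simplicity of $B$ as $\sum_j u_jbv_j=1_B$ yields $1\otimes 1=\sum_j(1\otimes u_j)(1\otimes b)(1\otimes v_j)\in I$, whence $I=A\otimes_\K B$.

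The main technical nuisance will be the Koszul-sign bookkeeping in the reduction step. Under the action of $(\alpha\otimes 1)\,(\cdot)\,(\beta\otimes 1)$ the $b_i$-coefficient of $x$ acquires a sign $(-1)^{|\beta||b_i|}$ that depends on $i$, so the first-coordinate projection of the sub-bimodule generated by $(a_1,\dots,a_n)$ is not literally $Aa_1A$ but rather what one obtains after splitting $\alpha,\beta$ into homogeneous pieces. The point is that this projection is still closed under graded left and right multiplication by $A$ and contains $a_1$, so it is a graded two-sided ideal of $A$ and equals $A$ by simplicity. Once first coefficient $1_A$ is secured, the commutator argument is a mechanical expansion of the Koszul rules.
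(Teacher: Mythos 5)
Your proof is correct in substance, but it is worth noting that the paper does not actually prove this lemma: it is quoted as part of Theorem 2.3 in~\cite{quadratic_forms}. So your proposal supplies a self-contained argument where the paper relies on the literature. What you run is the classical central-simple-algebra proof (commutation against $a'\otimes 1$ and $1\otimes b'$ for the centre; a minimal-length element and graded commutators for simplicity), decorated with Koszul signs, and you correctly anticipate the one place where the signs genuinely interfere: in the centre computation the sign $(-1)^{|a'||b_i|}$ combines with $(-1)^{|x||a'|}$ and the relation $|b_i|\equiv |x|+|a_i|$ to yield exactly the graded-centrality condition for each $a_i$, and in the simplicity step the $i$-dependent sign $(-1)^{|\beta||b_i|}$ is harmless because the set of first coefficients is in any case stable under left and right multiplication by homogeneous elements of $A$. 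Compared with the citation, your route buys a proof that is visibly uniform in the grading and does not invoke any structure theory of graded algebras.

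Two details that you gloss over should be recorded to make the argument airtight. First, the set $P=\{c_1\in A:\ c_1\otimes b_1+\sum_{i\ge 2}c_i\otimes b_i\in I \text{ for some } c_i\}$ must be a \emph{graded} two-sided ideal before graded simplicity can be applied (graded simplicity says nothing about non-graded ideals); this follows because $I$ is graded and the $b_i$ can be chosen homogeneous, or alternatively because the ideal generated by the homogeneous element $a_1$ is automatically graded. Second, before the commutator step you should replace $x'$ by its homogeneous component of degree $|b_1|$, which stays in $I$ and still has first coefficient $1_A$. If $x'$ is not homogeneous, the vanishing of the commutator only gives the twisted relations $cc_i=(-1)^{|c|(|b_1|+|b_i|)}c_ic$, in which the twist need not match the parity of $c_i$, so one cannot immediately conclude $c_i\in\widehat{Z}(A)=\K$; with $x'$ homogeneous one has $|c_i|\equiv |b_1|+|b_i|$ and the conclusion is exactly graded centrality. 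With these two lines added, the proof is complete.
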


\begin{lm}\label{lm:a_otimes_f_csga}
    Let $A$ be a $\K$-algebra and let $\F$ be a field extension of $\K.$ Then, if $A\otimes \F$ is a CSGA, so is $A.$
\end{lm}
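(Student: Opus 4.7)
The statement is a standard descent result: properties of the base change $A \otimes_\K \F$ being central simple graded reflect back to $A$. I would structure the proof in three steps, verifying in turn that $A$ is finite dimensional, has no proper graded ideals, and has graded center equal to $\K$.

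First, finite dimensionality is immediate: since $\F$ is a free $\K$-module, $\dim_\K A = \dim_\F(A \otimes_\K \F) < \infty.$

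Next, for the absence of proper graded ideals, I would use that a field extension is faithfully flat. If $I \subset A$ were a proper graded ideal, then $I \otimes_\K \F$ would be a graded ideal of $A \otimes_\K \F$. Flatness of $\F$ gives an injection $I \otimes_\K \F \hookrightarrow A \otimes_\K \F,$ so $I \otimes_\K \F \neq A \otimes_\K \F$ (as one can check by counting $\F$-dimensions, using $\dim_\F(I \otimes \F) = \dim_\K I < \dim_\K A = \dim_\F(A \otimes \F)$), and $I \otimes_\K \F \neq 0$ since $I \neq 0.$ This contradicts the simplicity of $A \otimes_\K \F.$

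The main step, and the one requiring actual work, is to show $\widehat{Z}(A) = \K$ given $\widehat{Z}(A \otimes_\K \F) = \F.$ The key identity I plan to establish is
\[
\widehat{Z}(A \otimes_\K \F) = \widehat{Z}(A) \otimes_\K \F,
\]
where $\F$ sits in degree zero. To prove this, I would fix a $\K$-basis $\{f_i\}_{i \in I}$ of $\F$ and write an arbitrary homogeneous element as $x = \sum_i a_i \otimes f_i$ with the $a_i \in A$ homogeneous of degree $|x|.$ For any homogeneous $b \in A,$ the graded commutator of $x$ with $b \otimes 1$ equals $\sum_i (a_i b - (-1)^{|x||b|} b a_i) \otimes f_i,$ and by $\K$-linear independence of the $f_i$ this vanishes iff every $a_i$ graded-commutes with $b.$ Since elements $b \otimes 1$ generate $A \otimes_\K \F$ over $\F,$ and the $\F$-action is central, this shows $x \in \widehat{Z}(A \otimes_\K \F)$ iff every $a_i \in \widehat{Z}(A).$

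Finally, combining the identity above with the hypothesis $\widehat{Z}(A \otimes_\K \F) = \F = \K \otimes_\K \F$ gives $\widehat{Z}(A) \otimes_\K \F = \K \otimes_\K \F$ as subspaces of $A \otimes_\K \F.$ By faithful flatness of $\F/\K,$ this forces $\widehat{Z}(A) = \K,$ completing the proof. The only subtle point throughout is keeping the grading conventions consistent when computing the graded center, which is handled by restricting attention to homogeneous elements and observing that $\F$ sits in degree zero.
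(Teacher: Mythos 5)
Your proof is correct and follows essentially the same descent-by-base-change strategy as the paper: the simplicity step is identical, and for the center the paper simply notes that $a\in\widehat{Z}(A)$ implies $a\otimes 1\in\widehat{Z}(A\otimes_\K\F)=\F$, hence $a\in\K$. The only difference is that you prove the full identity $\widehat{Z}(A\otimes_\K\F)=\widehat{Z}(A)\otimes_\K\F$ and invoke faithful flatness, which is more than is needed but equally valid.
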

\begin{proof}
First, we show that $\widehat{Z}(A\otimes_\K \F)=\F.$
Since $A$ is a $\K$-algebra, we have
$\K\subset \widehat{Z}(A).$ Thus, we need to show $\widehat{Z}(A)\subset \K. $
Let $a\in \widehat{Z}(A),$ so $aa'=(-1)^{|a|\cdot |a'|}a'a$ for all $a'\in h(A).$ Thus,
\[
(a\otimes 1_A)\cdot (a'\otimes f)= aa'\otimes f=(-1)^{|a|\cdot|a'|}a'a\otimes f= (-1)^{|a|\cdot |a'|}(a'\otimes f)\cdot (a\otimes 1_A),
\] 
for all $a'\otimes f\in A\otimes_\K \F$ where $a'\in h(A).$ 
So, $a\otimes 1_A\in\widehat{Z}(A\otimes_\K \F).$
Since $\widehat{Z}(A\otimes_\K \F)=\F,$ it follows that  
$a\in \K.$ Thus, $\widehat{Z}(A\otimes_\K \F)\subset \K. $

Second, we show that $A$ is simple. Let $I$ be a graded ideal of $A,$ so $I\otimes_\K \F$ is a graded ideal of $A\otimes_\K\F.$ Hence, $I\otimes_\K \F$ is either the zero ideal or $A\otimes_\K \F,$ which implies that $I$ is either the zero ideal or $A.$ 
\end{proof}
Recall the definition of $\Cl$ in Section~\ref{sectin:lg_model_introduction}. 
Let $\mathfrak{m}$ be a maximal ideal of $\Ssf,$ and write 
$k=\Ssf/\mathfrak{m}.$ Let $\overline{k}$ denote the algebraic closure of $k.$ 
Define $ \mathrm{Cl}(\overline{k}^{n+1}):=\mathrm{Cl}_{\mathrm{even}}(\triangle^n)\otimes \overline{k}.$
Since $\overline{k}$ is algebraically closed, all the quadratic forms with coefficients in $\overline{k}$ are equivalent. Hence, we may assume that $\mathrm{Cl}(\overline{k}^{n+1})$ is defined by the standard quadratic form. That is, $\mathrm{Cl}(\overline{k}^{n+1})$ denotes the Clifford algebra of $\overline{k}^{n+1}$ such that $e_ie_j+e_je_i=2\delta_{ij},$ where $\delta_{ij}$ is the Kronecker delta.
Let $\mathrm{Cl}_{\mathrm{even}}(\overline{k}^{n+1})$ denote the subalgebra of $\mathrm{Cl}(\overline{k}^{n+1})$ that consists the elements of even degree,
 and define $\mathrm{Cl}_{\mathrm{even}}^+(\overline{k}^{n+1}):=\mathrm{Cl}_{\mathrm{even}}(\overline{k}^{n+1})/(e_0\cdots e_n- T^{1/2}).$ 
The proof of the following lemma is by an argument similar to that of Lemma 2.3.3 in~\cite{Morgan}.
\begin{lm}\label{lm:clifford_isomorphism}
Let $\mathfrak{m}$ be a maximal ideal of $\Ssf$ and write 
$k=\Ssf/\mathfrak{m}.$ Let $\overline{k}$ denote the algebraic closure of $k.$ Then,
    $\mathrm{Cl}^-_\mathrm{even}(\overline{k}^{n+1})\cong\mathrm{Cl}(\overline{k}^{n-1}),$ where $n\ge1$ odd. 
\end{lm}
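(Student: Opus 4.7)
The overall strategy is a two-stage reduction that exploits standard structural facts about Clifford algebras over an algebraically closed field of characteristic zero. The first stage realizes $\mathrm{Cl}_\mathrm{even}(\overline{k}^{n+1})$ as a full Clifford algebra on one fewer generator. The second stage uses the centrality of the volume element in an odd-dimensional Clifford algebra to perform the further quotient by $e_0 \cdots e_n \mp T^{1/2}$.

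For the first stage, I would set $f_i := e_0 e_i$ for $i = 1, \ldots, n$. A direct calculation using $e_0^2 = 1$ and the Clifford anticommutation yields $f_i f_j + f_j f_i = -2\delta_{ij}$, so the $f_i$ generate a Clifford algebra on $\overline{k}^n$ with the negative of the standard form. Since $\mathrm{char}(\overline{k}) = 0$ and $\overline{k}$ is algebraically closed, the rescaling $f_i \mapsto \sqrt{-1}\,f_i$ identifies this with $\mathrm{Cl}(\overline{k}^n)$ equipped with the standard form. A dimension count (both algebras have dimension $2^n$) together with a basis-level check that monomials $f_{i_1} \cdots f_{i_k}$ map to basis monomials of the even subalgebra (up to explicit signs) promotes this to an algebra isomorphism $\mathrm{Cl}(\overline{k}^n) \xrightarrow{\sim} \mathrm{Cl}_\mathrm{even}(\overline{k}^{n+1})$.

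For the second stage, an inductive computation moving the factors $e_0$ together gives $f_1 \cdots f_n = (-1)^{n(n-1)/2}\,e_0^{n\bmod 2} e_1 \cdots e_n$, which for $n$ odd reads $f_1 \cdots f_n = (-1)^{n(n-1)/2} \omega$, where $\omega := e_0 e_1 \cdots e_n$. Hence the defining relation $\omega = \pm T^{1/2}$ translates, in the $f$-basis, to $\omega_f := f_1 \cdots f_n = c$ for an explicit nonzero scalar $c \in \overline{k}$ whose square matches $\omega_f^2 = (-1)^{n(n+1)/2}$. Now $\omega_f$ is central in $\mathrm{Cl}(\overline{k}^n)$ for $n$ odd, because moving $\omega_f$ past any $f_i$ introduces $(-1)^{n-1} = 1$. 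Therefore the quotient $\mathrm{Cl}(\overline{k}^n)/(\omega_f - c)$ makes sense, and in it the relation solves $f_n = c\,(f_1 \cdots f_{n-1})^{-1}$ (the inverse exists as each $f_i$ is a unit). The quotient is thus generated by $f_1, \ldots, f_{n-1}$ subject only to the inherited Clifford relations $f_i f_j + f_j f_i = -2\delta_{ij}$, yielding a surjection $\mathrm{Cl}(\overline{k}^{n-1}) \twoheadrightarrow \mathrm{Cl}^-_\mathrm{even}(\overline{k}^{n+1})$. A dimension count — the two-dimensional central subalgebra $\overline{k}[\omega_f] \cong \overline{k} \oplus \overline{k}$ is collapsed to one factor, halving the dimension from $2^n$ to $2^{n-1}$ — forces the surjection to be an isomorphism.

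The main obstacle is sign bookkeeping: one must carefully verify the inductive formula for $f_1 \cdots f_n$ in terms of $\omega$, confirm that the specific scalar $c$ arising in the quotient satisfies $c^2 = \omega_f^2$ so that the quotient is well-defined and has the expected dimension, and ensure that the rescaling $f_i \mapsto \sqrt{-1}\,f_i$ used to pass from the form $-\mathrm{id}$ to the standard form is compatible with the identification of $\omega$ with $\omega_f$ up to a rescaling that $\overline{k}$ can absorb. Once these sign calculations are in place, the dimension count and the explicit solvability of $f_n$ finish the proof.
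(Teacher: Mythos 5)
Your proposal is correct and is essentially the same argument the paper invokes (it gives no proof of its own beyond citing the analogue of Lemma 2.3.3 in Morgan): identify $\mathrm{Cl}_{\mathrm{even}}(\overline{k}^{n+1})$ with $\mathrm{Cl}(\overline{k}^{n})$ via $f_i = e_0e_i$, then use that for $n$ odd the volume element $f_1\cdots f_n$ is central with square a nonzero scalar, so quotienting by $f_1\cdots f_n - c$ kills one factor of the two-dimensional center and leaves an algebra of dimension $2^{n-1}$ generated by $f_1,\ldots,f_{n-1}$, i.e.\ $\mathrm{Cl}(\overline{k}^{n-1})$. Your explicit attention to the normalization $c^2=\omega_f^2$ (which holds because $\omega^2=z_0\cdots z_n=T$ in the original algebra and is preserved under the rescaling to the standard form) is a point the paper glosses over, and is needed for the quotient to be nonzero.
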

\begin{lm}\label{lm:ccle_csga}
    Let $\mathfrak{m}$ be a maximal ideal of $\Ssf$ and write 
$k=\Ssf/\mathfrak{m}.$ Then,
    $\ccle\otimes k$ is a CSGA where $n\ge1$ odd. 
\end{lm}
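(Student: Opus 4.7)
The plan is to reduce to the algebraically closed case and identify the resulting algebra with a standard Clifford algebra, which is known to be a CSGA. By Lemma~\ref{lm:a_otimes_f_csga}, it suffices to show that the scalar extension $(\ccle \otimes k) \otimes_k \overline{k} = \ccle \otimes \overline{k}$ is a CSGA over $\overline{k}$.

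The next step is to identify $\ccle \otimes \overline{k}$ with $\mathrm{Cl}^+_{\mathrm{even}}(\overline{k}^{n+1})$, using the definitions of $\ccle$ and of the right-hand side that precede Lemma~\ref{lm:clifford_isomorphism}. The key algebraic observation here is that since $z_0\cdots z_n = T$ in $\Ssf$ and $T$ has a square root in $\overline{k}$, the images of the generators $e_0,\ldots,e_n$ satisfy a nondegenerate diagonal relation $e_i^2 \in \overline{k}^\times,$ $e_ie_j+e_je_i=0,$ up to sign conventions. Since $\overline{k}$ is algebraically closed, we can rescale the $e_i$ to reduce to the standard quadratic form, yielding the identification above.

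Then I would invoke the $+$-analogue of Lemma~\ref{lm:clifford_isomorphism}, whose proof is parallel to the $-$ case cited from Morgan: using that $e_0\cdots e_n = T^{1/2}$ in the quotient, one can solve for $e_0$ in terms of the remaining generators, reducing to a Clifford algebra on an $(n{-}1)$-dimensional subspace. This yields $\mathrm{Cl}^+_{\mathrm{even}}(\overline{k}^{n+1}) \cong \mathrm{Cl}(\overline{k}^{n-1})$ for $n$ odd. Finally, by Lemma~\ref{lm:clifford_is_csga}, $\mathrm{Cl}(\overline{k}^{n-1})$ is a CSGA over $\overline{k}$, completing the chain of reductions.

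The main obstacle is step two: verifying that the identification $\ccle \otimes \overline{k} \cong \mathrm{Cl}^+_{\mathrm{even}}(\overline{k}^{n+1})$ really lands in the nondegenerate standard-form regime, i.e., checking that no $z_i$ maps to $0$ under $\Ssf \to k \hookrightarrow \overline{k}$. This follows from the relation $z_0 \cdots z_n = T$ together with the fact that $T$ is a unit in $\Rsf$, hence not in $\mathfrak{m}$, but the bookkeeping with the normalization of coefficients needs to be done with care so that the square-root $T^{1/2}$ is unambiguously defined and the quotient relation $e_0 \cdots e_n = T^{1/2}$ survives the identification.
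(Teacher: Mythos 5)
Your proposal is correct and follows essentially the same route as the paper: reduce to the algebraic closure via Lemma~\ref{lm:a_otimes_f_csga}, identify $\ccle\otimes\overline{k}$ with $\mathrm{Cl}^+_{\mathrm{even}}(\overline{k}^{n+1})$ (the $z_i$ being units since $z_0\cdots z_n=T$), and conclude via Lemma~\ref{lm:clifford_isomorphism} and Lemma~\ref{lm:clifford_is_csga}. The normalization bookkeeping you flag does work out, since $(e_0\cdots e_n)^2 = z_0\cdots z_n = T = (T^{1/2})^2$, so the quotient relation is consistent with the rescaling to the standard form; the paper leaves this identification implicit.
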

\begin{proof}
Let $\overline{k}$ denote the algebraic closure of $k.$ By Lemma~\ref{lm:clifford_is_csga} $\mathrm{Cl}(\overline{k}^{n-1})$ is a CSGA. Thus, by Lemma~\ref{lm:clifford_isomorphism} it follows that $\mathrm{Cl}_{\mathrm{even}}^+(\overline{k}^{n+1})$ is a CSGA. Hence,
since $(\ccle\otimes k)\otimes\overline{k}\cong \mathrm{Cl}_{\mathrm{even}}^+(\overline{k}^{n+1}),$ by Lemma~\ref{lm:a_otimes_f_csga} we get that $\ccle\otimes k$ is a CSGA.

\end{proof}
\section{Normed matrix factorization category}\label{Section_MF}
This section is divided into three parts. We begin by proving a few properties concerning orthogonal bases of valued modules. Then, we
construct valuations on Hom-complexes in the normed matrix factorization category. 
Finally,
we  
describe a construction of a valued Landau-Ginzburg model motivated by toric geometry. An example of this construction will be discussed in more details in Section~\ref{Section:example}.

\subsection{Orthogonal bases in valued modules}

Let $(Q,\nu_Q)$ be a valued ring, and
let $(N,\nu_N),$ $(N',\nu_{N'})$ be valued modules over $(Q, \nu_Q).$ 
Define
\[
\nu_{N,N'}(\Phi):=\inf_{n\in N\backslash\{0\}}\{\nu_{N'}(\Phi(n))-\nu_N(n)\}, \quad\Phi\in \Hom(N,N').
\]

\begin{lm}\label{lm:N_valuation}
   $(\Hom(N,N'), \nu_{N,N'})$ is a valued module over $Q.$
\end{lm}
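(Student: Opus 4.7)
The plan is to verify directly the three defining conditions of a valued module (Definition~\ref{3}) for the pair $(\Hom(N,N'),\nu_{N,N'})$. Each check reduces to a one-line manipulation using the corresponding property of $\nu_{N'}$, followed by taking the infimum over $n\in N\setminus\{0\}$.

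First I would check non-degeneracy. If $\Phi=0$ then $\nu_{N'}(\Phi(n))=\infty$ for every $n$, so the infimum equals $\infty$. Conversely, if $\Phi\neq 0$, pick $n_0$ with $\Phi(n_0)\neq 0$; then both $\nu_{N'}(\Phi(n_0))$ and $\nu_N(n_0)$ lie in $\R$, so $\nu_{N'}(\Phi(n_0))-\nu_N(n_0)\in\R$ and the infimum is bounded above by a finite number. Next, for the ultrametric inequality, fix $n\in N\setminus\{0\}$ and apply the ultrametric inequality of $\nu_{N'}$ pointwise to get
\[
\nu_{N'}((\Phi_1+\Phi_2)(n))-\nu_N(n)\ge \min\{\nu_{N'}(\Phi_1(n)),\nu_{N'}(\Phi_2(n))\}-\nu_N(n)\ge \min\{\nu_{N,N'}(\Phi_1),\nu_{N,N'}(\Phi_2)\},
\]
and then take the infimum over $n$. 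Finally, for the scalar property $\nu_{N,N'}(q\Phi)=\nu_Q(q)+\nu_{N,N'}(\Phi)$, I would use that $N'$ is a valued $Q$-module to write $\nu_{N'}(q\cdot\Phi(n))=\nu_Q(q)+\nu_{N'}(\Phi(n))$, pull the constant $\nu_Q(q)$ out of the infimum, and treat $q=0$ separately (both sides are $\infty$ by the first axiom).

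The one genuine subtlety I expect is showing that $\nu_{N,N'}(\Phi)$ never equals $-\infty$ for $\Phi\neq 0$, so that $\nu_{N,N'}$ actually lands in $\R\cup\{\infty\}$. For this I would exploit the fact that $\Phi$ is a $Q$-module homomorphism: the rescaling identity
\[
\nu_{N'}(\Phi(qn))-\nu_N(qn)=\nu_Q(q)+\nu_{N'}(\Phi(n))-\nu_Q(q)-\nu_N(n)=\nu_{N'}(\Phi(n))-\nu_N(n)
\]
shows that the quantity inside the infimum depends only on the $Q$-orbit of $n$. Thus the infimum is effectively taken over a quotient, and combined with the finite-generation hypotheses on the modules appearing in the applications (cf.\ items~\ref{1_condition}, \ref{03_condition} in Section~\ref{Section:normed_mf_category_def}), one can reduce to a finite set of representatives. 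This orbit-reduction step is the only part that is not a purely formal manipulation and is where I expect to have to think, while all three axiom checks themselves are routine.
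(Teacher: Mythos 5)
Your three axiom checks are exactly the paper's proof of Lemma~\ref{lm:N_valuation}: non-degeneracy read off from the definition, the ultrametric inequality obtained by applying the inequality for $\nu_{N'}$ pointwise and then passing to the infimum over $n$, and the scalar identity by pulling the constant $\nu_Q(q)$ out of the infimum. On that core there is nothing to add.

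The final paragraph, however, does not work as written. You correctly observe that the quantity $n\mapsto \nu_{N'}(\Phi(n))-\nu_N(n)$ is constant on $Q$-orbits, but this does not reduce the infimum to a finite set: the set of orbits of nonzero elements under scalar multiplication is infinite already for a free module of rank two (there are infinitely many ``lines''), so no finite list of representatives exists, and the claimed lower bound does not follow. Moreover, the lemma is stated for arbitrary valued modules over a valued ring, so the finite-generation and orthogonal-basis hypotheses of items~\ref{1_condition} and~\ref{03_condition} are not available here, and invoking them changes the statement. The paper's own proof simply does not discuss lower-boundedness of the infimum in this lemma; where boundedness is actually needed, it is obtained later from orthogonal bases and the affine/vertex-limit conditions on the valuations (Lemmas~\ref{lm:N_e_ij} and~\ref{lm:orthogonal_basis}, and then Lemmas~\ref{lm:e_ij} and~\ref{lm:ima_m_eq_ima_r} in the matrix-factorization setting). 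If you do want to control the infimum in the presence of an orthogonal basis $\{n_i\}$, the correct mechanism is the estimate $\nu_{N'}(\Phi(n))-\nu_N(n)\ge \inf_i\{\nu_{N'}(\Phi(n_i))-\nu_N(n_i)\}$ for $n=\sum_i q_i n_i$, which follows from orthogonality of $\nu_N$ and subadditivity of $\nu_{N'}$, not orbit counting.
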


\begin{proof}
First, since $(N,\nu_N),$ $(N',\nu_{N'})$ are valued modules over $(Q, \nu_Q)$, it follows that $\Hom(N,N')$ is a $Q$-module and
from the definition of $\nu_{N,N'}$ it follows that $\nu_{N,N'}(\Phi)=\infty$ if and only if $\Phi$ is the zero homomorphism. Let $\Phi,\Psi\in \Hom(N,N').$
Next, we claim  $\nu_{N,N'}(\Phi+\Psi)\ge \min\{ \nu_{N,N'}(\Phi), \nu_{N,N'}(\Psi)\}.$
Indeed,
\begin{align*}
    \nu_{N,N'}(\Phi+\Psi)&= \inf_{n\in N\backslash\{0\}}\{\nu_{N'}\big(\Phi(n)+\Psi(n)\big)-\nu_N(n)\}\\
    &\ge \inf_{n\in N\backslash\{0\}}\big\{\min\{\nu_{N'}(\Phi(n)), \nu_{N'}(\Psi(n))\}-\nu_N(n)\big\}\\
    &=\min\big\{ \inf_{n\in N\backslash\{0\}}\{\nu_{N'}(\Phi(n))-\nu_N(n)\},\inf_{n\in N\backslash\{0\}}\{ \nu_{N'}(\Psi(n))-\nu_N(n)\}\big\}\\
    &=\min\{ \nu_{N,N'}(\Phi), \nu_{N,N'}(\Psi)\}.
\end{align*}
For any $q\in Q,$ we obtain 
\begin{align*}
    \nu_{N,N'}(q\Phi)&= \inf_{n\in N\backslash\{0\}} \{\nu_{N'}(q\Phi(n))- \nu_N(n)\}\\
    &=\inf_{n\in N\backslash\{0\}} \{\nu_Q(q)+\nu_{N'}(\Phi(n))- \nu_N(n)\}\\
    &=\nu_Q(q)+\inf_{n\in N\backslash\{0\}} \{\nu_{N'}(\Phi(n))- \nu_N(n)\}\\
    &=\nu_Q(q)+\nu_{N,N'}(\Phi).
\end{align*}
\end{proof} 
\begin{lm}\label{lm:weakly_normed_N}
    Let $(N,\nu_N), (N',\nu_{N'}), (N'',\nu_{N''})$ be valued modules over $Q.$
    Then, for all $\Psi\in \Hom(N,N')$ and $\Phi\in \Hom(N',N''),$ we have
    \[
    \nu_{N,N''}(\Phi \Psi)\ge \nu_{N',N''}(\Phi)+\nu_{N,N'}(\Psi).
    \]
\end{lm}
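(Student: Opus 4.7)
The plan is to expand the definition of $\nu_{N,N''}(\Phi\Psi)$ and then apply the defining inequalities of $\nu_{N',N''}$ and $\nu_{N,N'}$ term by term before passing to the infimum. The argument is purely formal — it uses only the definitions introduced just above and does not require any of the more delicate structure of valued modules — so the only thing to be careful about is the handling of the case when $\Psi(n) = 0$, where $\nu_{N'}(\Psi(n)) = \infty$.

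Concretely, I would start from
\[
\nu_{N,N''}(\Phi\Psi) = \inf_{n\in N\setminus\{0\}} \bigl\{\nu_{N''}(\Phi\Psi(n)) - \nu_N(n)\bigr\}.
\]
For each $n \in N\setminus\{0\}$, the definition of $\nu_{N',N''}$ applied to $\Phi \in \Hom(N',N'')$ (reading off the infimum at the particular element $\Psi(n) \in N'$, and interpreting the inequality trivially when $\Psi(n)=0$) gives
\[
\nu_{N''}(\Phi\Psi(n)) \geq \nu_{N',N''}(\Phi) + \nu_{N'}(\Psi(n)).
\]
Similarly, the definition of $\nu_{N,N'}$ applied to $\Psi$ gives
\[
\nu_{N'}(\Psi(n)) \geq \nu_{N,N'}(\Psi) + \nu_N(n).
\]
Chaining these two inequalities and subtracting $\nu_N(n)$ yields
\[
\nu_{N''}(\Phi\Psi(n)) - \nu_N(n) \geq \nu_{N',N''}(\Phi) + \nu_{N,N'}(\Psi).
\]

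Since the right-hand side is independent of $n$, I take the infimum over $n \in N\setminus\{0\}$ on the left and obtain the desired inequality $\nu_{N,N''}(\Phi\Psi) \geq \nu_{N',N''}(\Phi) + \nu_{N,N'}(\Psi)$. There is no real obstacle here; the only subtle point worth flagging in the write-up is that whenever $\Psi(n)=0$, the first displayed inequality is $\infty \geq \infty$, so it holds vacuously and the argument is unaffected.
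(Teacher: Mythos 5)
Your proof is correct and follows essentially the same route as the paper: both arguments reduce to the definitional bounds $\nu_{N''}(\Phi(m))-\nu_{N'}(m)\ge\nu_{N',N''}(\Phi)$ and $\nu_{N'}(\Psi(n))-\nu_N(n)\ge\nu_{N,N'}(\Psi)$ and then pass to the infimum over $n$ (the paper phrases this by adding and subtracting $\nu_{N'}(\Psi(n))$ and splitting the infimum, which is the same estimate). Your explicit remark about the case $\Psi(n)=0$ is a nice touch that the paper's write-up glosses over.
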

\begin{proof}
 \begin{align*}
   \nu_{N,N''}(\Phi\Psi)&= \inf_{n\in N\backslash\{0\}}\{\nu_{N''}(\Phi(\Psi(n)))-\nu_N(n)\}\\
    &=\inf_{n\in N\backslash\{0\}}\{\nu_{N''}(\Phi(\Psi(n)))- \nu_{N'}(\Psi(n)) +  \nu_{N'}(\Psi(n))- \nu_N(n)\}\\
    &\ge \inf_{n\in N\backslash\{0\}}\{\nu_{N''}(\Phi(\Psi(n)))- \nu_{N'}(\Psi(n))\}  +\inf_{n\in N\backslash\{0\}}\{  \nu_{N'}(\Psi(n))- \nu_N(n)\}\\
    &\ge \inf_{n\in N'\backslash\{0\}}\{\nu_{N''}(\Phi(n))- \nu_{N'}(n)\}  +\inf_{n\in N\backslash\{0\}}\{  \nu_{N'}(\Psi(n))- \nu_N(n)\}\\
    &= \nu_{N',N''}(\Phi)+\nu_{N,N'}(\Psi).
\end{align*}    
\end{proof}
Let $I$ be an index set of at most countable cardinality.
Recall that a Schauder basis $\{n_i\}_{i\in I}$
of $(N,\nu_N)$ is called $\nu_N$-orthogonal if
\[
\nu_N(\sum_{i\in I}q_in_i)=\inf_{i\in I}\{\nu_Q(q_i)+\nu_N(n_i)\},\quad q_i\in Q.
\]  
Assume both $(N,\nu_N)$ and $(N',\nu_{N'})$ admit $\nu_N$-orthogonal and $\nu_{N'}$-orthogonal Schauder bases denoted by
$\{n_i\}_{i\in I}$ and $\{n'_j\}_{j\in J}$ respectively.
Let $\{e_{ij}\}_{(i,j)\in I\times J}$ be a Schauder basis of $\Hom(N, N')$ acting on $\{n_i\}_{i\in I}$ as $
e_{ij}(n_k)= \delta_{ik}n'_j,$ where $\delta_{ik}$ is the Kronecker delta.
\begin{lm}\label{lm:N_e_ij}
    $\nu_{N,N'}(e_{ij})=\nu_{N'}(n'_j)-\nu_N(n_i).$
\end{lm}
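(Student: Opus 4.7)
The proof will be a short two-sided estimate of the infimum defining $\nu_{N,N'}(e_{ij})$, using the orthogonality of the Schauder basis $\{n_k\}_{k\in I}$.

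For the lower bound $\nu_{N,N'}(e_{ij}) \ge \nu_{N'}(n'_j) - \nu_N(n_i)$, I would take an arbitrary nonzero $n = \sum_{k\in I} q_k n_k \in N$ and compute $e_{ij}(n) = q_i n'_j$. If $q_i = 0$, then $e_{ij}(n) = 0$ and $\nu_{N'}(e_{ij}(n)) - \nu_N(n) = \infty$, so such $n$ do not affect the infimum. If $q_i \ne 0$, then $\nu_{N'}(e_{ij}(n)) = \nu_Q(q_i) + \nu_{N'}(n'_j)$, and by the $\nu_N$-orthogonality of $\{n_k\}$ we have
\[
\nu_N(n) = \inf_{k\in I}\{\nu_Q(q_k) + \nu_N(n_k)\} \le \nu_Q(q_i) + \nu_N(n_i).
\]
Combining these,
\[
\nu_{N'}(e_{ij}(n)) - \nu_N(n) \ge \nu_Q(q_i) + \nu_{N'}(n'_j) - \nu_Q(q_i) - \nu_N(n_i) = \nu_{N'}(n'_j) - \nu_N(n_i),
\]
and taking the infimum over $n$ gives the desired inequality.

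For the matching upper bound, I would simply plug in $n = n_i$, for which $e_{ij}(n_i) = n'_j$, so
\[
\nu_{N'}(e_{ij}(n_i)) - \nu_N(n_i) = \nu_{N'}(n'_j) - \nu_N(n_i),
\]
which shows the infimum is at most this value.

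The two estimates combine to give the equality, completing the proof. There is no substantive obstacle here; the only subtlety is making sure to exclude the case $q_i = 0$ from the infimum (which is automatic since $\nu_{N'}(0) = \infty$), and the key ingredient is the $\nu_N$-orthogonality, which immediately yields the bound $\nu_N(n) \le \nu_Q(q_i) + \nu_N(n_i)$ used in the lower estimate.
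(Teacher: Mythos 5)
Your proof is correct and follows essentially the same route as the paper: bound the infimum from below using the $\nu_N$-orthogonality of the basis (with the case $q_i=0$ dismissed since it contributes $\infty$), and achieve the bound by evaluating at $n=n_i$. No issues.
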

\begin{proof}
 Let $n\in N\backslash\{0\},$ and write $n=\sum_{k\in I}q^n_kn_k$ where $q^n_k\in Q.$ 
We obtain
\begin{align*}
    \nu_{N'}(e_{ij}(n))-\nu_N(n)&=  \nu_{N'}(e_{ij}(\sum_{k\in I}q^n_kn_k))-\nu_N(\sum_{k\in I}q^n_kn_k)\\
    &= \nu_{N'}(q^n_in'_j)-\inf_{k\in I}\{\nu_Q(q^n_k)+\nu_N(n_k)\}.
\end{align*}
If $q^n_i=0,$ then $ \nu_{N'}(e_{ij}(n))-\nu_N(n)=\infty.$ If $q^n_i\ne0,$ then 
\begin{align*}
 \nu_{N'}(e_{ij}(n))-\nu_N(n)&=\nu_{N'}(q^n_in'_j)-\inf_{k\in I}\{\nu_Q(q^n_k)+\nu_N(n_k)\}\\
&\ge\nu_Q(q^n_i)+ \nu_{N'}(n'_j)-\nu_Q(q^n_i)-\nu_N(n_i)\\
&=\nu_{N'}(n'_j)-\nu_N(n_i).
\end{align*}
 Hence, 
 \[
\nu_{N,N'}(e_{ij})\ge \nu_{N'}(n'_j)-\nu_N(n_i).
\] 
Taking $n=n_i$ yields $\nu_{N'}(e_{ij}(n))-\nu_N(n)=\nu_{N'}(n'_j)-\nu_N(n_i).$
\end{proof}

\begin{lm}\label{lm:orthogonal_basis}
    $\{e_{ij}\}_{(i,j)\in I\times J}$ is a $\nu_{N,N'}$-orthogonal Schauder basis of $\Hom(N,N').$
\end{lm}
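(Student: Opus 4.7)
The plan is to verify the two conditions separately: orthogonality of the family $\{e_{ij}\}_{(i,j)\in I\times J}$ with respect to $\nu_{N,N'}$, and the Schauder basis property itself. The former reduces to a direct computation using the orthogonality of $\{n_i\}$ and $\{n'_j\}$, while the latter requires constructing the expansion of a given $\Phi$.

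For orthogonality, I would start from a convergent sum $\Phi = \sum_{(i,j)\in I\times J} q_{ij} e_{ij}$. One direction is immediate: by Lemma~\ref{lm:N_valuation} and Lemma~\ref{lm:N_e_ij},
\[
\nu_{N,N'}(\Phi) \ge \inf_{(i,j)} \{\nu_Q(q_{ij}) + \nu_{N,N'}(e_{ij})\}.
\]
For the reverse inequality, evaluate $\Phi$ on a basis element $n_k$. From the definition of $e_{ij}$, one has $\Phi(n_k) = \sum_{j\in J} q_{kj}\, n'_j$. Applying $\nu_{N'}$-orthogonality of $\{n'_j\}$ gives $\nu_{N'}(\Phi(n_k)) = \inf_{j\in J}\{\nu_Q(q_{kj}) + \nu_{N'}(n'_j)\}$. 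Subtracting $\nu_N(n_k)$ and invoking Lemma~\ref{lm:N_e_ij} yields
\[
\nu_{N'}(\Phi(n_k)) - \nu_N(n_k) = \inf_{j\in J}\{\nu_Q(q_{kj}) + \nu_{N,N'}(e_{kj})\}.
\]
Taking the infimum over $k\in I$ and using the definition of $\nu_{N,N'}$ as an infimum over all nonzero elements shows $\nu_{N,N'}(\Phi) \le \inf_{(i,j)}\{\nu_Q(q_{ij}) + \nu_{N,N'}(e_{ij})\}$, establishing orthogonality.

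For the Schauder basis property, uniqueness follows at once from orthogonality, since a vanishing convergent sum $\sum q_{ij} e_{ij} = 0$ forces $\nu_Q(q_{ij}) + \nu_{N,N'}(e_{ij}) = \infty$ for every $(i,j)$ and hence $q_{ij} = 0$. For existence, given $\Phi \in \Hom(N,N')$, expand $\Phi(n_i) = \sum_{j\in J} q_{ij}\, n'_j$ using the $\nu_{N'}$-orthogonal Schauder basis $\{n'_j\}$, and let $\Phi' := \sum_{(i,j)} q_{ij} e_{ij}$. The bound $\nu_{N,N'}(\Phi) \le \nu_{N'}(\Phi(n_i)) - \nu_N(n_i)$ together with orthogonality of $\{n'_j\}$ gives $\nu_Q(q_{ij}) + \nu_{N,N'}(e_{ij}) \ge \nu_{N,N'}(\Phi)$ for all $(i,j)$, while the convergence of the expansions of $\Phi(n_i)$ forces $\nu_Q(q_{ij}) + \nu_{N,N'}(e_{ij}) \to \infty$ along each fixed $i$, and the Schauder basis structure on $N$ forces $\nu_N(n_i) \to \infty$; together these ensure that $\Phi'$ converges in the $\nu_{N,N'}$-topology. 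Finally, $\Phi$ and $\Phi'$ agree on every $n_k$ by construction, and continuity with respect to the valuations, guaranteed by the finiteness of $\nu_{N,N'}$ on both sides, extends this to all of $N$, giving $\Phi = \Phi'$.

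The main obstacle I expect is the convergence argument in the existence part: one must carefully juggle the three index sets $I$, $J$, and the pair $I\times J$, verifying that the non-Archimedean tail estimates control the double sum uniformly. Once the bookkeeping is set up correctly, the heart of the proof is a single evaluation of $\Phi$ on a basis vector, and the remainder is a formal unwinding of the definitions and of Lemmas~\ref{lm:N_valuation} and~\ref{lm:N_e_ij}.
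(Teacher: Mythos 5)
Your orthogonality argument is correct and is essentially the paper's proof: the paper also expands $\Phi=\sum_{i,j}q_{ij}e_{ij}$, evaluates on elements of $N$, invokes the $\nu_{N'}$-orthogonality of $\{n'_j\}$ together with Lemma~\ref{lm:N_e_ij}, and exchanges infima to land on $\inf_{i,j}\{\nu_Q(q_{ij})+\nu_{N,N'}(e_{ij})\}$. (The paper runs the computation for an arbitrary $n\in N\setminus\{0\}$ as a chain of equalities; your split into the easy inequality from Lemma~\ref{lm:N_valuation} plus evaluation at the basis vectors $n_k$ for the reverse inequality is, if anything, slightly cleaner, since at $n=n_k$ the coefficients of $\Phi(n_k)$ in the basis $\{n'_j\}$ are exactly the $q_{kj}$ and no cancellation among $\sum_i q_{ij}q^n_i$ can intervene.)

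Where you go beyond the paper is the Schauder-basis (spanning) part, which the paper simply records with ``Note that $\{e_{ij}\}$ is a Schauder basis of $\Hom(N,N')$'' and does not prove. Your existence argument has a genuine gap: the step ``the Schauder basis structure on $N$ forces $\nu_N(n_i)\to\infty$'' does not follow from the hypotheses. A $\nu_N$-orthogonal Schauder basis can perfectly well have all $\nu_N(n_i)$ bounded (even identically zero, as for monomial bases of Tate-algebra type); convergence of expansions $\sum_i q_i n_i$ is then carried entirely by $\nu_Q(q_i)\to\infty$, not by the basis valuations. Without that divergence, the estimates you cite do not show that $\sum_{i,j}q_{ij}e_{ij}$ converges in the $\nu_{N,N'}$-topology, and in fact for a general bounded-below family $\{e_{ij}\}$ an arbitrary $\Phi\in\Hom(N,N')$ (defined only by its values $\Phi(n_i)$) need not be a $\nu_{N,N'}$-convergent sum of the $e_{ij}$ at all: the pointwise data $\Phi(n_i)=\sum_j q_{ij}n'_j$ give no uniform control in $i$. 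So either one restricts to the situation actually used in the paper (finitely many $e_{ij}$ over $S$, as for $\Hom(M^1,M^2)$ with $M^1,M^2$ finitely generated free), or one takes $\Hom(N,N')$ to mean the completion of the span of the $e_{ij}$, in which case the spanning statement is true by fiat; but as written your convergence argument would fail.
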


\begin{proof}
Note that  $\{e_{ij}\}_{(i,j)\in I\times J}$ is a Schauder basis of $\Hom(N,N'),$ so we need to show it is $\nu_{N,N'}$-orthogonal.
Let $n\in N\backslash\{0\},$ and write $n=\sum_{k\in I}q^n_kn_k$ where $q^n_k\in Q.$ 
We get 
\begin{align*}
\nu_{N'}\big( \sum_{i,j}q_{i,j}e_{i,j}(n) \big)&=\nu_{N'}\big( \sum_{i,j}q_{i,j}q_i^nn_j' \big)\\
&=\inf_{i,j}\{\nu_Q(q_{i,j})+\nu_{N'}(q_i^nn_j')\}\\
&=\inf_{i,j}\{\nu_Q(q_{i,j})+\nu_{N'}(e_{i,j}(n))\}.
\end{align*}
Thus,
\begin{align*}
    \nu_{N,N'}(\sum_{i,j}q_{i,j}e_{i,j})&=\inf_{n\in N\backslash\{0\}}\{ \nu_{N'}\big( \sum_{i,j}q_{i,j}e_{i,j}(n) \big)- \nu_N(n) \}\\
    &=\inf_{n\in N\backslash\{0\}}\{ \inf_{i,j}\{\nu_Q(q_{i,j})+\nu_{N'}(e_{i,j}(n))\}- \nu_N(n) \}\\
    &=\inf_{i,j}\big\{\inf_{n\in N\backslash\{0\}}\{\nu_Q(q_{i,j})+ \nu_{N'}(e_{i,j}(n))- \nu_N(n) \}\big\}\\
    &=\inf_{i,j}\big\{\nu_Q(q_{i,j})+\inf_{n\in N\backslash\{0\}}\{ \nu_{N'}(e_{i,j}(n))- \nu_N(n) \}\big\}\\
    &=\inf_{i,j}\{\nu_Q(q_{i,j}) +\nu_{N,N'}(e_{i,j})\}.
\end{align*}
    
\end{proof}

\subsection{Valuations on Hom-complexes}\label{Section_valuations}
Let $\big((R,\nu_R),(S,\{\nu^S_\lambda\}_{\lambda\in \Lambda}) , W \big)$ be a valued Landau-Ginzburg model, and let $\MF(W,w)$ be the associated normed matrix factorization category. 
 
Let $(M^i,D_{M^i},\{\nu^{M^i}_\lambda\}_{\lambda\in \Lambda})$ for $i=1,2$ be two objects of $\MF(W,w).$
Note that $\Hom(M^1,M^2)$ is naturally an $S$-module, but since $S$ is an $R$-algebra, we can consider $\Hom(M^1,M^2)$ as an $R$-module. 
Define
\begin{equation}\label{eq:valuation_lambda_hom}
{\nu_{M^1,M^2}}_\lambda(\Phi):=\inf_{m\in M^1\backslash\{0\}}\{\nu^{M^2}_\lambda(\Phi(m))-\nu^{M^1}_\lambda(m)\}, \quad \lambda\in \Lambda, \quad \Phi\in\Hom(M^1,M^2).
\end{equation}
\begin{lm}\label{lm:val_fam_end_m}
   $(\Hom(M^1,M^2), {\nu_{M^1,M^2}}_\lambda)$ is a valued DG module over $R$ for all $\lambda\in \Lambda.$
\end{lm}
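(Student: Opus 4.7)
The plan is to reduce the lemma to Lemma~\ref{lm:N_valuation} together with the standard matrix factorization identity $\delta^2 = 0$. Fix $\lambda \in \Lambda.$ Since $(S,\nu^S_\lambda)$ is a valued algebra over $(R,\nu_R),$ the unit $1_S$ satisfies $\nu^S_\lambda(1_S) = 0,$ and the valued module axiom then gives $\nu^S_\lambda(r \cdot 1_S) = \nu_R(r)$ for every $r \in R.$ Consequently, any valued module over $(S,\nu^S_\lambda)$ is a valued module over $(R,\nu_R)$ by restriction of scalars with the same underlying valuation.

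With this reduction in hand, I view $(M^1,\nu^{M^1}_\lambda)$ and $(M^2,\nu^{M^2}_\lambda)$ as valued modules over $(S,\nu^S_\lambda)$ and apply Lemma~\ref{lm:N_valuation} with $Q = S,$ $N = M^1,$ $N' = M^2.$ The formula for $\nu_{N,N'}$ appearing in that lemma coincides verbatim with the definition of ${\nu_{M^1,M^2}}_\lambda$ in equation~\eqref{eq:valuation_lambda_hom}. Hence $(\Hom(M^1,M^2),{\nu_{M^1,M^2}}_\lambda)$ is a valued module over $S,$ and therefore also over $R.$

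It remains to verify the DG module structure over $R.$ The map $\delta$ defined by equation~\eqref{differential_mf} has degree $+1$ and is $S$-linear (hence $R$-linear), since each $D_{M^i}$ is a homomorphism of $S$-modules. For the identity $\delta^2 = 0,$ a direct computation using the sign $(-1)^{|\Phi|}$ in~\eqref{differential_mf} together with the matrix factorization identities $D_{M^i}^2 = (W-w)\cdot \id_{M^i}$ shows that the cross terms cancel and one is left with
\[
\delta^2(\Phi) \;=\; (W-w)\cdot \Phi \;-\; \Phi \cdot (W-w),
\]
which vanishes because $\Phi$ is $S$-linear and $W-w \in S.$

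I do not anticipate any genuine obstacle. The substantive content has already been packaged into Lemma~\ref{lm:N_valuation}, and what remains is the routine $\delta^2 = 0$ computation for matrix factorization Hom-complexes together with the elementary observation that the valuations $\nu^S_\lambda$ and $\nu_R$ are compatible under the inclusion $R \hookrightarrow S,$ which allows the valued module structure to descend from $S$ to $R.$
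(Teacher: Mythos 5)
Your proof is correct and follows essentially the same route as the paper: both reduce to Lemma~\ref{lm:N_valuation} after observing that the compatibility $\nu^S_\lambda|_R = \nu_R$ (coming from $S$ being a valued unital algebra over $R$) lets the valued module structure descend from $S$ to $R$. Your additional verification that $\delta$ is $R$-linear and squares to zero is a correct supplement that the paper leaves implicit.
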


\begin{proof}
    By Lemma~\ref{lm:N_valuation} we need to show that $(M^i,\nu_\lambda^{M^i})$ are valued modules over $R$ for all $\lambda\in\Lambda.$ 
    Since $(S, \nu^S_\lambda)$ is a valued unital algebra over $R$ for all $\lambda\in \Lambda,$ it follows that
$\nu_R(r)=\nu^S_\lambda(r)$ for all $r\in R$ and $\lambda\in \Lambda.$ Hence, for all $r\in R$ and $m_i\in M^i$ we get $\nu^{M^i}_\lambda(rm_i)=\nu_\lambda^S(r)+\nu^{M^i}_\lambda(m_i)=\nu_R(r)+\nu^{M^i}_\lambda(m_i),$ where $i=1,2.$ Thus, $(M^i,\nu_\lambda^{M^i})$ are valued modules over $R$ for all $\lambda\in\Lambda.$
\end{proof}

\begin{rem}\label{remark_S_M}
   It follows by Lemma~\ref{lm:N_valuation} that  $(\Hom(M^1,M^2), {\nu_{M^1,M^2}}_\lambda)$ is a valued module over $(S, \nu_\lambda^S)$ for all $\lambda\in \Lambda.$
\end{rem}
Define 
\begin{equation}\label{eq:val_hom_01}
\nu_{M^1,M^2}(\Phi)=\inf_{\lambda\in \Lambda}\{{\nu_{M^1,M^2}}_\lambda(\Phi)\},\quad \Phi\in\Hom(M^1,M^2).    
\end{equation}

\begin{lm}\label{lm:valuation_algebra_end}
 $(\Hom(M^1,M^2),\nu_{M^1,M^2})$ is a valued DG module over $R.$
\end{lm}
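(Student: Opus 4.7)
The plan is to verify directly that $\nu_{M^1,M^2}$ satisfies the three axioms of a valuation on the $R$-module $\Hom(M^1,M^2)$, exploiting Lemma~\ref{lm:val_fam_end_m} which states that each ${\nu_{M^1,M^2}}_\lambda$ is a valuation giving $\Hom(M^1,M^2)$ the structure of a valued DG module over $R$. The DG structure itself is inherited from the differential $\delta$ defined in~\eqref{differential_mf}, so the only work is checking the three axioms.

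Two of the axioms pass through the infimum in~\eqref{eq:val_hom_01} with essentially no effort. For the ultrametric inequality,
\[
\nu_{M^1,M^2}(\Phi_1+\Phi_2)=\inf_{\lambda\in\Lambda}{\nu_{M^1,M^2}}_\lambda(\Phi_1+\Phi_2)\ge \inf_{\lambda\in\Lambda}\min\{{\nu_{M^1,M^2}}_\lambda(\Phi_1),{\nu_{M^1,M^2}}_\lambda(\Phi_2)\},
\]
and the right-hand side equals $\min\{\nu_{M^1,M^2}(\Phi_1),\nu_{M^1,M^2}(\Phi_2)\}$. For $R$-homogeneity, Lemma~\ref{lm:val_fam_end_m} gives ${\nu_{M^1,M^2}}_\lambda(r\Phi)=\nu_R(r)+{\nu_{M^1,M^2}}_\lambda(\Phi)$ for every $\lambda$, so taking infima gives $\nu_{M^1,M^2}(r\Phi)=\nu_R(r)+\nu_{M^1,M^2}(\Phi)$.

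The substantive step, and the only step where one really uses the polytope structure, is the separation axiom: $\nu_{M^1,M^2}(\Phi)=\infty$ if and only if $\Phi=0$. The nontrivial direction is showing that for $\Phi\ne 0$ the infimum over $\lambda\in\Lambda$ is finite, i.e.\ that $\lambda\mapsto {\nu_{M^1,M^2}}_\lambda(\Phi)$ is bounded below on $\Lambda$. The plan is to use condition~\ref{03_condition} on $M^1,M^2$ together with condition~\ref{1_condition} on $S$ to produce affine control. Fix a common $\nu_\lambda^{M^1}$-orthogonal basis $m_1,\ldots,m_a$ of $M^1$ and a common $\nu_\lambda^{M^2}$-orthogonal basis $m'_1,\ldots,m'_{a'}$ of $M^2$, both with affine $\lambda$-dependence of their valuations. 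By Lemma~\ref{lm:N_e_ij} the elementary homomorphisms $e_{ij}$ satisfy
\[
{\nu_{M^1,M^2}}_\lambda(e_{ij})=\nu_\lambda^{M^2}(m'_j)-\nu_\lambda^{M^1}(m_i),
\]
which is an affine function of $\lambda$, and by Lemma~\ref{lm:orthogonal_basis} the family $\{e_{ij}\}$ is a $\nu_{M^1,M^2}|_\lambda$-orthogonal Schauder basis over $S$. Writing $\Phi=\sum_{i,j}s_{ij}e_{ij}$ with $s_{ij}\in S$ and expanding each $s_{ij}$ in the monomial basis of $S$ supplied by condition~\ref{1_condition} on $S$, one obtains
\[
{\nu_{M^1,M^2}}_\lambda(\Phi)=\inf_{i,j}\bigl\{\nu^S_\lambda(s_{ij})+{\nu_{M^1,M^2}}_\lambda(e_{ij})\bigr\},
\]
where each $\nu^S_\lambda(s_{ij})$ is itself an infimum over finitely many affine functions of $\lambda$ (plus valuations of the monomial coefficients in $R$, which are independent of $\lambda$). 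Since $\Lambda$ is bounded, every affine function on $\Lambda$ is bounded, and hence so is ${\nu_{M^1,M^2}}_\lambda(\Phi)$; in particular its infimum over $\lambda$ is finite whenever some $s_{ij}$ is nonzero, i.e.\ whenever $\Phi\ne 0$.

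The main obstacle I anticipate is purely bookkeeping around condition~\ref{1_condition}: expansions of $s_{ij}$ in the monomial basis of $S$ are \emph{infinite} sums, so one must check that the infimum of the affine quantities $\nu^S_\lambda(\text{monomial})$ over $\lambda\in\Lambda$ commutes appropriately with the limit in the Schauder expansion. The hypothesis that $\lim_{\lambda\to p_l}\nu^S_\lambda(s_i)\in\ima\nu_R$ at each vertex $p_l$ ensures that the affine coordinates stay controlled up to the boundary, which is exactly what lets the argument pass to the completion. Once this is handled, the three axioms are verified and the already-existing differential $\delta$ makes $(\Hom(M^1,M^2),\delta,\nu_{M^1,M^2})$ into a valued DG module over $R$.
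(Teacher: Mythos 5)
Your treatment of the ultrametric inequality and $R$-homogeneity is exactly the paper's: both follow by pushing the infimum over $\lambda$ through Lemma~\ref{lm:val_fam_end_m}, and this part is fine. Where you diverge is the separation axiom, and here you do substantially more work than the paper, in a way that is partly unnecessary and partly mislabeled. For the statement ``$\nu_{M^1,M^2}(\Phi)=\infty$ iff $\Phi=0$'' no boundedness below is needed at all: if $\Phi\neq 0$, then ${\nu_{M^1,M^2}}_{\lambda_0}(\Phi)<\infty$ for any single $\lambda_0\in\Lambda$, so the infimum over the nonempty set $\Lambda$ is already $<\infty$; this one-line observation is the paper's entire argument for that axiom. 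What your orthogonal-basis/affine-valuation argument actually addresses is a different (and genuine) subtlety, namely that $\nu_{M^1,M^2}$ should take values in $\R\cup\{\infty\}$ rather than $-\infty$; the paper's proof of this lemma is silent on that point and only settles it later, in Lemma~\ref{lm:s_pj}, Lemma~\ref{lm:affine} and Lemma~\ref{lm:ima_m_eq_ima_r}, via exactly the ingredients you invoke. One caution on your sketch of that step: ``every affine function on a bounded $\Lambda$ is bounded, hence so is the infimum'' is not valid for an infinite family (an infimum of affine functions, finite at every point of a bounded set, can still be unbounded below, e.g.\ $\log$-type concave functions near the boundary); the correct mechanism, as in the paper, is concavity of the piecewise affine function $\lambda\mapsto\nu^S_\lambda(s_{ij})+{\nu_{M^1,M^2}}_\lambda(e_{ij})$ together with the hypothesis that the vertex limits lie in $\ima\nu_R$, so the infimum is controlled at the vertices $p_1,\ldots,p_c$. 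You flag this as your ``main obstacle,'' which is the right instinct, but the fix is the vertex-limit/concavity argument of Lemma~\ref{lm:ima_m_eq_ima_r}, not boundedness of $\Lambda$ alone. In short: your plan proves the lemma (and a bit more), but the separation axiom itself admits the paper's much shorter argument, and the extra finiteness claim should be routed through the paper's later lemmas rather than through boundedness of affine functions.
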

\begin{proof}
    First,  $\nu_{M^1,M^2}(\Phi)=\infty$ if and only if ${\nu_{M^1,M^2}}_\lambda(\Phi)=\infty$ for all $\lambda\in \Lambda.$ Hence, $\nu_{M^1,M^2}(\Phi)=\infty$ if and only if $\Phi$ is the zero homomorphism.
    Let $\Phi, \Psi\in \Hom(M^1,M^2).$ We get
    \begin{align*}
        \nu_{M^1,M^2}(\Phi+\Psi)&=\inf_{\lambda\in \Lambda}\{{\nu_{M^1,M^2}}_\lambda(\Phi+\Psi)\}\\
        &\ge\inf_{\lambda\in \Lambda}\big\{\min\{{\nu_{M^1,M^2}}_\lambda(\Phi),{\nu_{M^1,M^2}}_\lambda(\Psi)\}\big\}\\
        &=\min\big\{\inf_{\lambda\in \Lambda}\{{\nu_{M^1,M^2}}_\lambda(\Phi)\},\inf_{\lambda\in \Lambda}\{{\nu_{M^1,M^2}}_\lambda(\Psi)\}\big\}\\
        &=\min\{\nu_{M^1,M^2}(\Phi),\nu_{M^1,M^2}(\Psi)\},
    \end{align*}
and for all $r\in R$ we obtain
\begin{align*}
    \nu_{M^1,M^2}(r\Phi)&=\inf_{\lambda\in \Lambda}\{{\nu_{M^1,M^2}}_\lambda (r\Phi)\}\\
    &=\inf_{\lambda\in \Lambda}\{\nu_R(r)+{\nu_{M^1,M^2}}_\lambda  (\Phi) \}\\
    &=\nu_R(r)+\inf_{\lambda\in \Lambda}\{{\nu_{M^1,M^2}}_\lambda  (\Phi)\}\\
    &=\nu_R(r)+ \nu_{M^1,M^2}(\Phi).
\end{align*}
\end{proof}
Recall
\[
\Hom(M^1,M^2)_0:=\{\Phi\in \Hom(M^1,M^2)| \ \nu_{M^1,M^2}(\Phi)\ge0\}/ \{\Phi\in \Hom(M^1,M^2)| \  \nu_{M^1,M^2}(\Phi)>0\}.
\]
\begin{lm}\label{lm:valuation_delta}
     $\nu_{M^1,M^2}(\delta(\Phi))\ge\nu_{M^1,M^2}(\Phi)$ for all $\Phi\in\Hom(M^1,M^2).$
\end{lm}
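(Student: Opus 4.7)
The plan is to reduce the claim to the pointwise inequality $\nu_\lambda^{M^2}(\delta(\Phi)(m)) \ge \nu_\lambda^{M^1}(m) + \nu_{M^1,M^2}(\Phi)$ for each $\lambda \in \Lambda$ and each nonzero $m \in M^1$, and then infimize. Since by definition $\nu_{M^1,M^2}(\Phi) = \inf_{\lambda \in \Lambda} \nu_{{M^1,M^2}_\lambda}(\Phi)$, it suffices to establish the analogous inequality for each individual valuation $\nu_{M^1,M^2,\lambda}$, and the global inequality follows by taking infima on both sides.

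The key inputs are (i) the explicit formula \eqref{differential_mf} for $\delta$, (ii) the hypothesis from condition~(2) in the definition of $\MF(W,w)$ that $\nu_\lambda^{M^i}(D_{M^i}(m)) \ge \nu_\lambda^{M^i}(m)$ for $i = 1, 2$, and (iii) the definition \eqref{eq:valuation_lambda_hom} which gives $\nu_\lambda^{M^2}(\Phi(n)) \ge \nu_{M^1,M^2,\lambda}(\Phi) + \nu_\lambda^{M^1}(n)$ for all $n \in M^1$. I first assume $\Phi$ is homogeneous (the general case is immediate since $\nu$ of a sum bounds below by the minimum). Then for any $\lambda$ and any nonzero $m \in M^1$, applying the strong triangle inequality to $\delta(\Phi)(m) = D_{M^2}(\Phi(m)) - (-1)^{|\Phi|}\Phi(D_{M^1}(m))$ gives
\[
\nu_\lambda^{M^2}(\delta(\Phi)(m)) \ge \min\bigl\{\nu_\lambda^{M^2}(D_{M^2}(\Phi(m))),\ \nu_\lambda^{M^2}(\Phi(D_{M^1}(m)))\bigr\}.
\]
For the first term, input (ii) yields $\nu_\lambda^{M^2}(D_{M^2}(\Phi(m))) \ge \nu_\lambda^{M^2}(\Phi(m)) \ge \nu_{M^1,M^2,\lambda}(\Phi) + \nu_\lambda^{M^1}(m)$ by input (iii). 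For the second term, input (iii) applied to $n = D_{M^1}(m)$ gives $\nu_\lambda^{M^2}(\Phi(D_{M^1}(m))) \ge \nu_{M^1,M^2,\lambda}(\Phi) + \nu_\lambda^{M^1}(D_{M^1}(m))$, and then (ii) applied to $M^1$ bounds this below by $\nu_{M^1,M^2,\lambda}(\Phi) + \nu_\lambda^{M^1}(m)$. Both branches therefore yield
\[
\nu_\lambda^{M^2}(\delta(\Phi)(m)) - \nu_\lambda^{M^1}(m) \ge \nu_{M^1,M^2,\lambda}(\Phi).
\]

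Taking the infimum over $m \in M^1 \setminus \{0\}$ gives $\nu_{M^1,M^2,\lambda}(\delta(\Phi)) \ge \nu_{M^1,M^2,\lambda}(\Phi)$, and taking the infimum over $\lambda \in \Lambda$ completes the argument. There is no real obstacle here; the only point requiring mild care is the reduction to homogeneous $\Phi$ so that the sign $(-1)^{|\Phi|}$ in \eqref{differential_mf} is well defined (and irrelevant to the estimate, since valuations ignore signs). The argument is essentially a packaging of the fact that each of the two summands in $\delta(\Phi)(m)$ separately obeys the required bound.
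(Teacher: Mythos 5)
Your proof is correct and follows essentially the same route as the paper: reduce to the fixed-$\lambda$ valuations, apply the ultrametric inequality to the two terms of $\delta(\Phi)(m)$, use $\nu_\lambda^{M^i}(D_{M^i}(m))\ge\nu_\lambda^{M^i}(m)$ on each branch, and infimize over $m$ and then $\lambda$. The only cosmetic difference is that you invoke the pointwise bound $\nu_\lambda^{M^2}(\Phi(n))\ge\nu_{{M^1,M^2}_\lambda}(\Phi)+\nu_\lambda^{M^1}(n)$ term by term, while the paper carries the infimum over $m$ through the whole computation; the content is identical.
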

\begin{proof}
Let $\Phi\in\Hom(M^1,M^2).$
It suffices to show that $\nu_{{M^1,M^2}_\lambda}(\delta(\Phi))\ge \nu_{{M^1,M^2}_\lambda}(\Phi)$ for all $\lambda\in\Lambda.$ We have
\begin{align*}
    \nu_{{M^1,M^2}_\lambda}(\delta(\Phi))&=\inf_{m\in M^1\backslash\{0\}}\{\nu^{M^2}_\lambda(\delta(\Phi)(m))-\nu^{M^1}_\lambda(m)\}\\
    &=\inf_{m\in M^1\backslash\{0\}}\{\nu^{M^2}_\lambda\big(D_{M^2}\circ\Phi(m)-(-1)^{|\Phi|}\Phi\circ D_{M^1}(m)\big)-\nu^{M^1}_\lambda(m)\}\\
    &=\inf_{m\in M^1\backslash\{0\}}\big\{\min\{ \nu^{M^2}_\lambda\big(D_{M^2}\circ\Phi(m)\big),\nu^{M^2}_\lambda\big(\Phi\circ D_{M^1}(m)\big)\}-\nu^{M^1}_\lambda(m)\big\}\\
    &=\inf_{m\in M^1\backslash\{0\}}\big\{\min\{ \nu^{M^2}_\lambda\big(D_{M^2}\circ\Phi(m)\big)-\nu^{M^1}_\lambda(m),\nu^{M^2}_\lambda\big(\Phi\circ D_{M^1}(m)\big)-\nu^{M^1}_\lambda(m)\}\big\}.
\end{align*}
Recall
$\nu_\lambda^{M^i}(D_{M^i}(m))\ge\nu_\lambda^{M^i}(m)$ for all $\lambda\in\Lambda$ and $m\in M^i.$ Thus, we get
\[
\nu^{M^2}_\lambda\big(D_{M^2}\circ\Phi(m)\big)-\nu^{M^1}_\lambda(m)\ge\nu^{M^2}_\lambda(\Phi(m))-\nu^{M^1}_\lambda(m),
\]
and 
\[
\nu^{M^2}_\lambda\big(\Phi\circ D_{M^1}(m)\big)-\nu^{M^1}_\lambda(m)
\ge\nu^{M^2}_\lambda\big(\Phi\circ D_{M^1}(m)\big)- \nu_\lambda^{M^1}(D_{M^1}(m)).
\]
Hence,
\begin{align*}
    \nu_{{M^1,M^2}_\lambda}(\delta(\Phi))&\ge\inf_{m\in M^1\backslash\{0\}}\big\{\min\{ \nu^{M^2}_\lambda(\Phi(m))-\nu^{M^1}_\lambda(m),\nu^{M^2}_\lambda\big(\Phi\circ D_{M^1}(m)\big)- \nu_\lambda^{M^1}(D_{M^1}(m))\}\big\}\\
    &=\inf_{m\in M^1\backslash\{0\}}\{\nu^{M^2}_\lambda(\Phi(m))-\nu^{M^1}_\lambda(m)\}\\
    &=\nu_{{M^1,M^2}_\lambda}(\Phi).
\end{align*}

\end{proof}

Recall $\Lambda$ is a bounded $n$-dimensional polytope with vertices $p_1,\ldots, p_c.$ 
Let $s_1,\ldots, s_b\in S$  satisfy~\ref{1_condition}. 
 Let $\{w_k\}_{k=1}^\infty$ be the basis of $S$ as an $R$-module that generated by products of $s_1,\ldots, s_b,$ that is 
    \[
    w_k=\prod_{i=1}^b s_i^{{d_k}_i}, \quad {d_k}_i\in \Z_{\ge0}.
    \]
Abbreviate $\nu^S_{p_l}(s):=\lim_{\lambda\to p_l}\nu^S_\lambda(s).$

\begin{lm}\label{lm:s_pj}
    $\nu^S_{p_l}(s)\in\ima\nu_R$ for all $s\in S$ and $l\in\{1,\ldots,c\}.$
\end{lm}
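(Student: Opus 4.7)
The plan is to expand $s$ in the orthogonal Schauder basis $\{w_k\}$, extend the term-wise valuations to $p_l$ by continuity, and exploit the discreteness of $\ima\nu_R$ to conclude. Write $s=\sum_k r_k w_k$ with $r_k\in R$. By $\nu_\lambda^S$-orthogonality,
\[
\nu_\lambda^S(s)=\inf_k\{\nu_R(r_k)+\nu_\lambda^S(w_k)\},\qquad \lambda\in\Lambda,
\]
where $\nu_\lambda^S(w_k)=\sum_i {d_k}_i\,\nu_\lambda^S(s_i)$ is affine in $\lambda$ and hence extends continuously to $p_l$ with limit $\nu_{p_l}^S(w_k)=\sum_i {d_k}_i\,\nu_{p_l}^S(s_i)$. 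Since $h_G$ is a group homomorphism with discrete image, $\ima\nu_R$ is a discrete, hence closed, additive subgroup of $\R$. In particular $\nu_{p_l}^S(w_k)\in\ima\nu_R$, and therefore $\nu_R(r_k)+\nu_{p_l}^S(w_k)\in\ima\nu_R$ for every $k$ with $r_k\neq 0$.

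The key step is to identify $\nu_{p_l}^S(s)$ with $I:=\inf_k\{\nu_R(r_k)+\nu_{p_l}^S(w_k)\}$. Fix an interior point $\lambda_0\in\Lambda$ and consider the segment $\lambda(t):=(1-t)p_l+t\lambda_0$ for $t\in(0,1]$. Affineness of $\nu_\lambda^S(w_k)$ gives
\[
\nu_R(r_k)+\nu_{\lambda(t)}^S(w_k)=(1-t)\bigl[\nu_R(r_k)+\nu_{p_l}^S(w_k)\bigr]+t\bigl[\nu_R(r_k)+\nu_{\lambda_0}^S(w_k)\bigr].
\]
Using $\nu_R(r_k)+\nu_{p_l}^S(w_k)\ge I$ and taking the infimum over $k$ yields $\nu_{\lambda(t)}^S(s)\ge (1-t)I+t\,\nu_{\lambda_0}^S(s)$, so $\liminf_{t\to 0^+}\nu_{\lambda(t)}^S(s)\ge I$. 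Conversely, for each fixed $k$, $\nu_{\lambda(t)}^S(s)\le\nu_R(r_k)+\nu_{\lambda(t)}^S(w_k)\to\nu_R(r_k)+\nu_{p_l}^S(w_k)$ as $t\to 0^+$; taking the infimum over $k$ gives $\limsup_{t\to 0^+}\nu_{\lambda(t)}^S(s)\le I$. Hence $\nu_{p_l}^S(s)=I$.

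To conclude, the set $\{\nu_R(r_k)+\nu_{p_l}^S(w_k):r_k\neq 0\}$ is a subset of the discrete subgroup $\ima\nu_R$ bounded below by $I=\nu_{p_l}^S(s)$ (finite unless $s=0$, in which case the statement is trivial as $\nu_{p_l}^S(0)=\infty$). A bounded-below subset of a discrete subgroup of $\R$ attains its infimum, so $I\in\ima\nu_R$, and therefore $\nu_{p_l}^S(s)\in\ima\nu_R$. The main delicacy in the argument is the interchange of limit and infimum in the middle step; the convex decomposition above handles it cleanly, and this is precisely where the affineness of the $\nu_\lambda^S(s_i)$ is decisive.
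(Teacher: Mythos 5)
Your proof is correct and follows essentially the same route as the paper's: expand $s$ in the $\nu^S_\lambda$-orthogonal basis $\{w_k\}$, observe that each $\nu_R(r_k)+\nu^S_{p_l}(w_k)$ lies in the subgroup $\ima\nu_R$, and identify $\nu^S_{p_l}(s)$ with $\inf_k\{\nu_R(r_k)+\nu^S_{p_l}(w_k)\}$. The only difference is that you explicitly justify the interchange of limit and infimum (via affineness along a segment to the vertex) and the attainment of the infimum (via discreteness of $\ima\nu_R$), steps the paper's proof asserts without comment.
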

\begin{proof}
Since $\nu_\lambda^S(w_k)=\sum_{i=1}^b{d_k}_i\nu^S_\lambda(s_i)$ and $\nu^S_{p_l}(s_i)\in\ima\nu_R,$ it follows that $\nu_{p_l}^S(w_k)\in\ima\nu_R$ for all $l\in\{1,\ldots, c\}$ and $k\ge1.$      
Let $s\in S.$ So, we can write
\[
s=\sum_{k=1}^\infty r_k w_k, \quad r_k\in R. 
\]
We get, $\nu^S_{p_l}(s)=\inf_k\{\nu_R(r_k)+\nu^S_{p_l}(w_k)\}\in\ima\nu_R$ for all $l\in\{1,\ldots, c\}.$
\end{proof}
We say that a map $f:\Lambda\to \R$ is piecewise affine if $f(\lambda)=\inf_{i\ge1}\{g_i(\lambda)\},$ where $g_i(\lambda)$ are affine maps. Note that such $f$ is a concave map. Hence, the infimum of $f$ is achieved when $\lambda$ converges to one of the vertices of $\Lambda.$
\begin{lm}\label{lm:affine}
    $\nu_\lambda^S(s)$ is a piecewise affine map for all $s\in S.$
\end{lm}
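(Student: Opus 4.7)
The plan is to write $s$ in the Schauder basis $\{w_k\}_{k\ge 1}$ provided by condition~\ref{1_condition} and exploit orthogonality to reduce the question to an infimum of affine maps. Concretely, write
\[
s=\sum_{k=1}^\infty r_k w_k,\qquad r_k\in R,\qquad w_k=\prod_{i=1}^b s_i^{{d_k}_i},
\]
so that by $\nu_\lambda^S$-orthogonality of the Schauder basis (which holds uniformly in $\lambda\in\Lambda$),
\[
\nu_\lambda^S(s)=\inf_{k\ge 1}\bigl\{\nu_R(r_k)+\nu_\lambda^S(w_k)\bigr\}.
\]

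Next, I would unpack $\nu_\lambda^S(w_k)$. Since $\nu_\lambda^S$ is a valuation on $S$, multiplicativity gives
\[
\nu_\lambda^S(w_k)=\sum_{i=1}^b {d_k}_i\,\nu_\lambda^S(s_i),
\]
which, as a non-negative integer combination of the affine maps $\lambda\mapsto\nu_\lambda^S(s_i)$ (affineness is part of condition~\ref{1_condition}), is itself an affine function of $\lambda$. For each $k$ with $r_k\ne 0$, the constant $\nu_R(r_k)\in\R$ merely shifts this affine map, so
\[
g_k(\lambda):=\nu_R(r_k)+\nu_\lambda^S(w_k)
\]
is affine in $\lambda$. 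Terms with $r_k=0$ contribute $\infty$ and may be discarded from the infimum.

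Therefore $\nu_\lambda^S(s)=\inf_{k:\,r_k\ne 0}g_k(\lambda)$ is an infimum of a countable family of affine maps, which matches the working definition of piecewise affine stated just before the lemma, and concavity of $\nu_\lambda^S(s)$ follows automatically. There is no real obstacle here; the only point requiring care is checking that the orthogonality identity — which is stated for finite linear combinations — passes to the completed sum representing $s$, but this is built into the definition of an orthogonal Schauder basis used throughout Section~\ref{Section_valuations}.
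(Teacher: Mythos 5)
Your proposal is correct and follows essentially the same route as the paper: expand $s$ in the orthogonal Schauder basis $\{w_k\}$, use orthogonality to write $\nu_\lambda^S(s)=\inf_k\{\nu_R(r_k)+\nu_\lambda^S(w_k)\}$, and note that each $\nu_\lambda^S(w_k)=\sum_i {d_k}_i\,\nu_\lambda^S(s_i)$ is affine in $\lambda$, so the infimum is piecewise affine in the sense defined just before the lemma.
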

\begin{proof}
We have $\nu_\lambda^S(w_k)=\sum_{i=1}^b{d_k}_i\nu^S_\lambda(s_i).$ Since $\nu^S_{p_l}(s_i)$ are  affine maps, it follows that $\nu_\lambda^S(w_k)$ are affine as well. 
Let $s\in S.$ So, we can write
\[
s=\sum_{k=1}^\infty r_k w_k, \quad r_k\in R. 
\]
Since $\nu^S_{p_l}(s)=\inf_k\{\nu_R(r_k)+\nu^S_{p_l}(w_k)\}\in\ima\nu_R,$ it follows that $\nu_\lambda^S(s)$ is a piecewise affine maps
\end{proof}

Let $m_1,\ldots m_a$ and $m_1',\ldots, m_{a'}'$ be $\nu_\lambda^{M^1}$-orthogonal and $\nu_\lambda^{M^2}$-orthogonal bases of $M^1$ and $M^2$ 
respectively.
Let $\{e_{ij}\}_{\substack{1\le i\le a\\ 1\le j\le a'}}$ be a basis of $\Hom(M^1, M^2)$ as an $S$-module acting on $m_1,\ldots m_a$ as $
e_{ij}(m_k)= \delta_{ik}m'_j.$ 
The following lemma follows immediately from Lemma~\ref{lm:N_e_ij}. 
\begin{lm}\label{lm:e_ij}
    ${\nu_{M^1, M^2}}_\lambda(e_{ij})=\nu^{M^2}_\lambda(m'_j)-\nu^{M^1}_\lambda(m_i).$
\end{lm}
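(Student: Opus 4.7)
The plan is to derive Lemma~\ref{lm:e_ij} as a direct specialization of the general Lemma~\ref{lm:N_e_ij}. The latter is stated for any pair of valued modules $(N,\nu_N), (N',\nu_{N'})$ over a valued ring $(Q,\nu_Q)$ that admit orthogonal Schauder bases, and it computes the value of the evaluation-type basis elements $e_{ij}$ of $\Hom(N,N')$ with respect to the induced valuation $\nu_{N,N'}$. So I only need to verify that the present setup fits the hypotheses of Lemma~\ref{lm:N_e_ij} with $\lambda$ held fixed.

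First I would fix $\lambda \in \Lambda$ and take $Q = S$ equipped with $\nu_Q = \nu_\lambda^S$, together with $N = M^1, \nu_N = \nu_\lambda^{M^1}$ and $N' = M^2, \nu_{N'} = \nu_\lambda^{M^2}$. Condition~\ref{03_condition} in the definition of the objects of $\MF(W,w)$ tells us both that each $(M^i,\nu_\lambda^{M^i})$ is a valued module over $(S,\nu_\lambda^S)$ and that the bases $m_1,\ldots,m_a$ and $m'_1,\ldots,m'_{a'}$ are $\nu_\lambda^{M^1}$-orthogonal and $\nu_\lambda^{M^2}$-orthogonal respectively. The finite index sets $\{1,\ldots,a\}$ and $\{1,\ldots,a'\}$ have at most countable cardinality, so the countability hypothesis in the definition of an orthogonal Schauder basis is trivially satisfied.

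Next I would observe that the valuation ${\nu_{M^1,M^2}}_\lambda$ on $\Hom(M^1,M^2)$ introduced in~\eqref{eq:valuation_lambda_hom} is literally the same formula as the valuation $\nu_{N,N'}$ defined just before Lemma~\ref{lm:N_valuation}, namely
\[
{\nu_{M^1,M^2}}_\lambda(\Phi) = \inf_{m \in M^1 \setminus \{0\}}\{\nu_\lambda^{M^2}(\Phi(m)) - \nu_\lambda^{M^1}(m)\}.
\]
Moreover, the basis elements $e_{ij}$ defined here by $e_{ij}(m_k) = \delta_{ik} m'_j$ agree verbatim with the basis elements of the same name in the general setting of Lemma~\ref{lm:N_e_ij}. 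Therefore applying Lemma~\ref{lm:N_e_ij} yields
\[
{\nu_{M^1,M^2}}_\lambda(e_{ij}) = \nu_\lambda^{M^2}(m'_j) - \nu_\lambda^{M^1}(m_i),
\]
which is the desired identity. There is no real obstacle: the proof is just checking that the names of the objects match up, and the substantive work is already contained in Lemma~\ref{lm:N_e_ij}.
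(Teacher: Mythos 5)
Your proposal is correct and is exactly the argument the paper uses: Lemma~\ref{lm:e_ij} is stated in the paper as following immediately from Lemma~\ref{lm:N_e_ij}, and your verification that condition~\ref{03_condition} supplies the orthogonal bases and that ${\nu_{M^1,M^2}}_\lambda$ coincides with the general valuation $\nu_{N,N'}$ (with $Q=S$, $\nu_Q=\nu_\lambda^S$) is precisely the specialization being invoked.
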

The following lemma is showing a relation between the valuations $\nu_{M^1,M^2}$ and $\nu_R.$ This will be used in order to prove Theorem~\ref{thm:bc_equivalence} and Theorem~\ref{thm:cy}. 
\begin{lm}\label{lm:ima_m_eq_ima_r}
$\ima\nu_{M^1,M^2}=\ima\nu_R.$ 
\end{lm}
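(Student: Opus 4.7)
\medskip

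The plan is to decompose an arbitrary $\Phi \in \Hom(M^1, M^2)$ in the orthogonal Schauder basis $\{e_{ij}\}$ from Lemma~\ref{lm:orthogonal_basis}, reducing the computation of $\nu_{M^1, M^2}(\Phi)$ to infima of concave piecewise affine functions on $\Lambda$, and then exploit the hypothesis that vertex limits of these functions lie in $\ima \nu_R$.

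First I would prove the nontrivial inclusion $\ima \nu_{M^1, M^2} \subseteq \ima \nu_R$. For nonzero $\Phi$, write $\Phi = \sum_{i,j} s_{ij} e_{ij}$ with $s_{ij} \in S$ in the basis of Lemma~\ref{lm:orthogonal_basis}. Combining the ${\nu_{M^1, M^2}}_\lambda$-orthogonality from that lemma with the formula for ${\nu_{M^1, M^2}}_\lambda(e_{ij})$ from Lemma~\ref{lm:e_ij}, I get
\[
{\nu_{M^1, M^2}}_\lambda(\Phi) = \inf_{i,j}\{\nu^S_\lambda(s_{ij}) + \nu^{M^2}_\lambda(m'_j) - \nu^{M^1}_\lambda(m_i)\}.
\]
Exchanging the two infima and setting $g_{ij}(\lambda) := \nu^S_\lambda(s_{ij}) + \nu^{M^2}_\lambda(m'_j) - \nu^{M^1}_\lambda(m_i)$ yields
\[
\nu_{M^1, M^2}(\Phi) = \inf_{i,j}\; \inf_{\lambda \in \Lambda} g_{ij}(\lambda).
\]
By Lemma~\ref{lm:affine}, $\nu^S_\lambda(s_{ij})$ is piecewise affine and hence concave in $\lambda$, and by condition~\ref{03_condition} each $\nu^{M^i}_\lambda(m_k)$ is affine, so $g_{ij}$ is concave on the bounded polytope $\Lambda$. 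A concave function on a bounded convex set attains its infimum at an extreme point, so $\inf_{\lambda \in \Lambda} g_{ij}(\lambda) = \min_{1 \le l \le c} \lim_{\lambda \to p_l} g_{ij}(\lambda)$. Lemma~\ref{lm:s_pj} together with condition~\ref{03_condition} guarantees that each of these vertex limits lies in $\ima \nu_R \cup \{\infty\}$, and since $\ima \nu_R$ is discrete (as $\ima h_G$ is discrete), the outer infimum over $(i,j)$ is attained and lies in $\ima \nu_R$.

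For the reverse inclusion $\ima \nu_R \subseteq \ima \nu_{M^1, M^2}$, I use that $\ima \nu_R \setminus \{\infty\}$ is a subgroup of $\R$ because $R$ is a field. Fix any single nonzero basis element $e_{i_0 j_0}$; by the argument above, $\alpha_0 := \nu_{M^1, M^2}(e_{i_0 j_0}) \in \ima \nu_R$. Since $(\Hom(M^1, M^2), \nu_{M^1, M^2})$ is a valued module over $R$ by Lemma~\ref{lm:valuation_algebra_end}, for any $x \in \ima \nu_R$ I choose $r \in R$ with $\nu_R(r) = x - \alpha_0$ and obtain $\nu_{M^1, M^2}(r \cdot e_{i_0 j_0}) = x$.

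The main obstacle will be the careful handling of the infimum of $g_{ij}$ on the possibly non-closed polytope $\Lambda$: one must show that concavity forces the infimum to be realized as the minimum of the vertex limits (rather than being attained somewhere in the interior or diverging to $-\infty$), and this relies crucially on the existence of finite vertex limits provided by Lemma~\ref{lm:s_pj} and condition~\ref{03_condition}.
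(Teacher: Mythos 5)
Your proposal is correct and follows essentially the same route as the paper: decompose $\Phi$ in the orthogonal basis $\{e_{ij}\}$ (Lemmas~\ref{lm:orthogonal_basis} and~\ref{lm:e_ij}), use the piecewise affine/concavity argument of Lemma~\ref{lm:affine} to push the infimum to the vertex limits, invoke Lemma~\ref{lm:s_pj} and condition~\ref{03_condition} to place these limits in $\ima\nu_R$, and obtain the reverse inclusion by rescaling a fixed nonzero element with a suitable $r\in R$, using that $R$ is a field. The only cosmetic difference is that you scale a basis element $e_{i_0j_0}$ where the paper scales an arbitrary nonzero $\Psi$, which changes nothing.
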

\begin{proof}
Let $\Phi\in \Hom(M^1, M^2),$ and write
$
\Phi=\sum_{i,j}s_{ij}e_{ij} $ where $s_{ij}\in S.
$
By Lemma~\ref{lm:orthogonal_basis} we have
\[
{\nu_{M^1,M^2}}_\lambda(\Phi)=\inf_{i,j}\{\nu^S_\lambda(s_{ij})+{\nu_{M^1,M^2}}_\lambda(e_{ij})\}.
\]
By assumption, the maps $\nu^{M^1}_\lambda(m_i), \nu^{M^2}_\lambda(m'_j)$ are affine. 
By Lemma~\ref{lm:affine} 
$\nu^S_\lambda(s_{ij})$ are piecewise affine maps. Hence, by Lemma~\ref{lm:e_ij},  $\nu^S_\lambda(s_{ij})+{\nu_{M^1,M^2}}_\lambda(e_{ij})$ are piecewise affine maps as well. So, the infimum is
achieved when $\lambda$ converges to one of the vertices $p_1,\ldots, p_c.$ By Lemma~\ref{lm:s_pj} and since $\nu^{M^1}_{p_l}(m_i), \nu^{M^2}_{p_l}(m_j')\in \ima\nu_R,$ it follows that
$\nu^S_{p_l}(s_{ij})+\nu_{p_l}^{M^1}(m_i)-\nu_{p_l}^{M^2}(m_j')\in \ima\nu_R$ for all $l\in\{1,\ldots,c\}.$ Hence, by
Lemma~\ref{lm:e_ij} we get
\[
\nu_{M^1,M^2}(\Phi)=\inf_{\lambda\in\Lambda}\{{\nu_{M^1,M^2}}_\lambda(\Phi)\}= \min_{1\le l\le c}\inf_{i,j}\{\nu^S_{p_l}(s_{ij})+\nu_{p_l}^{M^1}(m_i)-\nu_{p_l}^{M^2}(m_j') \}\in\ima\nu_R.
\]
Therefore, $\ima\nu_{M^1,M^2}\subset\ima\nu_R.$

    Fix $0\ne\Psi\in\Hom(M^1,M^2),$ and let $r\in \ima\nu_R.$  
    Write $r_0=\nu_{M^1,M^2}(\Psi)\in \ima\nu_R,$ and let $r'\in R$ with $\nu_R(r')=r-r_0,$ where such $r'$ exists since $R$ is a field.
    Thus,
     \[\nu_{M^1,M^2}(r'\Psi)=\nu_R(r')+\nu_{M^1,M^2}(\Psi)=r.
     \] 
     Hence, $\ima\nu_R\subset\ima\nu_{M^1,M^2}.$
\end{proof}

The following two lemmas are essential for showing $\MF(W,w)$ is a normed DG category over a valued DGA.
\begin{lm}
    Let $(M^i,D_{M^i},\{\nu^{M^i}_\lambda\}_{\lambda\in \Lambda})$ for $i=1,2,3$ be  objects of $\MF(W,w).$ Then, for all $\Psi\in \Hom(M^1,M^2)$ and $\Phi\in \Hom(M^2,M^3)$ we have
    \[
    {\nu_{M^1,M^3}}_\lambda(\Phi \Psi)\ge {\nu_{M^2,M^3}}_\lambda(\Phi)+{\nu_{M^1,M^2}}_\lambda(\Psi), \quad \lambda\in\Lambda.
    \]
\end{lm}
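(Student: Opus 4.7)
The statement is the pointwise (in $\lambda$) analogue, for triples of objects in $\MF(W,w)$, of the general submultiplicativity Lemma~\ref{lm:weakly_normed_N} for Hom-spaces between valued modules. The plan is to invoke that lemma directly.

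Concretely, I would fix $\lambda \in \Lambda$ and observe that by condition~\ref{03_condition} in the definition of objects of $\MF(W,w)$, each $(M^i,\nu^{M^i}_\lambda)$ is a valued module over the valued ring $(S,\nu^S_\lambda)$, for $i=1,2,3$. Moreover, comparing~\eqref{eq:valuation_lambda_hom} with the definition of $\nu_{N,N'}$ preceding Lemma~\ref{lm:N_valuation}, we have, with $Q=S$, $\nu_Q=\nu^S_\lambda$ and $(N,\nu_N)=(M^i,\nu^{M^i}_\lambda)$, the identification ${\nu_{M^i,M^{j}}}_\lambda = \nu_{M^i,M^j}$ in the notation of Lemma~\ref{lm:weakly_normed_N}.

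Therefore Lemma~\ref{lm:weakly_normed_N}, applied to $(N,N',N'') = (M^1,M^2,M^3)$ over $(S,\nu^S_\lambda)$, yields
\[
{\nu_{M^1,M^3}}_\lambda(\Phi\Psi) \;\ge\; {\nu_{M^2,M^3}}_\lambda(\Phi) + {\nu_{M^1,M^2}}_\lambda(\Psi)
\]
for any $\Psi \in \Hom(M^1,M^2)$ and $\Phi \in \Hom(M^2,M^3)$, which is the desired inequality. Since this holds for each $\lambda \in \Lambda$ separately, there is no further work.

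There is no serious obstacle here: the only thing to double-check is that the two versions of the norm-on-Hom are literally the same expression (they are, by inspection), and that $M^i$ really is a valued $S$-module in the sense of Lemma~\ref{lm:N_valuation}, which is built into the definition of an object. If a self-contained proof is preferred over a citation, one can simply reproduce the three-line computation from the proof of Lemma~\ref{lm:weakly_normed_N}: insert $-\nu^{M^2}_\lambda(\Psi(m)) + \nu^{M^2}_\lambda(\Psi(m))$, split the infimum into a sum of two infima, and relax the second infimum from $m\in M^1\setminus\{0\}$ to $n\in M^2\setminus\{0\}$ by taking $n = \Psi(m)$.
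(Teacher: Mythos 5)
Your proposal is correct and matches the paper's own proof, which simply cites Lemma~\ref{lm:weakly_normed_N}; your additional verification that each $(M^i,\nu^{M^i}_\lambda)$ is a valued module over $(S,\nu^S_\lambda)$ and that the Hom-valuations coincide makes the same argument explicit.
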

\begin{proof}
This follows from Lemma~\ref{lm:weakly_normed_N}.
\end{proof}

\begin{lm}\label{lm_2.14}
    Let $(M^i,D_{M^i},\{\nu^{M^i}_\lambda\}_{\lambda\in \Lambda})$ for $i=1,2,3$ be  objects of $\MF(W,w).$ Then, for all $\Psi\in \Hom(M^1,M^2)$ and $\Phi\in \Hom(M^2,M^3)$ we have
    \[
    \nu_{M^1,M^3}(\Phi \Psi)\ge \nu_{M^2,M^3}(\Phi)+\nu_{M^1,M^2}(\Psi).
    \]
\end{lm}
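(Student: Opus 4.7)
The plan is very short: Lemma~\ref{lm_2.14} is the global analogue of the immediately preceding lemma, so the whole job is to pass from the family of pointwise inequalities indexed by $\lambda\in\Lambda$ to the infimum that defines $\nu_{M^i,M^j}$.

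First, I would invoke the preceding lemma to obtain, for each $\lambda \in \Lambda,$
\[
{\nu_{M^1,M^3}}_\lambda(\Phi\Psi) \ge {\nu_{M^2,M^3}}_\lambda(\Phi) + {\nu_{M^1,M^2}}_\lambda(\Psi).
\]
Then, using the definition~\eqref{eq:val_hom_01} of $\nu_{M^1,M^3},$ together with the elementary fact that the infimum of a sum is bounded below by the sum of the infima (applied to the two nonnegative-with-$\infty$ families of real numbers in $\lambda$), I compute
\begin{align*}
\nu_{M^1,M^3}(\Phi\Psi)
&= \inf_{\lambda\in\Lambda}\{{\nu_{M^1,M^3}}_\lambda(\Phi\Psi)\}\\
&\ge \inf_{\lambda\in\Lambda}\{{\nu_{M^2,M^3}}_\lambda(\Phi) + {\nu_{M^1,M^2}}_\lambda(\Psi)\}\\
&\ge \inf_{\lambda\in\Lambda}\{{\nu_{M^2,M^3}}_\lambda(\Phi)\} + \inf_{\lambda\in\Lambda}\{{\nu_{M^1,M^2}}_\lambda(\Psi)\}\\
&= \nu_{M^2,M^3}(\Phi) + \nu_{M^1,M^2}(\Psi),
\end{align*}
which is exactly the claimed inequality.

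There is really no obstacle here; the content of the statement is entirely carried by the pointwise lemma (which in turn comes from Lemma~\ref{lm:weakly_normed_N}). The only small point to be mindful of is that the inequality $\inf_\lambda(f_\lambda + g_\lambda) \ge \inf_\lambda f_\lambda + \inf_\lambda g_\lambda$ must be applied inside $\R\cup\{\infty\},$ which is fine since both $\nu_{M^2,M^3}$ and $\nu_{M^1,M^2}$ take values in $\R\cup\{\infty\}$ and the degenerate case where one side is $\infty$ corresponds to $\Phi=0$ or $\Psi=0,$ making the inequality trivial.
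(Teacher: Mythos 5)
Your proposal is correct and is essentially identical to the paper's proof: the paper likewise uses the pointwise inequality from the preceding lemma (which follows from Lemma~\ref{lm:weakly_normed_N}) and then passes to the infimum over $\lambda\in\Lambda$ using $\inf_\lambda(f_\lambda+g_\lambda)\ge\inf_\lambda f_\lambda+\inf_\lambda g_\lambda$. Nothing is missing.
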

\begin{proof}
    \begin{align*}
    \nu_{M^1,M^3}(\Phi\Psi)&=\inf_{\lambda\in \Lambda}\{{\nu_{M^1,M^3}}_\lambda(\Phi\Psi)\}\\
    &\ge \inf_{\lambda\in \Lambda}\{{\nu_{M^2,M^3}}_\lambda(\Phi)+{\nu_{M^1,M^2}}_\lambda(\Psi) \}\\
    &\ge \inf_{\lambda\in \Lambda}\{{\nu_{M^2,M^3}}_\lambda(\Phi) \}+ \inf_{\lambda\in \Lambda}\{{\nu_{M^1,M^2}}_\lambda(\Psi)\}\\
    &=\nu_{M^2,M^3}(\Phi)+\nu_{M^1,M^2}(\Psi).
\end{align*}
\end{proof}

In the case of $(M^1,D_{M^1},\{\nu^{M^1}_\lambda\}_{\lambda\in \Lambda})=(M^2,D_{M^2},\{\nu^{M^2}_\lambda\}_{\lambda\in \Lambda}),$ we will abbreviate ${\nu_{M^1}}_\lambda={\nu_{M^1,M^1}}_\lambda,$  $\nu_{M^1}=\nu_{M^1,M^1}$ and $\End(M^1)=\Hom(M^1,M^1).$ Note that 
$(\End(M), \delta, \id_M)$ is a unital DGA over $R$ 
for any object $(M,D_M,\{\nu^M_\lambda\}_{\lambda\in \Lambda})$ of $\MF(W,w).$

\begin{cor}\label{lm:end_val_dga}
    $(\End(M), \delta, \id_M, {\nu_M}_\lambda)$ is a normed unital DGA over $R$ for all $\lambda\in \Lambda.$
\end{cor}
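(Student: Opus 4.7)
The plan is to verify each clause of the definition of a normed unital DGA over $R$ by specializing the lemmas already proved in this subsection to the diagonal case $M^1 = M^2 = M^3 = M$. The statement is essentially a bookkeeping consequence; there is no substantive new content, so the proof will be short.

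First, I would recall that $(\End(M), \delta, \id_M)$ is a unital DGA over $R$, as noted in the paragraph preceding the statement. Then I would apply Lemma~\ref{lm:val_fam_end_m} with $M^1 = M^2 = M$ to conclude that $(\End(M), {\nu_M}_\lambda)$ is a valued DG module over $R$ for every $\lambda \in \Lambda$. Next, the submultiplicativity of the norm with respect to composition,
\[
{\nu_M}_\lambda(\Phi \Psi) \;\ge\; {\nu_M}_\lambda(\Phi) + {\nu_M}_\lambda(\Psi), \qquad \Phi, \Psi \in \End(M),
\]
follows by specializing the unnamed lemma that appears just before Lemma~\ref{lm_2.14}. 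Combined with the valued module structure, this shows that $(\End(M), {\nu_M}_\lambda)$ is a normed algebra over $(R, \nu_R)$.

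For compatibility of the differential with the valuation, I would invoke Lemma~\ref{lm:valuation_delta}: although its statement is phrased in terms of $\nu_{M^1, M^2}$, the proof actually establishes the pointwise inequality $\nu_{{M^1, M^2}_\lambda}(\delta(\Phi)) \ge \nu_{{M^1, M^2}_\lambda}(\Phi)$ for each $\lambda \in \Lambda$ before taking the infimum over $\Lambda$. Specializing to $M^1 = M^2 = M$ gives exactly what is needed. Finally, the unit axiom ${\nu_M}_\lambda(\id_M) \ge 0$ is immediate from the defining formula~\eqref{eq:valuation_lambda_hom}, since $\id_M(m) = m$ forces $\nu^M_\lambda(\id_M(m)) - \nu^M_\lambda(m) = 0$ for every nonzero $m \in M$.

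Assembling these ingredients produces the desired conclusion. There is no genuine obstacle; the only mild subtlety worth making explicit is that Lemma~\ref{lm:valuation_delta} must be read at the level of its proof in order to extract the pointwise (in $\lambda$) compatibility of $\delta$ with ${\nu_M}_\lambda$, rather than only the infimum version in its stated form.
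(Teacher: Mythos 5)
Your proof is correct and follows exactly the route the paper intends: the corollary is stated without proof precisely because it is the specialization of Lemma~\ref{lm:val_fam_end_m}, the pointwise composition lemma preceding Lemma~\ref{lm_2.14}, and (for the differential) the $\lambda$-pointwise inequality established inside the proof of Lemma~\ref{lm:valuation_delta}, to the case $M^1=M^2=M^3=M$. Your observation that the pointwise-in-$\lambda$ compatibility of $\delta$ must be extracted from the proof rather than the statement of Lemma~\ref{lm:valuation_delta}, and the computation ${\nu_M}_\lambda(\id_M)=0$, are accurate and complete the bookkeeping.
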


\begin{cor}\label{lm:end_val_dga_2}
    $(\End(M), \delta, \id_M, \nu_M)$ is a normed unital DGA over $R.$ 
\end{cor}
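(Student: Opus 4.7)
The plan is to assemble this corollary from the results already established for $\nu_M = \inf_{\lambda \in \Lambda} \{{\nu_M}_\lambda\}$ (the special case $M^1 = M^2 = M$ of~\eqref{eq:val_hom_01}). There are four things to verify: (i) $(\End(M), \delta, \nu_M)$ is a valued DG module over $R$; (ii) composition is subadditive with respect to $\nu_M$, so that $(\End(M), \nu_M)$ is a normed ring; (iii) the differential $\delta$ is compatible with $\nu_M$; and (iv) the unit satisfies $\nu_M(\id_M) \geq 0$.

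First I would invoke Lemma~\ref{lm:valuation_algebra_end} with $M^1 = M^2 = M$ to obtain (i), i.e.\ that $(\End(M), \nu_M)$ is a valued module over $R$ equipped with the differential $\delta$. For (ii), I would apply Lemma~\ref{lm_2.14} with $M^1 = M^2 = M^3 = M$, which gives exactly the submultiplicativity
\[
\nu_M(\Phi \circ \Psi) \geq \nu_M(\Phi) + \nu_M(\Psi), \qquad \Phi, \Psi \in \End(M),
\]
so $(\End(M), \nu_M)$ is a normed ring. For (iii), Lemma~\ref{lm:valuation_delta} applied with $M^1 = M^2 = M$ yields $\nu_M(\delta(\Phi)) \geq \nu_M(\Phi)$ for all $\Phi \in \End(M)$, which combined with (i) shows that the differential respects the valuation in the required sense.

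Finally, for (iv), note that $\id_M(m) = m$ for all $m \in M$, so ${\nu_M}_\lambda(\id_M) = \inf_{m \in M \setminus \{0\}} \{\nu^M_\lambda(m) - \nu^M_\lambda(m)\} = 0$ for every $\lambda \in \Lambda$, hence $\nu_M(\id_M) = 0 \geq 0$. Combining (i)--(iv) with the fact (from Corollary~\ref{lm:end_val_dga}, or verified directly from the definition of $\delta$) that $\id_M$ is closed under $\delta$, we conclude that $(\End(M), \delta, \id_M, \nu_M)$ is a normed unital DGA over $R$.

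There is no substantive obstacle here: the corollary is essentially a packaging of Lemmas~\ref{lm:valuation_algebra_end}, \ref{lm:valuation_delta}, and~\ref{lm_2.14} under the common infimum over $\lambda$. The only minor point worth flagging is the unit condition, which is trivial but must be checked since the normed DGA structure requires the unit to have nonnegative valuation; the affinity and orthogonality hypotheses on $\nu^M_\lambda$ are not needed at this step, as the identity computation is pointwise in $\lambda$.
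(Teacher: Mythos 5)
Your proposal is correct and matches the paper's (implicit) justification: the corollary is stated there without a separate proof precisely because it is the specialization $M^1=M^2=M^3=M$ of Lemmas~\ref{lm:valuation_algebra_end}, \ref{lm:valuation_delta} and~\ref{lm_2.14}, combined with the observation that $(\End(M),\delta,\id_M)$ is a unital DGA. Your additional check that $\nu_M(\id_M)=0$ and $\delta(\id_M)=0$ is a harmless and reasonable completion of the same argument.
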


\subsection{The toric construction}\label{Section_toric_cons}
\subsubsection{The valued Landau-Ginzburg model}\label{Section:toric_construction_sub}

The following definition is given in~\cite{da_Silva_toric} based on~\cite{Delzant}.
\begin{dfn}\label{df:Delzant}
    A \textbf{Delzant polytope} $\triangle$ in $\R^n$ is a polytope satisfying:
    \begin{enumerate}[label=(\arabic*)]
        \item there are $n$ edges meeting at each vertex,
        \item for any vertex $p$ of $\triangle,$ each edge meeting at $p$ is of the form $p+tu_i,$ where $t\ge0$ and $u_i\in\Z^n,$
        \item for each vertex, the corresponding $u_1,\ldots, u_n$ can be chosen to be a $\Z$ basis of $\Z^n.$
    \end{enumerate}
\end{dfn}
Let $\triangle\subset \R^n$ be a Delzant polytope with $d$ facets. Write
\[
K=\Z^n, \qquad L=\Hom(K, \Z), \qquad L_\R= L\otimes_\R \R,
\]  
and
\begin{gather*}
\widetilde{K}=\Z^d, \qquad \widetilde K_\Zt = K \otimes \Zt,  \\
 \widetilde{L}=\Hom(\widetilde{K}, \Z), \qquad \widetilde{L}_\Zt = \Hom(\widetilde K ,\Zt), \qquad \widetilde{L}_\R= \widetilde{L}\otimes_\R \R.
\end{gather*}
Let $f_1,\ldots, f_d \in \widetilde K$ denote the standard basis and let $e_1,\ldots, e_d \in \widetilde L$ denote the dual basis. Let the homomorphism $\tilde g : \widetilde K \to \Z$ be given by $\tilde g(f_i) = 2$ for $i = 1,\ldots,d.$
Let $v_1,\ldots, v_d\in  K$ be the primitive outward-pointing normal vectors to the facets of $\triangle$, and let
$(\lambda_1,\ldots, \lambda_d)\in \R^d$ such that 
\[
\triangle:= \{x\in L_\R| \ \langle x, v_i\rangle\le \lambda_i  \}.
\]
In this paper, we assume for simplicity that $\lambda_i\in\Q$ for all $i\in\{1,\ldots, d\}.$ Let 
\[
\lambda = \sum_i \lambda_i e_i \in \widetilde{L}_\R.
\]
Define 
\[
\pi: \widetilde{K}\to K,
\]
\[
f_i\mapsto v_i.
\] 
It is shown in~\cite{da_Silva_toric} that the Delzant condition implies $\pi$ is surjective.
We use this combinatorial data associated with $\triangle$ to construct a Landau-Ginzburg model as follows.

Let $J = \ker \pi \cong \Z^{d-n},$ and let $i:J\to \widetilde{K}$ denote the inclusion. So, 
\[
0\longrightarrow J \overset{i}{\longrightarrow} \widetilde{K}\overset{\pi}{\longrightarrow}K\longrightarrow0
\]
is a short exact sequence. Let $i^* : \widetilde L \to \Hom(J,\Z)$ be the dual homomorphism and let $i_\R^*:=i^*\otimes\id_\R:\widetilde{L}_\R\to \Hom(J,\R).$ Let 
\[
g_J : J \to 2\Z, \qquad h_J : J \to \R
\]
be the homomorphisms given by $g_J = i^* \tilde g$ and $h_J = i_\R^*\lambda$ respectively.  
Let
\[
J_\Zt : = J \otimes \Zt.
\]
Let $G \subset J \otimes \Q$ be the maximal subgroup such that $g_J$ extends to a homomorphism 
\[
g_G : G \to 2\Z.
\]
Let $h_G: G \to \R$ denote the unique extension of $h_J$ to $G.$

Define a quadratic form $q$ on  as follows. For $i,j \in \{1,\ldots, d\},$ let $\epsilon_{ij} \in \Zt$ be given by
\begin{equation}\label{eq:eps}
\epsilon_{ij} = 
\begin{cases}
0, & f_i + f_j \in J, \\
1, & \text{otherwise.}
\end{cases}
\end{equation}
Define a quadratic form $\tilde q : \tilde K \to \Zt$ by
\[
\tilde q(k) \equiv \sum_{i \leq j} \epsilon_{ij} e_i(k) e_j(k) \quad \pmod{2}, \qquad\qquad k \in \widetilde K.
\]
Define the quadratic form
\begin{equation*}
q : J \to \Zt
\end{equation*}
by $q = i^* \tilde q.$ More explicitly, for $\beta \in J$ we write $i(\beta) = \sum_i \beta_i f_i \in \tilde K.$ Then,
\begin{equation}\label{eq:qform}
q(\beta) \equiv \sum_{i \leq j} \epsilon_{ij} \beta_i \beta_j \pmod 2.
\end{equation}
By abuse of notation, we may also denote by $q$ the induced quadratic form on $J_\Zt.$
We say $\triangle$ is \textbf{orientable} if $g_J$ factors through $4\Z$ and \textbf{combinatorially relatively pin} if $q$ is a homomorphism. If both conditions hold, we say that $\triangle$ is \textbf{combinatorially relatively spin}. 

Define a graded valued field $\Rtr$ as follows. As a vector space,
\[
\Rtr:=\bigg\{\sum_{i=0}^\infty a_iT^{\beta_i} | \ a_i\in\R, \ \beta_i\in G, \ \lim_i h_G(\beta_i)=\infty \bigg\}.
\]
The multiplication is given by 
\[
T^{\beta_1}T^{\beta_2} =  T^{\beta_1 + \beta_2}.
\]
Equip $\Rtr$ with the valuation
\[
\nu_{\Rtr}(\sum_{i=0}^\infty a_iT^{\beta_i})=\inf_{a_i\ne0}\{h_G(\beta_i)\}.
\]
A grading on $\Rtr$ is defined by declaring $|T^{\beta}| = g_G(\beta).$ 

Let
\[
H_{\Zt} = \Hom(J,\Zt)
\]
and choose
$
\bgc \in H_\Zt.
$
We call $\bgc$ a \textbf{background class} because it is analogous from the perspective of mirror symmetry to the choice of a background class $\bgc^\vee$ in the definition of the Fukaya category of $\Xtr$. Namely, $\bgc^\vee \in H^2(\Xtr;\Zt)$ is a class that restricts to the second Stiefel-Whitney class of Lagrangian submanifolds that give objects of the Fukaya category. This analogy with the Fukaya category will not be used in the construction below.

We define the valued $\Rtr$-algebra $\Strb$ and the superpotential $\Wtrb \in \Strb$ as follows. 
Consider the group ring $\Rtr[\widetilde{K}].$ Let $z^A \in \Rtr[\widetilde{K}]$ denote the element corresponding to $A\in \widetilde{K}.$ Let $z_i \in \Rtr[\widetilde{K}]$ be the element corresponding to $f_i\in \widetilde{K}.$  Let $\Rtr\langle z_1,\ldots,z_d\rangle$ denote the Tate algebra over $\Rtr$ generated by $z_1,\ldots,z_d.$ So, we have an inclusion $\Rtr[\widetilde K] \subset \Rtr\langle z_1,\ldots,z_d\rangle.$
Define
\[
 \Strb:=\Rtr\langle z_1,\ldots,z_d\rangle/\langle (-1)^{\bgc(\beta)}z^{i(\beta)}T^{-\beta} - 1\,| \ \beta\in  J\rangle. 
\]
A grading on $\Strb$ is defined by declaring $|z_i|=2.$
Define a family of valuations on $\Strb$ by setting 
\[
\nu_\zeta^{\Strb}(z_i) =\lambda(f_i) - \langle\pi^*_\R(\zeta),  f_i\rangle  , \qquad \zeta \in \interior{\triangle},
\]
and requiring that the Schauder basis $\{\prod z_i^{d_i}\}$ be $\nu_\zeta^{\Strb}$-orthogonal. 
Define
\[
\Wtr = \Wtrb = \sum_{i = 1}^d z_i \in \Strb.
\]

For $\bgc \in H_{\Zt},$ we define the valued Landau-Ginzburg model $LG(\triangle,\bgc)$ by 
\[
LG(\triangle,\bgc) = \big((\Rtr,\nu_{\Rtr}),(\Strb,\{\nu^{\Strb}_\zeta\}_{\zeta\in \interior{\triangle}}) , \Wtr \big).
\]
If $\triangle$ is combinatorially relatively spin and not the $1$-simplex $\triangle^1,$ 
we define 
\[
LG(\triangle) = LG(\triangle,q).
\]
Let $\bgc^1 \in H_\Zt(\triangle^1) \cong \Zt$ be the non-trivial element. We define
\[
LG(\triangle^1) = LG(\triangle^1,\bgc^1).
\]
This exceptional definition is made because $\triangle^1$ is combinatorially spin on account of its low dimension unlike the higher dimensional simplicies $\triangle^n$ for $n \equiv 1 \pmod 4,$ which are only combinatorially relatively spin. We choose the non-trivial background class of $\triangle^1$ so that it behaves analogously to higher dimensions.
When $\triangle$ is not combinatorially relatively spin, we define $LG(\triangle) = LG(\triangle, 0).$ In all cases, we write
\[
LG(\triangle) = \big((\Rtr,\nu_{\Rtr}),(\Str,\{\nu^{\Str}_\zeta\}_{\zeta\in \interior{\triangle}}) , \Wtr \big).
\]

Let $w\in \Rtr$ satisfy $\nu_{\Rtr}(w)\ge\sup_{\zeta\in\interior{\triangle}}\nu_\zeta^{\Strb}(\Wtr).$ We call such $w$ \textbf{admissible}.
The valued Landau-Ginzburg model $LG(\triangle,\bgc)$ gives rise to a normed matrix factorization category $\MF(\Wtrb,w)$ as defined in Section~\ref{section:1.2.1}. The matrix factorization category associated to $LG(\triangle)$ is denoted by $\MF(\Wtr,w)$. 

We conclude with a lemma that will be needed later in the paper. Define a norm $\nu_\Strb$ on $\Strb$
by
\[
\nu_\Strb(x) = \inf_{\zeta \in \interior{\triangle}} \nu_\zeta^\Strb(x).
\]
\begin{lm}\label{lm:u_coordinates}
Let $n = \dim \triangle.$ After possibly permuting the labels of $z_1,\ldots,z_d,$ we may assume that the canonical homomorphism $\Rtr[z_1,z_1^{-1},\ldots,z_n,z_n^{-1}] \to \Strb$ is injective and the image is dense in the $\nu_{\Strb}$ topology. 
\end{lm}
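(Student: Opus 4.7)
The plan is to pick a vertex of $\triangle$ to fix the permutation, establish invertibility of $z_1,\ldots,z_n$ in $\Strb$ using boundedness of $\triangle,$ and then verify density and injectivity separately.

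First I would use the Delzant condition: at any vertex of $\triangle,$ the $n$ primitive outward normals to the facets meeting there form a $\Z$-basis of $K.$ After relabeling, assume these are $v_1,\ldots,v_n,$ and for $j>n$ write $v_j = \sum_{i=1}^n a_{ij} v_i$ with $a_{ij}\in\Z.$ Boundedness of $\triangle$ forces $\R_{\geq 0}\langle v_1,\ldots,v_d\rangle = \R^n,$ so for each $k\leq n$ one can write $-v_k$ as a $\Q_{\geq 0}$-combination of $v_1,\ldots,v_d;$ clearing denominators gives $N\geq 1$ and $c_1,\ldots,c_d\in \Z_{\geq 0}$ with $Nv_k + \sum_j c_j v_j = 0.$ Setting $\gamma\in J$ with $i(\gamma)=Nf_k + \sum_j c_j f_j,$ the defining relation of $\Strb$ indexed by $\gamma$ reads
\[
z_k^{N+c_k}\prod_{j\neq k} z_j^{c_j} = (-1)^{\bgc(\gamma)} T^\gamma,
\]
which exhibits the left-hand side as a unit in $\Strb,$ so $z_k$ itself is a unit. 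This produces an $\Rtr$-algebra homomorphism $\varphi:\Rtr[z_1^{\pm},\ldots,z_n^{\pm}]\to \Strb$ sending $z_i\mapsto z_i.$

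Next, for density, let $\beta_j\in J$ satisfy $i(\beta_j)=f_j-\sum_i a_{ij}f_i.$ Using the units constructed above, the relation indexed by $\beta_j$ rearranges to $z_j = (-1)^{\bgc(\beta_j)}T^{\beta_j} z_1^{a_{1j}}\cdots z_n^{a_{nj}}\in \ima\varphi$ for $j>n.$ Hence every monomial in $z_1,\ldots,z_d$ lies in $\ima\varphi,$ so $\Rtr[z_1,\ldots,z_d]\subset \ima\varphi.$ Since $\Rtr[z_1,\ldots,z_d]$ is dense in $\Rtr\langle z_1,\ldots,z_d\rangle$ by construction of the Tate algebra, its image is dense in the quotient $\Strb$ in the $\nu_{\Strb}$-topology, and so is $\ima\varphi.$

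For injectivity I would exploit that $v_1,\ldots,v_n$ is a basis of $K$: for any distinct $I,I'\in\Z^n$ the affine function $\zeta\mapsto \sum_i (I_i-I'_i)(\lambda_i-\langle\zeta,v_i\rangle)$ on $\interior{\triangle}$ is not identically zero and so vanishes only on a hyperplane. Given a nonzero finite sum $f=\sum_{I\in S} a_I z^I,$ I pick $\zeta\in\interior{\triangle}$ outside the finitely many hyperplanes associated to pairs in $S\times S.$ Then the values $\nu_{\Rtr}(a_I)+\sum_i I_i(\lambda_i-\langle\zeta,v_i\rangle)$ are pairwise distinct for $I\in S,$ so the ultrametric inequality yields
\[
\nu_\zeta^{\Strb}(\varphi(f))=\min_{I\in S}\Big(\nu_{\Rtr}(a_I)+\sum_i I_i(\lambda_i-\langle\zeta,v_i\rangle)\Big)<\infty,
\]
and hence $\varphi(f)\neq 0.$ The main obstacle is the invertibility assertion that sets up $\varphi:$ it is the only place where boundedness of $\triangle$ enters essentially, and without it $\varphi$ would not even be well-defined. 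Density and injectivity then reduce to formal monomial manipulations together with a genericity choice of $\zeta.$
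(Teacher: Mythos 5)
Your proposal is correct and follows essentially the same route as the paper: after choosing a vertex and using the Delzant condition to arrange that $v_1,\ldots,v_n$ form a $\Z$-basis of $K$ (the paper derives this from its edge-based definition via the short duality argument you compress into one line), the relations of $\Strb$ are used to express each $z_j$, $j>n$, as a Laurent monomial in $z_1,\ldots,z_n$. The paper stops there with ``the lemma follows,'' while you additionally supply the implicit details --- invertibility of $z_1,\ldots,z_n$ from boundedness of $\triangle$ (completeness of the normal fan), density via monomials, and injectivity via a generic $\zeta\in\interior{\triangle}$ making the valuations of distinct terms pairwise distinct --- all of which are sound within the paper's standing assumptions on the valuations $\nu_\zeta^{\Strb}$.
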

\begin{proof}
We show that after permuting the labels of $v_1,\ldots,v_d$, we may assume that $v_1,\ldots,v_d$ from a basis for $K.$ Indeed, it follows from the Delzant condition of Definition~\ref{df:Delzant} that there are $n$ facets incident to each vertex of $\triangle.$ Choose $p$ a vertex of $\triangle \subset L_\R$ and permute the vectors $v_1,\ldots,v_d$ so that $v_1,\ldots,v_n$ are the primitive outward-pointing normal vectors of the facets incident to $p.$ We show that $v_1,\ldots,v_n$ are a basis of $K.$ Indeed, let $u_1,\ldots,u_n \in L$ be a basis such that the edges incident to $p$ have the form $p + tu_i$ with $t \geq 0$ as in Definition~\ref{df:Delzant}. The facets incident to $p$ are contained in the hyperplanes spanned by subsets of $\{u_1,\ldots,u_n\}$ of size $n-1.$ So, after possibly reordering $u_1,\ldots,u_n,$ we may assume that $\langle u_i,v_j \rangle = 0$ for $i \neq j.$ Since $u_1,\ldots,u_n$ is a basis and $v_j \neq 0,$ it follows that $\langle u_i,v_i \rangle \neq 0$ for all $i.$ Thus, after possibly renormalizing $v_1,\ldots,v_n$, we obtain a basis of $K$ dual to $u_1,\ldots,u_n$. But since the vectors $v_i$ were chosen to be primitive, such renormalization is not needed.

Given the preceding, the relations defining $\Strb$ allow us to express all $z_i$ as Laurent monomials in $z_1,\ldots,z_n.$ The lemma follows.
\end{proof}

\subsubsection{The Dirac matrix factorization} \label{ssec:DiracMF}
Let $\bgc' \in H_\Zt.$ We say that $\bgc'$ is \textbf{free} if it is of the form
\begin{equation}\label{eq:free}
\bgc'(\beta) = \sum_{\substack{i < j \\f_i + f_j \in J}} \bgc'(f_i + f_j) \beta_i \beta_j.
\end{equation}
For example, any $\bgc' \in H_\Zt$ is free for $\triangle = (\triangle^1)^n.$
Suppose $\triangle$ is combinatorially relatively spin, and set $\bgc = q + \bgc',$ where $\bgc' \in H_\Zt$ is free. We construct an admissible $\wtrb \in \Rtr$ and an object $(\Mtrb,\Dtrb)$ in the category $\MF(\Wtrb,\wtrb),$ called the Dirac matrix factorization. This construction provides us with a source of examples for computing the numerical invariants $N_{d,k}$ of Definition~\ref{def:numerical_invariants}. 

First, we define a Clifford algebra over $\Strb$. Let $\widetilde L_S = \widetilde L \otimes \Strb.$ 
Let $T(\widetilde L_S)$ denote the tensor algebra. Define a grading on $T(\widetilde L_S)$ by setting $|e_i| = 1.$ 
For $\zeta\in\interior{\triangle}$ define a $\nu_\zeta^{\Strb}$ valuation $\nu_\zeta^{T}$ on $T(\widetilde L_S)$ by setting
\[
\nu_\zeta^{T}(e_i)=\frac{1}{2}\big( \lambda(f_i)-\langle\pi^*_\R(\zeta),  f_i\rangle  \big)
\]
and requiring that the Schauder basis of $T(\widetilde L_S)$ given by pure tensors be $\nu_\zeta^{T}$-orthogonal.
Let
\begin{equation}\label{eq:vareps}
\varepsilon_{ij} = 
\begin{cases}
\sigma'(f_i + f_j), & f_i + f_j \in J, \\
1, & \text{otherwise.}
\end{cases}
\end{equation}
Define a homogeneous ideal $P \subset T(\widetilde L_S)$ by
\begin{equation}\label{eq:P}
P = \langle e_i \otimes e_j + (-1)^{1 + \varepsilon_{ij}}e_j \otimes e_i = z_i \delta_{ij} \,|\, i,j = 1,\ldots,d\rangle,
\end{equation}
and define
\[
\cclb:= T(\widetilde L_S)/ P.
\]
Since $\nu_\zeta^T(e_i \otimes e_j) = \nu_\zeta^T(e_j \otimes e_i),$ and if $i = j,$ these valuations equal $ \nu_\zeta^T(z_i),$ the valuation $\nu_\zeta^T$ descends to a valuation on $\cclb$ which we denote by $\nu_\zeta^\cclb.$

Next, we define $\Mtrb$ as a left $\cclb$ module obtained as a quotient of $\cclb$ by a left ideal. To define this ideal, we need the notion of a combinatorial relative spin structure.
Since $\triangle$ is orientable, for each $\beta \in J$ there is an element $\beta/2 \in G.$ 
For $\beta \in J,$ write $i(\beta)=(\beta_1,\ldots,\beta_d)\in \widetilde{K}$ and $e^{i(\beta)}:=e_1^{\beta_1}\cdots e_d^{\beta_d} \in \cclb.$ 
For $\s : J \to \Zt$ a map of sets, define a map of sets 
\[
\tilde \chi_\s : J \to \cclb^\times
\]
by $\tilde\chi_\s(\beta) = (-1)^{\s(\beta)}T^{-\frac{\beta}{2}}e^{i(\beta)}$. 

We say $\s$ is a \textbf{combinatorial relative spin structure} for $\triangle$ if $\tilde \chi_\s$ is a homomorphism, and we denote by $\Spt$ the set of combinatorial relative spin structures. For $\beta \in J$, let $[\beta] \in J_\Zt$ denote the reduction of $\beta$ modulo $2.$ There is an action $H_\Zt \times \Spt \to \Spt$ given by $(h \cdot \s)(\beta) = h([\beta]) + \s(\beta).$ The following lemma gives an analogy between $\Spt$ and the set of relative spin structures on $\Xtrr,$ which is an $H^1(\Xtrr;\Zt)$ torsor.
\begin{lm}\label{lm:mrs}
Under the assumption that $\triangle$ is combinatorially relatively spin, the set of combinatorial relative spin structures $\Spt$ is an $H_\Zt$ torsor. Moreover, for each $\s \in \Spt,$ the map $\tilde\chi_\s$ factors through a map $\chi_\s : J_{\Zt} \to \cclb^\times.$
\end{lm}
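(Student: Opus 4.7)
The lemma makes two claims: $\Spt$ is a (non-empty) $H_\Zt$-torsor, and each $\tilde\chi_\s$ with $\s \in \Spt$ descends to a homomorphism $\chi_\s : J_\Zt \to \cclb^\times$. The plan is to organise both around a single bilinear cocycle $\eta : J \times J \to \Zt$ coming from the Clifford relations, and then to match the three hypotheses in play — $q$ a homomorphism on $J$, freeness of $\sigma' \in H_\Zt$, and orientability of $\triangle$ — against the three obstructions that arise.

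Starting from $e^{i(\beta)}e^{i(\beta')}$ and moving each $e_j$ of the right factor past each $e_i$ of the left factor using $e_ie_j = (-1)^{\varepsilon_{ij}}e_je_i$ for $i \ne j$, one obtains $e^{i(\beta)}e^{i(\beta')} = (-1)^{\eta(\beta,\beta')}e^{i(\beta+\beta')}$ with $\eta(\beta,\beta') = \sum_{j<i}\varepsilon_{ij}\beta_i\beta'_j \pmod 2$. Thus $\tilde\chi_\s$ is a homomorphism iff $\delta\s = \eta$ in $C^2(J,\Zt)$, so showing $\Spt \neq \emptyset$ reduces to proving $[\eta] = 0$ in $H^2(J,\Zt)$. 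For a bilinear cocycle this class is represented by the symmetrisation $\sum_{i \ne j}\varepsilon_{ij}\beta_i\beta'_j$. Splitting this sum according to whether $f_i + f_j \in J$ and using $\varepsilon_{ij} = \epsilon_{ij} + \sigma'(f_i+f_j)[f_i+f_j \in J]$, the $\epsilon$-piece reproduces $q(\beta+\beta')-q(\beta)-q(\beta')$ and vanishes by the combinatorial relative spin hypothesis that $q$ is a homomorphism on $J$; the $\sigma'$-piece reproduces $\sigma'(\beta+\beta')-\sigma'(\beta)-\sigma'(\beta')$ by the free form~\eqref{eq:free} and vanishes since $\sigma' \in H_\Zt$ is linear. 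Non-emptiness then follows. Freeness of the $H_\Zt$-action is immediate from surjectivity of $J \twoheadrightarrow J_\Zt$, and transitivity because any two solutions of $\delta\s = \eta$ differ by a homomorphism $J \to \Zt$, which factors through $J_\Zt$ since $\Zt$ is $2$-torsion.

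For the factorisation it suffices to verify $\tilde\chi_\s(\beta)^2 = 1$ for all $\beta \in J$. Expanding $(e^{i(\beta)})^2 = (-1)^{\eta(\beta,\beta)}\prod_i e_i^{2\beta_i}$ by the same rearrangement, applying the Clifford relation to write $e_i^2$ in terms of $z_i$, and substituting the defining relation $z^{i(\beta)} = (-1)^{\bgc(\beta)}T^\beta$ of $\Strb$, collapses $\tilde\chi_\s(\beta)^2 = T^{-\beta}(e^{i(\beta)})^2$ to $(-1)^{\eta(\beta,\beta)+\bgc(\beta)}$. A mod-$2$ simplification using $\beta_i^2 \equiv \beta_i$, $\bgc = q+\sigma'$, the formula for $q$, the free form of $\sigma'$, and $\epsilon_{ii} = 1$ (since $2f_i \notin J$) identifies the exponent with $|\beta| = \sum_i \beta_i \pmod 2$. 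The orientability half of combinatorial relative spin, namely that $g_J(\beta) = 2|\beta| \in 4\Z$, is precisely $|\beta| \equiv 0 \pmod 2$ for all $\beta \in J$, giving $\tilde\chi_\s(\beta)^2 = 1$ and hence the descent to $\chi_\s$.

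The main obstacle is the sign bookkeeping: one has to align the three data $(q, \sigma', \bgc = q+\sigma')$ entering the Clifford rearrangement sign $\eta$ with the symmetrisations and bilinearisations so that each obstruction is killed by precisely the corresponding hypothesis. Once these cancellations are lined up, both halves of the lemma fall out in parallel from the same rearrangement computation.
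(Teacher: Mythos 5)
Your argument is correct, and the two key computations coincide with the paper's: the commutation sign picked up when rearranging $e^{i(\beta)}e^{i(\beta')}$ is the pairing $\sum_{i\neq j}\varepsilon_{ij}\beta_i\beta'_j$, which you identify with the polarization of $\bgc=q+\bgc'$ (using~\eqref{eq:qform},~\eqref{eq:vareps} and the free form~\eqref{eq:free}) and hence kill by the hypothesis that $q$ and $\bgc'$ are homomorphisms; and the square $\tilde\chi_\s(\beta)^2=T^{-\beta}(e^{i(\beta)})^2=1$ is exactly the paper's identity~\eqref{eq:e2}, with orientability entering through $\sum_i\beta_i^2\equiv\tfrac12 g_J(\beta)\equiv 0$. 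Where you genuinely diverge is the existence-and-torsor packaging: you phrase $\Spt\neq\emptyset$ as the vanishing of the class of the bilinear $2$-cocycle $\eta$ in $H^2(J;\Zt)$, detected by its alternation because $J$ is free abelian, whereas the paper constructs $\s$ explicitly by choosing a basis $\gamma_1,\ldots,\gamma_{d-n}$ of $J$, prescribing $\s(\gamma_i)$ arbitrarily, and extending multiplicatively using the commutativity of the $e^{i(\beta)}$; likewise your torsor step (two solutions of $\delta\s=\eta$ differ by a $1$-cocycle, i.e.\ a homomorphism $J\to\Zt$, which factors through $J_\Zt$) replaces the paper's verification via~\eqref{eq:e2} that $\s+\s'$ is additive. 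Your route is shorter and more conceptual, but it leans on the unproved (though standard) fact that a symmetric bilinear $2$-cocycle on a free abelian group is a coboundary (equivalently $\mathrm{Ext}^1_\Z(J,\Zt)=0$), while the paper's hands-on construction is self-contained and produces an explicit $\s$; also note that what you call the ``symmetrisation'' is really the mod-$2$ antisymmetrization (polarization) of $\eta$, though the formulas agree mod $2$, and you should say one word on why the $H_\Zt$-action preserves $\Spt$ (adding a homomorphism does not change $\delta\s$).
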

\begin{proof}
Define a bilinear form
\[
\langle \cdot, \cdot \rangle : J_\Zt \longrightarrow \Zt
\]
by
\[
\langle \beta,\beta' \rangle \equiv \sum_{i \neq j} \varepsilon_{ij} \beta_i \beta'_j \pmod 2, \qquad \beta,\beta' \in J_\Zt.  
\]
Since $\bgc'$ is free, it is given by~\eqref{eq:free}. Thus, recalling the formula~\eqref{eq:qform} for $q,$ and comparing the formula~\eqref{eq:eps} for $\epsilon_{ij}$ with the formula~\eqref{eq:vareps} for $\varepsilon_{ij}$, we obtain
\[
\bgc(\beta) = q(\beta) + \bgc'(\beta)  = 
\sum_{i \leq j} \varepsilon_{ij} \beta_i \beta'_j \pmod 2, \qquad \beta,\beta' \in J_\Zt.
\]
It follows that
\[
\bgc(\beta + \beta') + \bgc(\beta) + \bgc(\beta') = \langle \beta,\beta' \rangle.
\]
Since $\bgc$ is a homomorphism, $\langle \cdot,\cdot \rangle$ vanishes. On the other hand, 
\[
e^{i(\beta)} e^{i(\beta')} = (-1)^{\langle \beta,\beta' \rangle} e^{i(\beta')} e^{i(\beta)}.
\]
Thus, $e^{i(\beta)}$ and $e^{i(\beta')}$ commute for all $\beta,\beta' \in J_{\Zt}.$ Moreover, 
since $\triangle$ is orientable,
\[
\sum_i \beta_i^2 \equiv \sum_i \beta_i = \frac{1}{2}g_J(\beta) \equiv 0 \pmod 2,
\]
so
\[
\bgc(\beta) = \sum_{i < j} \varepsilon_{ij} \beta_i \beta_j + \sum_{i} \beta_i^2 \equiv \sum_{i<j} \varepsilon_{ij}\beta_i \beta_j \pmod 2.
\] 
Therefore,
\begin{equation}\label{eq:e2}
\left(e^{i(\beta)}\right)^2 = e^{i(\beta)}\cdot e^{i(\beta)} = (-1)^{\bgc(\beta)} z^{i(\beta)} = T^\beta.
\end{equation}

We proceed to show the existence of a combinatorial relative spin structure $\s$. Choose a basis $\gamma_1,\ldots,\gamma_{d-n}$ for $J.$ Choose $\s(\gamma_i) \in \Zt$ arbitrarily. For $\beta = \sum a_i \gamma_i \in J,$ define $\s(\beta)$ by
\[
(-1)^{\s(\beta)}T^{-\frac{\beta}{2}}e^{i(\beta)} = \prod_{i = 1}^{d-n} \left((-1)^{\s(\gamma_i)}T^{-\frac{\gamma_i}{2}}e^{i(\gamma_i)}\right)^{a_i}.
\]
We prove that for such $\s$ the map $\tilde\chi_\s$ is a homomorphism. Indeed, since $e^{i(\beta)}$ and $e^{i(\beta')}$ commute for $\beta,\beta' \in J,$ we have
\begin{align*}
\tilde\chi_\s(\beta)\cdot \tilde\chi_\s(\beta') &= \prod_{i = 1}^{d-n} \left((-1)^{\s(\gamma_i)}T^{-\frac{\gamma_i}{2}}e^{i(\gamma_i)}\right)^{a_i} \cdot \prod_{i = 1}^{d-n}\left( (-1)^{\s(\gamma_i)}T^{-\frac{\gamma_i}{2}}e^{i(\gamma_i)} \right)^{a_i'}  \\
& = \prod_{i = 1}^{d-n} \left((-1)^{\s(\gamma_i)}T^{-\frac{\gamma_i}{2}}e^{i(\gamma_i)}\right)^{a_i + a_i'} = \tilde\chi_\s(\beta + \beta').
\end{align*}

To prove that $\Spt$ is an $H_\Zt$ torsor, it remains to show that if $\s, \s' \in \Spt,$ then the map $\tilde h : J \to \Zt$ defined by
\[
\tilde h(\beta) = \s(\beta) + \s'(\beta)
\]
factors through a homomorphism $h : J_\Zt \to \Zt.$ Indeed, it suffices to show that $\tilde h$ is a homomorphism. This follows because by~\eqref{eq:e2} we have
\begin{equation*}
(-1)^{\tilde h(\beta + \beta')} = \tilde\chi_\s(\beta + \beta') \cdot \tilde\chi_{\s'}(\beta + \beta')
= \tilde\chi_\s(\beta) \cdot \tilde\chi_{\s'}(\beta) \cdot \tilde\chi_\s(\beta') \cdot \tilde\chi_{\s'}(\beta') = (-1)^{\tilde h(\beta) + \tilde h(\beta')}.  
\end{equation*}

Finally, for any combinatorial relative spin structure $\s \in \Spt,$ we show that $\tilde \chi_\s$ factors through a homomorphism $\chi_\s : J_\Zt \to \cclb^\times.$ Indeed, by~\eqref{eq:e2} we have
\[
\tilde \chi_\s(2\beta) = (\tilde \chi(\beta))^2 = \left((-1)^{\s(\beta)}T^{-\frac{\beta}{2}}e^{i(\beta)} \right)^2 = T^{-\beta}(e^{i(\beta)})^2 = 1.
\]
\end{proof}
Lemma~\ref{lm:mrs} allows us to choose a combinatorial relative spin structure~$\s.$ 
The choice will effect the construction of $(\Mtrb,\Dtrb)$, but we supress $\s$ from the notation to avoid it becoming too cumbersome. In the calculations in Section~\ref{Section:example} below, we specify $\s$ explicitly.
Define a left ideal $\Ltrb \subset \cclb$ by
\begin{equation}\label{eq:L}
\Ltrb = \langle (-1)^{\s(\beta)}T^{-\beta/2}e^{i(\beta)}- 1 \,| \,\  \beta\in J\rangle.
\end{equation}
Define a left $\cclb$ module $\Mtrb$ by
\[
\Mtrb:=\cclb/\Ltrb.
\]
It follows that $\Mtrb$ is a free $\Str$ module. Indeed, one uses an averaging procedure to construct invariant vectors for the representation of the finite group $J_\Zt$ on $\cclb$ given by right multiplication by $\chi_\s$. These invariant vectors descend to free generators of $\Mtrb.$

Define $\Dtrb\in\End(\Mtrb)$ to be left multiplication by $e_0+\ldots+e_n$ and define
\[
\wtrb = -2\sum_{\substack{\beta \in J \\ \exists i, j: \, \beta = f_i + f_j  \\ \bgc(\beta) = 0 }} (-1)^{\s(\beta)} T^{\frac{\beta}{2}}.
\]
From the perspective of mirror symmetry, $\wtr$ counts Maslov $2$ disks in $\Xtr$ with boundary on $\Xtrr,$ which come in pairs, one pair for each class in $H_2(\Xtr;\Z) \cong J.$ 
\begin{lm}
We have $(\Dtrb)^2 = \Wtrb - \wtrb.$
\end{lm}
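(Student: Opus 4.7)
The plan is to show that $(\Dtrb)^2,$ being left multiplication by $E^2$ with $E = \sum_{i=1}^d e_i \in \cclb,$ equals scalar multiplication by $\Wtrb - \wtrb$ on the cyclic left module $\Mtrb = \cclb/\Ltrb.$ Using the relations in $P,$ namely $e_i^2 = z_i$ and $e_j e_i = (-1)^{\varepsilon_{ij}} e_i e_j$ for $i \neq j,$ I would first compute
\[
E^2 = \sum_{i=1}^d z_i + \sum_{i<j}\bigl(1 + (-1)^{\varepsilon_{ij}}\bigr) e_i e_j = \Wtrb + 2\sum_{\substack{i<j \\ \varepsilon_{ij} = 0}} e_i e_j.
\]
The condition $\varepsilon_{ij} = 0$ translates via~\eqref{eq:vareps} and $\bgc = q + \bgc',$ using $q(f_i + f_j) \equiv \epsilon_{ij} = 0$ whenever $f_i + f_j \in J,$ to the requirement that $\beta := f_i + f_j \in J$ and $\bgc(\beta) = 0.$ For each such pair, $e_i e_j = e^{i(\beta)}.$

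Next, the defining relation of $\Ltrb$ yields $e^{i(\beta)} = (-1)^{\s(\beta)} T^{\beta/2} + \alpha_\beta$ where $\alpha_\beta := (-1)^{\s(\beta)} T^{\beta/2} (\tilde\chi_\s(\beta) - 1) \in \Ltrb.$ Comparing with the definition of $\wtrb$ gives
\[
E^2 = \Wtrb - \wtrb + 2 \sum_\beta \alpha_\beta,
\]
with the sum ranging over $\beta = f_i + f_j \in J$ such that $\bgc(\beta) = 0$ and $i < j.$ It remains to show that each $\alpha_\beta$ annihilates $\Mtrb.$

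The principal obstacle is to prove that $e^{i(\beta)}$ is central in $\cclb$ for such $\beta,$ which I would verify by checking commutation with each generator $e_k$ separately. For $k \in \{i, j\},$ direct computation using $e_i e_j = e_j e_i$ (from $\varepsilon_{ij} = 0$) together with $e_i^2 = z_i$ and $e_j^2 = z_j$ suffices. For $k \notin \{i, j\},$ the key input is the primitivity of the facet normals: the condition $\pi(\beta) = v_i + v_j = 0$ forces $v_j = -v_i,$ and since the $v_l$ are distinct primitive vectors, no other $v_k$ equals $-v_i$ or $-v_j.$ Hence $f_i + f_k, f_j + f_k \notin J,$ so $\varepsilon_{ik} = \varepsilon_{jk} = 1,$ and moving $e_k$ past $e^{i(\beta)} = e_i e_j$ contributes the total sign $(-1)^{\varepsilon_{ik} + \varepsilon_{jk}} = 1.$

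Centrality of $e^{i(\beta)},$ and hence of $\alpha_\beta,$ implies that the left ideal $\cclb(\tilde\chi_\s(\beta) - 1) \subset \Ltrb$ is automatically two-sided. Consequently $\alpha_\beta \cdot y = y \cdot \alpha_\beta \in \Ltrb$ for every $y \in \cclb,$ so $\alpha_\beta$ acts as zero on $\Mtrb.$ Summing over $\beta$ gives $(\Dtrb)^2 = (\Wtrb - \wtrb) \cdot \id_{\Mtrb},$ as required.
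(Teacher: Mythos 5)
Your proof is correct and follows essentially the same route as the paper: expand $\left(\sum_i e_i\right)^2$ using the relations in $P$, observe that the surviving cross terms are exactly the pairs with $f_i+f_j \in J$ and $\bgc(f_i+f_j)=0$, and then use the relation defining $\Ltrb$ to replace $e^{i(\beta)}$ by $(-1)^{\s(\beta)}T^{\beta/2}$. The only addition is your verification that $e^{i(\beta)}$ is central (via primitivity of the facet normals), so that the relation in the \emph{left} ideal $\Ltrb$ genuinely gives an identity of operators on $\Mtrb$ --- a point the paper's proof leaves implicit, and a worthwhile check.
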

\begin{proof}
By the definition~\eqref{eq:P} of the ideal $P$ that gives the relations for $\cclb$, we have $e_i^2 = z_i$ for all $i$ and if $i \neq j,$ then $e_i$ and $e_j$ anti-commute unless $f_i + f_j \in J,$ in which case they commute. So,
\begin{equation*}
(\Dtrb)^2 = \sum_i z_i  + 2\sum_{\substack{i,j \\ \bgc(f_i+f_j) = 0}} e_i e_j.
\end{equation*}
By the definition~\eqref{eq:L} of the ideal $\Ltrb$ from the definition of $\Mtrb,$ we have
\[
\sum_{\substack{i,j \\ \bgc(f_i + f_j) = 0}} e_i e_j = \sum_{\substack{\beta \in J \\ \exists i, j: \, \beta = f_i + f_j  \\ \bgc(\beta) = 0 }} (-1)^{\s(\beta)} T^{\frac{\beta}{2}}.
\]
The lemma follows by combining the preceding two equations.
\end{proof}
In light of the preceding, $(\Mtrb,\Dtrb)$ is an object of $\MF(\Wtrb,\wtrb).$ As in the case of $LG(\triangle)$ we set 
\[
\wtr = \wtrb, \qquad \Mtr = \Mtrb, \qquad \Dtr = \Dtrb, \qquad \ccl = \cclb,
\]
for $\bgc = q$ in the case $\triangle \neq \triangle^1$ and $\bgc$ non-trivial in the case $\triangle = \triangle^1.$

We conclude with a technical lemma that will be helpful in performing calculations in $\End(\Mtr).$ Since
$\nu_\zeta^{\ccl}(e^{i(\beta)})=\nu_R(T^{\beta/2})$,
it follows that $\nu_\zeta^{\ccl}$ descends to a valuation on $\Mtr$, which we denote by $\nu_\zeta^{\Mtr}.$
The family $\{\nu^{\Mtr}_\zeta\}_{\zeta\in\interior{\triangle}}$
induces 
a family of valuations $\{{\nu_{\Mtr}}_\zeta\}_{\zeta\in\interior{\triangle}}$ on the algebra $\End(\Mtr)$ as defined in~\eqref{eq:valuation_lambda_hom}.
\begin{lm}
${\nu_{\Mtr}}_\zeta(x)=\nu_\zeta^{\ccl}(x),$ for all $x\in \ccl$ and $\zeta\in\interior{\triangle}.$
\end{lm}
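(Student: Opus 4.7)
The plan is to prove the equality by establishing the two inequalities separately, using $[1] \in M_\triangle$ as a distinguished test element for the upper bound and the valued module structure for the lower bound.

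First I would establish the lower bound $\nu_{M_\triangle,\zeta}(x) \geq \nu_\zeta^{\mathrm{Cl}(\triangle)}(x)$. The key claim is that $(M_\triangle, \nu_\zeta^{M_\triangle})$ is a valued module over the valued ring $(\mathrm{Cl}(\triangle), \nu_\zeta^{\mathrm{Cl}(\triangle)})$, i.e.
\[
\nu_\zeta^{M_\triangle}(x \cdot m) \geq \nu_\zeta^{\mathrm{Cl}(\triangle)}(x) + \nu_\zeta^{M_\triangle}(m), \qquad x \in \mathrm{Cl}(\triangle), \ m \in M_\triangle.
\]
I would reduce this to pure tensor generators by expanding $x$ in the orthogonal Schauder basis of $\mathrm{Cl}(\triangle)$ given by ordered products of the $e_i$, and $m$ in the induced orthogonal $S_\triangle$-basis of $M_\triangle$ obtained from a transversal of the quotient by $L_\triangle$. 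On such pure monomials the inequality reduces, via the relations defining $P$ and $L_\triangle$, to the defining property of $\nu_\zeta^{\mathrm{Cl}(\triangle)}$. Rearranging and taking infimum over $m \neq 0$ in the definition of ${\nu_{M_\triangle}}_\zeta$ immediately yields the lower bound.

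Next I would establish the upper bound $\nu_{M_\triangle,\zeta}(x) \leq \nu_\zeta^{\mathrm{Cl}(\triangle)}(x)$ by evaluating at the specific element $m_0 = [1] \in M_\triangle$. Since $\nu_\zeta^{\mathrm{Cl}(\triangle)}(1) = 0$ and the valuation descends to $M_\triangle$, one has $\nu_\zeta^{M_\triangle}([1]) = 0$. The action gives $x \cdot [1] = [x]$, and the descent of the valuation supplies
\[
\nu_\zeta^{M_\triangle}([x]) \leq \nu_\zeta^{\mathrm{Cl}(\triangle)}(x)
\]
for any choice of representative. Combining,
\[
\nu_{M_\triangle,\zeta}(x) \leq \nu_\zeta^{M_\triangle}(x \cdot [1]) - \nu_\zeta^{M_\triangle}([1]) = \nu_\zeta^{M_\triangle}([x]) \leq \nu_\zeta^{\mathrm{Cl}(\triangle)}(x).
\]

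The main delicate point is the precise meaning of descent for the valuation. Because $L_\triangle$ is generated by elements of the form $(-1)^{\s(\beta)} T^{-\beta/2} e^{i(\beta)} - 1$ in which both summands have $\nu_\zeta^{\mathrm{Cl}(\triangle)}$-valuation zero, the descended map is well defined and satisfies the inequality $\nu_\zeta^{M_\triangle}([y]) \leq \nu_\zeta^{\mathrm{Cl}(\triangle)}(y)$ used above, with equality whenever $y$ lies in the chosen basis of representatives used to construct the orthogonal $S_\triangle$-basis of $M_\triangle$. Verifying this compatibility of bases — so that both bounds hold simultaneously — is the step where all the combinatorial input from $J$, $\s$, and the defining relations of $\mathrm{Cl}(\triangle)$ must be carefully tracked; once this is in place, both inequalities are formal.
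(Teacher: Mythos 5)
Your lower bound is fine: the inequality $\nu_\zeta^{\Mtr}(x\cdot m)\ge\nu_\zeta^{\ccl}(x)+\nu_\zeta^{\Mtr}(m)$ does hold and immediately gives ${\nu_{\Mtr}}_\zeta(x)\ge\nu_\zeta^{\ccl}(x)$. The gap is in the upper bound. The descended valuation on $\Mtr=\ccl/\Ltrb$ satisfies the \emph{opposite} inequality to the one you invoke: passing to a quotient can only increase valuation, so $\nu_\zeta^{\Mtr}([y])\ge\nu_\zeta^{\ccl}(y)$ for every representative $y$, not $\le$, and the inequality can be strict because of cancellation modulo $\Ltrb$. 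Concretely, for $\triangle=\triangle^1$ take $x=e_0e_1-T^{1/2}$, a nonzero element of $\ccl$ lying in the left ideal $\Ltrb$: then $x\cdot[1]=[x]=0$, so $\nu_\zeta^{\Mtr}([x])=\infty$ while $\nu_\zeta^{\ccl}(x)=\tfrac12$, and your chain of inequalities only yields the vacuous bound ${\nu_{\Mtr}}_\zeta(x)\le\infty$. The same failure occurs for elements not in the ideal, e.g.\ $x=e_0e_1-T^{1/2}+T^N$ with $N$ large, where $\nu_\zeta^{\Mtr}(x\cdot[1])=N$ is far above $\nu_\zeta^{\ccl}(x)$. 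So the single test element $[1]$, together with the reversed descent inequality, cannot produce the upper bound for general $x$.

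What the upper bound actually requires is, for each $x\in\ccl$, some $m\in\Mtr$ on which left multiplication by $x$ is valuation-exact, i.e.\ $\nu_\zeta^{\Mtr}(xm)=\nu_\zeta^{\ccl}(x)+\nu_\zeta^{\Mtr}(m)$; in the example above $m=[e_1]$ works, since the anticommutation relation gives $x\cdot[e_1]=-2T^{1/2}[e_1]$, whence $\nu_\zeta^{\Mtr}(x[e_1])-\nu_\zeta^{\Mtr}([e_1])=\tfrac12$. This is exactly where the paper's (very terse) proof puts the content: it writes $\nu_\zeta^{\Mtr}(xm)=\nu_\zeta^{\ccl}(x)+\nu_\zeta^{\Mtr}(m)$ inside the infimum, relying on the fact that the valuations were chosen so that the defining relations $(-1)^{\s(\beta)}T^{-\beta/2}e^{i(\beta)}=1$ are valuation-homogeneous and that $\Mtr$ has an orthogonal basis on which elements of $\ccl$ act with exact additivity (as stated, even that equality needs the caveat that it can fail for special pairs such as $x\in\Ltrb$, $m=[1]$, but it holds on suitable basis vectors, which is all the infimum needs). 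To repair your argument, replace the evaluation at $[1]$ by the exhibition of such an $m$ for each $x$ — for instance an appropriate element of the orthogonal basis of $\Mtr$ obtained from the averaging construction — and drop the claim $\nu_\zeta^{\Mtr}([x])\le\nu_\zeta^{\ccl}(x)$, which is false in general.
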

\begin{proof}
   \begin{multline*}
{\nu_{\Mtr}}_\zeta(x)=\inf_{m\in \Mtr\backslash\{0\}}\{\nu_\zeta^{\Mtr}(xm)-\nu_\zeta^{\Mtr}(m)\} =\\
=\inf_{m\in \Mtr\backslash\{0\}}\{\nu_\zeta^{\ccl}(x)+\nu_\zeta^{\Mtr}(m)-\nu_\zeta^{\Mtr}(m)\}=\nu_\zeta^{\ccl}(x).
\end{multline*}
\end{proof}

\section{The cyclic complex}\label{Section:hochschild}
  
\subsection{Definitions}\label{subsection_cy}  
Let $(R,d_R,\nu_R)$ be a commutative valued DGA over $\K.$
In the following, $\mA$ stands for a small normed DG category over $R.$ 
We follow Shklyarov~\cite{Shklyarov}.

Recall the definition of the Hochschild complex $\CC_*(\mA)$ defined in Section~\ref{section:normed_cy_str}.
Let $d$ stands for the differential on the $\Hom$-complexes of $\mA.$ For an element $a_l\otimes\ldots\otimes a_1\in\CC_l(\mA)$ such that $a_i$ are homogeneous elements, define $\epsilon_i:=\sum_{j\ge i}|sa_j|.$
The differential of the complex $\CC_*(\mA)$ is given by $\dcc:=b(\mu_1)+b(\mu_2)$, where
\begin{multline*}
b(\mu_1)(a_l\otimes a_{l-1}\otimes\ldots\otimes a_1)=d( a_l) \otimes a_{l-1}\otimes\ldots\otimes a_1+\\
+\sum_{i=1}^{l-1}(-1)^{\epsilon_{i+1}}a_l\otimes a_{l-1}\otimes\ldots\otimes d_A(a_i)\otimes\ldots\otimes a_1,
\end{multline*}
\begin{multline}\label{b(mu)_definition}
    b(\mu_2)(a_l\otimes a_{l-1}\otimes\ldots\otimes a_1)=(-1)^{|a_l|}a_la_{l-1}\otimes a_{l-2}\otimes\ldots\otimes a_1-\\
    - \sum_{i=1}^{l-2}(-1)^{\epsilon_{i+1}}a_l\otimes a_{l-1}\otimes\ldots\otimes a_{i+1}a_i\otimes\ldots\otimes a_1-(-1)^{|sa_1|(\epsilon_2+1)}a_1a_l\otimes a_{l-1}\otimes\ldots\otimes a_2.
\end{multline}
Let $b'(\mu_2)$ denote the operator on $\CC_*(\mA)$ given by the second line in~\eqref{b(mu)_definition}, and recall the definition of the cyclic permutation $t$~\eqref{eq:cyclic_pernutation}.
The following lemma is proved in~\cite{Loday}.
\begin{lm}\label{lm:commutator_mu_1}
    \[
    t\circ b(\mu_1)=b(\mu_1)\circ t,\quad b(\mu_2)(\id-t)=(\id-t)b'(\mu_2).\]
    
\end{lm}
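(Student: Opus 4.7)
The plan is to verify both identities by direct, term-by-term expansion of each side on a general homogeneous element $a_l \otimes \cdots \otimes a_1 \in \CC_l(\mA)$ and then match signs. Both identities are classical results from cyclic homology (see~\cite{Loday}), and the only adaptation needed here is to check that the suspension-grading sign conventions used in the definitions of $b(\mu_1)$, $b(\mu_2)$ and $t$ match the ones in the standard proof. The work lies entirely in the bookkeeping.

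For the first identity $t \circ b(\mu_1) = b(\mu_1) \circ t$, I would observe that $b(\mu_1)$ is a sum of operators $\{d_i\}_{i=1}^l$, where $d_i$ applies $d$ to the $i$-th tensor factor with an appropriate sign, and that $t$ cyclically permutes positions without altering any factor. Consequently $t \circ d_i$ and $d_{i-1} \circ t$ (with indices taken mod $l$) produce the same underlying tensor, so the identity reduces to a sign check. The key input is that $d$ has cohomological degree $1$, so $|sd(a)| = |sa| + 1$; combined with the formula for the sign of $t$ in~\eqref{eq:cyclic_pernutation} and the prefactors $(-1)^{\epsilon_{i+1}}$ in the definition of $b(\mu_1)$, one verifies directly that the signs match after the cyclic shift.

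For the second identity $b(\mu_2)(\id - t) = (\id - t) b'(\mu_2)$, I would write $b(\mu_2) = b'(\mu_2) + \sigma$, where $\sigma$ denotes the remaining part of~\eqref{b(mu)_definition} (i.e., the first line), and rewrite the claim as
\[
\sigma \circ (\id - t) \;=\; b'(\mu_2) \circ t \;-\; t \circ b'(\mu_2).
\]
Both sides can be expanded as explicit sums of tensors of length $l-1$ indexed by the position at which the multiplication occurs. The key geometric observation is that the cyclic permutation $t$ converts one of the multiplications appearing in $b'(\mu_2) \circ t$ versus $t \circ b'(\mu_2)$ into a ``wrap-around'' product involving $a_l$ and $a_{l-1}$ (or $a_1$ and $a_l$, depending on the direction of the comparison), and this discrepancy is exactly what $\sigma \circ (\id - t)$ produces.

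The main obstacle will be maintaining consistent sign control throughout, particularly the interaction between the cumulative suspension degrees $\epsilon_i$, the sign $(-1)^{|sa_l|(\epsilon_1 - |sa_l|)}$ coming from $t$, and the Koszul signs produced when a differential or a multiplication is commuted past several tensor factors. Once these are organized, the verification is routine, which is why I expect the paper simply to cite~\cite{Loday} and omit the computation.
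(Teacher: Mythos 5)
Your plan is sound, and it supplies more than the paper does: the paper gives no argument for this lemma, it simply cites~\cite{Loday}, exactly as you predicted. Your verification is the standard one, and its two structural claims are correct: the first identity reduces to commuting the internal differential past the cyclic permutation (a Koszul-sign check using $|s\,d(a)|=|sa|+1$), and your rewriting of the second identity as $\sigma\circ(\id-t)=b'(\mu_2)\circ t-t\circ b'(\mu_2)$ with $\sigma=b(\mu_2)-b'(\mu_2)$ is algebraically equivalent to the claim and isolates exactly the right cancellation: after commuting the face maps past $t$, the only unmatched terms of $b'(\mu_2)\circ t$ and $t\circ b'(\mu_2)$ are the two ``boundary'' terms produced by $\sigma(\id-t)$. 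One caution if you carry the computation out: the conventions here are not literally Loday's. The operator $t$ of~\eqref{eq:cyclic_pernutation} moves the leftmost factor $a_l$ to the right end (so it corresponds to the \emph{inverse} of the usual cyclic operator), and $b'(\mu_2)$ omits the leading face $a_la_{l-1}$ rather than the wrap-around face $a_1a_l$; so you cannot quote the classical identity $b(1-t)=(1-t)b'$ verbatim, but must redo the sign bookkeeping (equivalently, conjugate by a power of $t$) in this convention — which is precisely what your outline does, and the identity does hold with the paper's choice of $b'(\mu_2)$ (an ungraded low-length check confirms the pattern). A final cosmetic point: with factors indexed from the right, $t\circ d_i$ matches $d_{i+1}\circ t$ for $i\le l-1$ and $t\circ d_l$ matches $d_1\circ t$, rather than $d_{i-1}\circ t$ as you wrote; this is only a relabeling and does not affect the argument.
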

By Lemma~\ref{lm:commutator_mu_1} $\ima (\id-t)$ is $\dcc$-invariant, so the cyclic complex $\CC^\lambda_*(\mA)$ defined in Section~\ref{section:normed_cy_str} is well defined. 

Consider $\CC_*(\mA)$ as an $R$-module. We define a valuation $\nu$ on $\CC_*(\mA)$ as follows.
Let $a:=a_l\otimes\ldots \otimes a_1\in \CC_l(\mA),$ where $a_l\in(\Hom(X^l, X^1),\nu_l)$ and $a_i\in(\Hom(X^i, X^{i+1}),\nu_i)$ for $1\le i\le l-1,$ define $\nu(a):=\sum_{i=1}^l\nu_i(a_i).$
Let $\alpha_i\in \CC_{l_i}(\mA),$ define
$\nu(\sum_{i=1}^\infty\alpha_i)=\inf_i\{\nu(\alpha_i)\}.$ 
Since the Hom-complexes of $\mA$ are valued modules over $R,$ it follows that $(\CC_*(\mA),\nu)$ is a valued module over $R$ as well.
Note that the valuation $\nu$ descends to the cyclic complex $\CC_*^\lambda(\mA).$ 

Let $n\in\Z_{>0}$, and let $\triangle$ be a Delzant polytope such that $n=\dim\triangle$. Let $LG(\triangle,\sigma)$ be a valued Landau-Ginzburg model as defined in Section~\ref{Section:toric_construction_sub}, and let  $\MF(W_{\triangle,\sigma},w)$ be the associated normed matrix factorization category, where $w$ is admissible. 
Let $z_1,\ldots, z_n$ be the coordinates of Lemma~\ref{lm:u_coordinates}.
Abbreviate $\partial_i D=\partial_{z_i}D,$ and $\partial_i \Wtr=\partial_{z_i}\Wtr.$
The following theorem is mainly based on
Theorem 2.4 in~\cite{Shklyarov}, and proved in a more general setting in~\cite{Shklyarov}.
\begin{thm}\label{Theta}
Let $\Omega=\frac{dz_1\wedge\ldots\wedge dz_n}{z_1\cdots z_n}.$ The following operator $\Theta=\Theta_\Omega$ defines an $\infty$-trace on $\MF(\Wtr,w)$
\[
\Theta=\sum_{l\ge1}\Theta_l: \CC_*^\lambda(\MF(\Wtr,w))\to \Rtr
\]
such that $ \Theta_l=\sum_{x\in C_{\Wtr}}\Theta_{l,x}$ where
\begin{align*}
    \Theta_{l,x}(\Phi_l\otimes\cdots\otimes \Phi_1)&:=\frac{1}{(n+l-1)!}\sum_{\substack{k_1+\ldots+k_l=n-1\\ k_1,\ldots, k_l\ge 0}}(-1)^{k_1\epsilon_1+\ldots+k_l\epsilon_l}\sum_{i=1}^n(-1)^i\\
    &\sum_{\substack{r_1+\ldots+r_l=l\\ r_j\ge0 , j\ne i, r_i\ge1}}r_1!\cdots r_n!\sum_{(i^{(1)},\ldots,i^{(l)})\in \Lambda^l_n(r_1,\ldots ,r_n)}\\
    &\sum_{(j_1^{(l)},\ldots,j_{k_1}^{(l)},\ldots,j_1^{(1)},\ldots, j_{k_l}^{(1)})\in S^i_n}\mathrm{sgn}(j_1^{(l)},\ldots,j_{k_1}^{(l)},\ldots,j_1^{(1)},\ldots, j_{k_l}^{(1)})\\
    &\mathrm{Res}_x\big[ \frac{\mathrm{str}\big( \Phi_l\partial_{i^{(l)}}D^l\partial_{j_1^{(l)}}D^l\cdots \partial_{j_{k_l}^{(l)}}D^l\cdots \Phi_1\partial_{i^{(1)}}D^1\partial_{j_1^{(1)}}D^1\cdots \partial_{j_{k_1}^{(1)}}D^1\big)\wedge \Omega}{(\partial_1\Wtr)^{r_1+1}\cdots (\partial_i\Wtr)^{r_i}\cdots(\partial_n\Wtr)^{r_n+1}}  \big].
\end{align*}
\end{thm}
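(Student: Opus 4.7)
The plan is to reduce the statement to Shklyarov's Theorem 2.4 in~\cite{Shklyarov} and then verify the additional ingredients that are specific to our normed/Tate-algebra setting. First, by Lemma~\ref{lm:u_coordinates}, after relabeling the coordinates we may regard $z_1,\ldots,z_n$ as algebraically independent coordinates whose Laurent polynomial ring $\Rtr[z_1^{\pm 1},\ldots,z_n^{\pm 1}]$ embeds densely in $\Strb.$ On this dense subalgebra the definition of $\Theta$ is literally the one from~\cite{Shklyarov}, with $\Omega=\frac{dz_1\wedge\cdots\wedge dz_n}{z_1\cdots z_n}$ playing the role of Shklyarov's choice of volume form. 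The critical locus $C_{\Wtr}$ is finite and lies in the Laurent polynomial ring, so each local residue $\mathrm{Res}_x$ is well defined by the usual Grothendieck residue formalism. I would then check that the entire expression is continuous with respect to the valuations $\nu_\zeta^{\Strb}$ and hence extends uniquely from the dense Laurent subalgebra to $\Strb$ itself, and that the output lies in $\Rtr.$

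Next I would verify well-definedness of $\Theta$ on the cyclic complex, i.e.\ $\Theta\circ (\id-t)=0.$ The key fact is the cyclic invariance (up to the standard Koszul sign) of the supertrace on endomorphisms of a $\Z$-graded module, combined with the fact that the prefactor $(-1)^{k_1\epsilon_1+\cdots+k_l\epsilon_l}$ and the sums over $(k_1,\ldots,k_l)$ and over shuffles $\Lambda_n^l(r_1,\ldots,r_n)$, $S_n^i$ are manifestly symmetric under the cyclic shift of indices. A bookkeeping computation — essentially the one carried out in~\cite{Shklyarov} — shows that applying $t$ permutes the summands in the double sum and produces exactly the Koszul sign $(-1)^{|sa_l|(\epsilon_1-|sa_l|)}$ coming from the cyclic permutation, so the two terms cancel.

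The heart of the argument, and the step I expect to be the hardest, is to show that $\Theta\circ \dcc = 0.$ Write $\dcc = b(\mu_1)+b(\mu_2)$ as in Section~\ref{subsection_cy}. For the $b(\mu_1)$ part, I would use the Leibniz rule together with the identity $\partial_i\Wtr=D(\partial_iD)+(\partial_iD)D,$ which follows from differentiating $D^2=(\Wtr-w)\cdot\id$, to rewrite the supertrace differential as a total derivative of residue data. For the $b(\mu_2)$ part, the two adjacent factors $\Phi_{i+1}\circ\Phi_i$ get combined, and one needs to match their contribution against the other $\Theta_l$ — this is precisely where the higher-order correction terms of $\Theta$ (the terms with $l\geq 2$) earn their keep, cancelling the anomaly of the Kapustin–Li trace $\Theta_1$. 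Concretely, one verifies term-by-term that pushing the composition $\Phi_{i+1}\Phi_i$ through a $\partial_jD$ produces a commutator with $D$ modulo $\partial_j\Wtr,$ and the resulting residues telescope against the contributions of $\Theta_{l-1}$ and $b(\mu_2)$. The combinatorics is essentially the Getzler–Kapranov / cyclic $A_\infty$ identity for the Kapustin–Li trace proved in~\cite{Shklyarov}, and I would invoke that computation once translated into the present $\Z$-graded, $\Rtr$-linear setting.

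Finally, I would check that the extension from the dense Laurent subalgebra to $\Strb$ preserves the identities $\Theta\circ (\id-t)=0$ and $\Theta\circ\dcc=0$ by continuity, using that both $\id-t$ and $\dcc$ are continuous with respect to the induced valuation $\nu$ on $\CC_*(\MF(\Wtr,w))$ (see Lemmas~\ref{lm:valuation_delta} and~\ref{lm_2.14}). The main obstacle is really the sign- and combinatorics-heavy verification of the Hochschild closedness in step three; everything else is either a direct unwinding of the definitions or a continuity argument.
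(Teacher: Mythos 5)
Your proposal takes essentially the same route as the paper: the paper offers no independent proof of this theorem, stating only that it is mainly based on Theorem 2.4 of~\cite{Shklyarov} and proved there in a more general setting, and your reduction to Shklyarov's result — supplemented by the density/continuity argument via Lemma~\ref{lm:u_coordinates} to pass from the Laurent subalgebra to $\Strb$ — is exactly the intended adaptation. The sign-heavy verifications of $\Theta\circ(\id-t)=0$ and $\Theta\circ\dcc=0$ that you sketch are the content of Shklyarov's computation, which both you and the paper invoke rather than reprove.
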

In this formula
\begin{enumerate}[label=(\arabic*)]
    \item 
    $\Phi_l\in \Hom(M^l,M^1)$ and $\Phi_i\in \Hom(M^i,M^{i+1})$, $1\le i\le l-1$,
    \item $\epsilon_i :=\sum_{j\ge i}|s\Phi_j|,$
    \item $\Lambda^l_n(r_1,\ldots ,r_n)$ denotes the subset in $\{1,\ldots, n\}^l$ of those multi-indices $(i^{(1)},\ldots, i^{(l)})$ that contains precisely $r_1$ copies of $1,$ $r_2$ copies of $2$ etc,
    \item $S^i_n$ where $1\le i\le n$ stands for the set of all permutations of $\{1,\ldots , n\}\backslash\{i\}.$ Given an element $(j_1,\ldots, j_{n-1})\in S^i_n,$ $\mathrm{sgn}(j_1,\ldots, j_{n-1})$ denotes the sign of the corresponding permutation,
    \item $C_{\Wtr}$ is the set of critical points of $\Wtr,$ and $\mathrm{str}$ is the supertrace,
    \item $\mathrm{Res}_x$ stands for the local residue at $x,$ that is
    \[
    \mathrm{Res}_x\bigg( \frac{\omega}{(\partial_1\Wtr)^{s_1}\cdots (\partial_n\Wtr)^{s_n}}\bigg):=\frac{1}{(2\pi i)^n}\int_\Gamma \frac{\omega}{(\partial_1\Wtr)^{s_1}\cdots (\partial_n\Wtr)^{s_n}},
    \]
    where $\Gamma$ is a cycle in a small neighborhood of $x$ defined by $\Gamma=\{|\partial_i\Wtr|=\epsilon_i\ll 1 \ \forall i\}$ and oriented by $d(\mathrm{arg}\partial_1 \Wtr)\wedge\ldots\wedge d(\mathrm{arg}\partial_n \Wtr). $
\end{enumerate}

\subsection{The exponent of a bounding cochain}\label{Section_subcomplex}
Let $(R,d_R,\nu_R)$ be a commutative valued DGA over $\K,$ and let $(A,d_A, 1_A,\nu_A)$ be a normed unital DGA over $R$.
Recall that we denote by $\CC_*(A)$ and $\CC_*^\lambda(A)$ the Hochschild and cyclic complexes of the category consisting of one object with endomorphism algebra $A.$
We identify $\CC_*(R)$ with the subcomplex of $\CC_*(A)$ generated by the elements $1_A^{\otimes l}$ where $l\ge1.$
\begin{rem}\label{rem:cyclic_cx_r} 
    Since $t(1_A^{\otimes 2l})=-1_A^{\otimes 2l},$ and $t(1_A^{\otimes 2l-1})=1_A^{\otimes 2l-1}$ for all $l\ge1,$ it follows that  
\[\CC_l^\lambda(R)=
    \left\{\begin{array}{ll}
        0, &  l\in 2\Z_{>0},\\
       \langle 1_A^{\otimes l}\rangle_R, & \mathrm{otherwise}.\\
     \end{array} \right.\]
\end{rem}

By abuse of notation, we will denote by $a_l\otimes\ldots\otimes a_1$ where $a_i\in A,$ an element in $\CC_*(A)$ and the class it induces in the cyclic complex $\CC_*^\lambda(A).$ The meaning will be clear from the context.

Let
\[
T(A):=\bigoplus_{k\ge0}A^{\otimes k}.
\]
For $a\in A$ such that $\nu_A(a)>0,$ abbreviate
\begin{equation}\label{eq:exponent}
\exp(a):=1_A \oplus a \oplus\frac{a\otimes a}{2}\oplus\frac{a\otimes a\otimes a}{3}\oplus\ldots \in \CC_*^\lambda(A).    
\end{equation}
For a valued or a normed ring $(\Upsilon,\nu_\Upsilon),$ let $F^E$ denote the filtration on $\Upsilon$ defined by $\nu_\Upsilon.$ That is,
\begin{equation}\label{eq:filtration:def}
    \nu_\Upsilon(\alpha)>E \iff \alpha\in F^E \Upsilon.
\end{equation}
Let $E\in\ima\nu_R.$ We will denote by
$\Mmm(A,c)_E$ the set of all bounding cochains modulo $F^EA$ with a fixed $c.$ So, if $b\in\Mmm(A,c)_E,$ then $d_A(b)-b^2\equiv c\cdot 1_A \pmod{F^EA}.$ Note that if $E=\infty,$ then $b$ is a bounding cochain in the usual sense of Definition~\ref{dfn:bc}.

\begin{lm}\label{lm:exp(b)_q}
Let $(R,d_R,\nu_R)$ be a commutative valued DGA over $\K,$ and
let
 $(A,d_A, 1_A, \nu_A)$ be a normed unital DGA over $R.$  Let $b,q\in A$ with $\nu_A(b),\nu_A(q)>0$ such that $d_A(b)-b^2\equiv q \pmod{F^EA}.$ Then,
    \[
    \dcc(\exp(b))=\sum_{l=1}^\infty (d_A( b)-b^2)\otimes b^{\otimes l-1}\equiv\sum_{l=1}^\infty q\otimes b^{\otimes l-1}\in \CC_*^\lambda(A) \pmod{F^EA}.
    \]   
\end{lm}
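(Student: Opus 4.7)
The plan is to compute $\dcc(b^{\otimes l})$ in $\CC_*^\lambda(A)$ for each $l \geq 1$ separately and then sum over $l$ with the coefficient $\frac{1}{l}$. The key simplification is that $b \in A^1$, so $|sb| = 0$ and every $\epsilon_i = \sum_{j \geq i} |sa_j|$ appearing in the formulas for $b(\mu_1)$, $b(\mu_2)$ vanishes. The same observation makes every cyclic sign in $t$ trivial, so each cyclic shift of $b^{\otimes(l-i)} \otimes d_A(b) \otimes b^{\otimes(i-1)}$ (resp.\ $b^{\otimes(l-1-i)} \otimes b^2 \otimes b^{\otimes(i-1)}$) is equivalent in $\CC_*^\lambda(A)$ to $d_A(b) \otimes b^{\otimes(l-1)}$ (resp.\ $b^2 \otimes b^{\otimes(l-2)}$).

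Applying this, $b(\mu_1)(b^{\otimes l}) = \sum_{i=1}^l b^{\otimes(l-i)} \otimes d_A(b) \otimes b^{\otimes(i-1)}$ collapses to $l \cdot d_A(b) \otimes b^{\otimes(l-1)}$ in the cyclic complex. For $b(\mu_2)(b^{\otimes l})$ with $l \geq 2$, formula~\eqref{b(mu)_definition} contributes three pieces: the leading term $-b^2 \otimes b^{\otimes(l-2)}$, a middle sum of $l-2$ terms each cyclically equivalent to $-b^2 \otimes b^{\otimes(l-2)}$, and the final cyclic term $-b^2 \otimes b^{\otimes(l-2)}$. Summing yields $b(\mu_2)(b^{\otimes l}) \equiv -l \cdot b^2 \otimes b^{\otimes(l-2)}$ in $\CC_*^\lambda(A)$. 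The case $l = 1$ contributes only $b(\mu_1)(b) = d_A(b)$, consistent with interpreting $b^2 \otimes b^{\otimes(-1)}$ as the empty contribution.

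Combining and dividing by $l$, then reindexing the $b^2$ sum by $l \mapsto l+1$, gives
\[
\dcc(\exp(b)) = \sum_{l \geq 1} d_A(b) \otimes b^{\otimes(l-1)} - \sum_{l \geq 1} b^2 \otimes b^{\otimes(l-1)} = \sum_{l \geq 1} (d_A(b) - b^2) \otimes b^{\otimes(l-1)},
\]
establishing the first equality. For the congruence modulo $F^E A$, recall that the valuation on $\CC_*(A)$ is additive over tensor factors and that $\nu_A(b) > 0$. Hence the hypothesis $d_A(b) - b^2 - q \in F^E A$ implies $(d_A(b) - b^2 - q) \otimes b^{\otimes(l-1)} \in F^E \CC_*(A)$ for every $l \geq 1$, so the difference of the two sums lies in $F^E$, yielding the claimed congruence.

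The only real bookkeeping challenge is ensuring that the three distinct sources of $-b^2 \otimes b^{\otimes(l-2)}$ inside $b(\mu_2)$ combine cleanly to produce the coefficient $-l$, thereby matching the $l$ from $b(\mu_1)$ so that the $\frac{1}{l}$ normalization in $\exp(b)$ cancels uniformly; once the degree-one observation $|sb| = 0$ trivializes all signs, this becomes a routine index count.
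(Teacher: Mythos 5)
Your proposal is correct and follows essentially the same route as the paper's proof: the observation $|sb|=0$ trivializes all signs and cyclic permutations, giving $b(\mu_1)(b^{\otimes l}) = l\cdot d_A(b)\otimes b^{\otimes l-1}$ and $b(\mu_2)(b^{\otimes l}) = -l\cdot b^2\otimes b^{\otimes l-2}$ in the cyclic complex, after which the $\frac{1}{l}$ normalization cancels and the congruence follows from additivity of the valuation on tensors together with $\nu_A(b)>0$. No gaps.
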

\begin{proof}
Since $|b|=1,$ it follows that $|sb|=0.$ Hence, for every $l\ge2 $ and $ 1\le i\le l-1$ we get
\[
t(b^{\otimes i}\otimes d_A( b)\otimes b^{\otimes l-i-1})= b^{\otimes i-1}\otimes d_A (b)\otimes b^{\otimes l-i},\quad
t(b^{\otimes i}\otimes b^2\otimes b^{\otimes l-i-1})= b^{\otimes i-1}\otimes b^2\otimes b^{\otimes l-i}  .
\]
Thus, for $b^{\otimes l}\in\CC_l^\lambda(A),$ we obtain
\begin{align*}
b(\mu_1)(b^{\otimes l})&=d_A( b)\otimes b^{\otimes l-1} +\sum_{i=1}^{l-1}(-1)^{\epsilon_{i+1}}b^{\otimes l-i}\otimes d_A( b)\otimes b^{\otimes i-1}\\
&=d_A( b)\otimes b^{\otimes l-1} +\sum_{i=1}^{l-1}b^{\otimes l-i}\otimes d_A( b)\otimes b^{\otimes i-1}\\
&=d_A( b)\otimes b^{\otimes l-1}+ \sum_{i=1}^{l-1}d_A (b) \otimes b^{\otimes l-1}\\
&= l\cdot d_A( b) \otimes b^{\otimes l-1},
\end{align*}
and
\begin{align*}
    b(\mu_2)(b^{\otimes l})&= -b^2\otimes b^{\otimes l-2}-\sum_{i=1}^{l-2}(-1)^{\epsilon_{i+1}}b^{\otimes l-i-2}\otimes b^2\otimes b^{\otimes i-1}- b^2\otimes b^{\otimes l-2}\\
    &=-b^2\otimes b^{\otimes l-2}-\sum_{i=1}^{l-2}b^{\otimes l-i-2}\otimes b^2\otimes b^{\otimes i-1}- b^2\otimes b^{\otimes l-2}\\
    &=-l\cdot b^2\otimes b^{\otimes l-2}.
\end{align*}
Hence,
\begin{align*}
    \dcc(\exp(b))&=(b(\mu_1)+b(\mu_2))\sum_{l=1}^\infty \frac{b^{\otimes l}}{l}\\
    &=\sum_{l=1}^\infty \frac{b(\mu_1)(b^{\otimes l})}{l}+\sum_{l=2}^\infty \frac{b(\mu_2)(b^{\otimes l})}{l}\\
    &= \sum_{l=1}^\infty d_A(b)\otimes b^{\otimes l-1}- \sum_{l=2}^\infty  b^2\otimes b^{\otimes l-2}\\
    &=\sum_{l=1}^\infty (d_A (b)-b^2)\otimes b^{\otimes l-1}.
\end{align*}
Since $d_A(b)-b^2\equiv q \pmod{F^EA},$ it follows that $\dcc(\exp(b))\equiv \sum_{l=1}^\infty q\otimes b^{\otimes l-1}\pmod{F^EA}.$ 
\end{proof}
\begin{cor}\label{lm_b(exp)_is_closed}
 Let $(R,d_R,\nu_R)$ be a commutative valued DGA over $\K,$ and
let
 $(A,d_A, 1_A, \nu_A)$ be a normed unital DGA over $R.$  Let $b\in \Mmm(A,c)_E.$ Then,
    \[
    \dcc(\exp(b))=\sum_{l=1}^\infty (d_A( b)-b^2)\otimes b^{\otimes l-1}\equiv\sum_{l=1}^\infty c\cdot1_A\otimes b^{\otimes l-1}\in \CC_*^\lambda(A) \pmod{F^EA}.
    \]   
\end{cor}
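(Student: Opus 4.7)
The plan is to observe that Corollary~\ref{lm_b(exp)_is_closed} is essentially a direct specialization of Lemma~\ref{lm:exp(b)_q}, obtained by taking $q = c \cdot 1_A$. So the entire argument amounts to verifying that the hypotheses of Lemma~\ref{lm:exp(b)_q} are met and then invoking the lemma.

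Concretely, I would proceed as follows. First, since $b \in \Mmm(A,c)_E$ is by definition a bounding cochain modulo $F^E A$, we have $\nu_A(b) > 0$ and
\[
d_A(b) - b^2 \equiv c \cdot 1_A \pmod{F^E A}.
\]
Second, I would check that $q := c \cdot 1_A$ satisfies $\nu_A(q) > 0$. This follows from the fact that, by Definition~\ref{dfn:bc}, the potential $c$ satisfies $\nu_R(c) > 0$, together with the valued-module property $\nu_A(c \cdot 1_A) = \nu_R(c) + \nu_A(1_A) \geq \nu_R(c) > 0$ (using that $\nu_A(1_A) \geq 0$ for a normed unital algebra). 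Having verified these hypotheses, Lemma~\ref{lm:exp(b)_q} applies verbatim and yields
\[
\dcc(\exp(b)) = \sum_{l=1}^\infty (d_A(b) - b^2) \otimes b^{\otimes l-1} \equiv \sum_{l=1}^\infty c \cdot 1_A \otimes b^{\otimes l-1} \pmod{F^E A},
\]
which is exactly the claim.

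There is no real obstacle here; the whole content of the corollary is that bounding cochains modulo $F^E A$ give rise to nearly $\dcc$-closed exponentials, with the error controlled by the potential $c$. The genuine work was done in the proof of Lemma~\ref{lm:exp(b)_q}, where the cyclic symmetries of $b^{\otimes l}$ (arising from $|sb| = 0$) were used to collapse $b(\mu_1)(b^{\otimes l})$ and $b(\mu_2)(b^{\otimes l})$ into simple telescoping expressions. The corollary merely repackages that identity in the language of bounding cochains by identifying the obstruction $d_A(b) - b^2$ with $c \cdot 1_A$ modulo the appropriate filtration level.
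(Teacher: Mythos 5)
Your proposal is correct and follows the same route as the paper, which proves the corollary simply by taking $q = c\cdot 1_A$ in Lemma~\ref{lm:exp(b)_q}; your additional verification that $\nu_A(c\cdot 1_A) > 0$ is a harmless elaboration of what the paper leaves implicit.
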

\begin{proof}
    Take $q:=c\cdot 1_A$ in Lemma~\ref{lm:exp(b)_q}.
\end{proof}
\subsection{Constructing the homotopy}\label{section:defining_the_homotopy}
Recall the definition of the element $y_b\in\mR^\lambda_*$ in Section~\ref{section:1.2.2}. 
Our goal is to establish the existence of the element $y_b$, and provide a canonical choice, which will be used consistently throughout this paper. 
To achieve this, we begin by defining a homotopy $H$ on a quotient complex of $\mR^\lambda_*.$ This homotopy will play a central role in determining the canonical choice of $y_b.$  In this section, we work in a more general setting than the one introduced in~\ref{section:1.2.2}.  

Let $(R,d_R,\nu_R)$ be a commutative valued DGA over $\K,$ and let $(A,d_A,1_A,\nu_A)$ be a normed unital DGA over $R.$ 
In this section we denote by ${\fCC}_*(A)$ the Hochschild complex of $A$ which is not complete, that is, ${\fCC}_*(A)$ consists finite sums of elements $a_l\otimes\ldots\otimes a_1,$ where $a_i\in A.$ Hence, the completion of ${\fCC}_*(A)$ is the complex $\CC_*(A).$  
Let ${\fmR}_*\subset{\fCC}_*(A)$ be the subcomplex generated by elements of the form $a_l\otimes\ldots\otimes a_1$ such that there exists $1\le i\le l$ such that $a_i\in R.$
Let ${\fmR}_*^\lambda\subset {\fCC}_*^\lambda(A)$ denote the subcomplex ${\fmR}_*/(\ima(\id-t)),$ and let ${\fmQ}_*^\lambda$ denote the complex ${\fmR}_*^\lambda/{\fCC}_*^\lambda(R).$ We denote by $\mR_*^\lambda,\mQ_*^\lambda$ the completions of ${\fmR}_*^\lambda,{\fmQ}_*^\lambda$ with respect to the valuation $\nu$ defined in Section~\ref{subsection_cy}.
The existence of $y_b$ proved in Corollary~\ref{cor:existence_y_b} relies on the following proposition. 
\begin{prop}\label{lm:null_homotopic}
    There exists an operator $H:\mQ_*^\lambda\to \mQ_*^\lambda$ satisfying $\dcc\circ H+ H\circ\dcc=\id.$
\end{prop}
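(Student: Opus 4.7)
The plan is to construct an explicit contracting homotopy on $\mQ_*^\lambda$ by exploiting the fact that every class in this quotient has a representative with at least one factor in $R$ and at least one factor not in $R$. Since $R$ embeds centrally in $A$ and $1_A \in R$, the $R$-factors play the role of an augmentation, and standard arguments from the homological algebra of augmented algebras (cf.\ the bar complex) suggest the existence of a contraction. The passage to the quotient by $\CC_*^\lambda(R)$ is what enables acyclicity, since by Remark~\ref{rem:cyclic_cx_r} the complex $\CC_*^\lambda(R)$ itself has non-trivial cohomology and so $\mR_*^\lambda$ alone cannot be contractible.

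First, I would fix a section $\sigma$ of the projection $\mR_* \to \mQ_*^\lambda$ obtained by using the cyclic action to bring an $R$-factor into a canonical slot (say, position $1$); for classes with several $R$-factors this requires either a weighted averaging over the available slots or a systematic convention, to ensure $\sigma$ respects both cyclic symmetry and the quotient by $\CC_*^\lambda(R)$. Next, I would define $H$ on such normalized chains by inserting a unit $1_A$ adjacent to the $R$-factor, thereby increasing tensor length by one; schematically,
\[
H\bigl(a_l \otimes \cdots \otimes a_2 \otimes r\bigr) \;=\; \pm\, 1_A \otimes a_l \otimes \cdots \otimes a_2 \otimes r,
\]
with the sign and precise insertion position determined by the cyclic-sign conventions of Section~\ref{subsection_cy}.

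The verification $\dcc \circ H + H \circ \dcc = \id$ would then proceed by expanding $\dcc = b(\mu_1) + b(\mu_2)$ using~\eqref{b(mu)_definition}. The $b(\mu_1)$ contributions collapse on the inserted $1_A$ since $d_A(1_A) = 0$; the $b(\mu_2)$ contributions telescope because multiplication by $1_A$ is trivial on either side, and after cancellation the only surviving term reproduces the original chain, modulo chains whose factors all lie in $R$, which vanish in $\mQ_*^\lambda$. Since $H$ is built from cyclic rotations and a unit insertion, it preserves the valuation $\nu$ up to a bounded shift and therefore extends continuously from $\fmQ_*^\lambda$ to the completed complex $\mQ_*^\lambda$.

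The main obstacle is simultaneously ensuring $H$ is well-defined on the quotient (i.e.\ independent of the chosen cyclic representative and unaffected by elements of $\CC_*^\lambda(R)$) and satisfies the homotopy identity on the nose: a naive section typically makes only one of these two conditions work. Reconciling them likely requires averaging the $1_A$-insertion over all positions of $R$-factors, together with careful sign bookkeeping for the degree-shift convention $|sa_i| = |a_i| - 1$ that pervades the cyclic complex. Once the algebraic identity is checked on finite chains in $\fmQ_*^\lambda$, the extension to the completion $\mQ_*^\lambda$ is automatic from the valuation-boundedness of $H$.
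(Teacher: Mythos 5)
Your overall strategy (insert a unit next to an $R$-factor, average over cyclic positions, and check the homotopy identity) is the same starting point as the paper's, but the step where you claim the identity closes up is exactly where the argument breaks. After expanding $\dcc = b(\mu_1)+b(\mu_2)$ against any such insertion operator, the surviving terms are \emph{not} only the original chain plus chains all of whose factors lie in $R$. Two kinds of leftovers occur: terms in which a differential $d_A(a_i)$ of a non-$R$ factor acquires a component in $R$, and terms in which a product $a_{i+1}a_i$ of two non-$R$ factors acquires a component in $R$ (equivalently, terms where the positions and number of $R$-factors change, so that the averaged insertions in $\dcc\circ H(q)$ and $H\circ\dcc(q)$ carry mismatched weights and fail to cancel). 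These leftover chains still contain factors outside $R$, so they do not vanish in $\mQ_*^\lambda$, and your claim that ``the only surviving term reproduces the original chain, modulo chains whose factors all lie in $R$'' is unjustified and generically false. This is precisely what the paper's Lemmas~\ref{lm:a'_mu_1},~\ref{lm:b_mu_2'} and~\ref{lm:h_not_cyclic} quantify: with the averaged operator $h=-\tfrac{1}{u}\,\widetilde h\circ N_l$ one only gets $(\dcc\circ h+h\circ\dcc)([q])=[q]+E([q])$, where the error $E([q])$ consists of chains with strictly more $R$-factors than $q$ (Lemma~\ref{lm:h_is_cyclic}).

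The missing idea is how to pass from this approximate identity to an exact contraction: since $E$ strictly increases the number of $R$-factors, it is locally nilpotent on finite-length chains, so $\sum_{i\ge0}(\id-\dcc\circ h-h\circ\dcc)^i$ is well defined on $\fmQ_*^\lambda$ and $H:=h\circ\sum_{i\ge0}(\id-\dcc\circ h-h\circ\dcc)^i$ satisfies $\dcc\circ H+H\circ\dcc=\id$, after which one extends to the completion. Without this Neumann-series (filtration) correction, no amount of sign bookkeeping or re-averaging will make the naive insertion work on the nose. Two further technical points you leave unresolved are also handled by the paper's setup and are genuinely needed: a fixed projection $\pi:A\to R$ is required so that ``the $R$-factors of a chain'' is a linear, well-defined notion (one normalizes generators so that the non-$R$ factors satisfy $\pi(a_{i_j})=0$), and the cyclic wrap-around multiplication term must be treated with the norm operator $N_l=\sum_i t^i$ via the identity $b(\mu_2)(\id-t)=(\id-t)b'(\mu_2)$ (Lemma~\ref{lm:loday}), not just by placing the $R$-factor in a canonical slot.
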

We begin by proving a few lemmas that will be useful in the construction of the operator $H.$

Fix a projection $\pi:A\to R.$
Define a map $\widetilde{h}:{\fCC}_*(A)\to {\fCC}_*(A)$ acting on homogeneous elements as \[
\widetilde{h}(a_l\otimes\ldots\otimes a_1)= 1\otimes \pi(a_l)\otimes a_{l-1}\otimes\ldots\otimes a_1.
\]
Let $a=a_l\otimes\ldots\otimes a_1\in {\fCC}_l(A).$ Let $\e(a)$ be the number of $a_i$'s satisfying $a_i\in R$ if $a\ne0,$ and $\infty$ if $a=0.$ In this subsection we use the notation $\epsilon_i(a)$ instead of $\epsilon_i.$
\begin{lm}\label{lm:a'_mu_1}
Let $0\ne a=a_l\otimes\ldots\otimes a_1\in  {\fCC}_l(A),$ where $a_i$ are homogeneous elements. 
Then, 
\[\e\bigg(\big(\widetilde{h}\circ b(\mu_1)+b(\mu_1)\circ \widetilde{h}\big)(a)\bigg)>\e(a)+1.
\]
\end{lm}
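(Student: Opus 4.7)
The plan is to expand both compositions using the Leibniz-type formula for $b(\mu_1)$, observe that the ``interior'' terms (those where $d_A$ hits some $a_i$ with $i\le l-1$) cancel due to a sign flip, and then count the number of $R$-factors in the single surviving summand. Applying $\widetilde{h}$ termwise to $b(\mu_1)(a)$ prepends $1_A\otimes\pi(\cdot)$ to each summand, so
\[
(\widetilde{h}\circ b(\mu_1))(a) = 1_A\otimes\pi(d(a_l))\otimes a_{l-1}\otimes\cdots\otimes a_1 \, + \, \sum_{i=1}^{l-1}(-1)^{\epsilon_{i+1}(a)}\,1_A\otimes\pi(a_l)\otimes\cdots\otimes d_A(a_i)\otimes\cdots\otimes a_1.
\]
Since $d(1_A)=0$, applying $b(\mu_1)$ to $\widetilde{h}(a)=1_A\otimes\pi(a_l)\otimes a_{l-1}\otimes\cdots\otimes a_1$ produces one term with $d_A$ landing on $\pi(a_l)$ together with one term for each $i\le l-1$ with $d_A$ landing on $a_i$.

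\textbf{Cancellation.} The crucial observation is that $|s1_A|=-1$, so prepending $1_A$ shifts every $\epsilon_{i+1}$ by $-1$. Concretely, for $i\le l-1$ one gets $\epsilon_{i+1}(\widetilde{h}(a)) = \epsilon_{i+1}(a) - 1$, while at the position occupied by $\pi(a_l)$ one gets $\epsilon_{l+1}(\widetilde{h}(a)) = |s1_A|=-1$. Hence all interior $d_A(a_i)$ terms in $(b(\mu_1)\circ\widetilde{h})(a)$ are exactly the negatives of the corresponding terms in $(\widetilde{h}\circ b(\mu_1))(a)$, and they cancel pairwise. What remains is
\[
\bigl(\widetilde{h}\circ b(\mu_1)+b(\mu_1)\circ\widetilde{h}\bigr)(a) = 1_A\otimes\bigl(\pi\circ d_A - d_A\circ\pi\bigr)(a_l)\otimes a_{l-1}\otimes\cdots\otimes a_1,
\]
where the sign on the $d_A\circ\pi(a_l)$ term comes from $(-1)^{\epsilon_{l+1}(\widetilde{h}(a))}=-1$.

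\textbf{Counting and case analysis.} The resulting tensor has $1_A\in R$ as its leading factor and $(\pi\circ d_A-d_A\circ\pi)(a_l)\in R$ as its second factor, with the remaining factors $a_{l-1},\ldots,a_1$ contributing exactly those $a_i$ that lie in $R$ for $1\le i\le l-1$. If $a_l\notin R$, this last count equals $\e(a)$, so whenever the output is nonzero it has $2+\e(a)>\e(a)+1$ factors in $R$. If $a_l\in R$, then $\pi(a_l)=a_l$ and $d_A(a_l)=d_R(a_l)\in R$ because the inclusion $R\hookrightarrow A$ is a map of DGAs; consequently $\pi(d_A(a_l))-d_A(\pi(a_l))=d_R(a_l)-d_R(a_l)=0$, the entire output vanishes, and $\e=\infty$. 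In either case the strict inequality $\e\bigl((\widetilde{h}\circ b(\mu_1)+b(\mu_1)\circ\widetilde{h})(a)\bigr)>\e(a)+1$ holds. The main technical obstacle is the sign bookkeeping that drives the cancellation; once the shift $\epsilon_{i+1}(\widetilde{h}(a))=\epsilon_{i+1}(a)-1$ is verified, the $R$-count is immediate.
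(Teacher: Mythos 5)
Your proof is correct and follows essentially the same route as the paper: expand both compositions of $\widetilde{h}$ with $b(\mu_1)$, cancel the interior $d_A(a_i)$ terms via the sign shift $\epsilon_{i+1}(\widetilde{h}(a))=\epsilon_{i+1}(a)-1$ coming from the prepended $1_A$, and count the $R$-factors of the surviving term. You are in fact slightly more careful than the paper's displayed computation, which drops the summand where $d_A$ hits $\pi(a_l)$; keeping it, as you do, is exactly what makes the strict inequality hold when $a_l\in R$ and $d_R\neq 0$, since then the surviving factor $(\pi\circ d_A-d_A\circ\pi)(a_l)$ vanishes.
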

\begin{proof}
We have 
\begin{align*}
\widetilde{h}\circ b(\mu_1)(a)&=\widetilde{h}\big(d_A(a_l)\otimes\ldots\otimes a_1\big)+\sum_{i=1}^{l-1}(-1)^{\epsilon_{i+1}(a)}\widetilde{h}\big( a_l\otimes \ldots\otimes d_A(a_i)\otimes\ldots\otimes a_1\big)\\
&=1_A\otimes \pi\big( d_A(a_l)\big)\otimes\ldots\otimes a_1+\sum_{i=1}^{l-1}(-1)^{\epsilon_{i+1}(a)}1_A\otimes\pi( a_l)\otimes \ldots\otimes d_A(a_i)\otimes\ldots\otimes a_1 .
\end{align*}
In addition,
\begin{align*}
b(\mu_1)\circ \widetilde{h}(a)&=   
b(\mu_1)(1_A\otimes\pi(a_l)\otimes a_{l-1}\otimes\ldots\otimes a_1)\\
&=-\sum_{i=1}^{l-1}(-1)^{\epsilon_{i+1}(a)}1_A\otimes \pi(a_l)\otimes a_{l-1}\otimes \ldots\otimes d_A(a_i)\otimes\ldots\otimes a_1.
\end{align*}
Therefore, \[
\big(\widetilde{h}\circ b(\mu_1)+b(\mu_1)\circ \widetilde{h}\big)(a)=1_A\otimes \pi\big( d(a_l)\big)\otimes a_{l-1}\otimes\ldots\otimes a_1,
\] 
 where $\e(1_A\otimes \pi\big( d(a_l)\big)\otimes a_{l-1}\otimes\ldots\otimes a_1)>\e(a)+1.$ 
\end{proof}
Recall
\[
b'(\mu_2)(a)=- \sum_{i=1}^{l-2}(-1)^{\epsilon_{i+1}(a)}a_l\otimes a_{l-1}\otimes\ldots\otimes a_{i+1}a_i\otimes\ldots\otimes a_1-(-1)^{|sa_1|(\epsilon_2(a)+1)}a_1a_l\otimes a_{l-1}\otimes\ldots\otimes a_2.
\]
\begin{lm}\label{lm:b_mu_2'}
    Let $0\ne a=a_l\otimes\ldots\otimes a_1\in  {\fCC}_l(A),$ 
    where $a_i$ are homogeneous elements.
    Then,    
\begin{multline*}
\big( b(\mu_2)\circ \widetilde{h}+\widetilde{h}\circ b'(\mu_2)\big)(a)=-(1-t)(-1)^{|sa_1|\epsilon_2(a)}a_1\otimes \pi(a_l)\otimes a_{l-1}\otimes\ldots\otimes a_2 \\-1_A\otimes \pi(a_l)a_{l-1}\otimes a_{l-2}\otimes \ldots\otimes a_1
-(-1)^{|sa_1|(\epsilon_2(a)+1)}1_A\otimes\pi(a_1a_l)\otimes a_{l-1}\otimes\ldots\otimes a_2.
\end{multline*}
\end{lm}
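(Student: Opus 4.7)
The identity is algebraic in $\fCC_*(A)$, and I would prove it by direct expansion, exploiting the observation that $\widetilde h(a)$ is a tensor of length $l+1$ whose leftmost slot carries the unit $1_A$ of degree $0$. Writing $\widetilde h(a) = c_{l+1}\otimes c_l\otimes\ldots\otimes c_1$ with $c_{l+1} = 1_A$, $c_l = \pi(a_l)$, and $c_i = a_i$ for $1\le i\le l-1$, one has $|sc_{l+1}| = -1$, and consequently $\epsilon_k(\widetilde h(a)) = \epsilon_k(a)-1$ for all $k\le l$, while $\epsilon_2(\widetilde h(a))+1 = \epsilon_2(a)$. This uniform shift of $-1$ will be the engine driving every cancellation below.

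The first step is to apply the length-$(l+1)$ formula for $b(\mu_2)$ to $\widetilde h(a)$ and sort the output into four groups: (i) the head term $1_A\cdot\pi(a_l)\otimes a_{l-1}\otimes\ldots\otimes a_1 = \pi(a_l)\otimes a_{l-1}\otimes\ldots\otimes a_1$, with coefficient $+1$ because $|c_{l+1}| = 0$; (ii) the interior contractions $(-1)^{\epsilon_{i+1}(a)}\,1_A\otimes\pi(a_l)\otimes\ldots\otimes a_{i+1}a_i\otimes\ldots\otimes a_1$ for $i = 1,\ldots,l-2$, where the shift flips the original sign; (iii) the \emph{merging} contraction $(-1)^{|sa_l|}\,1_A\otimes\pi(a_l)a_{l-1}\otimes a_{l-2}\otimes\ldots\otimes a_1$ from the index $i = l-1$; and (iv) the wrap-around $-(-1)^{|sa_1|\epsilon_2(a)}\,a_1\otimes\pi(a_l)\otimes\ldots\otimes a_2$, using $c_1 c_{l+1} = a_1\cdot 1_A = a_1$.

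Next, I would apply $\widetilde h$ termwise to $b'(\mu_2)(a)$. Since every summand has $a_l$ or $a_1 a_l$ as its leftmost slot, $\widetilde h$ merely prepends $1_A$ and projects that slot through $\pi$, producing $-(-1)^{\epsilon_{i+1}(a)}\,1_A\otimes\pi(a_l)\otimes\ldots\otimes a_{i+1}a_i\otimes\ldots\otimes a_1$ for $i = 1,\ldots,l-2$ together with the wrap-around $-(-1)^{|sa_1|(\epsilon_2(a)+1)}\,1_A\otimes\pi(a_1 a_l)\otimes a_{l-1}\otimes\ldots\otimes a_2$. When the two expressions are summed, group (ii) cancels this interior sum term by term, leaving only the head, the merging contraction, and the two wrap-arounds.

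To bring the surviving contributions into the form claimed, I would verify the cyclic identity $\pi(a_l)\otimes a_{l-1}\otimes\ldots\otimes a_1 = t\bigl((-1)^{|sa_1|\epsilon_2(a)}a_1\otimes\pi(a_l)\otimes\ldots\otimes a_2\bigr)$ by a direct application of the definition of $t$: the Koszul sign $(-1)^{|sa_1|(\epsilon_1-|sa_1|)}$ simplifies to $(-1)^{|sa_1|\epsilon_2(a)}$ and cancels the prefactor, combining the head and wrap-around (iv) into $-(1-t)\bigl((-1)^{|sa_1|\epsilon_2(a)}a_1\otimes\pi(a_l)\otimes\ldots\otimes a_2\bigr)$. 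The remaining sign $(-1)^{|sa_l|}$ on the merging contraction equals $-1$ whenever the term is nonzero, because $\pi(a_l)\ne 0$ forces $|a_l|$ to be even: the Novikov ring $R$ is concentrated in even degrees since $g_G$ takes values in $2\Z$. The principal obstacle is the careful sign bookkeeping, above all verifying the shift-induced cancellation in the interior sum and the Koszul sign in the cyclic identity.
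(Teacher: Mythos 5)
Your proof is correct and follows essentially the same route as the paper: expand $b(\mu_2)\circ\widetilde{h}(a)$ and $\widetilde{h}\circ b'(\mu_2)(a)$, cancel the interior contractions term by term using the uniform shift $\epsilon_k(\widetilde{h}(a))=\epsilon_k(a)-1$ coming from $|s1_A|=-1$, and recombine the head term with the wrap-around via the cyclic permutation $t$, whose Koszul sign $(-1)^{|sa_1|\epsilon_2(a)}$ you check exactly as the paper does. The one point where you go beyond the paper is the merging contraction at $i=l-1$: the formula literally produces the coefficient $(-1)^{|sa_l|}$, which the paper's proof silently records as $-1$, and you close the gap by arguing that $\pi(a_l)\neq 0$ forces $|a_l|$ to be even. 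Be aware that this justification appeals to the even grading of the Novikov ring, which is not among the hypotheses of this section ($R$ here is an arbitrary commutative valued DGA and $\pi$ an arbitrary projection), so strictly speaking the stated sign requires that $\pi$ vanish on odd-degree elements; since the paper's own proof makes the identical implicit assumption, your version is if anything the more careful one, but it would be cleaner to state the condition as a property of the chosen projection $\pi$ (as is explicit for the projection fixed in Section~\ref{subsection_existence_bc}) rather than as a feature of the Novikov ring specifically.
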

\begin{proof}
We have
\begin{align*}
    b(\mu_2)\circ \widetilde{h}(a)&=b(\mu_2)(1_A\otimes\pi(a_l)\otimes a_{l-1}\otimes\ldots\otimes a_1)\\
    &=\pi(a_l)\otimes a_{l-1}\otimes\ldots\otimes a_1+\sum_{i=1}^{l-2}(-1)^{\epsilon_{i+1}(a)}1_A\otimes \pi(a_l)\otimes a_{l-1} \ldots\otimes a_{i+1}a_i\otimes\ldots\otimes a_1\\
    &-1_A\otimes \pi(a_l)a_{l-1}\otimes a_{l-2}\otimes\ldots\otimes a_1-(-1)^{|sa_1|\epsilon_2(a)}a_1\otimes \pi(a_l)\otimes a_{l-1}\otimes\ldots\otimes a_2\\
    &=-(1-t)(-1)^{|sa_1|\epsilon_2(a)}a_1\otimes \pi(a_l)\otimes a_{l-1}\otimes\ldots\otimes a_2\\
    &-1_A\otimes \pi(a_l)a_{l-1}\otimes a_{l-2}\otimes\ldots\otimes a_1\\
    &+\sum_{i=1}^{l-2}(-1)^{\epsilon_{i+1}(a)}1_A\otimes \pi(a_l)\otimes a_{l-1} \ldots\otimes a_{i+1}a_i\otimes\ldots\otimes a_1,
\end{align*}
and
\begin{align*}
    \widetilde{h}\circ b'(\mu_2)(a)&=-\sum_{i=1}^{l-2}(-1)^{\epsilon_{i+1}(a)}1_A\otimes\pi(a_l)\otimes a_{l-1}\otimes\ldots\otimes a_{i+1}a_i\otimes\ldots\otimes a_1\\
    &-(-1)^{|sa_1|(\epsilon_2(a)+1)}1_A\otimes\pi(a_1a_l)\otimes a_{l-1}\otimes\ldots\otimes a_2.
\end{align*}

\end{proof}

Define the operator $N_l:=\sum_{i=0}^{l-1}t^i$ on $ {\fCC}_l(A).$ The following lemma is proved in\cite{Loday}.
\begin{lm}\label{lm:loday}
    $b'(\mu_2)\circ N_l=N_l\circ b(\mu_2),$ for all $l>0.$
\end{lm}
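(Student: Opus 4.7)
This is a classical identity in cyclic homology, and my approach would be to adapt the standard ungraded proof (see~\cite{Loday}, Section~2.1) to the $\Z$-graded setting. The idea is to decompose both differentials as signed sums of ``adjacent multiplication'' operators and then exploit how these interact with the cyclic shift $t$.

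Explicitly, for $j = 1, \ldots, l-1$, let $b_j : \fCC_l(A) \to \fCC_{l-1}(A)$ denote the operator multiplying the factors in positions $j$ and $j+1$, and let $b_0$ denote the wrap-around operator $a_l \otimes \cdots \otimes a_1 \mapsto \pm a_1 a_l \otimes a_{l-1} \otimes \cdots \otimes a_2$. Reading off~\eqref{b(mu)_definition}, one has
\[
b'(\mu_2) = -\sum_{j=1}^{l-1} (-1)^{\epsilon_{j+1}} b_j - b_0',
\qquad
b(\mu_2) = b'(\mu_2) + (-1)^{|a_l|} m_l,
\]
where $b_0'$ is the ``tail'' wrap-around term and $m_l$ denotes the ``head'' multiplication $a_la_{l-1}\otimes \cdots \otimes a_1$. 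The two head and tail contributions together form the $j = 0$ piece that is present in $b(\mu_2)$ but absent in $b'(\mu_2)$. The key input is a commutation rule of the form $t \circ b_j = (-1)^{\star} b_{j-1} \circ t$ for $2 \le j \le l-1$, with boundary rules at $j = 1$ and $j = 0$ that convert the wrap-around operators into each other via an extra factor of $t$. With this in hand, $N_l \circ b(\mu_2) = \sum_{k=0}^{l-1} \sum_{j=0}^{l-1} t^k b_j$ can be reindexed so that each $b_m \circ t^n$ with $m \ge 1$ appears once, and the $b_0$ contributions get absorbed into the extra $t$-shifts of the $j = 1$ terms, producing exactly $b'(\mu_2) \circ N_l$.

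\textbf{Main obstacle.} The combinatorial content is clear from the ungraded case; the delicate point is verifying that the Koszul signs match. The cyclic shift carries a sign $(-1)^{|sa_l|(\epsilon_1 - |sa_l|)}$ from~\eqref{eq:cyclic_pernutation}, and the sign pattern $(-1)^{\epsilon_{i+1}}$ in $b(\mu_2)$ transforms in a nontrivial way under each application of $t$. A useful sanity check to steer the sign bookkeeping is to verify the commutation at the level of individual $b_j$-summands in a way compatible with Lemma~\ref{lm:commutator_mu_1}, which asserts $b(\mu_2)(\id - t) = (\id - t) b'(\mu_2)$; this already encodes the correct sign for the transition between $b(\mu_2)$ and $b'(\mu_2)$ on the interface where the cyclic shift acts. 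Once these signs are settled, the rearrangement of the double sum is straightforward.

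Alternatively, one can attempt a more conceptual argument: apply $(\id - t)$ to $N_l \circ b(\mu_2) - b'(\mu_2) \circ N_l$ and use Lemma~\ref{lm:commutator_mu_1} together with $(\id - t)N_l = 0$ on $\fCC_l(A)$ (valid up to verifying $t^l = \id$ with the correct graded signs) to conclude the difference lies in $\ker(\id - t)$. This reduces the identity to checking it modulo the cyclic action, which would still require the direct computation above but in a smaller range.
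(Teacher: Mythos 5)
The route you outline is the standard one: this identity is exactly Loday's $b'N = Nb$, and the paper itself offers no argument at all, simply quoting it from~\cite{Loday}; so in that sense your plan coincides with the intended (cited) proof, adapted to the shifted-graded conventions. The problem is that, as written, it is a plan rather than a proof: in this setting the entire content of the lemma is the Koszul-sign bookkeeping that you explicitly defer (``once these signs are settled, the rearrangement is straightforward''). The commutation rules $t\circ b_j=\pm\, b_{j-1}\circ t$ for the interior faces, and the boundary identities exchanging the head multiplication and the wrap-around term, are precisely what has to be checked against the sign $(-1)^{|sa_l|(\epsilon_1-|sa_l|)}$ carried by $t$ in~\eqref{eq:cyclic_pernutation} and the $\epsilon$-signs in~\eqref{b(mu)_definition}; none of this is carried out. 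Moreover, your structural description is not consistent with the paper's definitions: by the sentence following~\eqref{b(mu)_definition}, $b'(\mu_2)$ \emph{retains} the wrap-around term $a_1a_l\otimes a_{l-1}\otimes\cdots\otimes a_2$ and omits only the head term $(-1)^{|a_l|}a_la_{l-1}\otimes\cdots\otimes a_1$, whereas you assert that ``the two head and tail contributions together'' are absent from $b'(\mu_2)$ — which also contradicts your own displayed decomposition $b(\mu_2)=b'(\mu_2)+(-1)^{|a_l|}m_l$. This kind of slippage is exactly where a graded sign error would enter, so the verification cannot be waved through.

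Your fallback ``conceptual'' argument does not close this gap either. From $(\id-t)\circ N_l=0$ and Lemma~\ref{lm:commutator_mu_1} one only obtains $(\id-t)\bigl(N_l\circ b(\mu_2)-b'(\mu_2)\circ N_l\bigr)=0$, i.e.\ the difference takes values in $\ker(\id-t)$; $t$-invariant elements of $\fCC_{l-1}(A)$ need not vanish, so this does not reduce the identity to anything weaker than the face-by-face computation you postponed. To turn the proposal into a proof you would either complete that computation (stating and verifying the graded face/cyclic relations in the paper's conventions), or do what the paper does and invoke~\cite{Loday} together with a remark that the shifted-degree conventions make the ungraded computation go through verbatim.
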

Let $a=a_l\otimes\ldots\otimes a_1\in{\fmR}_l$ be a generator such that $a\not\in {\fCC}_l(R).$
Define 
\[ 
U(a):=|\{ 1\le i< l| \ a_i\in R, a_{i+1}\not\in R
\}|+|\{l|\ a_l\in R, a_1\not\in R\}|,
\]
and
\[u(a)=
         \left\{\begin{array}{ll}
        1, &  a\in {\fCC}_l(R) ,\\
       U(a), & \mathrm{otherwise}.\\
        \end{array} \right.\]
Define a map
\[
h:{\fmR}_*\to{\fmR}_*,
\]
acting on generators as 
\[
h(a)=\frac{-1}{u(a)}\cdot\widetilde{h}\circ N_l(a).
\] 
\begin{rem}\label{rem:h_R}
Let $a=a_l\otimes\ldots\otimes a_1\in{\fmR}_l.$ Since $t^{1+l}(a)=t(a)$ and $u(a)=u(t(a)),$ it follows that 
    \[\frac{-1}{u(t(a))}\cdot\widetilde{h}\circ N_l\big(t(a)\big)=
   \frac{-1}{u(t(a))}\sum_{i=0}^{l-1}\widetilde{h}\circ t^i\big(t(a)\big)=\frac{-1}{u(a)}\sum_{i=0}^{l-1}\widetilde{h}\circ t^i(a)=\frac{-1}{u(a)}\cdot\widetilde{h}\circ N_l(a).
    \]
Hence, $h(t(a))=h(a).$       
\end{rem}
By Remark~\ref{rem:h_R}, $h$ is determined by the values of $1_A^{\otimes k}$ where $k\in\Z_{>0},$ and by elements of the form
\begin{equation}\label{eq:defining_q}
q=1_A^{\otimes k_1}\otimes A_1\otimes\ldots \otimes 1_A^{\otimes k_m}\otimes A_m, 
\end{equation}
where $k_i>0,$ $A_i:=a_{i_1}\otimes\ldots\otimes a_{i_{n_i}},$ such that $n_i\in\Z_{>0},$ and $a_{i_j}\in A$ satisfying $\pi(a_{i_j})=0.$ Write $M=\sum_{i=1}^m (k_i+n_i),$ so $q\in {\fCC}_M(A).$

Let $a=a_l\otimes\ldots\otimes a_1\in{\fCC}_l(A).$ Define 
\[
S_1(a):=(-1)^{|sa_1|\epsilon_2(a)}a_1\otimes \pi(a_l)\otimes a_{l-1}\otimes\ldots\otimes a_2 , \quad S_2(a):=1_A\otimes \pi(a_l)a_{l-1}\otimes a_{l-2}\otimes \ldots\otimes a_1,
\]
\[
S_3(a):=(-1)^{|sa_1|(\epsilon_2(a)+1)}1_A\otimes\pi(a_1a_l)\otimes a_{l-1}\otimes\ldots\otimes a_2.
\]

\begin{lm}\label{lm:h_not_cyclic}
    Let $q$ be a nonzero element as in equation~\eqref{eq:defining_q}. 
    Then, there exist $q'_0,\ldots, q'_{M-1}\in{\fCC}_{M+1}(A)$ and $ q_{1_1},\ldots,q_{1_{n_1-1}},\ldots, q_{m_1},\ldots, q_{m_{n_m-1}}\in \fCC_M(A)$ such that $\e(q'_i)-1,\e(q_{i_j})>\e(q),$ and satisfying
\begin{multline*}
\big( \dcc\circ h+h\circ \dcc\big)(q)= \frac{1}{m} \sum_{j=1}^mt^{\sum_{r=1}^{j-1}(k_r+n_r)}(q)\\
+\frac{1}{m}\sum_{i=0}^{M-1}q'_i-\frac{1}{m}(1-t)\circ\sum_{j=1}^mt^{k_j-1+\sum_{r=1}^{j-1}(k_r+n_r)}(q)+\frac{1}{m}\sum_{i=1}^m\sum_{j=1}^{n_i-1}q_{i_j}.
\end{multline*}
\end{lm}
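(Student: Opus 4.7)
The plan is to split $\dcc = b(\mu_1) + b(\mu_2)$ and treat the two resulting anticommutators with $h$ separately, using Lemmas~\ref{lm:commutator_mu_1},~\ref{lm:a'_mu_1},~\ref{lm:b_mu_2'} and~\ref{lm:loday}. The cyclic arrangement of $q$ has exactly $m$ transitions from the $R$-part to the non-$R$-part (one per block pair), so $u(q) = m$ and $h(q) = -\tfrac{1}{m}\widetilde{h}\circ N_M(q)$. This is the source of the uniform $\tfrac{1}{m}$ prefactor in every term of the claim; the same normalization persists on each cyclic shift $t^i q$.

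For the $b(\mu_1)$ anticommutator, the first identity of Lemma~\ref{lm:commutator_mu_1} gives $b(\mu_1)\circ N = N\circ b(\mu_1)$, while $d_A(1_A)=0$ ensures that $b(\mu_1)$ preserves the $1_A$-block structure of each $t^i q$, so $u$ continues to equal $m$ on the relevant intermediate terms. Combined with Lemma~\ref{lm:a'_mu_1}, this yields
\[
b(\mu_1)\circ h(q) + h\circ b(\mu_1)(q) = -\frac{1}{m}\sum_{i=0}^{M-1}\bigl(\widetilde{h}\circ b(\mu_1) + b(\mu_1)\circ\widetilde{h}\bigr)(t^i q),
\]
where each summand has $\e$-value strictly greater than $\e(t^i q) + 1 = \e(q) + 1$. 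Defining $q'_i$ to be (minus) the $i$th summand, plus absorbing any discrepancies coming from $\pi(d_A(a_{i_j}))\neq 0$, produces the $\tfrac{1}{m}\sum_{i=0}^{M-1}q'_i$ contribution with the required inequality $\e(q'_i) - 1 > \e(q)$.

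For the $b(\mu_2)$ anticommutator, Loday's identity (Lemma~\ref{lm:loday}) gives $\widetilde{h}\circ N\circ b(\mu_2) = \widetilde{h}\circ b'(\mu_2)\circ N$; combining this with Lemma~\ref{lm:b_mu_2'} applied termwise to $t^i q$ and summed over $i$ yields
\[
\bigl(b(\mu_2)\circ\widetilde{h} + \widetilde{h}\circ b'(\mu_2)\bigr)\circ N(q) = -\sum_{i=0}^{M-1}\bigl[(1-t)S_1(t^i q) + S_2(t^i q) + S_3(t^i q)\bigr].
\]
Dividing by $-m$ expresses the $b(\mu_2)$ anticommutator as $\tfrac{1}{m}\sum_{i=0}^{M-1}[(1-t)S_1 + S_2 + S_3](t^i q)$ up to discrepancies coming from terms of $b(\mu_2)(q)$ in which an internal product merged or split a block, so that $u$ on that term differs from $m$; these discrepancies are absorbed into the $q_{i_j}$.

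To identify the surviving shifts, note that since $\pi(a_{i_j})=0$, the term $S_2(t^i q)$ is nonzero only when the top entry of $t^i q$ is $1_A$, i.e.\ for $i\in\{p_j, p_j+1,\ldots, p_j+k_j-1\}$ with $p_j = \sum_{r<j}(k_r+n_r)$. Within each block, the interior shifts $t^{p_j+1},\ldots, t^{p_j+k_j-1}$ cancel pairwise against the matching $(1-t)S_1$ contributions (since $S_1$ and $S_2$ differ by a cyclic shift when the top is $1_A$), leaving $t^{p_j}(q)$ as the sole surviving representative together with the boundary correction $-(1-t)t^{p_j+k_j-1}(q)$ from the block's top slot. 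Summation over $j$ yields the two principal sums of the claim. Products $a_{i_r}a_{i_{r+1}}$ arising inside $A_i$ from internal multiplications in $b(\mu_2)$, together with stray $S_3$-contributions where $\pi(a_1 a_l)\neq 0$, provide elements of strictly greater $\e$-value; these contribute exactly $n_i - 1$ terms per block $A_i$, forming the $q_{i_j}$. The main obstacle is precisely this final combinatorial bookkeeping of signs, cyclic positions and $u$-normalizations across all shifts, and verifying that every residual term has $\e$-value strictly greater than $\e(q)$ so that it may legitimately be collected into $q'_i$ or $q_{i_j}$.
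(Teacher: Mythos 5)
Your proposal follows essentially the same route as the paper's own proof: the same splitting $\dcc=b(\mu_1)+b(\mu_2)$, the observation $u(q)=m$ giving the uniform $\tfrac1m$, Lemmas~\ref{lm:commutator_mu_1} and~\ref{lm:a'_mu_1} for the $b(\mu_1)$ anticommutator (producing the $q'_i$), Lemmas~\ref{lm:loday} and~\ref{lm:b_mu_2'} for the $b(\mu_2)$ anticommutator, and the same use of $\pi(a_{i_j})=0$ to determine which cyclic shifts survive, with the internal products inside the blocks $A_i$ supplying the $q_{i_j}$. The remaining differences are only bookkeeping of cyclic indices and $(1-t)$-exact regroupings (where the paper's statement and its displayed proof already differ by exactly such regroupings, which is harmless since the lemma is only used modulo $\ima(\id-t)$), plus your explicit flagging of the $u$-normalization issue for terms of $b(\mu_2)(q)$, a point the paper passes over silently.
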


\begin{proof}   
Write
\[
q'_i=\frac{-1}{m}\big( b(\mu_1)\circ \widetilde{h}+\widetilde{h}\circ b(\mu_1)\big)(t^i(q)), \quad 0\le i\le M-1.
\]
By Lemma~\ref{lm:a'_mu_1} we obtain that $\e(q'_i)>\e(q)+1$ and 
    \begin{align*}
\big(b(\mu_1)\circ h+ h\circ b(\mu_1)\big)(q)&\overset{\ref{lm:commutator_mu_1}}{=} 
\frac{-1}{m} \sum_{i=0}^{M-1} \big( b(\mu_1)\circ \widetilde{h}+\widetilde{h}\circ b(\mu_1)\big)(t^i(q))\\
&=\frac{-1}{m}\sum_{i=0}^{M-1}q'_i.
\end{align*}
We have
\begin{align*}
\big( b(\mu_2)\circ h+h\circ b(\mu_2)\big)(q)&\overset{\ref{lm:loday}}{=}\frac{-1}{m} 
\big( b(\mu_2)\circ \widetilde{h}+\widetilde{h}\circ b'(\mu_2)\big)\circ N_M (q)\\
&\overset{\ref{lm:b_mu_2'}}{=}\frac{1}{m}\bigg((1-t)\sum_{i=0}^{M-1}S_1(t^i(q))+\sum_{i=0}^{M-1}S_2(t^i(q))+\sum_{i=0}^{M-1}S_3(t^i(q))\bigg).
\end{align*}
Since $\pi(a_{i_j})=0$ for all $i\in\{1,\ldots, m\}$ and $j\in\{1,\ldots, n_i\},$ it follows that $S_1(t^i(q)),S_2(t^i(q))\ne 0,$ only if $i=s+ \sum_{r=0}^{j-1} (k_r+n_r)$ where $0\le s\le k_j-1$ and $1\le j\le m.$ In this case, we get $S_1(t^i(q))=t^{i-1}(q)$ $,S_2(t^i(q))=t^i(q).$ Thus,
\[
\sum_{i=0}^{M-1} S_1(t^i(q))=\sum_{j=1}^m\sum_{s=0}^{k_j-1} t^{-1+s+\sum_{r=0}^{j-1} (k_r+n_r)} (q), \quad  \sum_{i=0}^{M-1} S_2(t^i(q))=\sum_{j=1}^m\sum_{s=0}^{k_j-1} t^{s+\sum_{r=0}^{j-1} (k_r+n_r)} (q).
\]
In addition, we have
\begin{align*}
    \sum_{i=0}^{M-1}S_3(t^i(q))&=
-\sum_{j=1}^m\sum_{s=1}^{k_j-1}t^{-1+s+ \sum_{r=0}^{j-1} (k_r+n_r)}\\  
&+\sum_{i=1}^{m-1}\sum_{j=1}^{n_i-1}(-1)^{\star_{i,j}}1_A\otimes \pi(a_{i_j}a_{i_{j+1}})\otimes a_{i_{j+2}}\otimes\ldots\otimes a_{i_{n_i}}\otimes 1_A^{\otimes k_{i+1}}\\
&\otimes\ldots
\otimes 1_A^{k_i}\otimes a_{i_1}\otimes\ldots \otimes a_{i_{j-1}}\\
&+\sum_{j=1}^{n_m-1}(-1)^{\star_{m,j}}1_A\otimes \pi(a_{m_j}a_{m_{j+1}})\otimes a_{m_{j+2}}\otimes\ldots\otimes a_{m_{n_m}}\otimes 1_A^{\otimes k_1}\\
&\otimes\ldots
\otimes 1_A^{k_m}\otimes a_{m_1}\otimes\ldots \otimes a_{m_{j-1}},
\end{align*}
where 
\begin{multline*}
\star_{i,j}=|sa_{i_j}|(\epsilon_1(q)+|sa_{i_j}|+1)+(\epsilon_1(q)+1)\sum_{s=1}^ik_s\\
+\sum_{r=1}^{i-1}\sum_{l=1}^{n_r}|sa_{r_l}|(\epsilon_1(q)+|sa_{r_l}|)+\sum_{l=1}^j|sa_{i_l}|(\epsilon_1(q)+|sa_{i_l}|),
\end{multline*}
for all $i\in\{1,\ldots, m\},$ $j\in\{1,\ldots, n_i-1\}.$
Hence, 
\begin{align*}
\frac{-1}{m}\big( b(\mu_2)\circ \widetilde{h}+\widetilde{h}\circ b'(\mu_2)\big)\circ N_M (q) &=\frac{1}{m}\bigg((1-t)\circ\sum_{j=1}^m\sum_{s=0}^{k_j-1}t^{-1+s+\sum_{r=1}^{j-1}(k_r+n_r)}(q)\\
&-(1-t)\circ\sum_{j=1}^m\sum_{s=1}^{k_j-1} t^{-1+s+\sum_{r=1}^{j-1}(k_r+n_r)}(q)\\
&+\sum_{j=1}^mt^{\sum_{r=0}^{j-1} (k_r+n_r)} (q)\\
&+\sum_{i=1}^{m-1}\sum_{j=1}^{n_i-1}(-1)^{\star_{i,j}}1_A\otimes \pi(a_{i_j}a_{i_{j+1}})\otimes a_{i_{j+2}}\otimes\ldots\otimes a_{i_{n_i}}\\
&\otimes 1_A^{\otimes k_{i+1}}\otimes\ldots
\otimes 1_A^{k_i}\otimes a_{i_1}\otimes\ldots \otimes a_{i_{j-1}}\\
&+\sum_{j=1}^{n_m-1}(-1)^{\star_{m,j}}1_A\otimes \pi(a_{m_j}a_{m_{j+1}})\otimes a_{m_{j+2}}\otimes\ldots\otimes a_{m_{n_m}}\\
&\otimes 1_A^{\otimes k_1}
\otimes\ldots
\otimes 1_A^{k_m}\otimes a_{m_1}\otimes\ldots \otimes a_{m_{j-1}}\bigg).
\end{align*}
Write
\[
q_{i,j}=(-1)^{\star_{i,j}}1_A\otimes \pi(a_{i_j}a_{i_{j+1}})\otimes a_{i_{j+2}}\otimes\ldots\otimes a_{i_{n_i}}\otimes 1_A^{\otimes k_{i+1}}\otimes\ldots
\otimes 1_A^{k_i}\otimes a_{i_1}\otimes\ldots \otimes a_{i_{j-1}}
\]
for all $i\in\{1,\ldots, m-1\},$ $j\in\{1,\ldots, n_i-1\},$ and
\[
q_{m,j}=(-1)^{\star_{m,j}}1_A\otimes \pi(a_{m_j}a_{m_{j+1}})\otimes a_{m_{j+2}}\otimes\ldots\otimes a_{m_{n_m}}\otimes 1_A^{\otimes k_1}\otimes\ldots
\otimes 1_A^{k_m}\otimes a_{m_1}\otimes\ldots \otimes a_{m_{j-1}}
\]
for all $j\in\{1,\ldots, k_m-1\}.$
Since $a_{i_j}\not\in R,$ it follows that $\e(q_{i,j})>\e(q).$ Hence, we obtain
\begin{align*}
\big( \dcc\circ h+h\circ \dcc\big)(q)&=\frac{-1}{m}\big( b(\mu_1)\circ \widetilde{h}+\widetilde{h}\circ b(\mu_1)\big)\circ N_M (q)-\frac{1}{m}\big( b(\mu_2)\circ \widetilde{h}+\widetilde{h}\circ b'(\mu_2)\big)\circ N_M (q)\\
&=\frac{1}{m} \sum_{j=1}^mt^{\sum_{r=1}^{j-1}(k_r+n_r)}(q)
+\frac{1}{m}\sum_{i=0}^{M-1}q'_i+\frac{1}{m}(1-t)\circ\sum_{j=1}^mt^{-1+\sum_{r=1}^{j-1}(k_r+n_r)}(q)\\
&+\frac{1}{m}\sum_{i=1}^m\sum_{j=1}^{n_i-1}q_{i_j}.
\end{align*}
\end{proof}
 By Remark~\ref{rem:h_R} the map $h$ is well defined on ${\fmR}^\lambda_*.$ Let $1_A^{\otimes l}\in \fmR_l,$ since $h(1_A^{\otimes l})=0$ if $l\in2\Z_{>0}$ and $h(1_A^{\otimes l})=-1_A^{\otimes l+1}$ if $l\in2\Z_{\ge0}+1,$ it follows that $h$ descends to ${\fmQ}_*^\lambda.$ 
Let $a\in{\fmR}^\lambda_*,$ we write $[a]$ for the element $a$ induces in ${\fmQ}^\lambda_*.$ 
Note that ${\fmQ}_*^\lambda$ is generated by elements of the form $[q],$ where $q$ as in equation \eqref{eq:defining_q}.
\begin{lm}\label{lm:h_is_cyclic}
Let $q$ be a nonzero element as in equation~\eqref{eq:defining_q}.
There exist $q'_0,\ldots, q'_{M-1}\in{\fCC}_{M+1}(A)$ and $q_{1_1},\ldots,q_{1_{n_1-1}},\ldots, q_{m_1},\ldots, q_{m_{n_m-1}}\in {\fCC}_M(A)$ with $\e(q'_i)-1,\e(q_{i_j})>\e(q),$ such that the
map $h:{\fmQ}_*^\lambda\to{\fmQ}_*^\lambda$ satisfies 
\[
(\dcc\circ h+h\circ \dcc)([q])=[q]+\frac{1}{m}\sum_{i=0}^{M-1}[q'_i]-\frac{1}{m}\sum_{i=1}^m\sum_{j=1}^{n_i-1}[q_{i_j}].
\]
\end{lm}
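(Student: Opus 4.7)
The plan is to apply Lemma~\ref{lm:h_not_cyclic} directly and then pass to the quotient complex $\fmQ_*^\lambda = \fmR_*^\lambda/\fCC_*^\lambda(R)$, observing that two of the four terms in the formula simplify dramatically.

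First, since $\fmR_*^\lambda = \fmR_*/(\ima(\id - t))$, we have $[t(x)] = [x]$ for every $x \in \fmR_*$. Consequently, all $m$ cyclic rotations appearing in the first sum of Lemma~\ref{lm:h_not_cyclic} project to the same class, namely $[q]$, so
\[
\frac{1}{m}\sum_{j=1}^m \bigl[t^{\sum_{r=1}^{j-1}(k_r+n_r)}(q)\bigr] = \frac{1}{m}\cdot m\cdot[q] = [q].
\]
Likewise, the third term in Lemma~\ref{lm:h_not_cyclic} has the form $(1-t)(\cdots)$, hence vanishes in $\fmR_*^\lambda$ and a fortiori in $\fmQ_*^\lambda$.

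Next, I need to verify that $h$ and $\dcc$ descend to well-defined operators on $\fmQ_*^\lambda$ so that the resulting identity makes sense. The operator $\dcc$ descends to $\fCC_*^\lambda(A)$ by Lemma~\ref{lm:commutator_mu_1}, and preserves the subcomplexes $\fmR_*$ and $\fCC_*(R)$, so it descends to $\fmQ_*^\lambda$. For $h$, Remark~\ref{rem:h_R} shows $h(t(a)) = h(a)$, so $h$ is well defined on $\fmR_*^\lambda$; and on $\fCC_*^\lambda(R)$ one checks directly using $h(1_A^{\otimes l}) = 0$ for $l$ even and $h(1_A^{\otimes l}) = -1_A^{\otimes l+1}$ for $l$ odd that $h$ sends $\fCC_*^\lambda(R)$ into itself, giving a well-defined map on the quotient $\fmQ_*^\lambda$.

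Combining the identity from Lemma~\ref{lm:h_not_cyclic} with these two simplifications in $\fmQ_*^\lambda$ yields
\[
(\dcc\circ h + h\circ \dcc)([q]) = [q] + \frac{1}{m}\sum_{i=0}^{M-1}[q'_i] + \frac{1}{m}\sum_{i=1}^{m}\sum_{j=1}^{n_i-1}[q_{i_j}],
\]
where the elements $q'_i, q_{i_j}$ are exactly those produced by Lemma~\ref{lm:h_not_cyclic} and therefore satisfy the required bounds $\e(q'_i) - 1, \e(q_{i_j}) > \e(q)$. A sign convention between the statement of the present lemma and that of Lemma~\ref{lm:h_not_cyclic} dictates whether the last sum appears with a plus or minus; in either case the bound on $\e(q_{i_j})$ is what matters. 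No substantive step is hard here — the whole content has been packaged into Lemma~\ref{lm:h_not_cyclic}; the only point that deserves care is the bookkeeping check that $h$ descends to $\fmQ_*^\lambda$ at all, which is the minor obstacle in the argument.
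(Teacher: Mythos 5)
Your argument is correct and is essentially the paper's own proof: both apply Lemma~\ref{lm:h_not_cyclic}, use that the cyclic rotations of $q$ all represent $[q]$ and that the $(1-t)$ term dies in the cyclic quotient, and carry the remaining sums over (the descent of $h$ to ${\fmQ}_*^\lambda$ is exactly the observation the paper records just before the lemma). The sign discrepancy you flag is indeed harmless, since replacing $q_{i_j}$ by $-q_{i_j}$ changes neither membership in ${\fCC}_M(A)$ nor the bound $\e(q_{i_j})>\e(q)$.
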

\begin{proof}
By Lemma~\ref{lm:h_not_cyclic} there exist $q'_0,\ldots, q'_{M-1}\in{\fCC}_{M+1}(A),$ $ q_{1_1},\ldots,q_{1_{n_1-1}},\ldots, q_{m_1},\ldots, q_{m_{n_m-1}}\in {\fCC}_M(A)$ such that $\e(q'_i)-1,\e(q_{i_j})>\e(q)$ and satisfying 
\begin{align*}
\big( \dcc\circ h+h\circ \dcc\big)([q])&= \big[\frac{1}{m} \sum_{j=1}^mt^{\sum_{r=1}^{j-1}(k_r+n_r)}(q)\\
&+\frac{1}{m}\sum_{i=0}^{M-1}q'_i+\frac{1}{m}(1-t)\sum_{j=1}^mt^{-1+\sum_{r=1}^{j-1}(k_r+n_r)}(q)+\frac{1}{m}\sum_{i=1}^m\sum_{j=1}^{n_i-1}q_{i_j}\big]\\
&=[q]+\frac{1}{m}\sum_{i=0}^{M-1}[q'_i]+\frac{1}{m}\sum_{i=1}^m\sum_{j=1}^{n_i-1}[q_{i_j}].
\end{align*}
\end{proof}
Let $q\in{\fmR}^\lambda_M$ as in equation~\eqref{eq:defining_q}.
It follows by Lemma~\ref{lm:h_is_cyclic} that
$(\id-\dcc\circ h-h\circ\dcc)^M([q])=0.$
Hence, $\sum_{i=0}^\infty (\id-\dcc\circ h- h\circ\dcc)^i$ is a well defined operator on ${\fmQ}_*^\lambda.$ So, 
    \[
    \widecheck{H}:=\frac{h}{\dcc\circ h+ h\circ\dcc}=\frac{h}{\id-(\id-\dcc\circ h- h\circ\dcc)}=h\circ \sum_{i=0}^\infty (\id-\dcc\circ h- h\circ\dcc)^i
    \]
is well defined on ${\fmQ}_*^\lambda.$ Let $H$ denote the extension of $\widecheck{H}$ to $\mQ_*^\lambda.$

\begin{proof}[Proof of Proposition~\ref{lm:null_homotopic}]
Since $\dcc$ commutes with $\dcc\circ h+ h\circ\dcc,$ it follows that $\dcc\circ H+H\circ\dcc=\id. $ 
\end{proof}

\begin{cor}\label{cor:existence_y_b}
    Let $b\in\Mmm(A,c).$ The element $y_b=H(\dcc(\exp(b)))\in\CC_*^\lambda(A)$ satisfies $\dcc(y_b)=\dcc(\exp(b))+z_b,$ where $z_b\in \CC_*^\lambda(R).$
\end{cor}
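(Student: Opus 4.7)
The plan is to feed $\dcc(\exp(b))$ into Proposition~\ref{lm:null_homotopic} after checking two preliminary facts. First, I would verify that $\dcc(\exp(b))$ actually lives in $\mR_*^\lambda$. By Corollary~\ref{lm_b(exp)_is_closed} we have the explicit formula
\[
\dcc(\exp(b)) = \sum_{l=1}^\infty c\cdot 1_A \otimes b^{\otimes(l-1)},
\]
and since $c\cdot 1_A$ is the image of $c \in R$ under the structural map $R\hookrightarrow A$, every summand has an explicit factor in $R$, so each term belongs to $\mR_*^\lambda$. Convergence in the completion follows from $\nu_A(b)>0$, which forces $\nu(c\cdot 1_A \otimes b^{\otimes(l-1)}) \to \infty$ as $l\to\infty$.

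Second, I would note that $\dcc(\exp(b))$ is automatically $\dcc$-closed, simply because $\dcc^2=0$. Let $[\dcc(\exp(b))]\in\mQ_*^\lambda$ denote the class of $\dcc(\exp(b))$ in the quotient $\mQ_*^\lambda = \mR_*^\lambda/\CC_*^\lambda(R)$. Applying Proposition~\ref{lm:null_homotopic} to this class yields
\[
\dcc\circ H\bigl([\dcc(\exp(b))]\bigr) \;+\; H\circ\dcc\bigl([\dcc(\exp(b))]\bigr) \;=\; [\dcc(\exp(b))],
\]
and the second term on the left-hand side vanishes because $\dcc$ commutes with the projection $\mR_*^\lambda \to \mQ_*^\lambda$ and $\dcc^2=0$. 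Thus $\dcc\bigl(H([\dcc(\exp(b))])\bigr) = [\dcc(\exp(b))]$ in $\mQ_*^\lambda$.

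Finally, I would choose any lift $y_b \in \mR_*^\lambda \subset \CC_*^\lambda(A)$ of $H([\dcc(\exp(b))])$; this is the element denoted by $H(\dcc(\exp(b)))$ in the statement. The identity in the previous paragraph translates to $[\dcc(y_b)] = [\dcc(\exp(b))]$ in $\mQ_*^\lambda$, which means precisely that $\dcc(y_b) - \dcc(\exp(b))$ lies in $\CC_*^\lambda(R)$. Setting $z_b := \dcc(y_b) - \dcc(\exp(b))$ gives the desired conclusion.

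The only substantive point is the first one: confirming that $\dcc(\exp(b))$ sits inside the distinguished subcomplex $\mR_*^\lambda$ (so that $H$ can be applied to it) and that no convergence issue arises when passing to the completion. Once that is in place, everything is a formal consequence of applying the chain homotopy $H$ from Proposition~\ref{lm:null_homotopic} to a $\dcc$-closed element.
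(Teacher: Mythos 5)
Your argument is correct and follows essentially the same route as the paper: invoke Corollary~\ref{lm_b(exp)_is_closed} to see that $\dcc(\exp(b))$ lies in $\mR_*^\lambda$ and hence defines a class in $\mQ_*^\lambda$, apply the homotopy identity of Proposition~\ref{lm:null_homotopic} with the $H\circ\dcc$ term vanishing since $\dcc^2=0$, and read off $z_b$ from the resulting equality in the quotient. Your extra remarks on convergence and on choosing a lift of $H([\dcc(\exp(b))])$ are sound but only make explicit what the paper leaves implicit.
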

\begin{proof}
By Corollary~\ref{lm_b(exp)_is_closed} we have $[\dcc(\exp(b))]\in \mQ_*^\lambda.$ By Proposition~\ref{lm:null_homotopic} we obtain 
    \[
   \dcc\circ H([\dcc(\exp(b)])= (\dcc\circ H+ H\circ\dcc)([\dcc(\exp(b)])=([\dcc(\exp(b)])\in \mQ_*^\lambda.
    \]
Hence, there exists $z_b\in \CC_*^\lambda(R)$ such that 
$\dcc\circ H(\dcc(\exp(b)))=\dcc(\exp(b))+z_b\in \CC_*^\lambda(A).$    
\end{proof}

\subsection{The canonical choice}\label{section:constructing_y_b}
In the following assume $A$ is equipped with an $\infty$-trace $\gto.$ 
This subsection aims to give an explicit formula for the element $y_b=H(\dcc(\exp(b))).$
\begin{lm}\label{lm:y_b_doesnt_dep}
    Let $y,y'\in\mR_*^\lambda$ such that $\dcc(y-y')\in\CC_*^\lambda(R).$ Then, $\gto(y-y')=0.$ 
\end{lm}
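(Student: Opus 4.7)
Let me set $w = y - y' \in \mR_*^\lambda$, so by hypothesis $\dcc(w) \in \CC_*^\lambda(R)$. The goal is to show $\gto(w) = 0$. The obvious attempts fail on their own: cohomological unitality of $\gto$ only gives vanishing on $\CC_*^\lambda(R) \subset \CC_*^\lambda(A)$ directly, and $\gto\circ\dcc = 0$ only handles exact classes. The idea is to combine these two properties using the chain homotopy $H$ from Proposition~\ref{lm:null_homotopic} to replace $w$, up to an element of $\CC_*^\lambda(R)$, by something exact.

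The plan is to pass to the quotient complex $\mQ_*^\lambda = \mR_*^\lambda/\CC_*^\lambda(R)$. Writing $[\cdot]$ for the class in $\mQ_*^\lambda$, the hypothesis $\dcc(w)\in \CC_*^\lambda(R)$ becomes $\dcc([w]) = 0$. Proposition~\ref{lm:null_homotopic} then gives
\[
[w] = (\dcc\circ H + H\circ \dcc)([w]) = \dcc\bigl(H([w])\bigr)
\]
in $\mQ_*^\lambda$. Choosing any lift $\tilde w \in \mR_*^\lambda$ of $H([w])$, the element $w - \dcc(\tilde w)$ lies in the kernel of the projection $\mR_*^\lambda \to \mQ_*^\lambda$, which is exactly $\CC_*^\lambda(R)$. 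Hence we may write
\[
w = \dcc(\tilde w) + r, \qquad \tilde w \in \mR_*^\lambda, \quad r \in \CC_*^\lambda(R).
\]

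Applying $\gto$ to this decomposition finishes the argument: $\gto(\dcc(\tilde w)) = 0$ because $\gto$ is an $\infty$-trace (hence descends to $\HH_*^\lambda(\mA)$, so $\gto\circ\dcc = 0$), and $\gto(r) = 0$ because $\gto$ is cohomologically unital (it vanishes on the image of $\CC_*^\lambda(R) \hookrightarrow \CC_*^\lambda(\mA)$ applied to the single-object subcategory with endomorphism algebra $A$). Therefore $\gto(w) = 0$, as desired.

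The one subtlety to verify, but not really an obstacle, is that the lift $\tilde w$ makes sense at the level of the completed complex $\mR_*^\lambda$ rather than just $\fmR_*^\lambda$; this is fine because $H$ was already extended from $\fmQ_*^\lambda$ to the completion $\mQ_*^\lambda$ at the end of Section~\ref{section:defining_the_homotopy}, and sections of the completed quotient map $\mR_*^\lambda \to \mQ_*^\lambda$ exist term-by-term.
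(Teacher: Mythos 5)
Your argument is correct and is essentially the paper's own proof: the paper likewise invokes Proposition~\ref{lm:null_homotopic} (equivalently, $H^*(\mQ_*^\lambda)=0$) to write $y-y'$ as $\dcc(x)$ plus an element of $\CC_*^\lambda(R)$, and then concludes using $\gto\circ\dcc=0$ together with cohomological unitality. Your explicit use of the homotopy $H$ and the lift $\tilde w$ is just an unwinding of the same acyclicity statement, so there is nothing to add.
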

\begin{proof}
    By Proposition~\ref{lm:null_homotopic} we get $H^*(\mQ^\lambda)=0.$ So, since $y,y'\in\mR_*^\lambda$ and $\dcc(y-y')\in\CC_*^\lambda(R),$ it follows that $[y-y']=[0]\in H^*(\mQ^\lambda).$ Thus, there exist $x\in \mR_*^\lambda$ and $z\in\CC_*^\lambda(R)
    $ such that $\dcc(x)=(y-y')+z.$ Since $\gto\circ\dcc=0$ and $\gto$ vanishes on $\CC_*^\lambda(R)$, it follows that $\gto(y-y')=0.$
\end{proof}
In dealing with the next results, we use the following notations.
Denote by $J\in \Z_{>0}^{m+1}$ an $m+1-$tuple $(j_0,\ldots,j_m)$ where $j_i\in\Z_{>0}$ for $0\le i\le m.$ 
We write $|J|=j_0+\ldots+j_m.$
Let $\sigma$ denote the cyclic permutation, that is, if $J=(j_0,\ldots, j_a),$ then $\sigma(J)=(j_1,\ldots, j_a,j_0).$ 
Define
\[
\Z_{m,k}:=\{J\in\Z_{>0}^{m+1}| \ \ |J|=k\},
\]
and let $\sim$ be an equivalence relation on $\Z_{m,k}$ given by $J\sim J'$ if there exists $i\in\Z$ such that $\sigma^i(J)=J'.$ Write $[J]$ for the equivalence class of $J,$ and let
$\Z_{m,k}^\sigma$ denote the set of equivalence classes. 

Recall that for $E\in\ima\nu_R,$ we denote by
$\Mmm(A,c)_E$ the set of all bounding cochains modulo $F^EA$ with a fixed $c,$ that is $d_A(b)-b^2\equiv c\cdot 1_A \pmod{F^EA}.$ 
Let  $b\in \Mmm(A,c)_E,$ and assume there exists a projection $\pi:A\to R$ such that $\pi(b)=0.$
Let $k,l\in\Z_{\ge0},$ define
\[
B_{k,l}:=(c\cdot1_A \otimes 1_A)^{\otimes k}\otimes b^{\otimes l}.
\]
For all multindices $J,$ let $C_J\in\R.$ 
Let $n\in\Z_{\ge0},$ define
   \[
   y_n= -\sum_{m=0}^n \sum_{[J]\in\zzs}\sum_{I\in\Z_{>0}^{m+1}}C_J
   \bigotimes_{k=0}^m B_{j_k,i_k}\in\mR_*^\lambda.
  \]
Let $0\le m\le n$ and  $[J]=[(j_0,\ldots,j_m)]\in\zzs,$ define  
\[
 x_{n,[J]}= -\sum_{I\in\Z_{>0}^{m+1}}
\bigotimes_{k=0}^m B_{j_k,i_k}\in\mR_*^\lambda.
\]
Note that since we work in the complex $\mR_*^\lambda,$ it follows that $y_n,$ $x_{n,[J]}$ and $C_J$ do not depend on the choice of a  representative of the equivalence class $[J]\in\zzs.$
   \begin{lm}\label{lm:dcc_y_n_a_q}
      \begin{align*}
          \dcc(x_{n,[J]})&\equiv-\sum_{s=0}^m \sum_{\substack{I\in\Z_{>0}^{m+1}\\ i_s=1}}\bigotimes_{k=0}^{s-1}B_{j_k,i_k}\otimes \big(B_{j_s,0}\otimes c\cdot1_A \big)\bigotimes_{k=s+1}^mB_{j_k,i_k}\\
    &-\sum_{s=0}^m\sum_{\substack{I\in\Z_{>0}^{m+1}\\ i_s>1}}\bigotimes_{k=0}^{s-1}B_{j_k,i_k}\otimes \big(B_{j_s,0}\otimes c\cdot 1_A\otimes b^{\otimes i_s-1}\big)\bigotimes_{k=s+1}^mB_{j_k,i_k}\\
&-\sum_{s=0}^m\sum_{I\in\Z_{>0}^{m+2}}\bigotimes_{k=0}^{s-1}B_{j_k,i_k}\otimes \big(B_{j_s,i_s}\otimes c\cdot1_A\otimes b^{\otimes i_{s+1}}\big)\bigotimes_{k=s+1}^mB_{j_k,i_{k+1}}\\
&-\sum_{s=0}^m\sum_{\substack{I\in\Z_{>0}^{m+1}\\ i_s>1}}\bigotimes_{k=0}^{s-1}B_{j_k,i_k}\otimes \big(B_{j_s,i_s-1}\otimes c\cdot1_A\big)\bigotimes_{k=s+1}^mB_{j_k,i_{k+1}}\\
     &+\sum_{s=0}^m\sum_{I\in\Z_{>0}^{m+
1}}\bigotimes_{k=0}^{s-1}B_{j_k,i_k}\otimes\big( c\cdot1_A\otimes B_{j_s-1,i_s}\big)
   \bigotimes_{k=s+1}^mB_{j_k,i_k}\pmod{F^EA}.
      \end{align*}
  \end{lm}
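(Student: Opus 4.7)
My plan is to compute $\dcc\bigl(\bigotimes_{k=0}^m B_{j_k,i_k}\bigr)$ directly for each fixed $I\in\Z_{>0}^{m+1}$ and then re-index the summation over $I$ to match the five displayed sums, with all identities taken modulo $F^E A$.

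First, I would simplify the Koszul signs. Since $|s(c\cdot 1_A)|=1$, $|s 1_A|=-1$, and $|sb|=0$, each block $P_k:=(c\cdot 1_A\otimes 1_A)^{\otimes j_k}$ has total shifted degree zero, so the signs $(-1)^{\epsilon_{p+1}}$ attached to differentiating at a $b$-position or multiplying at a $b$-boundary collapse to $+1$. Next I would expand $b(\mu_1)$: since $d_A(c\cdot 1_A)=d_R(c)\cdot 1_A=0$ and $d_A(1_A)=0$, only the $b$-factors contribute, and each produces $d_A(b)\equiv b^2+c\cdot 1_A \pmod{F^E A}$ by the Maurer-Cartan equation. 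In parallel I would expand $b(\mu_2)$, organizing the consecutive products by type: inside a $P_s$-block (interior products $c\cdot 1_A\cdot 1_A$ and $1_A\cdot c\cdot 1_A$, each equal to $c\cdot 1_A$); inside a $Q_s:=b^{\otimes i_s}$-block (products $b\cdot b=b^2$); at the $P$--$Q$ interface ($1_A\cdot b=b$ and $b\cdot c\cdot 1_A=c\cdot b$); and the cyclic wrap-around term $a_1 a_l$.

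The crucial cancellation is that the $b^2$-contributions from within $Q_s$-blocks exactly cancel the $b^2$-part of the $d_A(b)$-insertions coming from $b(\mu_1)$, leaving in each former $b$-position only the $c\cdot 1_A$-part arising from $d_A(b)-b^2\equiv c\cdot 1_A$. I would then classify the surviving $c\cdot 1_A$-insertions by the position $p$ of the replaced $b$ inside its block $Q_s$: the case $p=1,\ i_s=1$ produces Term 1; the case $p=1,\ i_s>1$ produces Term 2; the case $2\le p\le i_s-1$ produces Term 3 after recording the pair $(p-1,i_s-p)$ of positive integers as two new consecutive entries of $I$, which is precisely the passage from $I\in\Z_{>0}^{m+1}$ to $I\in\Z_{>0}^{m+2}$; and the case $p=i_s,\ i_s>1$ produces Term 4 via the analogous reindexing. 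Meanwhile, the interior multiplications inside a single $P_s$-block telescope under the sign pattern, leaving a single boundary contribution per block of the form $c\cdot 1_A\otimes B_{j_s-1,i_s}$; this, together with the cyclic wrap $a_1 a_l$ term and the appropriate reindexing of $I$, yields Term 5.

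The main obstacle is the combined sign bookkeeping of the $b(\mu_2)$ calculation: verifying that the interior multiplications in each $P_s$-block telescope cleanly, that the boundary products of the form $b\cdot c\cdot 1_A=c\cdot b$ either pair off among themselves or are absorbed into the five displayed sums (using that $\nu_A(c\cdot b)$ is strictly larger than $\nu_A(b)$, so such terms land either in one of the stated sums after reindexing or in $F^E A$), and that the cyclic quotient $\CC_*^\lambda$ is compatible with the wrap-around term when it happens between a $b$-factor and an adjacent $c\cdot 1_A$- or $1_A$-factor. Once these sign identities are pinned down, the rest is the routine reindexing outlined above, and the overall minus sign in $x_{n,[J]}$ propagates to give the stated signs of the five sums.
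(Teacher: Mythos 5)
Your overall strategy coincides with the paper's: expand $\dcc$ on each summand $\bigotimes_k B_{j_k,i_k}$, substitute $d_A(b)-b^2\equiv c\cdot 1_A\pmod{F^EA}$, cancel the $b^2$-parts of the $d_A(b)$-insertions against the $b\cdot b$-contractions (correctly, this is not a termwise cancellation for fixed $I$ but only holds after the shift $i_s\mapsto i_s+1$ in the sum over $I$, which your re-indexing plan accommodates), and sort the surviving $c\cdot 1_A$-insertions by their position in the block into the first four displayed sums, with the passage to $\Z_{>0}^{m+2}$ for interior positions. All of this matches the paper's proof and is fine; your sign observation about $b$-positions and $b$-boundaries is also correct.

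The genuine gap is in your treatment of the fifth sum, which is exactly where the content of the lemma lies. First, the asserted ``telescoping'' inside a $P_s$-block is not verified: with the paper's sign conventions the $(c\cdot 1_A)\cdot 1_A$ and $1_A\cdot(c\cdot 1_A)$ contractions of a block produce identical tensors with opposite signs, and the remaining one interacts with the $1_A\cdot b$ interface contraction, so determining what survives per block, and with which coefficient, is precisely the computation that must be carried out; it cannot be deferred as routine bookkeeping, since the answer is the coefficient $+1$ of the term $c\cdot1_A\otimes B_{j_s-1,i_s}$. Second, and more seriously, your disposal of the $b\cdot(c\cdot 1_A)=c\cdot b$ terms (the $Q_s$--$P_{s+1}$ interfaces and the cyclic wrap-around $a_1a_l$) is invalid as stated: the inequality $\nu_A(c\cdot b)>\nu_A(b)$ does not place these terms in $F^EA$, because $E$ is a fixed filtration level unrelated to $\nu_R(c)+\nu_A(b)$; moreover such terms contain a genuine factor $c\cdot b$, which no cyclic rotation in $\CC^\lambda_*$ can split into separate factors $c\cdot 1_A$ and $b$, so they are not of the shape of any of the five displayed sums ``after reindexing,'' nor do they obviously pair off. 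These are precisely the contributions that must be shown, using the cyclic identification together with the sums over $I$ and over the blocks, to assemble into the fifth sum; until that verification is done, your argument establishes the first four sums but not the stated identity.
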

  \begin{proof}
      We have,
\begin{align*}
    \dcc(x_{n,[J]})&=- \sum_{s=0}^m\sum_{I\in\Z_{>0}^{m+
1}}\sum_{t=1}^{i_s}\bigotimes_{k=0}^{s-1}B_{j_k,i_k}\otimes\big( B_{j_s,t-1}\otimes \delta(b)\otimes b^{\otimes i_s-t}\big)\bigotimes_{k=s+1}^mB_{j_k,i_k}\\
    &- \sum_{I\in\Z_{>0}^{m+
1}}\big( c\cdot1_A\otimes B_{j_0-1,i_0}\big)\bigotimes_{k=1}^mB_{j_k,i_k}\\
   &+ \sum_{s=0}^m\sum_{I\in\Z_{>0}^{m+
1}}\bigotimes_{k=0}^{s-1}B_{j_k,i_k}\otimes\big(  c\cdot1_A\otimes B_{j_s-1,i_s}\big)\bigotimes_{k=s+1}^mB_{j_k,i_k}\\
   &+ \sum_{s=0}^m\sum_{I\in\Z_{>0}^{m+
1}}\sum_{t=2}^{i_s}\bigotimes_{k=0}^{s-1}B_{j_k,i_k}\otimes\big( B_{j_s,t-2}\otimes b^2\otimes b^{\otimes i_s-t}\big)\bigotimes_{k=s+1}^mB_{j_k,i_k}\\
    &+ \sum_{I\in\Z_{>0}^{m+
1}}\big(c\cdot b\otimes 1_A\otimes B_{j_0-1,i_0}\big)\bigotimes_{k=1}^{m-1}B_{j_k,i_k}\otimes B_{j_m,i_m-1}\\
&\equiv- \sum_{s=0}^m\sum_{I\in\Z_{>0}^{m+
1}}\sum_{t=1}^{i_s}\bigotimes_{k=0}^{s-1}B_{j_k,i_k}\otimes \big(B_{j_s,t-1}\otimes c\cdot1_A\otimes b^{\otimes i_s-t}\big)\bigotimes_{k=s+1}^mB_{j_k,i_k}\\
&+\sum_{s=0}^m\sum_{I\in\Z_{>0}^{m+
1}}\bigotimes_{k=0}^{s-1}B_{j_k,i_k}\otimes\big( c\cdot1_A\otimes B_{j_s-1,i_s}\big)\bigotimes_{k=s+1}^mB_{j_k,i_k}\pmod{F^EA}.
    \end{align*}
In addition, we have
\begin{align*}
    -\sum_{s=0}^m&\sum_{I\in\Z_{>0}^{m+1}}\sum_{t=1}^{i_s}\bigotimes_{k=0}^{s-1}B_{j_k,i_k}\otimes\big( B_{j_s,t-1}\otimes c\cdot1_A\otimes b^{\otimes i_s-t}\big)\bigotimes_{k=s+1}^mB_{j_k,i_k}(c,b)\\
    &=- \sum_{s=0}^m\sum_{\substack{I\in\Z_{>0}^{m+1}\\ i_s=1}}\bigotimes_{k=0}^{s-1}B_{j_k,i_k}\otimes \big(B_{j_s,0}\otimes c\cdot1_A \big)\bigotimes_{k=s+1}^mB_{j_k,i_k}(c,b)\\
    &-\sum_{s=0}^m\sum_{\substack{I\in\Z_{>0}^{m+1}\\ i_s>1}}\bigotimes_{k=0}^{s-1}B_{j_k,i_k}\otimes \big(B_{j_s,0}\otimes c\cdot1_A\otimes b^{\otimes i_s-1}\big)\bigotimes_{k=s+1}^mB_{j_k,i_k}\\
&-\sum_{s=0}^m\sum_{I\in\Z_{>0}^{m+2}}\bigotimes_{k=0}^{s-1}B_{j_k,i_k}\otimes \big(B_{j_s,i_s}\otimes c\cdot1_A\otimes b^{\otimes i_{s+1}}\big)\bigotimes_{k=s+1}^mB_{j_k,i_{k+1}}\\
&-\sum_{s=0}^m\sum_{\substack{I\in\Z_{>0}^{m+1}\\ i_s>1}}\bigotimes_{k=0}^{s-1}B_{j_k,i_k}\otimes \big(B_{j_s,i_s-1}\otimes c\cdot1_A\big)\bigotimes_{k=s+1}^mB_{j_k,i_{k+1}}.
\end{align*}
This completes the proof.
  \end{proof}

\begin{lm}\label{lm:id_hdcc_x}
    \begin{multline*}
   (\id-h\circ\dcc )(x_{n,[J]})\equiv -\frac{2}{m+1}\sum_{s=0}^mx_{n,[(j_0,\ldots,j_s+1,\ldots,j_m)]}-\frac{1}{m+2}\sum_{s=0}^mx_{n,[(j_0,\ldots, j_s,1,j_{s+1},\ldots,j_m)]}\\
   -\bigg(\frac{1-\delta_{m0}}{m-\delta_{m0}}\bigg )\bigg(
   x_{n,[(j_0+j_m+1,j_1,\ldots, j_{m-1})]}+\sum_{s=0}^{m-1}x_{n,[(j_0,\ldots, j_s+j_{s+1}+1,\ldots, j_m)]}\bigg)\pmod{F^EA}.
\end{multline*}
\end{lm}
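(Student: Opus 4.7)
The strategy is to start from the explicit formula for $\dcc(x_{n,[J]})$ given by Lemma~\ref{lm:dcc_y_n_a_q} and apply the operator $h$ term by term, using its definition
\[
h(a) \;=\; \frac{-1}{u(a)}\,\widetilde h\circ N_l(a), \qquad \widetilde h(a_l\otimes\cdots\otimes a_1)=1_A\otimes \pi(a_l)\otimes a_{l-1}\otimes\cdots\otimes a_1,
\]
on each cyclic class appearing in $\dcc(x_{n,[J]})$. The key simplification is that $\pi(b)=0$, while $\pi(c\cdot 1_A)=c$ and $\pi(1_A)=1_R$; therefore, in the cyclic sum $N_l$, only those rotations that bring a factor of $c\cdot 1_A$ or $1_A$ to the leftmost position contribute. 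After $\widetilde h$ prepends $1_A\otimes \pi(\cdot)$, the resulting tensor is again a concatenation of blocks of the form $B_{j',i'}$, so it lies in the span of the generators $x_{n,[J']}$.

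First I will analyze the five families of summands listed in Lemma~\ref{lm:dcc_y_n_a_q}. Each has the form
\[
\bigotimes_{k=0}^{s-1}B_{j_k,i_k}\otimes(\text{insertion involving }c\cdot 1_A)\otimes\bigotimes_{k=s+1}^{m}B_{j_{k'},i_{k'}},
\]
with an additional $c\cdot 1_A$ appearing either adjacent to a $b^{\otimes i_s}$ tail, inside a block, or between two blocks. For each of these shapes I will identify the (unique up to the cyclic action) multi-index $[J']$ into which $\widetilde h$ packages the result, after a suitable rotation that places $c\cdot 1_A$ at the head. The three reassembly patterns produce precisely the three classes on the right-hand side: absorbing the new $c\cdot 1_A$ at the boundary of $B_{j_s,i_s}$ gives a block $B_{j_s+1,i_s}$; inserting an entire fresh pair between two blocks gives a new block $B_{1,i}$ interposed between $B_{j_s,i_s}$ and $B_{j_{s+1},i_{s+1}}$; and merging two adjacent blocks (with the newly inserted pair) yields a single block $B_{j_s+j_{s+1}+1,\ldots}$, including the wrap-around term responsible for the $1-\delta_{m0}$ factor.

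Next I will do the coefficient bookkeeping. For a generic generator $\bigotimes_{k=0}^{m}B_{j_k,i_k}$ one has $U(a)=m+1$ (there are $m+1$ transitions from a $b$-block back to a $c\cdot 1_A$-block through the cycle), so $u(a)=m+1$ when $m\geq 0$ and $i_k\geq 1$ for all $k$. For the three output patterns the values of $u$ on the image are $m+1$, $m+2$, and $m$ (resp.\ $1$ in the wrap-around $m=0$ case), which combined with the number of cyclic rotations within $N_l$ that actually produce that output, and with the factor $-1/u$ in the definition of $h$, yield exactly the stated coefficients $-2/(m+1)$, $-1/(m+2)$, and $-(1-\delta_{m0})/(m-\delta_{m0})$. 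The factor $2$ in the first coefficient comes from the fact that both the rotation placing $c\cdot 1_A$ at the head and the rotation placing the neighbouring $1_A$ there contribute to the same output class $[(j_0,\ldots,j_s+1,\ldots,j_m)]$. Finally, the identity summand $x_{n,[J]}$ on the left-hand side of the lemma exactly cancels the contribution coming from the single term in $\dcc(x_{n,[J]})$ which is itself of the form $x_{n,[J]}$ up to a cyclic permutation (this is the role of the "$\id$" in $\id-h\circ\dcc$ and is parallel to the identity recovered in Lemma~\ref{lm:h_is_cyclic}).

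The main technical obstacle will be the coefficient bookkeeping in the third paragraph: one must match how many of the $|{\rm orbit}|$ rotations in $N_l$ produce each target class, compute $u$ correctly both on source and target (where the special case $m=0$ changes $u$ from $1$ to $0$ transitions and hence appears as the $\delta_{m0}$ correction), and verify that working modulo $F^E A$ kills all additional contributions coming from the higher-$\e$ terms in $\dcc$. Once the combinatorics is carried out carefully on each of the five summands of Lemma~\ref{lm:dcc_y_n_a_q}, the three aggregated output classes appear with the claimed multiplicities and the identity summand closes the formula.
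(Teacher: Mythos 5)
Your overall route is the same as the paper's: start from Lemma~\ref{lm:dcc_y_n_a_q}, apply $h=\tfrac{-1}{u}\widetilde h\circ N_l$ to each of the five families, use $\pi(b)=0$ to discard rotations headed by $b$, regroup into block form, and let the $\id$ term cancel the copy of $x_{n,[J]}$ reproduced from the last family. But there is a genuine gap in the central step. It is not true that every surviving rotation reassembles into a concatenation of blocks $B_{j',i'}$: a term of $\dcc(x_{n,[J]})$ has exactly one $R$-run of odd length (the one where $\dcc$ inserted a single $c\cdot 1_A$), and only cuts made inside that run produce block-form tensors; a cut inside any of the other (even-length) runs yields a word with two odd runs, which is not in the span of the $x_{n,[J']}$. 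The computation only closes because of a sign cancellation you never invoke: after $\widetilde h$ prepends $1_A$, the word has an even number of $R$-factors, so since $|s1_A|=-1$ is odd, the cyclic operator $t$ carries sign $-1$ whenever the head is an $R$-factor; hence in $\CC^\lambda$ the contributions of consecutive cut positions within one run alternate, even-length runs cancel completely, and the odd run contributes exactly one net term. This is also why the coefficient of each output is just $-1/u$ and not (as your "number of rotations producing that output" bookkeeping would give) proportional to the length $2j_s\pm1$ of the run being cut.

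The same issue invalidates your explanation of the factor $2$: two adjacent rotations landing in the same cyclic class \emph{cancel} rather than add, so the $2$ in $\tfrac{2}{m+1}$ cannot come from "the rotation placing $c\cdot 1_A$ at the head and the rotation placing the neighbouring $1_A$ there." In the paper it comes from two distinct families of $\dcc$-terms, namely the one where $c\cdot 1_A$ sits before the remaining tail $b^{\otimes i_s-1}$ and the one where it sits after $b^{\otimes i_s-1}$ adjacent to the next block (plus its cyclic wrap-around), which after re-indexing the inner sums over $I\in\Z_{>0}^{m+1}$ both produce $\tfrac{1}{m+1}\sum_s x_{n,[(j_0,\ldots,j_s+1,\ldots,j_m)]}$. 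Two smaller inaccuracies: the fifth family of $\dcc(x_{n,[J]})$ is not itself $x_{n,[J]}$ up to cyclic permutation (it is missing a $1_A$); it becomes $x_{n,[J]}$ only after $h$ prepends the missing factor and the $\tfrac{1}{m+1}$ combines with the sum over $s$, and that is what the $\id$ cancels. And at $m=0$ the vanishing encoded by $\delta_{m0}$ is not a matter of $u$ dropping to $0$; there $u=1$ by convention, and the merged term dies because it lands in $\CC^\lambda_{\mathrm{even}}(R)=0$ (Remark~\ref{rem:cyclic_cx_r}). With the run-cancellation argument supplied and the factor $2$ reattributed, your plan does reduce to the paper's computation.
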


\begin{proof}
By Lemma~\ref{lm:dcc_y_n_a_q} we get
\begin{align*}
h\circ\dcc(x_{n,[J]})&\equiv h\bigg(- \sum_{s=0}^m\sum_{\substack{I\in\Z_{>0}^{m+1}\\ i_s=1}}\bigotimes_{k=0}^{s-1}B_{j_k,i_k}\otimes \big(B_{j_s,0}\otimes c\cdot1_A \big)\bigotimes_{k=s+1}^mB_{j_k,i_k}\\
    &-\sum_{s=0}^m\sum_{\substack{I\in\Z_{>0}^{m+1}\\ i_s>1}}\bigotimes_{k=0}^{s-1}B_{j_k,i_k}\otimes \big(B_{j_s,0}\otimes c\cdot1_A\otimes b^{\otimes i_s-1}\big)\bigotimes_{k=s+1}^mB_{j_k,i_k}\\
&-\sum_{s=0}^m\sum_{I\in\Z_{>0}^{m+2}}\bigotimes_{k=0}^{s-1}B_{j_k,i_k}\otimes \big(B_{j_s,i_s}\otimes c\cdot1_A\otimes b^{\otimes i_{s+1}}\big)\bigotimes_{k=s+1}^mB_{j_k,i_{k+1}}\\
&-\sum_{s=0}^m\sum_{\substack{I\in\Z_{>0}^{m+1}\\ i_s>1}}\bigotimes_{k=0}^{s-1}B_{j_k,i_k}(c,b)\otimes \big(B_{j_s,i_s-1}\otimes c\cdot1_A\big)\bigotimes_{k=s+1}^mB_{j_k,i_{k+1}}\bigg)\\
       &- \frac{1}{m+1}\sum_{s=0}^m\sum_{I\in\Z_{>0}^{m+1}}B_{j_s,i_s}\bigotimes_{k=s+1}^mB_{j_k,i_k}\bigotimes_{k=0}^{s-1}B_{j_k,i_k} \pmod{F^EA}.
\end{align*}  
Since we work in the cyclic complex $\CC_*^\lambda(A),$ it follow that 
\[
x_{n,[J]}\equiv-\frac{1}{m+1}\sum_{s=0}^m\sum_{I\in\Z_{>0}^{m+1}}B_{j_s,i_s}\bigotimes_{k=s+1}^mB_{j_k,i_k}\bigotimes_{k=0}^{s-1}B_{j_k,i_k} \pmod{F^EA}.
\]
Note that if $m=0,$ then
\[
\sum_{s=0}^m\sum_{\substack{I\in\Z_{>0}^{m+1}\\ i_s=1}}\bigotimes_{k=0}^{s-1}B_{j_k,i_k}\otimes \big(B_{j_s,0}\otimes c\cdot1_A \big)\bigotimes_{k=s+1}^mB_{j_k,i_k}= c^{|J|+1}\cdot 1_A^{\otimes 2|J|+1}.
\]
By Remark~\ref{rem:cyclic_cx_r} we have $\CC_l^\lambda(R)=0$ for all $l\in2\Z_{>0},$ so
$
h\big(c^{|J|+1}\cdot 1_A^{\otimes 2|J|+1}\big)=0.
$
Hence,
\begin{align*}
    (\id-h\circ\dcc)(x_{n,[J]})
    &\equiv-\sum_{s=0}^m\sum_{\substack{I\in\Z_{>0}^{m+1}\\i_s=1}}\frac{1-\delta_{m0}}{m-\delta_{m0}}\bigotimes_{k=0}^{s-1}B_{j_k,i_k}\otimes B_{j_s+1,0}\bigotimes_{k=s+1}^mB_{j_k,i_k}\\
    &-\sum_{s=0}^m\sum_{\substack{I\in\Z_{>0}^{m+1}\\ i_s>1}} \frac{1}{m+1}\bigotimes_{k=0}^{s-1}B_{j_k,i_k}\otimes B_{j_s+1,i_s-1}\bigotimes_{k=s+1}^mB_{j_k,i_k}\\
    &-\sum_{s=0}^m\sum_{I\in\Z_{>0}^{m+2}}\frac{1}{m+2}\bigotimes_{k=0}^sB_{j_k,i_k}\otimes  B_{1,i_{s+1}}\bigotimes_{k=s+1}^mB_{j_k,i_{k+1}}\\
    &-\sum_{s=0}^{m-1}\sum_{\substack{I\in\Z_{>0}^{m+1}\\ i_s>1}}\frac{1}{m+1}\bigotimes_{k=0}^{s-1}B_{j_k,i_k}\otimes \big(B_{j_s,i_s-1}\otimes B_{j_{s+1}+1,i_{s+1}}\big)\bigotimes_{k=s+2}^mB_{j_k,i_{k+1}}\\
    &-\sum_{\substack{I\in\Z_{>0}^{m+1}\\ i_m>1}}\frac{1}{m+1}B_{j_0+1,i_0}\bigotimes_{k=1}^{m-1}B_{j_k,i_k}\otimes B_{j_m,i_m-1} \pmod{F^EA}.
\end{align*}
By changing variables, we obtain:
\begin{enumerate}[label=(\arabic*)]
    \item 
    \begin{align*}
\sum_{s=0}^m&\sum_{\substack{I\in\Z_{>0}^{m+1}\\i_s=1}}\frac{1}{m}\bigotimes_{k=0}^{s-1}B_{j_k,i_k}\otimes B_{j_s+1,0}\bigotimes_{k=s+1}^mB_{j_k,i_k}\\
    &=\sum_{s=0}^{m-1}\sum_{\substack{I\in\Z_{>0}^{m+1}\\i_s=1}}\frac{1}{m}\bigotimes_{k=0}^{s-1}B_{j_k,i_k}\otimes B_{j_s+j_{s+1}+1,i_{s+1}}\bigotimes_{k=s+2}^mB_{j_k,i_k}\\
    &+ \sum_{\substack{I\in\Z_{>0}^{m+1}\\i_s=1}}\frac{1}{m}B_{j_m+j_0+1,i_0}\bigotimes_{k=1}^{s-1}B_{j_k,i_k}\\
    &=\frac{1}{m}\sum_{s=0}^{m-1}x_{n,[(j_0,\ldots, j_s+j_{s+1}+1,\ldots, j_m)]}+\frac{1}{m}x_{n,[(j_0+j_m+1,j_1,\ldots, j_{m-1})]}\pmod{F^EA},
    \end{align*}
\item
\begin{align*}
\sum_{s=0}^m&\sum_{\substack{I\in\Z_{>0}^{m+1}\\ i_s>1}} \frac{1}{m+1}\bigotimes_{k=0}^{s-1}B_{j_k,i_k}\otimes B_{j_s+1,i_s-1}\bigotimes_{k=s+1}^mB_{j_k,i_k}\\
    &+\sum_{s=0}^{m-1}\sum_{\substack{I\in\Z_{>0}^{m+1}\\ i_s>1}}\frac{1}{m+1}\bigotimes_{k=0}^{s-1}B_{j_k,i_k}\otimes \big(B_{j_s,i_s-1}\otimes B_{j_{s+1}+1,i_{s+1}}\big)\bigotimes_{k=s+2}^mB_{j_k,i_{k+1}}\\
    &+\sum_{\substack{I\in\Z_{>0}^{m+1}\\ i_m>1}}\frac{1}{m+1}B_{j_0+1,i_0}\bigotimes_{k=1}^{m-1}B_{j_k,i_k}\otimes B_{j_m,i_m-1}\\
    &=2\sum_{s=0}^m\sum_{I\in\Z_{>0}^{m+1}}\frac{1}{m+1}\bigotimes_{k=1}^{s-1}B_{j_k,i_k}\otimes B_{j_s+1,i_s}\bigotimes_{k=s+1}^m B_{j_k,i_k}\\
    &=\frac{2}{m+1}\sum_{s=0}x_{n,[(j_0,\ldots,j_s+1,\ldots,j_m)]} \pmod{F^EA},
\end{align*}
    \item 
    \begin{align*}
        \sum_{s=0}^m&\sum_{I\in\Z_{>0}^{m+2}}\frac{1}{m+2}\bigotimes_{k=0}^sB_{j_k,i_k}\otimes B_{1,i_{s+1}}\bigotimes_{k=s+1}^mB_{j_k,i_{k+1}}\\
    &=\frac{1}{m+2}\sum_{s=0}^mx_{n,[(j_0,\ldots, j_s,1,j_{s+1},\ldots,j_m)]}\pmod{F^EA}
    .
    \end{align*}
\end{enumerate}
This completes the proof.
\end{proof}

\begin{lm}\label{lm:dcc_dcc}
For all $n\in\Z_{\ge0},$
\[
(\id-\dcc\circ h-h\circ \dcc)^n(\dcc(\exp(b)))=(\id-\dcc\circ h)^n(\dcc(\exp(b))).
\]    
\end{lm}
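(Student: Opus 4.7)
The proof strategy is to exploit the elementary fact that $\dcc^2 = 0$ together with the observation that $\dcc(\exp(b))$ lies in $\ker \dcc$, and that $(\id - \dcc \circ h)$ preserves $\ker \dcc$. The key identity is that on $\ker \dcc$, the operators $(\id - \dcc \circ h - h\circ \dcc)$ and $(\id - \dcc \circ h)$ coincide, since the $h \circ \dcc$ term annihilates anything $\dcc$-closed.

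More precisely, I would first establish two elementary observations (where all operators are regarded as acting on the quotient complex on which $H$ is defined):
\begin{enumerate}[label=(\roman*)]
\item If $y \in \ker \dcc$, then $(\id - \dcc \circ h)(y) \in \ker \dcc$ as well, because
\[
\dcc\bigl((\id - \dcc \circ h)(y)\bigr) = \dcc(y) - \dcc^2(h(y)) = 0.
\]
\item If $y \in \ker \dcc$, then
\[
(\id - \dcc \circ h - h \circ \dcc)(y) = y - \dcc(h(y)) - h(\dcc(y)) = y - \dcc(h(y)) = (\id - \dcc \circ h)(y).
\]
\end{enumerate}

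Then I would apply these to $x := \dcc(\exp(b))$, which lies in $\ker \dcc$ since $\dcc^2 = 0$. The proof proceeds by induction on $n$. The base case $n = 0$ is trivial. For the inductive step, assume
\[
(\id - \dcc \circ h - h \circ \dcc)^{n-1}(x) = (\id - \dcc \circ h)^{n-1}(x).
\]
By repeated application of (i), this common element lies in $\ker \dcc$. Applying (ii) to this element gives
\[
(\id - \dcc \circ h - h \circ \dcc)\bigl((\id - \dcc \circ h)^{n-1}(x)\bigr) = (\id - \dcc \circ h)\bigl((\id - \dcc \circ h)^{n-1}(x)\bigr),
\]
which is precisely the desired equality at level $n$.

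There is essentially no obstacle to this argument; it is a purely formal consequence of $\dcc^2 = 0$. The only mild care required is to confirm that $h$ and $H$ are defined on the appropriate quotient complex $\mQ_*^\lambda$ (and hence that $\dcc(\exp(b))$ descends to a $\dcc$-closed element there), which follows from Corollary~\ref{lm_b(exp)_is_closed} together with the discussion preceding Proposition~\ref{lm:null_homotopic}.
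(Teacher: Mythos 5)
Your argument is correct and is essentially the paper's proof: the same induction on $n$, with the $h\circ\dcc$ term annihilated because the element in question is $\dcc$-closed (the paper packages this by writing $(\id-\dcc\circ h)^n=\id+\dcc\circ S$ and invoking $\dcc^2=0$, which is equivalent to your observations (i)--(ii)). The only minor adjustment is that the identity is used in $\mR_*^\lambda$, where $h$ and $\dcc$ are already defined, so there is no need to pass to the quotient $\mQ_*^\lambda$; your argument applies there verbatim.
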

\begin{proof}
    We prove it by induction. For $n=0,$ the statement is a tautology. Assume the statement holds for a general $n.$
Let $S$ be the linear operator on $\mR^\lambda_*$ satisfying $(\id-\dcc\circ h)^n=\id+\dcc\circ S.$  Hence, \begin{align*}
    (\id-\dcc\circ h-h\circ \dcc)^{n+1}(\dcc(\exp(b)))&=(\id-\dcc\circ h-h\circ \dcc)\circ(\id-\dcc\circ h)^n(\dcc(\exp(b)))\\
    &=(\id-\dcc\circ h-h\circ \dcc)\circ(\id+\dcc\circ S)(\dcc(\exp(b)))\\
    &=(\id-\dcc\circ h)\circ(\id+\dcc\circ S)(\dcc(\exp(b)))\\
    &=(\id-\dcc\circ h)^{n+1}(\dcc(\exp(b))).
\end{align*}    
\end{proof}

Define
\[
y_{b,n}:=h\circ(\id-\dcc\circ h-h\circ\dcc)^n(\dcc(\exp(b))).
\]
\begin{lm}\label{lm:y_b_n_computation}
There exist  $C_J\in\R$ for all $J\in\zzs$ such that 
    \[
   y_{b,n}\equiv y_n:= - \sum_{m=0}^n \sum_{[J]\in\zzs}\sum_{I\in\Z_{>0}^{m+1}}C_J
   \bigotimes_{k=0}^mB_{j_k,i_k} \pmod{F^EA}.
  \]
\end{lm}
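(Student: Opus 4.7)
My plan is to induct on $n$, exploiting a clean recursion that reduces the inductive step directly to Lemma~\ref{lm:id_hdcc_x}. Since $\dcc^2 = 0$, one has the commutation
\[
(\id - \dcc h - h\dcc)\circ\dcc = \dcc\circ(\id - h\dcc),
\]
and iterating gives $(\id - \dcc h - h\dcc)^n\circ\dcc = \dcc\circ(\id - h\dcc)^n.$ Setting $u_n := (\id - h\dcc)^n(\exp(b))$, I obtain $y_{b,n} = h\dcc(u_n),$ whence
\[
y_{b,n+1} = h\dcc(\id - h\dcc)(u_n) = y_{b,n} - h\dcc(y_{b,n}) = (\id - h\dcc)(y_{b,n}).
\]
This recursion bypasses any subtlety about whether $h^2 = 0.$

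For the base case $n = 0,$ I use Corollary~\ref{lm_b(exp)_is_closed} to write $\dcc(\exp(b))\equiv\sum_{l\ge 1} c\cdot 1_A\otimes b^{\otimes l-1}\pmod{F^E A}$ and apply $h$ termwise via its explicit formula. Since $\pi(b) = 0,$ only the summand of $N_l(c\cdot 1_A\otimes b^{\otimes l-1})$ with $c\cdot 1_A$ in the leftmost position contributes to $\widetilde{h}\circ N_l,$ while the $l = 1$ term lands in $\CC^\lambda_2(R)$ and vanishes by Remark~\ref{rem:cyclic_cx_r}. Using the cyclic relations in $\CC_*^\lambda(A)$, the surviving sum identifies with $-C_{(1)}\sum_{i_0\ge 1}B_{1,i_0}$ for a single computable $C_{(1)}\in\R,$ matching $y_0$ (here $m = 0$ and $[J] = [(1)]$).

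For the inductive step, assume $y_{b,n}\equiv y_n = \sum_{m,[J]}C_J\,x_{n,[J]}\pmod{F^E A}.$ By the recursion and linearity,
\[
y_{b,n+1}\equiv\sum_{m,[J]}C_J\,(\id - h\dcc)(x_{n,[J]})\pmod{F^E A}.
\]
Lemma~\ref{lm:id_hdcc_x} expresses each $(\id - h\dcc)(x_{n,[J]})$ as an explicit $\R$-linear combination of elements $x_{n,[J']}$ with $|J'| = n+2,$ where each $[J']$ arises from $[J]$ by one of three local operations on the cyclic tuple: incrementing a single entry, inserting a new entry equal to $1,$ or cyclically merging two adjacent entries into their sum plus $1.$ All resulting tuple lengths $m'$ lie in $\{0,\ldots,n+1\},$ so regrouping by $[J']\in\Z^\sigma_{m',n+2}$ and defining $C_{J'}$ as the total coefficient produces the desired expression $y_{b,n+1}\equiv y_{n+1}.$

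The principal obstacle is the base case, which requires careful bookkeeping of signs arising from the cyclic permutation $t$ and identification of cyclic equivalence classes in $\CC^\lambda_*(A)$ modulo the vanishing contribution from $\CC^\lambda_*(R)$ in even degrees. Once the base case is established, the inductive step is essentially a direct application of Lemma~\ref{lm:id_hdcc_x} coupled with the clean recursion $y_{b,n+1} = (\id - h\dcc)(y_{b,n}).$
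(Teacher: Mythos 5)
Your proposal is correct and follows essentially the same route as the paper: the identity $y_{b,n+1}=(\id-h\circ\dcc)(y_{b,n})$, the base case $y_{b,0}\equiv-\sum_{l\ge1}B_{1,l}$, and the inductive step via Lemma~\ref{lm:id_hdcc_x}. The only difference is cosmetic — you derive the recursion from the commutation $(\id-\dcc\circ h-h\circ\dcc)\circ\dcc=\dcc\circ(\id-h\circ\dcc)$ rather than from Lemma~\ref{lm:dcc_dcc}, which is an equivalent manipulation using $\dcc^2=0$.
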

\begin{proof}
    We prove it by induction. For $n=0,$ we have
    \[
    y_{b,0}=-\sum_{l=1}^\infty c\cdot 1_A\otimes 1_A\otimes b^{\otimes l}=-\sum_{l=1}^\infty B_{1,l} \pmod{F^EA},
    \]
so $C_{[(1)]}=1.$ Assume the statement holds for a general $n.$ We have,
\begin{align*}
y_{b,{n+1}}&=h\circ(\id-\dcc\circ h-h\circ\dcc)^{n+1}(\dcc(\exp(b)))\\
&\overset{\ref{lm:dcc_dcc}}{=}h\circ(\id-\dcc\circ h)^{n+1}(\dcc(\exp(b)))\\
&=h\circ(\id-\dcc\circ h)\circ(\id-\dcc\circ h)^n(\dcc(\exp(b))\\
&=(\id-h\circ\dcc)\circ h\circ(\id-\dcc\circ h)^n(\dcc(\exp(b))\\
&=(\id-h\circ\dcc)(y_{b,n}).
\end{align*}
Hence, by Lemma~\ref{lm:id_hdcc_x} we get
\begin{multline*}
    y_{b,n+1}\equiv\sum_{m=0}^n \sum_{J\in\zzs}C_J\bigg(-\frac{2}{m+1}\sum_{s=0}^mx_{n,[(j_0,\ldots,j_s+1,\ldots,j_m)]}-\frac{1}{m+2}\sum_{s=0}^mx_{n,[(j_0,\ldots, j_s,1,j_{s+1},\ldots,j_m)]}\\
   -\bigg(\frac{1-\delta_{m0}}{m-\delta_{m0}}\bigg )\bigg(
   x_{n,[(j_0+j_m+1,j_1,\ldots, j_{m-1})]}+\sum_{s=0}^{m-1}x_{n,[(j_0,\ldots, j_s+j_{s+1}+1,\ldots, j_m)]}\bigg)  \bigg)\pmod{F^EA}.
\end{multline*}

Thus, we can write
\[
y_{b,n+1}\equiv-\sum_{m=0}^{n+1} \sum_{J\in\zzs}\sum_{I\in\zz}K_J
   \bigotimes_{k=0}^mB_{j_k,i_k}\pmod{F^EA},
\]
where $K_J$ are linear combinations of the constants $C_J$ of $y_{b,n}.$
\end{proof}
 \begin{cor}\label{cor:y_b}
     If $y_b=H(\dcc(\exp(b))),$ then there exist  $C_J\in\R$ for all $J\in\zzs,$  such that 
      \[
   y_b\equiv -\sum_{n=0}^\infty \sum_{m=0}^n \sum_{J\in\zzs}\sum_{I\in\Z_{>0}^{m+1}}C_J
\bigotimes_{k=0}^mB_{j_k,i_k} \pmod{F^EA}.
  \]
 \end{cor}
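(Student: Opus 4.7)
\medskip

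\noindent\textbf{Proof proposal.} The plan is to expand $H$ as a formal geometric series in the operator $\id - \dcc\circ h - h\circ\dcc$ and then invoke Lemma~\ref{lm:y_b_n_computation} term-by-term. Recall from Section~\ref{section:defining_the_homotopy} that on $\mQ_*^\lambda$ we defined
\[
\widecheck{H} = h\circ \sum_{i=0}^\infty (\id - \dcc\circ h - h\circ\dcc)^i,
\]
and that $H$ is the extension of $\widecheck{H}$ to the completion $\mQ_*^\lambda$. Since Corollary~\ref{lm_b(exp)_is_closed} gives $[\dcc(\exp(b))]\in\mQ_*^\lambda$, we may apply $H$ directly to $\dcc(\exp(b))$ and obtain
\[
y_b = H(\dcc(\exp(b))) = \sum_{n=0}^\infty h\circ (\id - \dcc\circ h - h\circ\dcc)^n(\dcc(\exp(b))) = \sum_{n=0}^\infty y_{b,n}.
\]

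Next I would invoke Lemma~\ref{lm:y_b_n_computation}, which produces, for each $n\ge 0$, constants $C_J^{(n)} \in \R$ indexed by $J \in \Z_{m,n+1}^\sigma$ with $0\le m\le n$ such that
\[
y_{b,n} \equiv -\sum_{m=0}^n \sum_{[J]\in \Z_{m,n+1}^\sigma} \sum_{I\in \Z_{>0}^{m+1}} C_J^{(n)} \bigotimes_{k=0}^m B_{j_k,i_k} \pmod{F^E A}.
\]
Since for a fixed pair $(m,[J])$ with $[J]\in\Z_{m,n+1}^\sigma$ the integer $n$ is uniquely determined by $|J|=n+1$, the coefficient $C_J^{(n)}$ depends only on $[J]$ and not on the running index $n$; set $C_J := C_J^{(|J|-1)}$. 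Summing over $n$ then yields the claimed formula.

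The only point requiring a brief check is that the double series makes sense in the completed cyclic complex $\mR_*^\lambda$, i.e.\ that the congruence modulo $F^E A$ is preserved under the infinite sum. This is routine: each tensor factor of the form $B_{j_k,i_k}$ contains at least one copy of $b$ or of $c\cdot 1_A$, both of which have strictly positive valuation, so the contributions with a prescribed total valuation bound involve only finitely many tuples $(m,[J],I)$ up to cyclic symmetry, and the series converges with respect to the valuation $\nu$ on $\CC_*^\lambda(A)$. I do not expect this step to be an obstacle, since it uses only the gapped structure of $\nu_R$ and the positivity of $\nu_A(b), \nu_R(c)$; the real content of the corollary is already contained in Lemma~\ref{lm:y_b_n_computation}, so Corollary~\ref{cor:y_b} is a direct packaging of that statement.
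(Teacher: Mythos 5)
Your proposal is correct and is essentially the argument the paper intends: Corollary~\ref{cor:y_b} is obtained by expanding $H=h\circ\sum_{i\ge0}(\id-\dcc\circ h-h\circ\dcc)^i$, applying Lemma~\ref{lm:y_b_n_computation} to each $y_{b,n}$, and summing over $n$, with the constants depending only on $[J]$ since $|J|=n+1$ pins down $n$. Your added remark on convergence of the sum modulo $F^EA$ (positivity of $\nu_A(b)$, $\nu_R(c)$ and discreteness of the value group) is a fair, if brief, justification of the one point the paper leaves implicit.
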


Note that the constants $C_{[(j_0,\ldots, j_m)]}$ do not depend on the bounding cochain $b,$ but only on the equivalence class $[(j_0,\ldots, j_m)].$ We call to $C_{[(j_0,\ldots, j_m)]}$ where $j_0+\ldots+ j_m=k+1$ a constant of level $k.$ 
\begin{rem}\label{rem:constant_c}
It follows from the proof of Lemma~\ref{lm:y_b_n_computation} that the constants $C_{[(j_0,\ldots, j_m)]}$ of $y_b=H(\dcc(\exp(b)))$ can be computed inductively.

Consider the graph in Figure~\ref{fig:graph}.  
Let $C_J$ be a constant in the graph of level $k,$ let $C_{J_1},\ldots, C_{J_t}$ denote the constants of level $k-1$ that are connected to $C_J,$ and let $\alpha_{J_1},\ldots, \alpha_{J_t}$ be the weights on the edges that connect $C_J$ to these constants respectively. Then,
$C_J=\sum_{i=1}^t \alpha_{J_i}C_{J_i}.$
For example,
\[
C_{[(2)]}=2\cdot C_{[(1)]}=2, \quad C_{[(1,1]}=\frac{1}{2}\cdot C_{[(1)]}=\frac{1}{2},\] 
\[ C_{[(3)]}=2\cdot C_{[(2)]}+2\cdot C_{[(1,1)]}=5,\quad
C_{[(2,1)]}=\frac{1}{2}\cdot C_{[(2)]}+ 2\cdot C_{[(1,1)]}=2.
\]
\begin{figure}[H]
     \centering
     \includegraphics[scale=0.38]{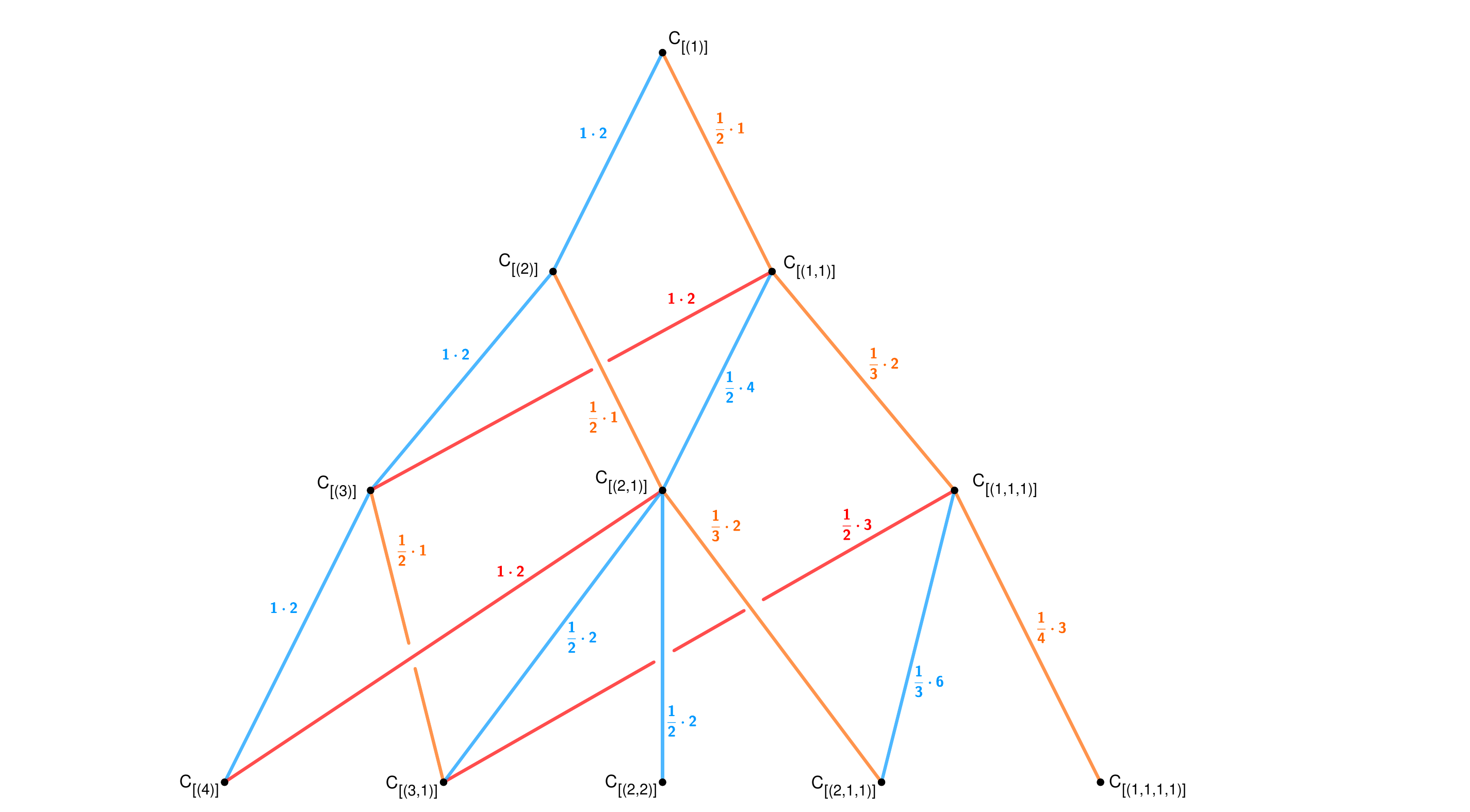}
     \caption{The constants of level $1,2,3,4$.}
     \label{fig:graph}
 \end{figure}    
\end{rem}

Let $E, E'\in \R_{>0}$ such that $E'>E.$ Let
$b\in\Mmm(A,c)_E$ and $q\in (A)^2$ satisfying $\nu(q)\ge E'$ and 
$d_A(b)-b^2\equiv c\cdot 1_A+q \pmod{F^{E'}A}.$ Assume there is a projection $\pi:A\to R$ such that $\pi(b)=0.$
\begin{lm}\label{lm:d_a_q0}
    $d_A(q)\equiv0\pmod{F^{E'}A}.$
\end{lm}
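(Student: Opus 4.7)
The plan is to differentiate the defining relation and exploit $d_A^2 = 0$ together with $d_R(c) = 0$ (the latter is built into the definition of a bounding cochain, so it holds even in the filtered setting of $\Mmm(A,c)_E$).

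First, I would write the hypothesis as an honest equation by choosing an error term: there exists $r \in (A)^2$ with $\nu_A(r) > E'$ such that
\[
d_A(b) - b^2 = c \cdot 1_A + q + r.
\]
Applying $d_A$ to both sides and using $d_A^2(b) = 0$, $d_A(1_A) = 0$, and $d_R(c) = 0$ (so $d_A(c \cdot 1_A) = 0$), I get
\[
- d_A(b^2) = d_A(q) + d_A(r),
\]
which rearranges to $d_A(q) = -d_A(b^2) - d_A(r)$.

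Next, I would compute $d_A(b^2)$ using the graded Leibniz rule with $|b| = 1$:
\[
d_A(b^2) = d_A(b) \cdot b - b \cdot d_A(b).
\]
Substituting $d_A(b) = b^2 + c\cdot 1_A + q + r$ and using that $c \in R$ lies in the center (so $c\cdot 1_A$ commutes with $b$), the $b^3$ and $cb$ contributions cancel, leaving
\[
d_A(b^2) = (qb - bq) + (rb - br).
\]

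Finally, I would estimate valuations term by term. Since $\nu_A(b) > 0$ and $\nu_A(q) \ge E'$, Lemma~\ref{lm_2.14} (submultiplicativity) gives $\nu_A(qb - bq) \ge E' + \nu_A(b) > E'$. Similarly $\nu_A(rb - br) \ge \nu_A(r) + \nu_A(b) > E'$. For the last term, the axiom $\nu_A(d_A(x)) \ge \nu_A(x)$ from the definition of a normed DGA gives $\nu_A(d_A(r)) \ge \nu_A(r) > E'$. Combining,
\[
\nu_A(d_A(q)) \ge \min\{\nu_A(qb - bq),\, \nu_A(rb - br),\, \nu_A(d_A(r))\} > E',
\]
which by the definition~\eqref{eq:filtration:def} of $F^{E'}A$ is exactly $d_A(q) \equiv 0 \pmod{F^{E'}A}$. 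There is no genuine obstacle here; the only thing that requires mild care is tracking the Leibniz sign and confirming that the strict inequality $\nu_A(b) > 0$ from the definition of a bounding cochain is what converts $\nu_A(q) \ge E'$ into $\nu_A(d_A(q)) > E'$.
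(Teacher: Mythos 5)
Your proof is correct and follows essentially the same route as the paper: apply $d_A$ to the Maurer--Cartan relation, use the graded Leibniz rule with $d_A^2=0$, $d_R(c)=0$ and the centrality of $c\cdot 1_A$, and then kill the commutator terms $qb-bq$ (and your explicit error term) by the valuation estimates coming from $\nu_A(b)>0$. The only difference is cosmetic: you introduce an exact error term $r\in F^{E'}A$ instead of computing with congruences modulo $F^{E'}A$ throughout, as the paper does.
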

\begin{proof}
\begin{align*}
    0&=-d_A(b^2)-bd_A(b)+d_A(b)b\\
    &=d_A(d_A(b)-b^2)-b(d_A(b)-b^2)+(d_A (b)-b^2)b\\
    &\equiv d_A(c\cdot1_A +q)- b(c\cdot1_A +q)+(c\cdot1_A +q)b\\
    &\equiv d_A(q)- bq+qb\\
    &\equiv d_A(q) \pmod{F^{E'}A} .
\end{align*}

\end{proof}    

\begin{lm}\label{lm:dcc_y_b_mod_e}
Let 
\[
y_b=H(\sum_{l=1}^\infty c\cdot1_A\otimes b^{\otimes l-1}).
\] 
Then, there exists $z_b\in\CC_*^\lambda(R)$ such that 
\[
\dcc(y_b)\equiv \dcc(\exp(b))+z_b-q\pmod{F^{E'}A}. 
\]
\end{lm}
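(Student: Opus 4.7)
The plan is to combine the homotopy identity $\dcc\circ H + H\circ \dcc = \id$ on $\mQ_*^\lambda$ from Proposition~\ref{lm:null_homotopic} with a short computation showing that $w := \sum_{l=1}^\infty c\cdot 1_A\otimes b^{\otimes l-1}$ is approximately $\dcc$-closed modulo $F^{E'}A$. As a first step, I would apply Lemma~\ref{lm:exp(b)_q} with its $q$ taken to be $c\cdot 1_A + q$ (legitimate since by hypothesis $d_A(b) - b^2 \equiv c\cdot 1_A + q \pmod{F^{E'}A}$) to obtain
\[
\dcc(\exp(b)) \equiv w + q' \pmod{F^{E'}A}, \qquad q' := \sum_{l=1}^\infty q\otimes b^{\otimes l-1}.
\]

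The key observation I would then exploit is that $q' \equiv q \pmod{F^{E'}A}$: for every $l\ge 2$ the tensor $q\otimes b^{\otimes l-1}$ has valuation at least $\nu(q) + (l-1)\nu(b) \geq E' + \nu(b) > E'$, so only the $l=1$ contribution survives. Since $\dcc$ preserves the filtration and $\dcc(q) = d_A(q) \equiv 0 \pmod{F^{E'}A}$ by Lemma~\ref{lm:d_a_q0}, this yields $\dcc(q') \equiv 0 \pmod{F^{E'}A}$. Combined with the exact identity $\dcc^2(\exp(b)) = 0$, I obtain $\dcc(w) \equiv -\dcc(q') \equiv 0 \pmod{F^{E'}A}$. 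Next I would verify that the canonical lift of $H$ to $\mR_*^\lambda$ given by $h\circ\sum_{i\ge 0}(\id - \dcc h - h\dcc)^i$ preserves the valuation filtration: each of $\tilde h$, the cyclic symmetrization in $h$, and $\dcc$ is valuation-nonincreasing (using that the fixed projection $\pi:A\to R$ satisfies $\nu\circ\pi\ge\nu$). Consequently $H(\dcc(w)) \equiv 0 \pmod{F^{E'}A}$.

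To finish, I would apply the homotopy identity to $[w]\in\mQ_*^\lambda$ and lift using the canonical formula to obtain $z_b \in \CC_*^\lambda(R)$ with $\dcc(y_b) = w - H(\dcc(w)) + z_b$. Reducing modulo $F^{E'}A$, the middle term vanishes, giving
\[
\dcc(y_b) \equiv w + z_b \equiv \dcc(\exp(b)) - q' + z_b \equiv \dcc(\exp(b)) - q + z_b \pmod{F^{E'}A},
\]
as required. The only nontrivial point is the filtration-preservation of $H$; the rest is direct bookkeeping from the hypotheses together with the fact that $q$ sits above $E'$ in valuation.
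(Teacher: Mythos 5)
Your proof is correct and follows the paper's own argument: both apply Lemma~\ref{lm:exp(b)_q} to write $\dcc(\exp(b))\equiv q+\sum_{l\ge1} c\cdot 1_A\otimes b^{\otimes l-1}\pmod{F^{E'}A}$ (the higher terms $q\otimes b^{\otimes l-1}$, $l\ge2$, being absorbed into $F^{E'}A$), deduce from $\dcc^2=0$ together with Lemma~\ref{lm:d_a_q0} that this sum is $\dcc$-closed modulo $F^{E'}A$, and then invoke the homotopy identity defining $H$ to produce $z_b\in\CC_*^\lambda(R)$ and conclude. Your explicit verification that $H$ preserves the valuation filtration merely spells out a step the paper leaves implicit, so the two arguments are essentially identical.
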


\begin{proof}

By Lemma~\ref{lm:exp(b)_q} we have     
 \[
\dcc(\exp(b))=\sum_{l=1}^\infty (d_A( b)-b^2)\otimes b^{\otimes l-1}\equiv q+ \sum_{l=1}^\infty c\cdot 1_A\otimes b^{\otimes l-1} \pmod{F^{E'}A}.
    \]
Hence, it follows by Lemma~\ref{lm:d_a_q0} that
\[
\dcc(\sum_{l=1}^\infty c\cdot1_A\otimes b^{\otimes l-1}) \equiv 0\pmod{F^{E'}A}.
\]
By the definition of $H,$ there exists $z
_b\in\CC_*^\lambda(R)$ such that    
\begin{align*}
\dcc\circ H(\sum_{l=1}^\infty c\cdot1_A\otimes b^{\otimes l-1})&=\sum_{l=1}^\infty c\cdot1_A\otimes b^{\otimes l-1}-H\circ\dcc(\sum_{l=1}^\infty c\cdot1_A\otimes b^{\otimes l-1})+z_b\\
&\equiv \sum_{l=1}^\infty c\cdot1_A\otimes b^{\otimes l-1}+z_b\pmod{F^{E'}A}.
\end{align*}
Thus, \[
\dcc(y_b)\equiv \dcc(\exp(b))+z_b -q\pmod{F^{E'}A}. 
\]

\end{proof}
Let $b\in\Mmm(A,c),$ and let 
$y_b\in\mR_*^\lambda(A)$ such that $\dcc(y_b)=\dcc(\exp(b))+z_b,$ for  $z_b\in\CC_*^\lambda(R).$
Define
\[
    \expp(b):=\exp(b)-y_b.
\]
By Lemma~\ref{lm:y_b_doesnt_dep} we get that $\gto(\expp(b))$ does not depend on the choice of $y_b.$  

\section{Bounding cochains in matrix factorizations }\label{section_bc_mf}

\subsection{Algebraic setting}\label{section:algebraic_setting} 

Let $n\in\Z_{>0}$ be odd. Let
$\big((R,\nu_R),(S,\{\nu^S_\lambda\}_{\lambda\in \Lambda}) , W \big)$ be a valued Landau-Ginzburg model, and
let $\MF(W,w)$ be the associated  normed matrix factorization category defined in Section~\ref{Section:normed_mf_category_def}. Let $(M,D_M,\{\nu^M_\lambda\}_{\lambda\in \Lambda})$ be an object of $\MF(W,w).$
Recall that $\End(M)$ is equipped with the norm $\nu_M$ defined in Section~\ref{Section_valuations}.  

Let $s$ be a formal variable of degree $1-n$ and consider the ring $\R[[s]].$ 
Equip $\R[[s]]$ with the trivial differential and with the valuation $\nu_s$ given by 
\[
\nu_s(\sum_{i=0}^\infty x_i s^i)=\min\{i| \ x_i\ne 0\}.
\]
Let $\Rs=\R[[s]]\otimes R$ and $\ems=\R[[s]]\otimes\End(M)$ be 
equipped with the valuation $\nu_{\Rs}:=\nu_s\otimes \nu_R,$ and the norm
$\nu_{\Ms}:=\nu_s\otimes\nu_M$ respectively. 
Recall $\Rs$ is equipped with the trivial differential. It follows by Corollary~\ref{lm:end_val_dga_2} that $(\ems, \delta, \id_{\Ms}, \nu_{\Ms})$ is a normed unital DGA over $\Rs.$

Assume $\MF(W,w)$ is equipped with a dimension $n$ normed $\infty$-trace 
\[
\gto:\CC_*^\lambda(\MF(W,w))\to R.
\]
Extend $\gto$ to $\End(M)^s$ by extension of scalars.

Recall the definition of the filtration $F^E$~\eqref{eq:filtration:def}.
Recall $R$ is equipped with a homomorphism $h_G:G\to \R$ such that $\ima h_G$ is a discrete subset of $\R.$ Hence, by the definition of $\nu_{\Rs},$ it follows that $\ima\nu_{\Rs}\backslash\{\infty\}$ is a discrete subset of $\R.$ It follows by  Lemma~\ref{lm:ima_m_eq_ima_r} that $\ima\nu_{\Ms}=\ima\nu_{\Rs}.$ 
Let $\alpha_1,\dots, \alpha_m\in F^0\Upsilon$ where $\Upsilon=\Rs,$ or $\Upsilon=\ems.$ Define 
\[
\mG(\alpha_1,\ldots,\alpha_m):=\big\{x\in\ima\nu_\Upsilon\backslash\{\infty\}  |\ x\ge\min\{ \nu_\Upsilon(\alpha_1),\ldots, \nu_\Upsilon(\alpha_m)\}\big\}.
\]
Thus, we can write
\begin{equation}\label{sabab_def}
\mG(\alpha_1,\ldots,\alpha_m)=\{E_1, E_2, E_3,\ldots\}, \qquad E_i<E_{i+1} \quad \forall i,
\end{equation}
where $E_1=\min\{ \nu_\Upsilon(\alpha_1),\ldots, \nu_\Upsilon(\alpha_m)\}.$

For a valued or a normed ring $(\Upsilon,\nu_\Upsilon),$ 
define $F^{\ge E} \Upsilon:=\{\alpha\in  \Upsilon| \ \nu_\Upsilon(\alpha)\ge E\},$
and $\Upsilon_E:=F^{\ge E} \Upsilon/F^E \Upsilon.$ 

\subsection{Existence of bounding cochains}\label{subsection_existence_bc}
This section describes an inductive construction of a bounding cochain in the algebra $\ems,$ where $M$ is a normed Calabi-Yau spherical object.
The obstruction theory 
introduced in~\cite{FOOO1} will play an essential role 
in this construction.

We will denote by $\id$ both $\id_M$ and $\id_{\Ms}.$ Define
\[
mc(\alpha):=\delta(\alpha)-\alpha^2, \quad \alpha\in \ems.
\] 
Fix $r\in F^0\Rs,$ and consider the subset $\mG(r)\subset\R_{>0}$ ordered as in~\eqref{sabab_def}.
Let $l\ge1.$ Suppose we have $b_{(l)}\in F^0\ems$ with $|b_{(l)}|=1,$ 
$\nu_{\Ms}(b_{(l)})\in \mG(r),$ 
and 
\[
mc(b_{(l)})\equiv c_{(l)}\cdot \id \pmod{F^{E_l}\ems}, \quad c_{(l)}\in (\Rs)^2, \quad \nu_{\Rs}(c_{(l)})>0.
\]
Define the obstruction cochain $o_l\in F^{\ge E_{l+1}}\ems$ to be an endomorphism satisfying
\begin{equation}\label{obstruction_chain}
mc(b_{(l)})\equiv c_{(l)}\cdot \id+ o_l \pmod{F^{E_{l+1}}\ems}.
\end{equation}
\begin{lm}\label{lemma_o_closed}
$
\delta(o_l)\equiv0 \pmod{F^{E_{l+1}}\ems}.
$
\end{lm}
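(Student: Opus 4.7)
The proof I propose is the standard obstruction-theoretic calculation: one shows directly that $\delta(mc(b_{(l)}))$ vanishes on the nose (because $\delta^2 = 0$ and a graded-commutator calculation), then reads off the conclusion from the defining equation~\eqref{obstruction_chain}.

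First, I would write~\eqref{obstruction_chain} as an honest equation
\[
\delta(b_{(l)}) - b_{(l)}^2 = c_{(l)} \cdot \id + o_l + r_l, \qquad r_l \in F^{E_{l+1}}\ems,
\]
and solve for $o_l$. Applying $\delta$ and using $\delta^2 = 0$ together with the fact that $\Rs$ carries the trivial differential (so $\delta(c_{(l)} \cdot \id) = c_{(l)} \cdot \delta(\id_M) = 0$), we obtain
\[
\delta(o_l) = -\delta(b_{(l)}^2) - \delta(r_l).
\]
By Lemma~\ref{lm:valuation_delta}, $\nu_{\Ms}(\delta(r_l)) \ge \nu_{\Ms}(r_l) > E_{l+1}$, so $\delta(r_l) \in F^{E_{l+1}}\ems$; thus it remains to control $\delta(b_{(l)}^2)$.

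Next, I would apply the Leibniz rule~\eqref{eq:leib}. Since $|b_{(l)}| = 1$,
\[
\delta(b_{(l)}^2) = \delta(b_{(l)})\, b_{(l)} - b_{(l)}\, \delta(b_{(l)}).
\]
Substituting $\delta(b_{(l)}) = b_{(l)}^2 + c_{(l)} \cdot \id + o_l + r_l$ and expanding, the $b_{(l)}^3$ terms cancel by associativity and the $c_{(l)} \cdot \id$ terms cancel because $c_{(l)} \cdot \id$ is central. What remains is the graded commutator
\[
\delta(b_{(l)}^2) = (o_l\, b_{(l)} - b_{(l)}\, o_l) + (r_l\, b_{(l)} - b_{(l)}\, r_l).
\]

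Finally, I would estimate valuations. Since $b_{(l)} \in F^0\ems$ means $\nu_{\Ms}(b_{(l)}) > 0$, Lemma~\ref{lm_2.14} gives
\[
\nu_{\Ms}(o_l\, b_{(l)}),\ \nu_{\Ms}(b_{(l)}\, o_l) \;\ge\; \nu_{\Ms}(o_l) + \nu_{\Ms}(b_{(l)}) \;>\; E_{l+1},
\]
and analogously for $r_l\, b_{(l)}$ and $b_{(l)}\, r_l$ since $\nu_{\Ms}(r_l) > E_{l+1}$. Hence $\delta(b_{(l)}^2) \in F^{E_{l+1}}\ems$, so $\delta(o_l) \in F^{E_{l+1}}\ems$, which is exactly the claim. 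There is no real obstacle here; the only thing to watch is that $\nu_{\Ms}(b_{(l)})$ is strictly positive, which is guaranteed by $b_{(l)} \in F^0\ems$ and ensures the final inequality is strict.
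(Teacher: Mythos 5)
Your argument is correct and is essentially the paper's own: the paper proves Lemma~\ref{lemma_o_closed} by citing the general Lemma~\ref{lm:d_a_q0}, whose proof is exactly your computation (apply the differential, use $\delta^2=0$ and the Leibniz rule, cancel the central term $c_{(l)}\cdot\id$, and observe that the remaining commutators $o_l b_{(l)}-b_{(l)}o_l$ and the error terms have valuation strictly greater than $E_{l+1}$ since $\nu_{\Ms}(b_{(l)})>0$). You have merely inlined that lemma in the specific setting, so there is no substantive difference.
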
 
\begin{proof}
This follows from Lemma~\ref{lm:d_a_q0}.

\end{proof}

\begin{lm}\label{lm:h_0}
   Let $M$ be a spherical object. Then, $H^*(\eems_E)\cong H^*(S^n;\Rs_E)$ for all $E\in\ima\nu_{\Ms}\backslash\{\infty\}.$
\end{lm}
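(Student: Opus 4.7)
The plan is to reduce the statement to the defining property of a spherical object by first decomposing $\ems_E$ via the $s$-filtration and then using that $R$ is a field to translate between graded pieces of the valuation filtration on $\End(M)$.

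First I would unpack $\ems_E$ as a chain complex. Because $\nu_{\Ms} = \nu_s \otimes \nu_M$, an element $\Phi = \sum_{i \ge 0} s^i \Phi_i \in \ems$ lies in $F^{\ge E}\ems$ (resp.\ $F^E\ems$) iff $\nu_M(\Phi_i) \ge E-i$ (resp.\ $> E-i$) for every $i$. Since $s$ has degree $1-n$ and the differential $\delta$ on $\ems$ is obtained from $\delta$ on $\End(M)$ by extension of scalars along the trivial differential on $\R[[s]]$, the complex $\ems_E$ decomposes as a direct sum of chain complexes
\[
\ems_E \;\cong\; \bigoplus_{i \ge 0} s^i \cdot \End(M)_{E-i}[(n-1)i],
\]
where $\End(M)_{e} := F^{\ge e}\End(M)/F^{e}\End(M)$ and the degree shift records the degree of $s^i$. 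The sum is effectively finite in each cohomological degree because $\ima\nu_M \setminus \{\infty\}$ is discrete (Lemma~\ref{lm:ima_m_eq_ima_r} together with the assumption on $\ima h_G$), so the only nonzero summands come from those $i$ with $E-i \in \ima \nu_M$.

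Next I would show that for each $e \in \ima\nu_M$ the chain complex $\End(M)_{e}$ is isomorphic to $\End(M)_{0}$ up to a scalar shift. Pick $r_e \in R$ with $\nu_R(r_e) = e$; this is possible because $\ima\nu_M = \ima\nu_R$ and $R$ is a field, so in particular $r_e$ is invertible. Multiplication by $r_e$ maps $F^{\ge 0}\End(M) \to F^{\ge e}\End(M)$ and $F^{0}\End(M) \to F^{e}\End(M)$ isomorphically, with inverse given by multiplication by $r_e^{-1}$. Because $\delta$ is $R$-linear (the differential $d_R$ on $R$ vanishes and $D_M$ is $S$-linear, hence $R$-linear), this scaling is a chain isomorphism $\End(M)_{0} \xrightarrow{\sim} \End(M)_{e}$. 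Passing to cohomology and invoking the hypothesis $H^*(\End(M)_{0}) \cong H^*(S^n; R_{0})$, I get $H^*(\End(M)_{e}) \cong R_{e} \oplus R_{e}[-n]$, naturally in $e$.

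Assembling the pieces yields
\[
H^*(\ems_E) \;\cong\; \bigoplus_{i \ge 0} s^i \cdot H^*(\End(M)_{E-i})[(n-1)i] \;\cong\; \bigoplus_{i \ge 0} s^i (R_{E-i} \oplus R_{E-i}[-n])[(n-1)i],
\]
which, after collecting degrees, matches the analogous decomposition of $\Rs_E$ and of $\Rs_E[-n]$; that is,
\[
H^*(\ems_E) \;\cong\; \Rs_E \oplus \Rs_E[-n] \;\cong\; H^*(S^n; \Rs_E).
\]
The only step that requires some care is the direct-sum decomposition of $\ems_E$: one must verify that the $s$-adic and $\nu_M$-adic filtrations interact compatibly, which amounts to the fact that $\nu_{\Ms}$ is defined as the sum valuation $\nu_s \otimes \nu_M$ and that completions were taken with respect to $\nu_{\Ms}$. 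The substantive input is the spherical hypothesis; everything else is bookkeeping with the two filtrations.
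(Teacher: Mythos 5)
Your proof is correct and follows essentially the same route as the paper: in both, the key step is to use $\ima\nu_{\Ms}=\ima\nu_{\Rs}$ together with invertibility in the field $R$ to realize level $E$ as a scalar multiple of level $0$, where the spherical hypothesis applies. The paper simply multiplies once by $r_E\in\Rs$ at the level of $\ems$ (leaving the reduction from $\End(M)_0$ to $\ems_0$ implicit), while you first decompose by powers of $s$ and scale each piece $\End(M)_{E-i}$ by $r_{E-i}\in R$; the only cosmetic point is that your $s$-decomposition of $\ems_E$ is a priori a product of subcomplexes rather than a degreewise finite direct sum, which is harmless since the differential preserves each factor and cohomology commutes with products.
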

\begin{proof}
    Let $E\in\ima\nu_{\Ms}\backslash\{\infty\} .$ Since $\ima\nu_{\Ms}=\ima\nu_{\Rs},$ there exists $r_E\in {\Rs}$ such that $\nu_{\Rs}(r_E)=E.$ Hence, multiplication by $r_E$ induces isomorphisms $H^*(\eems_0)\cong H^*(\eems_E)$ and $H^*(S^n;\Rs_0)\cong H^*(S^n;\Rs_E).$ Thus, 
    $H^*(\eems_E)\cong H^*(S^n;\Rs_E).$
\end{proof}
Let $M$ be a normed Calabi-Yau object.
Let $h_0$ denote an element in $\End^n(M) $ representing a generator of $H^n(\End(M)^s_0)$ with $\nu_M(h_0)=0$ such that $\gto_1(h_0)=1.$ Fix a projection $\pi:\ems\to\Rs.$ Since $\Rs$ consists elements of even degree, it follows that $\pi(\Phi)=0$ for all $\Phi\in\ems$ of odd degree.
Let $H$ be the homotopy defined with respect to $\pi$ in Section~\ref{section:defining_the_homotopy}. 
Define
\begin{equation}\label{eq:y_b_l}
   y_{b_{(l)}}:=H\bigg(\sum_{l=1}^\infty c_{(l)}\cdot\id\otimes b_{(l)}^{\otimes l-1}\bigg).
\end{equation}
Since $b_{(l)}\in\Mmm(A,c_{(l)})_{E_l}$ and $\pi(b_{(l)})=0,$ it follows 
by Corollary~\ref{lm_b(exp)_is_closed} that 
\[
y_{b_{(l)}}\equiv H(\dcc(\exp(b_{(l)})))\pmod{F^{E_l}\ems}.
\]
Define
$\expp(b_{(l)}):=\exp(b_{(l)})-y_{b_{(l)}}.$ 

\begin{lm}\label{lm:computation}
Let $M$ be a normed Calabi-Yau spherical object, and let $r\in F^0\Rs$ with $|r|=1-n.$
Suppose 
\[
\gto(\expp(b_{(l)}))\equiv r \pmod{F^{E_l}\Rs},
\]
and suppose 
there exist $c_l\in F^{\ge E_{l+1}}\Rs$ and $b_l\in F^{\ge E_{l+1}}\End(M)^s$ with $|c_l|=2,$ $|b_l|= 1,$ such that
\[
\delta(b_l)\equiv -o_l+c_l\cdot \id \pmod{F^{E_{l+1}}\ems}.
\]
Then, 
there exists $\theta_l\in F^{\ge E_{l+1}}\Rs$ such that  
$b_{(l+1)}:=b_{(l)}+b_l-\theta_lh_0\in \End^1(M)^s$ 
and $c_{(l+1)}:=c_{(l)}+c_l\in (\Rs)^2$ 
satisfy $\nu_{\Ms}(b_{(l+1)})\in\mG(r),$ and  
\[
mc(b_{(l+1)})\equiv c_{(l+1)}\cdot\id \pmod{F^{E_{l+1}}\ems},\quad \nu_{\Rs}(c_{(l+1)})>0,
\]
\[
\gto(\expp(b_{(l+1)}))\equiv r \pmod{F^{E_{l+1}}\Rs}.
\]
\end{lm}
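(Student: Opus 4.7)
The strategy is to show that $b_{(l)}+b_l$ already satisfies the Maurer-Cartan equation modulo $F^{E_{l+1}}\ems$ with the new potential $c_{(l+1)}=c_{(l)}+c_l$, that subtracting any correction of the form $\theta_l h_0$ with $\theta_l\in F^{\ge E_{l+1}}R^s$ does not disturb this equation modulo $F^{E_{l+1}}$, and that the resulting freedom in $\theta_l$ can be used to achieve $\gto(\expp(b_{(l+1)}))\equiv r\pmod{F^{E_{l+1}}R^s}$. The key is that $h_0$ is a normalized top-degree class representative with $\gto_1(h_0)=1$, so subtracting $\theta_l h_0$ shifts the trace value by $-\theta_l$ to leading order, without affecting anything else modulo $F^{E_{l+1}}$.

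For the Maurer-Cartan verification, expand
\[
mc(b_{(l+1)})=\delta(b_{(l)})+\delta(b_l)-\delta(\theta_l h_0)-(b_{(l)}+b_l-\theta_l h_0)^2.
\]
Since $\nu_{\Ms}(b_l),\nu_{\Ms}(\theta_l h_0)\ge E_{l+1}$ and $\nu_{\Ms}(b_{(l)})\ge E_1>0$, every cross term in the expanded square has valuation strictly greater than $E_{l+1}$, while $b_l^2$ and $(\theta_l h_0)^2$ have valuation $\ge 2E_{l+1}$. Since $h_0$ represents a class in $H^n(\ems_0)$, we have $\delta(h_0)\in F^{>0}\ems$, hence $\nu_{\Ms}(\delta(\theta_l h_0))>E_{l+1}$. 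Combined with $mc(b_{(l)})\equiv c_{(l)}\cdot\id+o_l$ and $\delta(b_l)\equiv -o_l+c_l\cdot\id\pmod{F^{E_{l+1}}\ems}$, this yields $mc(b_{(l+1)})\equiv c_{(l+1)}\cdot\id\pmod{F^{E_{l+1}}\ems}$ for every choice of $\theta_l$, and $\nu_{\Rs}(c_{(l+1)})>0$ is immediate.

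For the trace, set $\Delta:=b_l-\theta_l h_0$, so $\nu_{\Ms}(\Delta)\ge E_{l+1}$. Expanding $\exp(b_{(l+1)})$ in powers of $\Delta$, every tensor monomial of length $\ge 2$ containing at least one $\Delta$ has an additional factor of $b_{(l)}$ of strictly positive valuation, so has total valuation $>E_{l+1}$; hence $\exp(b_{(l+1)})\equiv\exp(b_{(l)})+\Delta\pmod{F^{E_{l+1}}}$ in the cyclic complex. For the homotopy correction, $y_{b_{(l)}}$ and $y_{b_{(l+1)}}$ are both produced by applying the same operator $H$ (whose component $\widetilde h$, cyclic permutations, and $\dcc$ all preserve the valuation filtration) to expressions of the form $\sum c\cdot\id\otimes b^{\otimes k-1}$. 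The leading perturbation in the argument of $H$ replaces either a single factor of $c_{(l)}$ by $c_l$ or a single $b_{(l)}$ by $\Delta$; in both cases the remaining mandatory factors of $c_{(l)}$ or $b_{(l)}$ contribute strictly positive valuation, forcing the perturbation above $E_{l+1}$. Thus $y_{b_{(l+1)}}\equiv y_{b_{(l)}}\pmod{F^{E_{l+1}}}$, and Corollary~\ref{cor:y_b} provides a complementary structural check. Applying the normed $\infty$-trace $\gto$ and using $\gto_1(h_0)=1$, one obtains
\[
\gto(\expp(b_{(l+1)}))\equiv\gto(\expp(b_{(l)}))+\gto_1(b_l)-\theta_l\pmod{F^{E_{l+1}}R^s}.
\]

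Finally, define $\theta_l:=\gto(\expp(b_{(l)}))-r+\gto_1(b_l)$. The degree check $|\theta_l|=1-n$ follows from $|r|=|\gto(\expp(b_{(l)}))|=|\gto_1(b_l)|=1-n$. The first summand $\gto(\expp(b_{(l)}))-r$ has valuation $>E_l$, hence $\ge E_{l+1}$, by the inductive hypothesis; the second has valuation $\ge\nu_{\Ms}(b_l)\ge E_{l+1}$ by the normedness of $\gto$; so $\theta_l\in F^{\ge E_{l+1}}R^s$. With this choice, $\gto(\expp(b_{(l+1)}))\equiv r\pmod{F^{E_{l+1}}R^s}$, and $\nu_{\Ms}(b_{(l+1)})\ge\min(E_1,E_{l+1})=E_1$ lies in $\mG(r)$ using Lemma~\ref{lm:ima_m_eq_ima_r}. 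The main obstacle is controlling $y_{b_{(l+1)}}-y_{b_{(l)}}$ modulo $F^{E_{l+1}}$; this is handled by the filtration-preservation of $H$ combined with the fact that the unchanged factors in the input carry strictly positive valuation.
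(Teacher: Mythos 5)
Your proposal is correct and follows essentially the same route as the paper: the same choice $\theta_l\equiv\gto(\exp(b_{(l)}))+\gto_1(b_l)-\gto(y_{b_{(l)}})-r$, the same filtration arguments giving $mc(b_{(l+1)})\equiv c_{(l+1)}\cdot\id$ and $y_{b_{(l+1)}}\equiv y_{b_{(l)}}\pmod{F^{E_{l+1}}}$, and the same use of the normedness of $\gto$ together with $\gto_1(h_0)=1$. You merely make explicit some valuation estimates (the cross terms in the square, the term $\delta(\theta_l h_0)$, and the expansion of $\exp(b_{(l+1)})$) that the paper leaves implicit.
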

\begin{proof}
   Since $\nu_{\Rs}(c_{(l)})>0$ and $\nu_{\Rs}(c_l)\ge E_{l+1},$ it follows that $\nu_{\Rs}(c_{(l+1)})>0.$ 
Let $\theta_l\in \Rs$ such that 
\[\theta_l\equiv  \gto(\exp(b_{(l)}))+\gto_1(b_l)-\gto(y_{b_{(l)}})-r \pmod{F^{E_{l+1}}\Rs}.\]
Since $\gto$ is normed and by the assumption 
\[
\gto(\expp(b_{(l)}))\equiv r \pmod{F^{E_l}\Rs},
\] it follows that $\theta_l\in F^{\ge E_{l+1}}\Rs.$ In addition, since $\gto$ is an operator of dimension $n,$ $|r|=1-n$ and $|\exp(b_{(l)})|=|b_l|=|y_{b_{(l)}}|=1,$ it follows that $|\theta_l|=1-n.$ 
Hence,
\[
b_{(l+1)}=b_{(l)}+b_l-\theta_lh_0\in\End^1(\Ms)
\]
satisfies $\nu_{\Ms}(b_{(l+1)})\in\mG(r),$ and
\begin{align*}
mc(b_{(l+1)})&\equiv mc(b_{(l)})+\delta(b_l)-\delta(\theta_lh_0)\\
&\equiv c_{(l)}\cdot \id+o_l+\delta(b_l)\\
&\equiv c_{(l)}\cdot \id+ c_l\cdot\id\\
&\equiv c_{(l+1)}\cdot \id  \pmod{F^{E_{l+1}}\ems}.
\end{align*}
Since $\nu_{\Rs}({c_{(l+1)}})>0$ and $ \nu_{\Ms}({b_{(l+1)}})>0 ,$ it follows that $y_{b_{(l)}}\equiv y_{b_{(l+1)}}\pmod{F^{E_{l+1}}\ems}.$ Therefore, since $\gto$ is normed, we get
\begin{align*}
\gto(\expp(b_{(l+1)}))&\equiv \gto(\exp(b_{(l+1)})-y_{b_{(l+1)}})\\
&\equiv \gto(\exp(b_{(l)}))+\gto_1(b_l)-\gto_1(\theta_lh_0)-\gto(y_{b_{(l+1)}})\\
&\equiv \gto(\exp(b_{(l)}))+\gto_1(b_l)-\theta_l-\gto(y_{b_{(l)}})\\
&\equiv r\quad \pmod{F^{E_{l+1}}\Rs}.
\end{align*}
\end{proof}

\begin{prop}\label{theorem:existence_bc}
Let $M$ be a normed Calabi-Yau spherical object.
Then, for any $r\in F^0\Rs$ with $|r|=1-n,$ there exists a bounding cochain $b\in \ems$ such that $\gto(\expp(b))=r.$   
\end{prop}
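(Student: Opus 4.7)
The plan is to construct $b$ as the limit of a sequence $b_{(l)}$ produced by iterated application of Lemma~\ref{lm:computation}, starting from an explicit base case. The single nontrivial input at each inductive step will be the production of trivialization data $(c_l,b_l)$ for the obstruction $o_l$; this rests on the sphericality hypothesis together with a parity argument forced by $n$ odd.

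\emph{Base case.} Set $b_{(1)}:=r\cdot h_0$ and $c_{(1)}:=0$. Degrees match: $|b_{(1)}|=(1-n)+n=1$, and $\nu_{\Ms}(b_{(1)})=\nu_{\Rs}(r)=E_1>0$ lies in $\mG(r)$. Since $h_0$ is closed and $\nu_{\Ms}(b_{(1)}^2)\ge 2E_1>E_1$, we get $mc(b_{(1)})\equiv 0\equiv c_{(1)}\cdot\id\pmod{F^{E_1}\ems}$. With $c_{(1)}=0$ the element $y_{b_{(1)}}$ vanishes, so $\gto(\expp(b_{(1)}))=\sum_{k\ge 1}\gto_k(b_{(1)}^{\otimes k})/k$; the leading term is $\gto_1(b_{(1)})=r\gto_1(h_0)=r$ by $\Rs$-linearity and the choice of $h_0$, while the higher terms lie in $F^{E_1}\Rs$ because $\gto$ is normed and $\nu_{\Ms}(b_{(1)}^{\otimes k})\ge kE_1>E_1$ for $k\ge 2$. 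Thus $\gto(\expp(b_{(1)}))\equiv r\pmod{F^{E_1}\Rs}$, and the hypotheses of Lemma~\ref{lm:computation} hold at level $1$.

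\emph{Inductive step.} Given $b_{(l)}$, the obstruction $o_l$ from~\eqref{obstruction_chain} is closed modulo $F^{E_{l+1}}\ems$ by Lemma~\ref{lemma_o_closed}, so it defines a class $[o_l]\in H^2(\ems_{E_{l+1}})$. Lemma~\ref{lm:h_0} and sphericality identify
\[
H^2(\ems_{E_{l+1}})\cong H^2(S^n;\Rs_{E_{l+1}})\cong (\Rs_{E_{l+1}})^2\cdot[\id]\,\oplus\,(\Rs_{E_{l+1}})^{2-n}\cdot[h_0].
\]
The crux is a parity observation: $g_G$ takes values in $2\Z$ and $|s|=1-n$ is even because $n$ is odd, so every homogeneous element of $\Rs$ has even degree; in particular $(\Rs)^{2-n}=0$. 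Hence $[o_l]=c_l\cdot[\id]$ for some $c_l\in(\Rs)^2$ with $\nu_{\Rs}(c_l)\ge E_{l+1}$, and there exists a degree-$1$ primitive $b_l\in F^{\ge E_{l+1}}\ems$ satisfying $\delta(b_l)\equiv c_l\cdot\id-o_l\pmod{F^{E_{l+1}}\ems}$. Lemma~\ref{lm:computation} then delivers $b_{(l+1)}, c_{(l+1)}$ satisfying the same hypotheses at level $l+1$.

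\emph{Passage to the limit, and the main obstacle.} The differences $b_{(l+1)}-b_{(l)}$ and $c_{(l+1)}-c_{(l)}$ lie in $F^{\ge E_{l+1}}$. Since $\ima\nu_R\setminus\{\infty\}$ is discrete and the $E_l\in\mG(r)$ are bounded below, $E_l\to\infty$, so the sequences are Cauchy. Completeness of $\ems$ and $\Rs$ yields limits $b\in(\ems)^1$ and $c\in(\Rs)^2$ with $mc(b)=c\cdot\id$; continuity of $\gto$ (from the normed property) and of $b\mapsto\expp(b)$ in the filtration topology then gives $\gto(\expp(b))=r$. The only conceptual content beyond Lemma~\ref{lm:computation} is the parity-based vanishing $(\Rs)^{2-n}=0$, which eliminates the potential $[h_0]$-component of the obstruction. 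It is precisely the oddness of $n$ that keeps the induction from stalling, mirroring on the algebraic side the rational cohomology sphere hypothesis that underlies the analogous classification of~\cite{Point_like_bc}.
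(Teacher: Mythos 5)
Your proof follows the same inductive scheme as the paper's: base case $b_{(1)}=r\cdot h_0$, obstruction classes killed using sphericality (Lemma~\ref{lm:h_0}) and the correction step of Lemma~\ref{lm:computation}, then passage to the limit via discreteness of $\ima\nu_{\Rs}$. The only difference is that you make explicit the parity observation ($(\Rs)^{2-n}=0$ because $\Rs$ is concentrated in even degrees and $n$ is odd) which guarantees $[o_l]$ has no $[h_0]$-component, a point the paper leaves implicit when it asserts the existence of $c_l$ and $b_l$.
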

\begin{proof}
Fix $r\in F^0\Rs$ with $|r|=1-n.$ Write $\mG(r)$ in the form of a list as in~\eqref{sabab_def}.

Define $b_{(1)}:=r\cdot h_0.$ Hence, $|b_{(1)}|=1$ and
\[
mc(b_{(1)})\equiv 0 =c_{(1)}\cdot \id, \pmod{F^{E_1}\ems}, \quad c_{(1)}=0.
\]
In addition, $\nu_{\Rs}(c_{(1)})>0$ and
$\nu_{\Ms}(b_{(1)})>0.$
Since $c_{(1)}=0,$ we get
$y_{b_{(1)}}= 0.$ Thus,
\[
\gto(\expp(b_{(1)}))=\gto(\exp(b_{(1)}))\equiv\gto_1(b_{(1)})= r \pmod{F^{E_1}\Rs}.\]
We proceed by induction. Suppose we have $b_{(l)}\in \ems$ with $|b_{(l)}|=1,$ $\nu_{\Ms}(b_{(l)})\subset \mG(r),$  and
\[
mc(b_{(l)})\equiv c_{(l)}\cdot \id,  \pmod{F^{E_l}\ems} \quad c_{(l)}\in (\Rs)^2,\quad \nu_{\Rs}(c_{(l)})>0, \quad d_{\Rs}(c_{(l)})=0,
\]
\[\gto(\expp(b_{(l)}))\equiv r \pmod{F^{E_l}\ems}.
\]
Define the obstruction cochain $o_l$ as in~\eqref{obstruction_chain}. 
By Lemma~\ref{lemma_o_closed} we have 
\[
\delta(o_l)\equiv0 \pmod{F^{E_{l+1}}\ems},
\]
so $[o_l]\in H^*(\eems_{E_{l+1}}).$
By Lemma~\ref{lm:h_0} we have
$H^*(\eems_{E_{l+1}})\cong H^*(S^n;\Rs_{E_{l+1}}).$
Hence,
there exist $c_l\in F^{\ge E_{l+1}}\Rs$ and $b_l\in F^{\ge E_{l+1}}\End(M)^s$ with 
$|c_l|=2,$ $|b_l|=1,
$
such that 
\[\delta(b_l)\equiv -o_l+c_l\cdot \id \pmod{F^{E_{l+1}}\ems}.\]
Lemma~\ref{lm:computation} guarantees there exists $b_{(l+1)}\in \End^1(M)^s$ satisfying $\nu_{\Ms}(b_{(l+1)})\in\mG(r)$ and
\[
mc(b_{(l+1)})\equiv c_{(l+1)}\cdot \id \pmod{F^{E_{l+1}}\ems}, \quad c_{(l+1)}\in (\Rs)^2, \quad\nu_{\Rs}(c_{(l+1)})>0,
\] 
\[
\gto(\expp(b_{(l+1)}))\equiv r\pmod{F^{E_{l+1}}\Rs}, \quad b_{(l+1)}\equiv b_{(l)}\pmod{F^{E_l}\ems}. 
\]
Note that since $d_{\Rs}$ is the trivial differential, it follows that $d_{\Rs}(c_{(l+1)})=0.$ 
Thus, the inductive process gives rise to a convergent sequence $\{b_{(l)}\}_{l=1}^\infty$ where $b_{(l)}$ is a bounding cochain modulo $F^{E_l}\ems.$
Taking the limit as $l$ goes to infinity, we obtain 
\[
b:=\lim_l b_{(l)}\in \End^1(M)^s,
\quad c:=\lim_l c_{(l)}\in (\Rs)^2, \quad y_b:=\lim_ly_{b_{(l)}}\in \End^2(M)^s
\]
satisfying
$mc(b)=c\cdot \id,$ $y_b=H(\dcc(\exp(b)))$ and $\gto(\expp(b))=s.$  
In addition,
$d_{\Rs}(c)=0,$ $\nu_{\Rs}(c)>0$ and  $\nu_{\Ms}(b)>0.$ 

\end{proof}
\subsection{Pseudoisotopy}\label{section:pseudoisotopy}
Recall Definition~\ref{dfn:pseudoisotopy}. In this section, we construct a pseudoisotopy between $\eems$ and itself. This pseudoisotopy will be used in Section~\ref{section:gauge} in order to prove that the map $\varrho$ in Theorem~\ref{thm:bc_equivalence} is injective.

Write $I=[0,1],$ and let $A^*(I;\R)$ denote the complex of differential forms on $I.$ Let $\mathrm{ev}_0,\mathrm{ev}_1:A^*(I;\R)\to \R$ denote the evaluation maps at zero and one. Abbreviate $\eta|_i=\mathrm{ev}_i(\eta).$ Define the subcomplex
\[
A^*(I;\R)_{\mathrm{rel}}:=\{\eta\in A^*(I;\R)| \ \eta|_0=\eta|_1=0\}.
\]   
Let $H^*(I;\R)_{\mathrm{rel}}$ denote the cohomology associated to this subcomplex.
\begin{lm}\label{lm:cohomology_I}
    \[H^i(I;\R)_{\mathrm{rel}}=
         \left\{\begin{array}{ll}
        0, &  i\ne1,\\
        \langle [dt]\rangle_{\R}, & i=1.\\
        \end{array} \right.\]
\end{lm}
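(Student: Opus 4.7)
The complex $A^*(I;\R)_{\mathrm{rel}}$ is very concrete, so the plan is a direct computation. First I would unwind the definitions. Since $I$ is one-dimensional, $A^i(I;\R) = 0$ for $i \ge 2$, and evaluating a positive-degree form at a point (via pullback along $\{i\} \hookrightarrow I$) gives $0$. Hence
\[
A^0(I;\R)_{\mathrm{rel}} = \{f \in C^\infty(I) \mid f(0) = f(1) = 0\}, \qquad A^1(I;\R)_{\mathrm{rel}} = A^1(I;\R),
\]
with differential $d(f) = f'(t)\,dt$, and all other components zero. This already gives $H^i(I;\R)_{\mathrm{rel}} = 0$ for $i \ne 0, 1$.

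Next I would compute $H^0$. The kernel of $d$ on $A^0_{\mathrm{rel}}$ consists of constants vanishing at $0$ and $1$, so $H^0(I;\R)_{\mathrm{rel}} = 0$.

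For $H^1$, I would use integration. Define $\int_0^1 : A^1(I;\R) \to \R$ by $g(t)\,dt \mapsto \int_0^1 g(t)\,dt$. By the fundamental theorem of calculus, for any $f \in A^0_{\mathrm{rel}}$ we have $\int_0^1 f'(t)\,dt = f(1) - f(0) = 0$, so $\int_0^1$ descends to a map $H^1(I;\R)_{\mathrm{rel}} \to \R$. Conversely, given $\eta = g(t)\,dt \in A^1$ with $\int_0^1 g\,dt = 0$, the function $f(t) = \int_0^t g(s)\,ds$ lies in $A^0_{\mathrm{rel}}$ (it vanishes at both endpoints) and satisfies $df = \eta$. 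Thus the induced map $H^1(I;\R)_{\mathrm{rel}} \to \R$ is an isomorphism. Since $\int_0^1 dt = 1$, the class $[dt]$ generates, giving $H^1(I;\R)_{\mathrm{rel}} = \langle [dt] \rangle_\R$.

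There is no real obstacle here; the content is elementary single-variable calculus packaged as de Rham cohomology of the pair $(I, \partial I)$. The only point deserving care is confirming the convention that $\mathrm{ev}_i$ kills $1$-forms, so that $A^1(I;\R)_{\mathrm{rel}} = A^1(I;\R)$ and the computation reduces to the standard antiderivative argument.
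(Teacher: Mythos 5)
Your proof is correct, but it takes a different route from the paper. The paper does not compute the relative complex directly: it places $A^*(I;\R)_{\mathrm{rel}}$ in the short exact sequence
\[
0\longrightarrow A^*(I;\R)_{\mathrm{rel}}\longrightarrow A^*(I;\R)\overset{\mathrm{ev}_0\oplus\mathrm{ev}_1}{\longrightarrow}\R^{\oplus2}\longrightarrow0,
\]
reads off the ranks of $H^0_{\mathrm{rel}}$ and $H^1_{\mathrm{rel}}$ from the associated long exact sequence together with $H^0(I;\R)\cong\R$, $H^1(I;\R)=0$, and then shows separately that $[dt]\neq 0$ by a contradiction argument: if $df=dt$ with $f$ vanishing at the endpoints, then $f(t)-t$ is constant, forcing $f(0)=f(1)-1$. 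Your approach instead identifies the relative complex explicitly ($A^0_{\mathrm{rel}}$ the functions vanishing at $0,1$, $A^1_{\mathrm{rel}}=A^1$, since evaluation kills $1$-forms) and computes both cohomology groups by hand, using the fundamental theorem of calculus: $\int_0^1$ descends to $H^1_{\mathrm{rel}}$, and the antiderivative $f(t)=\int_0^t g$ of any $g\,dt$ with vanishing total integral lies in $A^0_{\mathrm{rel}}$, so $\int_0^1$ is an isomorphism onto $\R$ sending $[dt]\mapsto 1$. Your route is more self-contained and produces the generator together with an explicit inverse in one step, whereas the paper's long-exact-sequence argument needs the extra non-exactness check but is the pattern that generalizes to relative cohomology of pairs in higher dimension. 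Both are complete proofs of the stated lemma.
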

\begin{proof}
Consider the short exact sequence 
\[
0\longrightarrow A^*(I;\R)_{\mathrm{rel}}\hookrightarrow A^*(I;\R)\overset{\mathrm{ev_0}\oplus \mathrm{ev_1}}{\longrightarrow} \R^{\oplus 2}\longrightarrow0,
\]
and consider the long exact sequence it induces
\[
0\longrightarrow H^0(I;\R)_{\mathrm{rel}}\longrightarrow H^0(I;\R)\longrightarrow \R^{\oplus 2}\rightarrow H^1(I;\R)_{\mathrm{rel}}\longrightarrow H^1(I;\R)\longrightarrow\ldots.
\]
Since 
\[
H^0(I;\R)\cong \R, \qquad H^1(I;\R)\cong0,
\]
it follows that 
\[
H^0(I;\R)_{\mathrm{rel}}\cong 0, \qquad H^1(I;\R)_{\mathrm{rel}}\cong \R.
\]
Hence, it suffices to show  that $dt$ represents a non-trivial cohomology class of $H^1(I;\R)_{\mathrm{rel}}.$ Since $dt$ is closed we need to show it is non-exact.
Suppose there exists $f(t)\in A^0(I;\R)_{\mathrm{rel}}$ such that $df=dt.$ In particular, $[f(t)-t]\in H^0(I;\R).$ 
Thus, there exists $c\in\R$ such that $f(t)-t=c.$ But, this implies $f(0)=f(1)-1$ which is a contradiction since $f(t)\in A^0(I;\R)_{\mathrm{rel}}.$ 
\end{proof}
Define 
\[
\fUs_{\mathrm{rel}}:= A^*(I;\R)_{\mathrm{rel}}\otimes \ems, \quad \fRs_{\mathrm{rel}}:=A^*(I;\R)_{\mathrm{rel}}\otimes \Rs,
\]
\[
\fUs:= A^*(I;\R)\otimes \ems, \quad \fRs:=A^*(I;\R)\otimes \Rs,
\] 
Equip 
$\fUs_{\mathrm{rel}},$ $\fRs_{\mathrm{rel}}, $ $\fUs, \fRs$ with the valuations and the norms
$\nu_{\fUs_{\mathrm{rel}}},\nu_{\fRs_{\mathrm{rel}}}, \nu_{\fUs}, \nu_{\fRs} $ induced by $\nu_{\Ms}, \nu_{\Rs}$ and the trivial valuations on $A^*(I;\R)_{\mathrm{rel}}$ and $A^*(I;\R).$ 
These algebras are equipped with the differentials induced by the tensor products. This makes $\fRs_{\mathrm{rel}},$ $\fRs$ into commutative valued DGAs over $\R,$ and $\fUs_{\mathrm{rel}},$ $\fUs$ into normed unital DGAs over $\fRs_{\mathrm{rel}},$ $\fRs$ respectively.
The differential $\fpar$ and the multiplication on both $\fRs$ and $\fRs_{\mathrm{rel}}$ are given by
\[\fpar(\eta\otimes r):=d\eta\otimes r, \quad
(\eta_1\otimes r_1 )\cdot( \eta_2\otimes r_2):= \eta_1\wedge\eta_2\otimes r_1r_2,
\]
and the differential $\widetilde{\delta}$ and the multiplication on both $\fUs$ and  $\fUs_{\mathrm{rel}}$ are given by
\[
\widetilde{\delta}(\eta\otimes \alpha):=d\eta \otimes\alpha+(-1)^{|\eta|} \eta\otimes\delta(\alpha),\quad (\eta_1 \otimes\alpha_1) \cdot (\eta_2\otimes \alpha_2):=(-1)^{|\eta_2||\alpha_1|}\eta_1\wedge\eta_2 \otimes\alpha_1\alpha_2.
\]
 
Define $\mathrm{eval}^i:\fRs\to \Rs$ by $\mathrm{eval}^i:=\mathrm{ev}_i\otimes \id_{\Rs},$  
and 
$\mathfrak{eval}^i:\fUs\to \ems$ by $\mathfrak{eval}^i=\mathrm{ev}_i\otimes\id_{\Ms},$ for $i=0,1.$    
    \begin{lm}\label{lm:dga_morphism}
        $\mathrm{eval}^i$ are DGA morphisms, and $\mathfrak{eval}^i$ are unital DGA morphisms over $\mathrm{eval}^i.$
    \end{lm}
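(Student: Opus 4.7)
The plan is to reduce both assertions to the observation that the point-evaluation $\mathrm{ev}_i : A^*(I;\R) \to \R$ is itself a DGA morphism. Indeed, $\mathrm{ev}_i$ is the pullback of differential forms along the inclusion $\{i\} \hookrightarrow I$; it preserves grading by sending $A^0(I;\R)$ to $\R$ and killing $A^k(I;\R)$ for $k \geq 1$. It commutes with differentials trivially, since $\mathrm{ev}_i \circ d$ always lands in the image of $\mathrm{ev}_i$ on positive-degree forms, hence is zero, while $d$ on $\R$ vanishes. Multiplicativity with respect to the wedge product is immediate: both sides vanish unless all factors lie in $A^0(I;\R)$. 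The maps $\mathrm{eval}^i$ and $\mathfrak{eval}^i$ are then tensor products of $\mathrm{ev}_i$ with the identities on $\Rs$ and $\ems$, respectively, and it only remains to check compatibility with the specific differentials and multiplications defined on $\fRs$ and $\fUs$.

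For $\mathrm{eval}^i$, since $\Rs$ has the trivial differential, $\fpar(\eta \otimes r) = d\eta \otimes r$ is sent to $(d\eta)|_i \cdot r = 0$, and $d_{\Rs}(\mathrm{eval}^i(\eta \otimes r)) = 0$ as well, so compatibility with $\fpar$ holds. Multiplicativity follows from $(\eta_1 \wedge \eta_2)|_i = \eta_1|_i \cdot \eta_2|_i$, both sides vanishing unless $\eta_1,\eta_2 \in A^0(I;\R)$. For $\mathfrak{eval}^i$, the key computation is
\[
\mathfrak{eval}^i\bigl(\widetilde{\delta}(\eta \otimes \alpha)\bigr) = (d\eta)|_i \cdot \alpha + (-1)^{|\eta|}\,\eta|_i \cdot \delta(\alpha).
\]
The first term vanishes because $d\eta$ has positive form degree, and the Koszul sign $(-1)^{|\eta|}$ is irrelevant because $\eta|_i \neq 0$ forces $|\eta|=0$. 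The surviving term equals $\delta(\mathfrak{eval}^i(\eta \otimes \alpha))$ by $\R$-linearity of $\delta$. The analogous Koszul sign $(-1)^{|\eta_2||\alpha_1|}$ in the product on $\fUs$ is trivialized for the same degree reason, yielding multiplicativity. The morphism-over condition $\mathfrak{eval}^i(r \cdot x) = \mathrm{eval}^i(r)\cdot\mathfrak{eval}^i(x)$ follows directly from the definition of the $\fRs$-action on $\fUs$ inherited from the tensor product. Finally, the unital condition is $\mathfrak{eval}^i(1 \otimes \id_M) = 1 \cdot \id_M = \id_M$.

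There is no substantive obstacle in this verification; it is a direct unwinding of definitions. The only conceptual point is that evaluation at a point annihilates positive-degree forms, which renders trivial precisely those signs and Leibniz correction terms that might otherwise need to be tracked.
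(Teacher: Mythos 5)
Your proof is correct and follows essentially the same route as the paper: a direct unwinding of the definitions, with the key observation that evaluation at an endpoint kills positive-degree forms on $I$, which simultaneously annihilates the $d\eta$ terms and trivializes the Koszul signs $(-1)^{|\eta|}$ and $(-1)^{|\eta_2||\alpha_1|}$. The paper's verification of the morphism-over and unital conditions is the same as yours, just phrased via $\eval^i|_{\fRs}=\mathrm{eval}^i$.
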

\begin{proof}
        Let $\eta_1,\eta_2\in A^*(I;\R)$ and $r_1,r_2\in\Rs.$  We get
   \begin{align*}
       \mathrm{eval}^i\big((\eta_1\otimes r_1) \cdot( \eta_2\otimes r_2)\big)&=\mathrm{eval}^i(\eta_1\wedge\eta_2\otimes r_1r_2)\\
       &=\eta_1|_i\eta_2|_ir_1r_2\\
     &= (\eta_1|_i r_1)\cdot (\eta_2|_i r_2)\\
     &=\mathrm{eval}^i(\eta_1\otimes r_1)\cdot \mathrm{eval}^i(\eta_2\otimes r_2), 
   \end{align*}
   and 
   \[
   \mathrm{eval}^i\circ \fpar(\eta_1\otimes r_1)= \mathrm{eval}^i(d\eta_1\otimes r_1)=0=d_{\Rs}\circ \mathrm{eval}^i(\eta_1\otimes r_1).
   \]
Hence, $\mathrm{eval}^i$ are DGA morphisms. Let $ \alpha_1,\alpha_2\in \ems.$ We get
    \begin{align*}
     \eval^i\big((\eta_1\otimes\alpha_1)\cdot( \eta_2\otimes\alpha_2)\big)&= 
     \eval^i((-1)^{|\eta_2||\alpha_1|}\eta_1\wedge \eta_2 \otimes \alpha_1 \alpha_2)\\
     &=(-1)^{|\eta_2||\alpha_1|}\eta_1|_i\eta_2|_i\alpha_1\alpha_2\\
     &=\eta_1|_i\eta_2|_i\alpha_1\alpha_2\\
     &= (\eta_1|_i \alpha_1)\cdot (\eta_2|_i \alpha_2)\\
     &=\eval^i(\eta_1\otimes \alpha_1)\cdot \eval^i(\eta_2\otimes \alpha_2).
 \end{align*}
Since $\eta|_i=0$ for every $\eta\in A^1(I;\R),$ it follows that $(-1)^{|\eta|}\eta|_i=\eta|_i$ for every $\eta\in A^*(I;\R).$
    Thus,
    \begin{align*}
        \mathfrak{eval}^i\circ \widetilde{\delta}(\eta_1\otimes \alpha_1)&=\mathfrak{eval}^i\big(d\eta_1\otimes \alpha_1+(-1)^{|\eta|}\eta_1\otimes \delta(\alpha_1) \big)\\
    &=(-1)^{|\eta_1|}\eta_1|_i \delta(\alpha_1)\\
    &=\eta_1|_i \delta(\alpha_1)\\
    &=\delta\circ \mathrm{eval}^i(\eta_1\otimes \alpha_1).
    \end{align*}
Hence, $\eval^i$ are DGA morphisms.
Note that $\eval^i|_{\fRs}=\mathrm{eval}^i.$ Hence, $\eval^i$ are morphisms over $\mathrm{eval}^i.$  
    In addition $\eval^i(1\otimes \id_{\Ms})=\id_{\Ms},$ so $\eval^i$ are  unital morphisms.
\end{proof}
\begin{lm}\label{lm:eval_maps_homotopic}
   The maps $\mathrm{eval}^0$ and $\mathrm{eval}^1$ are homotopic.
\end{lm}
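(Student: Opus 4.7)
The plan is to produce an explicit chain homotopy $h : \fRs \to \Rs$ of degree $-1$ via integration over the interval $I = [0,1]$. Specifically, I would set
\[
h(\eta \otimes r) = \Bigl(\int_0^1 \eta\Bigr) \cdot r \quad \text{for } \eta \in A^1(I;\R), \qquad h(\eta \otimes r) = 0 \quad \text{for } \eta \in A^0(I;\R),
\]
extended $\R$-linearly. Because every element of $A^*(I;\R)$ is a sum of a $0$-form and a $1$-form, this defines $h$ on all of $\fRs$.

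Next I would verify the chain homotopy identity
\[
\mathrm{eval}^1 - \mathrm{eval}^0 = h \circ \fpar + d_\Rs \circ h.
\]
Since $d_\Rs = 0$, the second term on the right vanishes identically, so it suffices to check $\mathrm{eval}^1 - \mathrm{eval}^0 = h \circ \fpar$. For a pure tensor $\eta \otimes r$ with $\eta \in A^0(I;\R)$, one has $\fpar(\eta \otimes r) = d\eta \otimes r$, so
\[
h(\fpar(\eta \otimes r)) = \Bigl(\int_0^1 d\eta\Bigr) r = (\eta(1) - \eta(0))\, r = (\mathrm{eval}^1 - \mathrm{eval}^0)(\eta \otimes r)
\]
by the fundamental theorem of calculus. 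For $\eta \in A^1(I;\R)$, one has $\fpar(\eta \otimes r) = d\eta \otimes r = 0$ because $I$ is one-dimensional, and $\mathrm{eval}^i(\eta \otimes r) = 0$ because the pullback of a $1$-form to a point vanishes; so both sides are zero.

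This is essentially a routine application of Stokes' theorem, so I do not anticipate any serious obstacle. The only subtlety worth noting is that the paper does not spell out precisely what ``homotopic DGA morphisms'' means in Definition~\ref{dfn:pseudoisotopy}; the construction above supplies a chain-level homotopy, which is what is needed to invoke the pseudoisotopy machinery in Section~\ref{section:gauge}. If a stronger notion (e.g.\ one involving a multiplicative structure on $h$) were required, one could instead interpret the homotopy as coming from the path object $\fRs \to \Rs \oplus \Rs$, $\eta \otimes r \mapsto (\mathrm{eval}^0(\eta \otimes r), \mathrm{eval}^1(\eta \otimes r))$, whose fiber over the diagonal is acyclic by Lemma~\ref{lm:cohomology_I}; but at the chain-complex level the explicit integration homotopy above already does the job.
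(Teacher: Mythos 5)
Your proof is correct and is essentially the paper's own argument: the paper defines the same integration homotopy $h$ (zero on $0$-forms, $r'\int_0^1 f(t)\,dt$ on $f(t)dt\otimes r'$) and uses $d_{\Rs}=0$ to reduce the homotopy identity to the fundamental theorem of calculus. Your verification just spells out the details the paper leaves implicit.
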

\begin{proof}
Define a map $h:\fRs\to \Rs$ acting on pure tensors as
     \[h(r)=
         \left\{\begin{array}{ll}
        0, &  r=f(t)\otimes r',\\
        r'\int_0^1f(t)dt, & r=f(t)dt\otimes r'.\\
        \end{array} \right.\]
        Since $d_{\Rs}$ is the trivial differential, 
        we get $h\circ\fpar+d_{\Rs}\circ h=\mathrm{eval}^1-\mathrm{eval}^0.$ Hence, $h$ is a homotopy between $\mathrm{eval}^0$ and $\mathrm{eval}^1.$
\end{proof}
In the following we write $\eta\alpha$ for an element $\eta\otimes\alpha\in \fUs,$ and by abuse of notations we denote also by $\id$ the identity element of both $\fUs_{\mathrm{rel}}$ and $\fUs.$ 
Consider the cyclic complex $(\CC^\lambda_*(\fUs),\dcc).$ Let $\eta\in A^*(I;\R).$ We have 
\begin{equation}\label{eq:commutative}
\eta\cdot\alpha_l\otimes\ldots\otimes\alpha_1= (-1)^{\sum_{j=i+1}^l|\eta|(|\alpha_j|+1)}\alpha_l\otimes\ldots\otimes\eta\alpha_i\otimes\ldots\otimes\alpha_1.
\end{equation}
Define an operator $\thet:\CC^\lambda_*(\fUs)\to \fRs$ acting on generators by
\[
\thet_l(\eta\cdot \alpha_l\otimes\ldots \otimes \alpha_1 ):=\eta\cdot \gto_l(\alpha_l\otimes\ldots\otimes \alpha_1)
\]
for every $l\ge1.$ 
\begin{lm}
    The operator $\thet$ is well defined.
\end{lm}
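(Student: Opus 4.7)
The plan is to exhibit $\thet$ as the composition of two maps, reducing the well-definedness to that of $\gto$ together with graded commutativity of the base $A^*(I;\R)$. First, I would construct a canonical $\R$-linear map
$$\phi_l: \CC_l(\fUs) \longrightarrow A^*(I;\R) \otimes \CC_l(\ems)$$
by iteratively applying~\eqref{eq:commutative} to pull every differential-form component out to the leftmost factor. Concretely, for an elementary tensor $(\eta_l a_l) \otimes \cdots \otimes (\eta_1 a_1)$ with $\eta_i \in A^*(I;\R)$ and $a_i \in \ems$, set
$$\phi_l\big((\eta_l a_l) \otimes \cdots \otimes (\eta_1 a_1)\big) = (-1)^{\kappa}\, (\eta_l \wedge \cdots \wedge \eta_1) \otimes (a_l \otimes \cdots \otimes a_1),$$
where $\kappa$ is the Koszul sign produced by moving each $\eta_i$ leftward past $a_{i+1},\ldots,a_l$ in accord with~\eqref{eq:commutative}. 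Because $A^*(I;\R)$ is graded commutative and the multiplication map $A^*(I;\R)^{\otimes l} \to A^*(I;\R)$ respects the Koszul rule, $\phi_l$ is independent of the order in which the forms are extracted and descends to the tensor product over $\Rs$.

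Second, I would define $\thet_l := (\id_{A^*(I;\R)} \otimes \gto_l) \circ \phi_l$. By construction this agrees with the stated formula once all forms have been moved to the leftmost position, so the displayed recipe defines an unambiguous $\R$-linear operator on $\CC_l(\fUs)$.

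Third, to show that $\thet$ descends to $\CC^\lambda_*(\fUs)$, I would verify that $\phi_l$ intertwines the cyclic permutation $t$ on $\CC_l(\fUs)$ with $\id_{A^*(I;\R)} \otimes t_{\ems}$ on $A^*(I;\R) \otimes \CC_l(\ems)$. This is a sign comparison: the shift convention $|s(\eta a)| = |\eta| + |sa|$ combined with~\eqref{eq:commutative} must reproduce the sign $(-1)^{|sa_l|(\epsilon_1 - |sa_l|)}$ in the definition~\eqref{eq:cyclic_pernutation} of $t$, both on the $A^*(I;\R)$-side (where graded commutativity absorbs the form contribution) and on the $\ems$-side. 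Granted this intertwining, $\gto \circ (1-t) = 0$ immediately yields $\thet \circ (1-t) = 0$, so $\thet$ is well defined on the cyclic quotient.

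The main obstacle I expect is the sign bookkeeping in the third step, where the Koszul contribution from relocating a form past a tensor factor of $\ems$ must exactly cancel against the extra form-degree contribution to the cyclic sign. Once this matching is confirmed, the remaining verifications (bilinearity, $\Rs$-linearity, and extension by continuity in the valuation topology) are routine, and compatibility with $\widetilde{\delta}$ and $\fpar$ follows from the analogous property $\gto \circ \dcc = 0$ of $\gto$.
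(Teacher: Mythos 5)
Your proposal is correct and takes essentially the same route as the paper: rewrite each chain via~\eqref{eq:commutative} so all forms sit in front (your map $\phi_l$), and reduce cyclic invariance of $\thet$ to $\gto\circ(1-t)=0$ through a sign comparison; your claimed intertwining of $\phi_l$ with the cyclic permutations is exactly the paper's identity $\star_d\equiv A+B+C+D\pmod 2$. Be aware that this sign verification, which you defer as the "main obstacle," is the entire content of the paper's proof, so a complete write-up would still have to carry it out explicitly.
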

\begin{proof}
    We need to show that $\thet\circ (1-t)=0.$
    We have,
    \begin{align*}
        \thet(\eta_l\alpha_l\otimes\ldots\otimes \eta_1\alpha_1)&=\thet\big((-1)^{\star_d}\eta_l\wedge\ldots\wedge\eta_1\cdot \alpha_l\otimes\ldots\otimes\alpha_1\big)\\
        &=(-1)^{\star_d}\eta_l\wedge\ldots\wedge\eta_1\cdot \gto(\alpha_l\otimes\ldots\otimes\alpha_1),
    \end{align*}
    where $\star_d=\sum_{i=1}^{l-1}\sum_{j=i+1}^l(|\eta_i||\alpha_j|+|\eta_i|).$
    In addition,
    \begin{align*}
        \thet\circ t(\eta_l\alpha_l\otimes\ldots\otimes \eta_1\alpha_1)&=(-1)^A \thet(\eta_{l-1}\alpha_{l-1}\otimes\ldots\otimes\eta_1\alpha_1\otimes \eta_l\alpha_l)\\
        &=(-1)^{A+B}\thet(\eta_{l-1}\wedge\ldots\wedge\eta_1\wedge\eta_l\cdot \alpha_{l-1}\otimes \ldots\otimes\alpha_1\otimes\alpha_l)\\
        &=(-1)^{A+B+C}\thet(\eta_l\wedge\ldots\wedge\eta_l\cdot \alpha_{l-1}\otimes \ldots\otimes\alpha_1\otimes\alpha_l)\\
        &=(-1)^{A+B+C+D}\thet\big(\eta_l\wedge\ldots\wedge\eta_l\cdot t(\alpha_l\otimes \ldots\otimes\alpha_1)\big)\\
        &=(-1)^{A+B+C+D}\eta_l\wedge\ldots\wedge\eta_l\cdot \gto(\alpha_l\otimes \ldots\otimes\alpha_1),
    \end{align*}
    where we used in the last equality that $\gto\circ(1-t)=0,$ and
\[
A=(|\eta_l|+|\alpha_l|+1)\big( \sum_{i=1}^l(|\eta_i|+|\alpha_i|+1)+ (|\eta_l|+|\alpha_l|+1)    \big),\]
\[
B=\sum_{i=1}^{l-2}\sum_{j=i+1}^{l-1}(|\eta_i||\alpha_j|+|\eta_i|)+\sum_{i=1}^l(|\eta_l||\alpha_j|+|\eta_l|),
\]
\[
C=|\eta_l|\sum_{i=1}^{l-1}|\eta_i|, \quad D=(|\alpha_l|+1)\big( \sum_{i=1}^l(|\alpha_i|+1)+ (|\alpha_l|+1)    \big).
\]    
Since $\star_d\equiv A+B+C+D\pmod{2},$ it follows that $\thet=\thet\circ t.$ 
\end{proof}
Our goal is to show that $\fUs$ is a pseudoisotopy between $\ems$ and itself. So, we need to show that $\fpar\circ\thet=\thet\circ\widetilde{\dcc}.$ In order to prove this,  we need the following lemma.
\begin{lm}\label{lm:fdcc_equal}
   \begin{multline*}
\dcc(\eta_l\alpha_l\otimes\ldots\otimes\eta_1\alpha_1)=(-1)^{\star_d+\sum_{j=1}^l|\eta_j|}\eta_l\wedge\ldots\wedge\eta_1\cdot\dcc(\alpha_l\otimes\ldots\otimes\alpha_1)\\
+(-1)^{\star_d}d\eta_l\wedge\ldots\wedge\eta_1\cdot \alpha_l\otimes\ldots\otimes\alpha_1
 +(-1)^{\star_d}\sum_{i=1}^{l-1}(-1)^{\sum_{j=i+1}^l|\eta_j|}\eta_l\wedge\ldots\wedge d\eta_i\wedge\ldots\wedge\eta_1\cdot\alpha_l\otimes \ldots \otimes\alpha_1,
\end{multline*}
where $\star_d=\sum_{j=1}^{l-1}\sum_{k=j+1}^l(|\eta_j||\alpha_k|+|\eta_j|).$
\end{lm}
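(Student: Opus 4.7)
The plan is to prove the identity by a direct computation from the defining formulas of $\dcc = b(\mu_1) + b(\mu_2)$ given in Section~\ref{subsection_cy}, combined with the product and differential formulas on $\fUs$ and the commutation relation~\eqref{eq:commutative}. As a preliminary step, I would iterate~\eqref{eq:commutative} to extract each $\eta_j$ from position $j$ out to the leftmost position (between $\eta_{j+1}$ and the remaining tensor, so that no form has to pass through another form), which produces the identity
\[
\eta_l\alpha_l\otimes\ldots\otimes\eta_1\alpha_1 \;=\; (-1)^{\star_d}\,\eta_l\wedge\ldots\wedge\eta_1\cdot(\alpha_l\otimes\ldots\otimes\alpha_1).
\]
This establishes the bookkeeping for the main term and fixes the reference sign $(-1)^{\star_d}$ that will reappear in each of the three pieces of the statement.

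Next I would compute $b(\mu_1)$ slot by slot using $\widetilde\delta(\eta_i\alpha_i) = d\eta_i\cdot\alpha_i + (-1)^{|\eta_i|}\eta_i\cdot\delta(\alpha_i)$. Extracting all forms again via~\eqref{eq:commutative}, the $\delta(\alpha_i)$ pieces combine to $(-1)^{\star_d+\sum_{j=1}^l|\eta_j|}\eta_l\wedge\ldots\wedge\eta_1\cdot b(\mu_1)(\alpha_l\otimes\ldots\otimes\alpha_1)$. Here the global factor $(-1)^{\sum_j|\eta_j|}$ is assembled from the $(-1)^{|\eta_i|}$ of $\widetilde\delta$ together with the parity shift $(-1)^{\sum_{j<i}|\eta_j|}$ that appears because replacing $\alpha_i$ by $\delta(\alpha_i)$ increases the shifted slot-degree by one, altering the cost of extracting each $\eta_j$ with $j<i$ through slot $i$. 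The $d\eta_i$ pieces give the two "form-derivative" terms in the statement; the sign $(-1)^{\sum_{j=i+1}^l|\eta_j|}$ on the $i$-th such term arises by replacing $|\eta_i|$ with $|\eta_i|+1$ in the extraction sum for $\eta_i$, and comparing this modification against the sign $(-1)^{\epsilon_{i+1}}$ supplied by $b(\mu_1)$ itself.

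I would then handle $b(\mu_2)$ analogously, using $(\eta_{i+1}\alpha_{i+1})(\eta_i\alpha_i) = (-1)^{|\eta_i||\alpha_{i+1}|}\eta_{i+1}\wedge\eta_i\cdot\alpha_{i+1}\alpha_i$ to form each adjacent product, and the same formula (with the appropriate cyclic sign $(-1)^{|s(\eta_1\alpha_1)|(\epsilon_2+1)}$) for the wrap-around term $\alpha_1a_l\otimes\ldots$. Another careful extraction via~\eqref{eq:commutative} will collapse these into $(-1)^{\star_d+\sum_{j=1}^l|\eta_j|}\eta_l\wedge\ldots\wedge\eta_1\cdot b(\mu_2)(\alpha_l\otimes\ldots\otimes\alpha_1)$, with no residual form-derivative contribution (since $b(\mu_2)$ involves no $d$). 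Adding the $b(\mu_1)$ and $b(\mu_2)$ outputs then yields exactly the three terms of the lemma.

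The only genuinely delicate step is the sign bookkeeping, specifically verifying that the extra factors produced by (a) the $(-1)^{|\eta_i|}$ inside $\widetilde\delta$, (b) the Koszul signs $(-1)^{|\eta_i||\alpha_{i+1}|}$ from adjacent multiplication, (c) the $(-1)^{\epsilon_{i+1}}$ and $(-1)^{|s\alpha_1|(\epsilon_2+1)}$ prescribed by the Hochschild differential, and (d) the parity shifts caused by replacing $\alpha_i$ with $\delta(\alpha_i)$ or $\alpha_{i+1}\alpha_i$ in the extraction sums, together produce the single global factor $(-1)^{\sum_{j=1}^l|\eta_j|}$ on the main term and the factors $(-1)^{\sum_{j=i+1}^l|\eta_j|}$ on the $d\eta_i$ terms. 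I would organize this by reducing everything modulo $2$ and collecting the $|\alpha_k|$ coefficients, where cancellations occur exactly along the indices forced by the shifted tensor grading, leaving only $|\eta_j|$-dependent signs.
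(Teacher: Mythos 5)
Your plan is correct and follows essentially the same route as the paper's proof: expand $b(\mu_1)$ and $b(\mu_2)$ slot by slot using $\widetilde{\delta}(\eta_i\alpha_i)=d\eta_i\cdot\alpha_i+(-1)^{|\eta_i|}\eta_i\cdot\delta(\alpha_i)$ and the product rule, then extract all forms via~\eqref{eq:commutative} and verify modulo $2$ that the accumulated Koszul and Hochschild signs reduce to $(-1)^{\star_d+\sum_j|\eta_j|}$ on the main term and $(-1)^{\star_d+\sum_{j>i}|\eta_j|}$ on the $d\eta_i$ terms. The paper does exactly this bookkeeping (its exponents $\star_\delta$, $\diamond_{\delta_i}$, $\diamond_{d_i}$ are the quantities you describe), so no further comparison is needed.
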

\begin{proof}
We have
    \begin{multline*}\label{b_mu_1}
        b(\mu_1)(\eta_l\alpha_l\otimes\ldots\otimes \eta_1\alpha_1)=(d\eta_l\alpha_l+(-1)^{|\eta_l|}\eta_l\delta(\alpha_l))\otimes\ldots\otimes \eta_1\alpha_1+\\
        +\sum_{i=1}^{l-1}(-1)^{\epsilon_{i+1}}\eta_1\alpha_1\otimes \ldots\otimes(d\eta_i\alpha_i+(-1)^{|\eta_i|}\eta_i\delta(\alpha_i)) \otimes \ldots \otimes\eta_1\alpha_1.
     \end{multline*}
First, consider the first summand of $b(\mu_1)$. We obtain
\begin{multline*}
    (d\eta_l\alpha_l+(-1)^{|\eta_l|}\eta_l\delta(\alpha_l))\otimes\ldots\otimes \eta_1\alpha_1=\\
    = (-1)^{\star_\delta}\eta_l\wedge\ldots\wedge\eta_1\cdot \delta(\alpha_l)\otimes\ldots\otimes\alpha_1 +(-1)^{\star_d}d\eta_l\wedge\ldots\wedge\eta_1\cdot \alpha_l\otimes\ldots\otimes\alpha_1,
\end{multline*}
where 
\[
    \star_d=\sum_{j=1}^{l-1}\sum_{k=j+1}^l(|\eta_j||\alpha_k|+|\eta_j|),
\]
and 
\begin{align*}
    \star_\delta&=|\eta_l|+ \sum_{j=1}^{l-2}\sum_{k=j+1}^{l-1}(|\eta_j||\alpha_k|+|\eta_j|)+\sum_{j=1}^{l-1}|\eta_j||\alpha_l|\\
    &\equiv+ \sum_{j=1}^{l-1}\sum_{k=j+1}^l(|\eta_j||\alpha_k|+|\eta_j|)+\sum_{j=1}^l|\eta_j| \\
    &\equiv \star_d+\sum_{j=1}^l|\eta_j|\pmod{2}.
\end{align*}
Next, consider the second summand of $b(\mu_1)$. We obtain
\begin{multline*}
    \sum_{i=1}^{l-1}(-1)^{\epsilon_{i+1}}\eta_l\alpha_l\otimes \ldots\otimes(d\eta_i\alpha_i+(-1)^{|\eta_i|}\eta_i\delta(\alpha_i)) \otimes \ldots \otimes\eta_1\alpha_1=\\
    \sum_{i=1}^{l-1}(-1)^{\diamond_{\delta_i}}\eta_1\wedge\ldots\wedge\eta_l\cdot\alpha_1\otimes \ldots\otimes \delta(\alpha_i)\otimes \ldots \otimes\alpha_1+\\
    +\sum_{i=1}^{l-1}(-1)^{\diamond_{d_i}}\eta_1\wedge\ldots\wedge d\eta_i\wedge\ldots\wedge\eta_l\cdot\alpha_l\otimes \ldots\otimes  \alpha_1,
\end{multline*}
where 
\begin{align*}
    \diamond_{\delta_i}&=\epsilon_{i+1}+|\eta_i|+\sum_{j=i}^{l-1}\sum_{k=j+1}^l(|\eta_j||\alpha_k|+|\eta_j|)+\sum_{j=1}^{i-1}\sum_{\substack{k=j+1\\ k\ne i}}^l(|\eta_j||\alpha_k|+|\eta_j|)+ \sum_{j=1}^{i-1}|\eta_j||\alpha_i|\\
    &\equiv\star_d+\sum_{j=i+1}^l(|\alpha_j|+1)+\sum_{j=1}^l|\eta_j| \pmod{2},
\end{align*}
and
\begin{align*}
\diamond_{d_i}&=\epsilon_{i+1}+\sum_{j=i+1}^{l-1}\sum_{k=j+1}^l(|\eta_j||\alpha_k|+|\eta_j)|+ \sum_{k=i+1}^l(|\eta_i||\alpha_k|+|\alpha_k|+|\eta_i|+1)\\
&+\sum_{j=1}^{i-1}\sum_{k=j+1}^l(|\eta_j||\alpha_k|+|\eta_j|)\\
&\equiv \star_d+\sum_{j=i+1}^l |\eta_j| \pmod{2}.
\end{align*}
So,
\begin{multline*}
    b(\mu_1)(\eta_l\alpha_l\otimes\ldots\otimes\eta_1\alpha_1)= (-1)^{\star_d+\sum_{j=1}^l|\eta_j|}\eta_l\wedge\ldots\wedge\eta_1\cdot b(\mu_1)(\alpha_l\otimes\ldots\otimes \alpha_1)+\\
    + (-1)^{\star_d}d\eta_l\wedge\ldots\wedge\eta_1\cdot \alpha_l\otimes\ldots\otimes\alpha_1+ (-1)^{\star_d}\sum_{i=1}^{l-1}(-1)^{\sum_{j=i+1}^l|\eta_j|}\eta_1\wedge\ldots\wedge d\eta_i\wedge\ldots\eta_l\cdot\alpha_1\otimes \ldots\otimes\alpha_1.
\end{multline*}
With similar arguments rely on~\eqref{eq:commutative}, one can show
\[
 b(\mu_2)(\eta_l\alpha_l\otimes\ldots\otimes\eta_1\alpha_1)=(-1)^{\star_d+\sum_{j=1}^l|\eta_j|}\eta_l\wedge\ldots\wedge\eta_1\cdot b(\mu_2)(\alpha_l\otimes\ldots\otimes\alpha_1).
\]
This completes the proof.
\end{proof}
\begin{lm}\label{lm:thet_commuting_differentials}
   $\fpar\circ\thet=\thet\circ \widetilde{\dcc}.$
\end{lm}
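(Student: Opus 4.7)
The plan is to apply Lemma~\ref{lm:fdcc_equal} to $\widetilde{\dcc}(\eta_l\alpha_l\otimes\ldots\otimes\eta_1\alpha_1)$, which decomposes the differential into three pieces, and then apply $\thet$ termwise. The first piece involves $\dcc(\alpha_l\otimes\ldots\otimes\alpha_1)$ with all the forms $\eta_l\wedge\ldots\wedge\eta_1$ pulled out in front; the remaining two pieces together are a single sum of terms obtained from the wedge product by replacing each $\eta_i$ with $d\eta_i$. I will match the latter against $\fpar\circ\thet(\eta_l\alpha_l\otimes\ldots\otimes\eta_1\alpha_1)$, computed via the graded Leibniz rule on $A^*(I;\R)$.

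First, I would apply $\thet$ to the first term of Lemma~\ref{lm:fdcc_equal}, namely
\[
(-1)^{\star_d+\sum_{j=1}^l|\eta_j|}\,\eta_l\wedge\ldots\wedge\eta_1\cdot\dcc(\alpha_l\otimes\ldots\otimes\alpha_1).
\]
By the definition of $\thet$, this becomes
\[
(-1)^{\star_d+\sum_{j=1}^l|\eta_j|}\,\eta_l\wedge\ldots\wedge\eta_1\cdot\gto\bigl(\dcc(\alpha_l\otimes\ldots\otimes\alpha_1)\bigr),
\]
which vanishes because $\gto\circ\dcc=0$ by Definition~\ref{def:cy_str}. Applying $\thet$ to the remaining two pieces, which together equal
\[
(-1)^{\star_d}\sum_{i=1}^l(-1)^{\sum_{j=i+1}^l|\eta_j|}\,\eta_l\wedge\ldots\wedge d\eta_i\wedge\ldots\wedge\eta_1\cdot\alpha_l\otimes\ldots\otimes\alpha_1,
\]
yields
\[
(-1)^{\star_d}\sum_{i=1}^l(-1)^{\sum_{j=i+1}^l|\eta_j|}\,\eta_l\wedge\ldots\wedge d\eta_i\wedge\ldots\wedge\eta_1\cdot\gto(\alpha_l\otimes\ldots\otimes\alpha_1).
\]

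On the other side, $\thet(\eta_l\alpha_l\otimes\ldots\otimes\eta_1\alpha_1)=(-1)^{\star_d}\eta_l\wedge\ldots\wedge\eta_1\cdot\gto(\alpha_l\otimes\ldots\otimes\alpha_1)$ by definition. Since $d_{\Rs}=0$, we have $\fpar=d\otimes\id_{\Rs}$, so the graded Leibniz rule on the ordered wedge product $\eta_l\wedge\ldots\wedge\eta_1$ gives
\[
d(\eta_l\wedge\ldots\wedge\eta_1)=\sum_{i=1}^l(-1)^{\sum_{j=i+1}^l|\eta_j|}\,\eta_l\wedge\ldots\wedge d\eta_i\wedge\ldots\wedge\eta_1.
\]
Multiplying by $(-1)^{\star_d}\gto(\alpha_l\otimes\ldots\otimes\alpha_1)$ reproduces exactly the expression obtained above, establishing the identity on generators and hence on all of $\CC^\lambda_*(\fUs)$.

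The only real obstacle is sign bookkeeping, but this work has already been done in Lemma~\ref{lm:fdcc_equal}; once that expansion is in hand, the result reduces to two ingredients we already have: the identity $\gto\circ\dcc=0$ (which kills the Hochschild piece) and the graded Leibniz rule for the de Rham differential on $A^*(I;\R)$ (which matches the form pieces).
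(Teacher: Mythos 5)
Your proof is correct and is essentially the paper's argument run in the opposite direction: the paper starts from $\fpar\circ\thet$, applies the Leibniz rule to $d(\eta_l\wedge\ldots\wedge\eta_1)$, and then invokes Lemma~\ref{lm:fdcc_equal} together with $\gto\circ\dcc=0$, whereas you start from $\thet\circ\widetilde{\dcc}$ and use the same two ingredients to arrive at $\fpar\circ\thet$. The content and the key identities are identical, so no further comment is needed.
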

\begin{proof}
By~\eqref{eq:commutative}, we have
\[
\eta_l\alpha_l\otimes \ldots\otimes \eta_1\alpha_1= (-1)^{\star_d}\eta_l\wedge\ldots\wedge\eta_1\cdot \alpha_l\otimes\ldots\otimes \alpha_1,
\]
where $\star_d={\sum_{j=1}^{l-1}\sum_{k=j+1}^l(|\eta_j||\alpha_k|+|\eta_j|}).$
Hence, 
\begin{align*}
    \fpar\circ \thet_l (\eta_l\alpha_l\otimes \ldots\otimes \eta_1\alpha_1)&=(-1)^{\star_d} d(\eta_l\wedge\ldots\wedge\eta_1)\cdot\gto_l(\alpha_l\otimes\ldots\otimes \alpha_1)\\
    &=(-1)^{\star_d}\bigg(d\eta_l\wedge\ldots\wedge\eta_1+\\
    &+\sum_{i=1}^{l-1}(-1)^{\sum_{j=i+1}^l|\eta_j|}\eta_l\wedge\ldots \wedge d\eta_i\wedge\ldots\wedge\eta_1
    \bigg)\cdot\gto_l(\alpha_l\otimes\ldots\otimes \alpha_1)\\
    &=\thet_l\bigg((-1)^{\star_d+\sum_{j=1}^l|\eta_j|}\eta_l\wedge\ldots\wedge\eta_1\cdot\dcc(\alpha_l\otimes\ldots\otimes\alpha_1)+\\
&+(-1)^{\star_d}d\eta_l\wedge\ldots\wedge\eta_1\cdot \alpha_l\otimes\ldots\otimes\alpha_1
 +\\
 &+(-1)^{\star_d}\sum_{i=1}^{l-1}(-1)^{\sum_{j=i+1}^l|\eta_j|}\eta_l\wedge\ldots\wedge d\eta_i\wedge\ldots\wedge\eta_1\cdot\alpha_l\otimes \ldots \otimes\alpha_1\bigg)\\
    &\overset{\ref{lm:fdcc_equal}}= \thet_l\circ\dcc(\eta_l\alpha_l\otimes \ldots\otimes \eta_1\alpha_1),
\end{align*}
where the third equality follows from $\gto\circ\dcc=0.$

\end{proof}
\begin{lm}\label{lm:pseudo_thet}
    $\fUs$ is a unital pseudoisotopy between $\ems$ and itself.
\end{lm}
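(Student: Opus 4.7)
The plan is to verify each clause of Definition~\ref{dfn:pseudoisotopy} with $A_0=A_1=\ems$, base DGA $\fRs$, middle DGA $\fUs$, morphisms $\mathrm{eval}^i$ and $\mathfrak{eval}^i$, and operator $\thet$. Almost all of the ingredients have already been assembled, so the proof will largely be a matter of citing the earlier lemmas and checking the two conditions that have not yet been addressed explicitly: the commutativity of the evaluation diagram and the cohomological unitality of $\thet$.

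First I will handle clauses (1)--(3) by citation. By construction, $\fRs$ is a valued DGA over $\Rs$ and $\fUs$ is a normed unital DGA over $\fRs$. Lemma~\ref{lm:dga_morphism} states that $\mathrm{eval}^0,\mathrm{eval}^1:\fRs\to\Rs$ are DGA morphisms and that $\mathfrak{eval}^0,\mathfrak{eval}^1:\fUs\to\ems$ are unital DGA morphisms over $\mathrm{eval}^0,\mathrm{eval}^1$, while Lemma~\ref{lm:eval_maps_homotopic} gives a homotopy between $\mathrm{eval}^0$ and $\mathrm{eval}^1$. For the intertwining relation in clause (4), the identity $\fpar\circ\thet=\thet\circ\widetilde{\dcc}$ is exactly Lemma~\ref{lm:thet_commuting_differentials}.

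Next I will verify that the diagram in clause (4) commutes. On a generator $\eta_l\alpha_l\otimes\cdots\otimes\eta_1\alpha_1$ of $\CC_*^\lambda(\fUs)$, equation~\eqref{eq:commutative} allows me to collect the forms to the left, obtaining a sign $\star_d$ and then
\[
\thet(\eta_l\alpha_l\otimes\cdots\otimes\eta_1\alpha_1)=(-1)^{\star_d}\,\eta_l\wedge\cdots\wedge\eta_1\cdot\gto(\alpha_l\otimes\cdots\otimes\alpha_1).
\]
Applying $\mathrm{eval}^i$ yields $(-1)^{\star_d}\eta_l|_i\cdots\eta_1|_i\cdot\gto(\alpha_l\otimes\cdots\otimes\alpha_1)$, while $\gto\circ\mathfrak{eval}^i$ acting on the same generator gives $\eta_l|_i\cdots\eta_1|_i\cdot\gto(\alpha_l\otimes\cdots\otimes\alpha_1)$ after extracting the scalars. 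Since $\eta|_i=0$ whenever $|\eta|>0$, both expressions vanish unless every $\eta_j$ has degree zero; in that case $\star_d\equiv 0\pmod{2}$ and the two sides agree.

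Finally I will verify cohomological unitality. The natural embedding $\CC_*^\lambda(\fRs)\hookrightarrow\CC_*^\lambda(\fUs)$ sends $\eta_l r_l\otimes\cdots\otimes\eta_1 r_1$ (with $r_i\in\Rs$) to $\eta_l(r_l\cdot\id)\otimes\cdots\otimes\eta_1(r_1\cdot\id)$, and by the same computation as above,
\[
\thet\bigl(\eta_l(r_l\cdot\id)\otimes\cdots\otimes\eta_1(r_1\cdot\id)\bigr)=\pm\,\eta_l\wedge\cdots\wedge\eta_1\cdot\gto(r_l\cdot\id\otimes\cdots\otimes r_1\cdot\id),
\]
which vanishes because the $\infty$-trace $\gto$ on $\ems$ is cohomologically unital (per the convention in Section~\ref{section:normed_cy_str}). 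Thus $\thet$ is cohomologically unital, completing the verification of all clauses. The one mildly technical point is the sign bookkeeping in the commutativity check, but this has already been done in the proof of Lemma~\ref{lm:thet_commuting_differentials} and carries over with no essential change.
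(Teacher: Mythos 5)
Your proposal is correct and follows essentially the same route as the paper: it reduces the claim to Lemmas~\ref{lm:dga_morphism},~\ref{lm:eval_maps_homotopic},~\ref{lm:thet_commuting_differentials}, observes that cohomological unitality of $\thet$ is inherited from $\gto$, and checks the commuting square $\mathrm{eval}^i\circ\thet=\gto\circ\mathfrak{eval}^i$ directly from the definitions (which the paper simply asserts). Your extra sign bookkeeping for the diagram, using that $1$-forms evaluate to zero so only degree-zero $\eta_j$ contribute and $\star_d$ then vanishes, is a fine elaboration of that assertion.
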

\begin{proof}
Since $\gto$ is cohomologically unital, $\thet$ is cohomologically unital as well. In addition, it follows from the definitions of $\thet$ and $\mathrm{eval}^i,$ $\eval^i$ that $\gto\circ\eval^i=\mathrm{eval}^i\circ\thet.$ Hence, the claim follows from Lemmas~\ref{lm:dga_morphism},~\ref{lm:eval_maps_homotopic},~\ref{lm:thet_commuting_differentials}.
\end{proof}
Note that since $\gto$ is normed and $A^*(I;\R)$ is equipped with the trivial valuation, it follows that $\thet$ is a normed operator.  
Define 
\[
\widetilde{mc}(a):= \widetilde{\delta}(a)-a^2, \quad a\in \fUs.
\]
\subsection{Gauge equivalence of bounding cochains}\label{section:gauge}
The goal of the present section is to prove that the map $\varrho$ in Theorem~\ref{thm:bc_equivalence} is well defined and injective. Together with Proposition~\ref{theorem:existence_bc}, this proves Theorem~\ref{thm:bc_equivalence}.

We first prove that the map $\varrho$ is well defined. Recall Definition~\ref{dfn_gauge_equivalence}. 

\begin{proof}[Proof of Lemma~\ref{building_equiv_2}]
By Lemma~\ref{remain_c} we have $[c_0]=[c_1]\in H^*(\Rs;d_{\Rs})$. Since $d_{\Rs}$ is the trivial differential, it follows that $c_0=c_1.$

Let $\widetilde{b}$ be a pseudoisotopy between $b_0$ and $b_1,$
and let $h:\fS\to \Rs$ denote the homotopy between $\mathrm{eval}^0$ and $\mathrm{eval}^1,$ where $(\fS,\fpar_{\fS},\nu_{\fS})$ is a valued DGA over $\Rs.$ Thus, 
\[
h\circ\fpar_\fS=h\circ\fpar_\fS+ d_{\Rs}\circ h= \text{eval}^1- \text{eval}^0.
\]
Let $(\fB,\widetilde{m}_\fB,\nu_{\fB})$ denote the normed DGA over $\fS,$ and
let $\widetilde{\sigma}:\CC_*^\lambda(\fB)\to\fS$ denote the cohomologically unital operator satisfying $\fpar_\fS\circ\widetilde{\sigma}=\widetilde{\sigma}\circ\dcc,$ and $\gto\circ\eval^i=\mathrm{eval}^i\circ\widetilde{\sigma}.$ 
Hence,
\begin{align*}
    \gto(\expp(b_1))- \gto(\expp(b_0))&= (\gto\circ \eval^1 -\gto\circ \eval^0)(\expp(\widetilde{b}))\\
    &=((\mathrm{eval}^1-\mathrm{eval}^0)\circ\widetilde{\sigma})(\expp(\widetilde{b}))\\
    &= h\circ\fpar_\fS \circ\widetilde{\sigma}(\expp(\widetilde{b}))\\
    &=h\circ \widetilde{\sigma}\circ\dcc(\expp(\widetilde{b}))\\
    &=h\circ \widetilde{\sigma}\circ\dcc(\exp(\widetilde{b})-y_{\widetilde{b}})\\
    &=-h\circ \widetilde{\sigma}(z_{\widetilde{b}})\\
    &=0.
\end{align*}
\end{proof}

Let $\alpha_0,\alpha_1\in F^0\ems,$ and consider $\mG(\alpha_0,\alpha_1)\subset \R_{>0}$ ordered as in~\eqref{sabab_def}.
Let $l\ge1.$ Suppose we have $\widetilde{b}_{(l)}\in F^0\fUs$ with $|\widetilde{b}_{(l)}|=1,$ $\nu_{\fUs}(\widetilde{b}_{(l)})\in \mG(\alpha_0,\alpha_1),$ 
and 
\[
\widetilde{mc}(\widetilde{b}_{(l)})\equiv \widetilde{c}_{(l)}\cdot \id,  \pmod{F^{E_l}\fUs}, \quad \widetilde{c}_{(l)}\in (\fRs)^2,\quad 
\fpar\widetilde{c}_{(l)}=0,\quad \nu_{\fRs}(\widetilde{c}_{(l)})>0. 
\]
Define the obstruction cochain $F^{\ge E_{l+1}}\widetilde{o}_l\in \fUs$ to be an endomorphism satisfying
\begin{equation}\label{o_tilde_j}
\widetilde{mc}(\widetilde{b}_{(l)})\equiv \widetilde{c}_{(l)}\cdot \id+\widetilde{o}_l \pmod{F^{E_{l+1}}\fUs}.
\end{equation} 
Let $a\in \fUs,$ abbreviate $a|_i=\eval^i(a)$ where $i=0,1.$
The proof of Proposition~\ref{building_equiv_1} which shows the map $\varrho$ is injective is based on the following lemmas.
\begin{lm}\label{lm:o_tilde_is_closed}
$
\widetilde{\delta}(\widetilde{o}_l)\equiv0 \quad \pmod{F^{E_{l+1}}\fUs}.
$
\end{lm}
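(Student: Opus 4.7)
The plan is to adapt the argument of Lemma~\ref{lm:d_a_q0} to the pseudoisotopy DGA $\fUs$ over $\fRs$. The strategy is to apply $\widetilde{\delta}$ to both sides of the defining relation~\eqref{o_tilde_j} and exploit $\widetilde{\delta}^2 = 0$. On the right, since $\fpar\widetilde{c}_{(l)} = 0$ and $\widetilde{\delta}(\id) = 0$, one has $\widetilde{\delta}(\widetilde{c}_{(l)}\cdot\id) = 0$, so the right-hand side reduces to $\widetilde{\delta}(\widetilde{o}_l)$ modulo $F^{E_{l+1}}\fUs$. On the left, the Leibniz rule with $|\widetilde{b}_{(l)}| = 1$ and $\widetilde{\delta}^2 = 0$ yield
\[
\widetilde{\delta}(\widetilde{\delta}(\widetilde{b}_{(l)}) - \widetilde{b}_{(l)}^2) = -\widetilde{\delta}(\widetilde{b}_{(l)}^2) = \widetilde{b}_{(l)}\widetilde{\delta}(\widetilde{b}_{(l)}) - \widetilde{\delta}(\widetilde{b}_{(l)})\widetilde{b}_{(l)}.
\]
I would then substitute $\widetilde{\delta}(\widetilde{b}_{(l)}) \equiv \widetilde{b}_{(l)}^2 + \widetilde{c}_{(l)}\cdot\id + \widetilde{o}_l \pmod{F^{E_{l+1}}\fUs}$: the cubic terms in $\widetilde{b}_{(l)}$ cancel, leaving the commutator $[\widetilde{b}_{(l)}, \widetilde{c}_{(l)}\cdot\id]$ together with the cross terms $\widetilde{b}_{(l)}\widetilde{o}_l - \widetilde{o}_l\widetilde{b}_{(l)}$.

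Next I would dispose of both contributions. For the cross terms, submultiplicativity together with $\nu_{\fUs}(\widetilde{b}_{(l)}) \ge E_1 > 0$ (since $\alpha_0, \alpha_1 \in F^0\ems$) and $\nu_{\fUs}(\widetilde{o}_l) \ge E_{l+1}$ gives
\[
\nu_{\fUs}(\widetilde{b}_{(l)}\widetilde{o}_l),\ \nu_{\fUs}(\widetilde{o}_l\widetilde{b}_{(l)}) \ \ge\ E_1 + E_{l+1} \ >\ E_{l+1},
\]
so both already lie in $F^{E_{l+1}}\fUs$. For the commutator, the key observation is that $\widetilde{c}_{(l)}\cdot\id$ commutes with $\widetilde{b}_{(l)}$ in the ordinary (non-graded) sense. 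This holds because $|s| = 1-n$ is even and $R$ is concentrated in even degrees, so $\Rs$ has no odd-degree elements; consequently any homogeneous element of $(\fRs)^2$ must have form-degree $0$. A direct application of the multiplication rule $(\eta_1\otimes\alpha_1)(\eta_2\otimes\alpha_2) = (-1)^{|\eta_2||\alpha_1|}\eta_1\wedge\eta_2\otimes\alpha_1\alpha_2$ then shows that elements of $(\fRs)^2\cdot\id$ commute with arbitrary elements of $\fUs$.

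Combining these inputs, the left-hand side of the identity obtained by differentiating~\eqref{o_tilde_j} reduces to $0$ modulo $F^{E_{l+1}}\fUs$, giving $\widetilde{\delta}(\widetilde{o}_l) \equiv 0 \pmod{F^{E_{l+1}}\fUs}$ as claimed. There is no genuine obstacle here: the lemma is a line-by-line translation of Lemma~\ref{lm:d_a_q0} into the pseudoisotopy setting, and the only new ingredient is the parity check confirming that degree-$2$ elements of $\fRs$ carry form-degree zero and are therefore central in $\fUs$.
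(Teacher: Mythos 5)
Your proof is correct and is essentially the paper's argument: the paper simply invokes Lemma~\ref{lm:d_a_q0} (whose proof is exactly the Leibniz/centrality/valuation computation you carry out), applied to the normed unital DGA $\fUs$ over $\fRs$ with $b=\widetilde{b}_{(l)}$, $c=\widetilde{c}_{(l)}$, $q=\widetilde{o}_l$. Your explicit verification of the hypotheses (evenness of $\widetilde{c}_{(l)}$, hence centrality of $\widetilde{c}_{(l)}\cdot\id$, and $\nu_{\fUs}(\widetilde{b}_{(l)})>0$ killing the cross terms) just spells out what the citation leaves implicit.
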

\begin{proof}
   The follows from Lemma~\ref{lm:d_a_q0}.
\end{proof}
Define
\[
y_{\widetilde{b}_{(l)}}:=H\big(\sum_{l=1}^\infty\widetilde{c}_{(l)}\cdot \id\otimes \widetilde{b}_{(l)}^{\otimes l-1}\big),
\]
and $\expp(\widetilde{b}_{(l)})=\exp(\widetilde{b}_{(l)})-y_{\widetilde{b}_{(l)}}.$
Let $b\in\Mmm(\ems,c).$ In this section we choose $y_b=H(\dcc(\exp(b))).$
\begin{lm}\label{lm:o_is_closed_reduced}
Let $b_i\in \Mmm(\ems,c_i)$ for $i=0,1$ such that $\gto(\expp(b_0))=\gto(\expp(b_1)).$
Suppose $\widetilde{b}_{(l)}|_0=b_0,$  $\widetilde{b}_{(l)}|_1=b_1.$  Then,
$\int_0^1\thet_1(\widetilde{o}_l)\equiv 0 \pmod{F^{E_{l+1}}\Rs}.$
\end{lm}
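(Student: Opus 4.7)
The plan is to use the pseudoisotopy structure on $\fUs$ together with the normed $\infty$-trace $\thet$ to relate $\int_0^1 \thet_1(\widetilde{o}_l)$ to the difference $\gto(\expp(b_1)) - \gto(\expp(b_0))$, which vanishes by hypothesis. The argument is modeled on the proof of Lemma~\ref{building_equiv_2}, but run at the truncated level of mod-$F^{E_{l+1}}$ obstruction theory.

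First, I would apply Lemma~\ref{lm:dcc_y_b_mod_e} in the algebra $\fUs$, taking the role of $b$, $c$, $q$ to be $\widetilde{b}_{(l)}$, $\widetilde{c}_{(l)}$, $\widetilde{o}_l$ respectively (the needed inputs come from~\eqref{o_tilde_j} and Lemma~\ref{lm:o_tilde_is_closed}). This yields
\[
\dcc(y_{\widetilde{b}_{(l)}}) \equiv \dcc(\exp(\widetilde{b}_{(l)})) + z_{\widetilde{b}_{(l)}} - \widetilde{o}_l \pmod{F^{E_{l+1}}\fUs}
\]
for some $z_{\widetilde{b}_{(l)}} \in \CC_*^\lambda(\fRs)$. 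Applying $\thet$, using Lemma~\ref{lm:thet_commuting_differentials} ($\thet \circ \dcc = \fpar \circ \thet$) and cohomological unitality of $\thet$ to kill $\thet(z_{\widetilde{b}_{(l)}})$, I obtain
\[
\fpar\bigl(\thet(\expp(\widetilde{b}_{(l)}))\bigr) \equiv \thet_1(\widetilde{o}_l) \pmod{F^{E_{l+1}}\fRs}.
\]

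Next I would extract the Stokes step. Write $\thet(\expp(\widetilde{b}_{(l)})) = \alpha + \beta \, dt$ with $\alpha, \beta \in A^0(I;\R) \otimes \Rs$. Since $d_{\Rs} = 0$, one has $\fpar(\alpha + \beta \, dt) = \partial_t \alpha \cdot dt$, so integrating over $[0,1]$ gives
\[
\int_0^1 \thet_1(\widetilde{o}_l) \equiv \alpha|_1 - \alpha|_0 = \mathrm{eval}^1\bigl(\thet(\expp(\widetilde{b}_{(l)}))\bigr) - \mathrm{eval}^0\bigl(\thet(\expp(\widetilde{b}_{(l)}))\bigr) \pmod{F^{E_{l+1}}\Rs}.
\]

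Finally, I would show $\mathrm{eval}^i\bigl(\thet(\expp(\widetilde{b}_{(l)}))\bigr) \equiv \gto(\expp(b_i)) \pmod{F^{E_{l+1}}\Rs}$. By Lemma~\ref{lm:pseudo_thet} we have $\mathrm{eval}^i \circ \thet = \gto \circ \eval^i$, and since $\eval^i$ is a unital DGA morphism, $\eval^i(\exp(\widetilde{b}_{(l)})) = \exp(b_i)$. So the question reduces to showing $\gto\bigl(\eval^i(y_{\widetilde{b}_{(l)}})\bigr) \equiv \gto(y_{b_i}) \pmod{F^{E_{l+1}}\Rs}$. To this end, one computes that $\dcc\bigl(\eval^i(y_{\widetilde{b}_{(l)}})\bigr) \equiv \dcc(\exp(b_i)) + \eval^i(z_{\widetilde{b}_{(l)}}) \pmod{F^{E_{l+1}}\ems}$ (using $\widetilde{o}_l \in F^{\ge E_{l+1}}\fUs$ and $\mathrm{eval}^i(\widetilde{c}_{(l)}) \equiv c_i \pmod{F^{E_{l+1}}\Rs}$, the latter following from $\eval^i(\widetilde{b}_{(l)}) = b_i$ applied to~\eqref{o_tilde_j}). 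Hence one can find $\eta \in F^{E_{l+1}}\ems$ such that $\eval^i(y_{\widetilde{b}_{(l)}}) + \eta$ is a valid choice for $y_{b_i}$; invoking Lemma~\ref{lm:y_b_doesnt_dep} together with the normed property of $\gto$ (so $\gto(\eta) \in F^{E_{l+1}}\Rs$) gives the desired congruence. Combined with the hypothesis $\gto(\expp(b_0)) = \gto(\expp(b_1))$, this yields $\int_0^1 \thet_1(\widetilde{o}_l) \equiv 0 \pmod{F^{E_{l+1}}\Rs}$.

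The main obstacle is this last identification: one must carefully propagate the mod-$F^{E_{l+1}}$ approximations through the definition of $\expp$ and verify that Lemma~\ref{lm:y_b_doesnt_dep} can be invoked up to the appropriate filtration error. Once this bookkeeping is in place the argument is a direct Stokes computation on the interval.
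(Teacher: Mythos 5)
Your proposal is correct and follows essentially the same route as the paper: apply Lemma~\ref{lm:dcc_y_b_mod_e} to $\widetilde{b}_{(l)}$ with $q=\widetilde{o}_l$, push the congruence through $\thet$ using its normedness, vanishing on $\CC_*^\lambda(\fRs)$ and $\fpar\circ\thet=\thet\circ\dcc$, then integrate over $[0,1]$ to reduce to $\gto(\expp(b_1))-\gto(\expp(b_0))=0$. The only difference is that you spell out the final identification $\mathrm{eval}^i\bigl(\thet(\expp(\widetilde{b}_{(l)}))\bigr)\equiv\gto(\expp(b_i))$ via Lemma~\ref{lm:y_b_doesnt_dep}, a step the paper's chain of congruences leaves implicit (your $\eta$ should live in the cyclic complex $\mR_*^\lambda$ rather than in $\ems$, but this is only a notational slip).
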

\begin{proof}
By Lemma~\ref{lm:dcc_y_b_mod_e} there exists $z_{\widetilde{b}}\in\CC_*^\lambda(\fRs)$ such that
\[
\dcc(y_{\widetilde{b}_{(l)}})\equiv \dcc\exp(\widetilde{b}_{(l)})+z_{\widetilde{b}_{(l)}}-\widetilde{o}_l  \pmod{F^{E_{l+1}}\fUs}.
\]
Hence, since $\thet$ is normed and vanishes on $\CC^\lambda_*(\fRs),$ we obtain

\begin{align*}
    \int_0^1\thet_1(\widetilde{o}_l)&\equiv\int_0^1\thet(\dcc\exp(\widetilde{b}_{(l)}))-\int_0^1\thet(\dcc(y_{\widetilde{b}_{(l)}}))+ \int_0^1\thet(z_{\widetilde{b}_{(l)}})\\
    &\equiv \int_0^1\thet(\dcc\expp(\widetilde{b}_{(l)}))\\
     &\equiv \int_0^1\fpar\thet(\expp(\widetilde{b}_{(l)}))\\
    &\equiv\gto(\expp(\widetilde{b}_{(l)}|_1))-\gto(\expp(\widetilde{b}_{(l)}|_0))\\
    &\equiv \gto(\expp(b_1))-\gto(\expp(b_0))\\
    &=0
    \pmod{F^{E_{l+1}}\Rs}.
\end{align*}

\end{proof}

\begin{lm}\label{lm:o_c}
   Let $M$ be a spherical object, 
    and suppose $\widetilde{o}_l|_0,\widetilde{o}_l|_1\in \Rs\cdot \id.$ Then,
    $\widetilde{o}_l|_0\equiv \widetilde{o}_l|_1 \pmod{F^{E_{l+1}}\ems}.$
\end{lm}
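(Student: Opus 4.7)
The plan is to prove this by combining the closedness of $\widetilde{o}_l$ (Lemma~\ref{lm:o_tilde_is_closed}) with the spherical hypothesis on $M$ via Lemma~\ref{lm:h_0}. First I would decompose $\widetilde{o}_l = \alpha + dt\wedge \beta$ according to the form-degree splitting $A^*(I;\R) = A^0(I;\R) \oplus A^1(I;\R)$; here $\alpha$ is a smooth family of degree $2$ elements of $\ems$ parametrised by $t \in I$, and $\beta$ a smooth family of degree $1$ elements. A direct computation using $\widetilde{\delta}(\eta \otimes x) = d\eta \otimes x + (-1)^{|\eta|}\eta \otimes \delta(x)$ yields $\widetilde{\delta}(\widetilde{o}_l) = \delta(\alpha) + dt\wedge(\dot\alpha - \delta(\beta))$, so the vanishing of $\widetilde{\delta}(\widetilde{o}_l)$ modulo $F^{E_{l+1}}\fUs$ is equivalent to $\delta(\alpha(t)) \equiv 0$ and $\dot\alpha(t) \equiv \delta(\beta(t))$ modulo $F^{E_{l+1}}\ems$ for every $t \in I$.

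Integrating the second relation over $[0,1]$ and using $\widetilde{o}_l|_i = \alpha(i)$ gives
\[
\widetilde{o}_l|_1 - \widetilde{o}_l|_0 \;=\; \int_0^1 \dot\alpha(t)\,dt \;\equiv\; \delta\!\left(\int_0^1 \beta(t)\,dt\right) \pmod{F^{E_{l+1}}\ems},
\]
so the difference is $\delta$-exact modulo $F^{E_{l+1}}\ems$. By the hypothesis it also lies in $\Rs\cdot\id + F^{E_{l+1}}\ems$, so I may write $\widetilde{o}_l|_1 - \widetilde{o}_l|_0 \equiv r\cdot\id \pmod{F^{E_{l+1}}\ems}$ for a unique $r \in (\Rs)^2$ with $\nu_{\Rs}(r) \ge E_{l+1}$.

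It remains to show $r \in F^{E_{l+1}}\Rs$. Combining the preceding paragraphs, the class $[r\cdot\id]$ vanishes in $H^2(\ems_{E_{l+1}})$; but by Lemma~\ref{lm:h_0} together with the spherical hypothesis, $H^*(\ems_{E_{l+1}}) \cong H^*(S^n;\Rs_{E_{l+1}})$ as graded $\Rs_{E_{l+1}}$-modules, with $[\id]$ corresponding to the degree $0$ generator of the free rank one summand. Hence the map $\Rs_{E_{l+1}} \to H^*(\ems_{E_{l+1}})$, $r\mapsto [r\cdot\id]$, is injective, which forces $r \equiv 0 \pmod{F^{E_{l+1}}\Rs}$ and completes the argument. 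I expect the main subtle point to be tracking the identity class carefully through the isomorphism of Lemma~\ref{lm:h_0}, since all other steps reduce to a routine pseudoisotopy splitting of $\widetilde{\delta}$ followed by integration in $t$.
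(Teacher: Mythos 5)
Your proposal is correct and follows essentially the same route as the paper's proof: decompose $\widetilde{o}_l$ by form degree, use closedness (Lemma~\ref{lm:o_tilde_is_closed}) to identify the $dt$-component relation, integrate over $[0,1]$ to see that $\widetilde{o}_l|_1-\widetilde{o}_l|_0$ is $\delta$-exact modulo $F^{E_{l+1}}\ems$, and then invoke the spherical hypothesis via Lemma~\ref{lm:h_0} to conclude that an exact element of $\Rs\cdot\id$ must vanish in $\Rs_{E_{l+1}}\cdot[\id]$. Your closing remark about tracking the identity class through the isomorphism is the same implicit step the paper relies on, so there is no gap relative to its argument.
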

\begin{proof}
    Write
    \[
    \widetilde{o}_l=\sum_{i=1}^{m_0}f_i(t)\alpha_i+\sum_{i=1}^{m_1}g_i(t)dt\cdot\alpha'_i, \quad \alpha_i,\alpha'_i\in \ems, \quad f_i(t),g_i(t)dt\in A^*(I;\R).
    \]
By Lemma~\ref{lm:o_tilde_is_closed},
\[
0\equiv \widetilde{\delta}(\widetilde{o}_l)= \sum_{i=1}^{m_0}\frac{\partial f_i}{\partial t}dt\cdot \alpha_i+\sum_{i=1}^{m_0}f_i(t)\delta(\alpha_i)-\sum_{i=1}^{m_1}g_i(t)dt\cdot \delta(\alpha'_i)\pmod{F^{E_{l+1}}\fUs},
\]
so
\[
\sum_{i=1}^{m_0}\frac{\partial f_i}{\partial t}dt \cdot\alpha_i\equiv\sum_{i=1}^{m_1}g_i(t)dt\cdot\delta(\alpha'_i) \pmod{F^{E_{l+1}}\fUs}.
\]
We get
\[
\widetilde{o}_l|_1-\widetilde{o}_l|_0=
\sum_{i=1}^{m_0}\big(f_i(1)-f_i(0)\big)\alpha_i\equiv\delta \big(  \sum_{i=1}^{m_1}\alpha'_i\int_0^1g_i(t)dt \big) \pmod{F^{E_{l+1}}\Rs}. 
\]
Hence, $[0]=[\widetilde{o}_l|_1-\widetilde{o}_l|_0]\in \Rs_{E_{l+1}}\cdot \id.$ 
Therefore,
\[
\widetilde{o}_l|_1\equiv \widetilde{o}_l|_0 \pmod{F^{E_{l+1}}\Rs}.
\]
\end{proof}
Let $E\in\ima\nu_{\Ms}\backslash\{\infty\}.$ Since $\gto$ is normed, it descends to $\HH^\lambda_*(\ems_E).$ We denote by $\gto_1^E$ the induced operator.

\begin{lm}\label{lm:co_rel_0}
 Let $M$ be a normed Calabi-Yau spherical object. Then, $\ker\gto_1^E=[\id]\cdot \Rs_E.$ 
\end{lm}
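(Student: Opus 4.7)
The plan is to prove the two inclusions separately. The inclusion $[\id]\cdot \Rs_E\subseteq \ker\gto_1^E$ is immediate from cohomological unitality: for any $r\in F^{\geq E}\Rs$, the element $r\cdot\id\in F^{\geq E}\ems$ is closed, and because $\gto$ vanishes on the image of the natural embedding $\CC_*^\lambda(\Rs)\hookrightarrow \CC_*^\lambda(\ems)$, we have $\gto_1(r\cdot\id)=r\cdot\gto_1(\id)=0$, so $\gto_1^E([r\cdot\id])=0\in \Rs_E$.

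For the reverse inclusion $\ker\gto_1^E\subseteq [\id]\cdot \Rs_E$, the plan is to exhibit, for any class $[\beta]\in H^*(\ems_E)$, a decomposition of the form $[\beta]=[r_1\cdot\id+r_2\cdot h_0]$ with $r_1,r_2\in F^{\geq E}\Rs$. This is where the spherical and Calabi-Yau hypotheses enter. By the sphericality of $M$, we have $H^*(\ems_0)\cong H^*(S^n;\Rs_0)$, so as an $\Rs_0$-module $H^*(\ems_0)$ is generated by the two classes $[\id]$ (in degree $0$) and $[h_0]$ (in degree $n$); the latter is chosen so that $\gto_1(h_0)=1$, using the non-degeneracy of the Calabi-Yau pairing $H^n(\ems_0)\otimes H^0(\ems_0)\to \Rs_0$ paired against $[\id]$. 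By Lemma~\ref{lm:h_0}, multiplication by an element $r_E\in \Rs$ with $\nu_\Rs(r_E)=E$ induces an isomorphism $H^*(\ems_0)\xrightarrow{\sim}H^*(\ems_E)$, which transports this generating decomposition to $H^*(\ems_E)$; writing $r_E^{-1}[\beta]$ in the basis $\{[\id],[h_0]\}$ and then lifting and multiplying back by $r_E$ yields the desired $r_1,r_2\in F^{\geq E}\Rs$.

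With the decomposition in hand, the proof is completed by a direct computation: applying $\gto_1^E$ and using $\gto_1(\id)=0$ (cohomological unitality) and $\gto_1(h_0)=1$ gives
\[
\gto_1^E([\beta])=[r_1\cdot\gto_1(\id)+r_2\cdot\gto_1(h_0)]=[r_2]\in \Rs_E.
\]
If $\gto_1^E([\beta])=0$ then $r_2\in F^E\Rs$, i.e.\ $\nu_\Rs(r_2)>E$; since $\nu_{\Ms}(h_0)=0$, this forces $\nu_{\Ms}(r_2\cdot h_0)>E$, so $r_2\cdot h_0\in F^E\ems$ and $[\beta]=[r_1\cdot\id]\in [\id]\cdot \Rs_E$.

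The main obstacle is justifying the decomposition $[\beta]=[r_1\cdot\id+r_2\cdot h_0]$ in $H^*(\ems_E)$ cleanly, since for $E>0$ neither $\id$ nor $h_0$ lies in $F^{\geq E}\ems$ individually, so one must interpret $[\id]\cdot \Rs_E$ as the image of the natural map $r\mapsto [r\cdot\id]$ from $F^{\geq E}\Rs$ into $H^*(\ems_E)$ and carefully chase the isomorphism of Lemma~\ref{lm:h_0} to obtain suitable representatives. Once this identification is pinned down, everything else reduces to the normalization $\gto_1(h_0)=1$ provided by the normed Calabi-Yau structure.
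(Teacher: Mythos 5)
Your argument is correct and follows essentially the same route as the paper: both use the isomorphism $H^*(\ems_0)\xrightarrow{\sim}H^*(\ems_E)$ given by multiplication by $r_E$ together with sphericality to write $H^*(\ems_E)\cong[\id]\cdot\Rs_E\oplus[h_0]\cdot\Rs_E$, and then conclude from $\gto_1(\id)=0$ (cohomological unitality) and the normalization $\gto_1(h_0)=1$ that the kernel is exactly $[\id]\cdot\Rs_E$. Your write-up merely spells out the two inclusions that the paper's three-line proof leaves implicit.
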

  
\begin{proof}
The map $H^*(\ems_0)\to H^*(\ems_E)$ given by multiplication by $r_E\in\Rs$ where $\nu_{\Rs}(r_E)=E$ is an isomorphism. Hence,
since $M$ is a spherical object, it follows that $H^*(\ems_E)\cong [\id]\cdot \Rs_E\oplus [h_0]\cdot \Rs_E.$ Hence,  $\ker\gto_1^E=[\id]\cdot \Rs_E.$ 
\end{proof}
Let $E\in\ima\nu_{\Ms}\backslash\{\infty\}.$ Since $\thet$ is normed, 
it descends to $\HH^\lambda_1(\fUs_{\mathrm{rel}_E}).$ We denote by $\thet_1^E$ the induced operator.
 
\begin{lm}\label{lm:primitive}
Let $M$ be a normed Calabi-Yau spherical object. Let $E\in\ima\nu_{\Ms}\backslash\{\infty\},$ and let $[\widetilde{a}]\in H^{2i}(\fUs_{\mathrm{rel}_E})$ for $i\in\Z$ such that $\int_0^1\thet_1^E([\widetilde{a}])=0.$ Then, $[\widetilde{a}]=[0].$
\end{lm}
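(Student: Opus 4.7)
The approach is to compute $H^{2i}(\fUs_{\mathrm{rel}_E})$ explicitly and then to read off the injectivity of $\int_0^1 \thet_1^E$ from the degree constraints. First, I will note that $\fUs_{\mathrm{rel}} = A^*(I;\R)_{\mathrm{rel}} \otimes_\R \ems$, and that $A^*(I;\R)$ carries the trivial valuation, so reduction modulo the filtration commutes with this tensor product. By Lemma~\ref{lm:cohomology_I}, the cohomology of $A^*(I;\R)_{\mathrm{rel}}$ is $\R\cdot[dt]$ concentrated in degree $1$; by Lemma~\ref{lm:h_0}, which uses that $M$ is spherical, $H^*(\ems_E)\cong H^*(S^n;\Rs_E)$ is free on generators $[\id]$ and $[h_0]$ in degrees $0$ and $n$, respectively. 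Applying the Künneth theorem over the field $\R$ will give
\[
H^*(\fUs_{\mathrm{rel}_E}) \;\cong\; \R[dt]\otimes_\R H^{*-1}(\ems_E) \;\cong\; \Rs_E\cdot[dt\cdot\id] \;\oplus\; \Rs_E\cdot[dt\cdot h_0].
\]

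Next I will exploit the even-degree parity of $\Rs$ to cut this down in even total degree. Since $n$ is odd, both $|s|=1-n$ and $|T^\beta|=g_G(\beta)\in 2\Z$ are even, so every homogeneous element of $\Rs$ (and hence of $\Rs_E$) has even degree. In degree $2i$, the $[dt\cdot\id]$ summand requires a coefficient of odd degree $2i-1$, which must vanish, while the $[dt\cdot h_0]$ summand requires a coefficient of degree $2i-1-n$, which is even and can be nonzero. Thus every class admits a unique representation $[\widetilde a]=\alpha\cdot[dt\cdot h_0]$ with $\alpha\in\Rs_E$.

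To conclude, I will evaluate $\int_0^1\thet_1^E$ on the generator. By the defining formula $\thet_1(\eta\cdot x)=\eta\cdot\gto_1(x)$ together with the normalization $\gto_1(h_0)=1$, the class $\int_0^1\thet_1^E([dt\cdot h_0])$ is a generator of $\Rs_E$ as a free rank-one $\Rs_0$-module. The hypothesis $\int_0^1\thet_1^E([\widetilde a])=0$ will then force $\alpha=0$, whence $[\widetilde a]=0$.

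The main obstacle I anticipate is the bookkeeping needed to make the above Künneth identification rigorous at the level of $\Rs_E$ rather than over $\R$. In particular, I will need to select a representative of $[h_0]$ in $F^{\ge E}\ems/F^E\ems$ by multiplying the fixed generator in $H^n(\eems_0)$ by some $r_E\in\Rs$ with $\nu_{\Rs}(r_E)=E$, and then verify that $\gto_1$ descends to a map $\ems_E\to\Rs_E$ sending this class to a generator of $\Rs_E$ in the correct shifted degree; this follows because $\gto$ is normed. Once these identifications are in place, the parity argument and the de Rham integration are essentially formal.
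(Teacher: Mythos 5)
Your proof is correct and follows essentially the same route as the paper: both arguments reduce to the Künneth-type identification $H^*(\fUs_{\mathrm{rel}_E})\cong [dt]\cdot H^{*-1}(\ems_E)$ from Lemma~\ref{lm:cohomology_I}, use sphericity and the normalization $\gto_1(h_0)=1$ to see that only the $[dt\cdot\id]$ part is killed by $\int_0^1\thet_1^E$, and then invoke the even-degree parity of $\Rs$ (with $n$ odd) to rule that part out in even total degree. The paper packages the middle step as Lemma~\ref{lm:co_rel_0} ($\ker\gto_1^E=\Rs_E\cdot[\id]$) and argues that the kernel is concentrated in odd degree, which is the same content as your explicit decomposition $[\widetilde a]=\alpha\cdot[dt\cdot h_0]$.
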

\begin{proof}
By Lemma~\ref{lm:cohomology_I}
it follows that
$\ker\int_0^1\thet_1^E$ is generated by elements of the form $dt\cdot [\widetilde{\alpha}],$ where $[\widetilde{\alpha}]\in \ker\gto_1^E.$
By Lemma~\ref{lm:co_rel_0} we have  $\ker\gto_1^E=R_E\cdot[\id].$ So, since $R_E$ consists elements of even degree, it follows that $\ker\int_0^1\thet_1^E$ consists elements of odd degree. Therefore, $[\widetilde{a}]=[0].$ 
\end{proof}

\begin{lm}\label{lm:primitive_form} 
Let $M$ be a normed Calabi-Yau spherical object, and suppose $\widetilde{b}_{(l)}|_i$ are bounding cochains for $i=0,1.$ 
Then, 
there exist
$k_l\in F^{\ge E_{l+1}}\Rs$ and $\widetilde{b}_l\in F^{\ge E_{l+1}}\fUs_{\mathrm{rel}}$ with $
|k_l|=2,$ $|\widetilde{b}_l|=1,
$
such that $k_l\cdot\id=\widetilde{o}_l|_1,$ and 
\[\widetilde{\delta}(\widetilde{b}_l)\equiv -\widetilde{o}_l+k_l\cdot \id \pmod{F^{E_{l+1}}\fUs_{\mathrm{rel}}}.
    \]
\end{lm}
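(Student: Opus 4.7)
The plan is as follows. First I would define $k_l \in (\Rs)^2$ to be the scalar with $k_l \cdot \id = \widetilde{o}_l|_1$; this is well-defined because $\widetilde{b}_{(l)}|_1 = b_1$ is a genuine bounding cochain, so $\widetilde{mc}(\widetilde{b}_{(l)})|_1 = c_1 \cdot \id$, forcing $\widetilde{o}_l|_1 \in \Rs \cdot \id$ modulo $F^{E_{l+1}}\Rs$. Similarly $\widetilde{o}_l|_0 \in \Rs \cdot \id$, so Lemma~\ref{lm:o_c} gives
\[
\widetilde{o}_l|_0 \equiv \widetilde{o}_l|_1 = k_l \cdot \id \pmod{F^{E_{l+1}}\Rs}.
\]
Therefore both boundary evaluations of $\widetilde{o}_l - k_l \cdot \id$ vanish modulo $F^{E_{l+1}}$, so $\widetilde{o}_l - k_l \cdot \id$ descends to a well-defined class in $\fUs_{\mathrm{rel}_{E_{l+1}}}$. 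It is closed there: Lemma~\ref{lm:o_tilde_is_closed} gives $\widetilde{\delta}(\widetilde{o}_l) \equiv 0$, and $\widetilde{\delta}(k_l \cdot \id) = 0$.

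Next I would verify that the class $[\widetilde{o}_l - k_l \cdot \id] \in H^2(\fUs_{\mathrm{rel}_{E_{l+1}}})$ vanishes, and this is where the normed Calabi-Yau structure enters via Lemma~\ref{lm:primitive}. Since the class sits in even degree $2 = 2\cdot 1$, the lemma reduces matters to showing $\int_0^1 \thet_1^{E_{l+1}}([\widetilde{o}_l - k_l \cdot \id]) = 0$. For the $k_l \cdot \id$ piece, cohomological unitality of $\gto$ (and hence of $\thet$) gives $\thet_1(k_l \cdot \id) = 0$ since $k_l \cdot \id$ lies in the image of $\CC^\lambda_*(\fRs) \hookrightarrow \CC^\lambda_*(\fUs)$. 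For the $\widetilde{o}_l$ piece, Lemma~\ref{lm:o_is_closed_reduced} — which applies because by Lemma~\ref{building_equiv_2} (already proved in this section) gauge equivalent bounding cochains with equal $\varrho$-values satisfy $\gto(\expp(b_0)) = \gto(\expp(b_1))$, and in the present setting our hypothesis is exactly $\gto(\expp(\widetilde{b}_{(l)}|_0)) = \gto(\expp(\widetilde{b}_{(l)}|_1))$ — yields $\int_0^1 \thet_1(\widetilde{o}_l) \equiv 0 \pmod{F^{E_{l+1}}\Rs}$.

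Combining the two vanishings, Lemma~\ref{lm:primitive} produces $\widetilde{b}_l \in F^{\geq E_{l+1}}\fUs_{\mathrm{rel}}$ with
\[
\widetilde{\delta}(\widetilde{b}_l) \equiv k_l \cdot \id - \widetilde{o}_l \pmod{F^{E_{l+1}}\fUs_{\mathrm{rel}}}.
\]
The degree count is automatic: $|\widetilde{o}_l| = 2$ forces $|k_l| = 2$ and $|\widetilde{b}_l| = 1$. The main subtlety — really the only non-bookkeeping point — is ensuring that $\widetilde{o}_l - k_l \cdot \id$ genuinely defines a class in the relative complex $\fUs_{\mathrm{rel}_{E_{l+1}}}$, which is precisely the reason we are forced to work modulo $F^{E_{l+1}}$ rather than with strict equalities; here Lemma~\ref{lm:o_c} is essential because a priori we only control $\widetilde{o}_l|_1$ exactly, while $\widetilde{o}_l|_0$ is only known to be in $\Rs \cdot \id$ through the Maurer-Cartan equation at $t=0$.
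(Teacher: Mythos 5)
Your proposal is correct and follows essentially the same route as the paper's proof: define $k_l$ by $k_l\cdot\id=\widetilde{o}_l|_1$, use Lemma~\ref{lm:o_c} to place $\widetilde{o}_l-k_l\cdot\id$ in $F^{\ge E_{l+1}}\fUs_{\mathrm{rel}}$, note closedness via Lemma~\ref{lm:o_tilde_is_closed}, kill the pairing with $\int_0^1\thet_1^{E_{l+1}}$ via Lemma~\ref{lm:o_is_closed_reduced} (whose hypothesis $\gto(\expp(\widetilde{b}_{(l)}|_0))=\gto(\expp(\widetilde{b}_{(l)}|_1))$ is likewise used implicitly in the paper, being available where the lemma is applied in Proposition~\ref{building_equiv_1}), and conclude by Lemma~\ref{lm:primitive}. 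Your extra remark that $\thet_1(k_l\cdot\id)$ contributes nothing, by cohomological unitality, only makes explicit a point the paper leaves tacit.
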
 
\begin{proof}
Since $\widetilde{b}_{(l)}|_i$ are bounding cochains, it follows that $\widetilde{o}_l|_0, \widetilde{o}_l|_1\in \Rs\cdot \id.$ So, there exists $k_l\in F^{\ge E_{l+1}}\Rs$ with $|k_l|=2$ satisfying $k_l\cdot \id=\widetilde{o}_l|_1.$ In addition, by Lemma~\ref{lm:o_c} we have
\[
k_l\cdot\id:=\widetilde{o}_l|_1\equiv\widetilde{o}_l|_0\pmod{F^{E_{l+1}}\ems}.
\] 
Thus, $\widetilde{o}_l-k_l\cdot \id\in F^{\ge E_{l+1}}\fUs_{\mathrm{rel}}.$
By
Lemma~\ref{lm:o_tilde_is_closed} $\widetilde{o}_l-k_l\cdot \id$ represents a cohomology class of $H^2(\fUs_{\mathrm{rel}_{E_{l+1}}}),$   
and by Lemma~\ref{lm:o_is_closed_reduced} we have $\widetilde{o}_l-k_l\cdot \id\in\ker\int_0^1\thet_1^{E_{l+1}}.$ Hence,
by Lemma~\ref{lm:primitive} we get $[\widetilde{o}_l-k_l\cdot\id]=[0]$ which completes the proof.
\end{proof}

\begin{lm}\label{lm:c}
  Let $M$ be a normed Calabi-Yau spherical object.
Suppose $\widetilde{b}_{(l)}|_i\in \Mmm(\ems,c_i)$  
for $i=0,1.$ 
Then, there exist $k_l\in F^{\ge E_{l+1}}\Rs$ with $|k_l|=2$ such that 
$k_l\cdot\id=\widetilde{o}_l|_1$ and
\[\widetilde{c}_{(l+1)}:=\widetilde{c}_{(l)}+k_l\in(\Rs)^2 \] 
satisfies
$\widetilde{c}_{(l+1)}|_i\equiv c_i \pmod{F^{E_{l+1}}\Rs}$ for $i=0,1.$ 
\end{lm}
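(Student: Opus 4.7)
The plan is to leverage Lemma~\ref{lm:primitive_form} together with the hypothesis that the endpoints $\widetilde{b}_{(l)}|_i$ are genuine bounding cochains in $\ems$ with potentials $c_i$. Lemma~\ref{lm:primitive_form} already hands us an element $k_l \in F^{\ge E_{l+1}}\Rs$ with $|k_l|=2$ satisfying $k_l\cdot\id = \widetilde{o}_l|_1$, provided the hypotheses of that lemma hold; the only hypothesis to check is that $\widetilde{b}_{(l)}|_0$ and $\widetilde{b}_{(l)}|_1$ are bounding cochains, which is exactly what we assume.

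First, I would evaluate the defining congruence
\[
\widetilde{mc}(\widetilde{b}_{(l)}) \equiv \widetilde{c}_{(l)}\cdot\id + \widetilde{o}_l \pmod{F^{E_{l+1}}\fUs}
\]
at $i=0,1$. Since by Lemma~\ref{lm:dga_morphism} the maps $\eval^i$ are DGA morphisms and $\eval^i|_{\fRs}=\mathrm{eval}^i$, we obtain
\[
\delta(\widetilde{b}_{(l)}|_i) - (\widetilde{b}_{(l)}|_i)^2 \equiv \widetilde{c}_{(l)}|_i \cdot\id + \widetilde{o}_l|_i \pmod{F^{E_{l+1}}\ems}.
\]
Using the hypothesis that $\widetilde{b}_{(l)}|_i\in\Mmm(\ems,c_i)$, the left-hand side equals $c_i\cdot\id$ exactly, so
\begin{equation}\label{eq:plan-evaluated}
c_i \cdot\id \equiv \widetilde{c}_{(l)}|_i \cdot\id + \widetilde{o}_l|_i \pmod{F^{E_{l+1}}\ems}.
\end{equation}

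From~\eqref{eq:plan-evaluated} we see that $\widetilde{o}_l|_i \equiv (c_i - \widetilde{c}_{(l)}|_i)\cdot\id \pmod{F^{E_{l+1}}\ems}$, so in particular $\widetilde{o}_l|_0,\widetilde{o}_l|_1 \in \Rs\cdot\id$ modulo $F^{E_{l+1}}\ems$. This is precisely the hypothesis needed to apply Lemma~\ref{lm:o_c}, which then yields
\[
\widetilde{o}_l|_0 \equiv \widetilde{o}_l|_1 \equiv k_l\cdot\id \pmod{F^{E_{l+1}}\ems}.
\]
Plugging back into~\eqref{eq:plan-evaluated} for each $i=0,1$ gives
\[
c_i \equiv \widetilde{c}_{(l)}|_i + k_l \pmod{F^{E_{l+1}}\Rs},
\]
and since $k_l\in\Rs$ is fixed under $\mathrm{eval}^i$ (via the inclusion $\Rs\hookrightarrow\fRs$), the right-hand side is exactly $\widetilde{c}_{(l+1)}|_i$, as desired.

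There is no substantive obstacle here; the lemma is essentially a packaging result that combines the compatibility of $\eval^i$ with $\widetilde{mc}$, the extraction of $k_l$ from Lemma~\ref{lm:primitive_form}, and the endpoint comparison from Lemma~\ref{lm:o_c}. The only minor point requiring care is bookkeeping the congruence classes modulo $F^{E_{l+1}}$ and verifying that $\widetilde{o}_l|_i$ are scalar (in $\Rs\cdot\id$) before invoking Lemma~\ref{lm:o_c}; this is automatic from~\eqref{eq:plan-evaluated} once the bounding-cochain property of the endpoints is used.
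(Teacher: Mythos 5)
Your argument is correct and follows essentially the same route as the paper: evaluate the Maurer–Cartan congruence at $i=0,1$, use the bounding-cochain hypothesis to see that $\widetilde{o}_l|_0,\widetilde{o}_l|_1$ lie in $\Rs\cdot\id$, take $k_l$ with $k_l\cdot\id=\widetilde{o}_l|_1$ (as in Lemma~\ref{lm:primitive_form}), and invoke Lemma~\ref{lm:o_c} to compare the two endpoint values and conclude $\widetilde{c}_{(l+1)}|_i\equiv c_i \pmod{F^{E_{l+1}}\Rs}$. The only cosmetic difference is that you track the scalar membership of $\widetilde{o}_l|_i$ modulo $F^{E_{l+1}}\ems$ while the paper asserts it outright, which is harmless since all subsequent conclusions are taken modulo $F^{E_{l+1}}$.
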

\begin{proof}
Since $\widetilde{b}_{(l)}|_i\in \Mmm(\ems,c_i),$ it follows that $\widetilde{c}_{(l)}|_i\equiv c_i \pmod{F^{E_l}\Rs},$  $\widetilde{o}_l|_1,\widetilde{o}_l|_0 \in \Rs\cdot \id$ and
\[
c_i\equiv \widetilde{c}_{(l)}+\widetilde{o}_l|_i \pmod{F^{E_{l+1}}\Rs}, \quad i=0,1.
\]
Let $k_l\in F^{\ge E_{l+1}}\Rs$ with $|k_l|=2$ such that $k_l\cdot \id=\widetilde{o}_l|_1.$
Write
\[\widetilde{c}_{(l+1)}:=\widetilde{c}_{(l)}+k_l\in (\Rs)^2.
\] 
By Lemma~~\ref{lm:o_c} we have
$k_l\cdot\id:=\widetilde{o}_l|_1\equiv\widetilde{o}_l|_0\pmod{F^{E_{l+1}}\ems}.$
Hence, 
\[
\widetilde{c}_{(l+1)}|_i\equiv c_i \pmod{F^{E_{l+1}}\Rs}, \quad i=0,1.
\]
\end{proof}

\begin{prop}\label{building_equiv_1}
 Let $r\in F^0\Rs$ with $|r|=1-n.$ Let $M$ be a normed Calabi-Yau spherical object.
Let $b_i\in \Mmm(\ems,c_i)$ where $i=0,1,$ such that $\gto(\expp({b_i}))=r.$ Then $b_0\sim_{\fUs}b_1.$
\end{prop}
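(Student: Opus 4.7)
The plan is to construct a pseudoisotopy $\widetilde{b}\in\Mmm(\fUs,\widetilde{c})$ joining $b_0$ and $b_1$ by an inductive construction parallel to the proof of Proposition~\ref{theorem:existence_bc}, but carried out in the pseudoisotopy algebra $\fUs$ (which by Lemma~\ref{lm:pseudo_thet} is a unital pseudoisotopy between $\ems$ and itself) and using the relative correction-term lemmas proved earlier in this section. Since the final $\widetilde{c}$ must be $\fpar$-closed and the only closed degree-$2$ elements of $\fRs$ are constant in $t,$ the construction will simultaneously force $c_0=c_1$ as a byproduct.

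For the base case, choose $E_1>0$ smaller than $\min\{\nu_{\Ms}(b_0-b_1),\nu_{\Rs}(c_0),\nu_{\Rs}(c_1)\}$ and set $\widetilde{b}_{(1)}=(1-t)b_0+tb_1$ and $\widetilde{c}_{(1)}=0.$ A direct expansion gives
\[
\widetilde{mc}(\widetilde{b}_{(1)})-\widetilde{c}_{(1)}\id=dt(b_1-b_0)+(1-t)t(b_0-b_1)^2+\bigl((1-t)c_0+tc_1\bigr)\id\in F^{E_1}\fUs,
\]
while $\widetilde{c}_{(1)}\equiv c_i\pmod{F^{E_1}\Rs}$ for $i=0,1$ and $\fpar\widetilde{c}_{(1)}=0,$ so the inductive hypotheses of Section~\ref{section:gauge} are met at $l=1.$

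For the inductive step, given $\widetilde{b}_{(l)},\widetilde{c}_{(l)}$ at level $E_l,$ define the obstruction $\widetilde{o}_l$ via~\eqref{o_tilde_j}. Because $b_i$ is a bounding cochain with potential $c_i,$ we compute $\widetilde{o}_l|_i=(c_i-\widetilde{c}_{(l)})\id\in\Rs\cdot\id,$ and the hypothesis $\gto(\expp(b_0))=\gto(\expp(b_1))=r$ enters through Lemma~\ref{lm:o_is_closed_reduced} to give $\int_0^1\thet_1(\widetilde{o}_l)\equiv 0\pmod{F^{E_{l+1}}\Rs}.$ Lemma~\ref{lm:primitive_form} then produces $k_l\in F^{\ge E_{l+1}}\Rs$ with $k_l\id=\widetilde{o}_l|_1,$ and $\widetilde{b}_l\in F^{\ge E_{l+1}}\fUs_{\mathrm{rel}}$ satisfying $\widetilde{\delta}(\widetilde{b}_l)\equiv-\widetilde{o}_l+k_l\id\pmod{F^{E_{l+1}}\fUs_{\mathrm{rel}}}.$ Set $\widetilde{b}_{(l+1)}=\widetilde{b}_{(l)}+\widetilde{b}_l$ and $\widetilde{c}_{(l+1)}=\widetilde{c}_{(l)}+k_l.$ Because $\widetilde{b}_l\in\fUs_{\mathrm{rel}},$ the boundary values $\widetilde{b}_{(l+1)}|_i=b_i$ persist; Lemma~\ref{lm:c} gives $\widetilde{c}_{(l+1)}|_i\equiv c_i\pmod{F^{E_{l+1}}\Rs};$ since $k_l\in\Rs$ is constant in $t,$ $\widetilde{c}_{(l+1)}$ remains $\fpar$-closed; and the cross terms $[\widetilde{b}_{(l)},\widetilde{b}_l]$ and $\widetilde{b}_l^2$ have valuation strictly greater than $E_{l+1},$ so the Maurer--Cartan identity $\widetilde{mc}(\widetilde{b}_{(l+1)})\equiv\widetilde{c}_{(l+1)}\id\pmod{F^{E_{l+1}}\fUs}$ is verified by a direct expansion. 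Passing to the limit $l\to\infty$ (the successive differences $\widetilde{b}_l,k_l$ have valuation $\ge E_{l+1}\to\infty,$ so the sequences converge in the completions), we obtain $\widetilde{b},\widetilde{c}$ that solve the Maurer--Cartan equation exactly, satisfy $\widetilde{b}|_i=b_i$ and $\widetilde{c}|_i=c_i,$ and in particular exhibit $b_0\sim_{\fUs}b_1.$

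The main obstacle is the applicability of Lemma~\ref{lm:primitive_form} at each step to produce $\widetilde{b}_l$ and $k_l.$ Its proof relies on Lemma~\ref{lm:primitive}, which is precisely where the normed Calabi--Yau spherical hypothesis on $M$ is used to control the degree-$2$ cohomology of $\fUs_{\mathrm{rel}},$ so that the compatibility of $\widetilde{o}_l$ with $\thet$ guaranteed by Lemma~\ref{lm:o_is_closed_reduced} actually forces $\widetilde{o}_l$ to be exact in the relative complex up to a multiple of $\id.$
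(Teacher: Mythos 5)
Your proof is correct and follows the paper's own strategy almost verbatim: linear interpolation between $b_0$ and $b_1,$ inductive correction by Lemmas~\ref{lm:o_is_closed_reduced},~\ref{lm:primitive_form} and~\ref{lm:c}, and passage to the limit. The one place you deviate is the base case: the paper uses $\gto_1(b_1-b_0)\equiv 0$ together with non-degeneracy of $\gto_1$ and sphericity to produce $\eta$ with $\delta(\eta)\equiv b_1-b_0 \pmod{F^{E_1}\ems}$ and takes $\widetilde b_{(1)}=t\,b_1+(1-t)\,b_0+dt\cdot\eta,$ so that the Maurer--Cartan equation already holds modulo the first level $E_1=\min\{\nu_{\Ms}(b_0),\nu_{\Ms}(b_1)\}$ of $\mG(b_0,b_1);$ you instead drop $\eta,$ start at an artificially small level $E_1,$ and let the relative obstruction machinery absorb the term $dt\,(b_1-b_0)$ at later steps. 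This works, because $\int_0^1\thet_1\big(dt\,(b_1-b_0)\big)=\gto_1(b_1-b_0)$ is exactly what Lemma~\ref{lm:o_is_closed_reduced} shows vanishes modulo the relevant filtration, and Lemma~\ref{lm:primitive} (via Lemma~\ref{lm:co_rel_0}, which is where normed Calabi--Yau and sphericity enter) then supplies the primitive; so the same hypotheses are used, only routed once through the machinery rather than separately at the start. Two bookkeeping points you should make explicit: (i) for $l\ge 2$ the levels $E_l$ must be chosen as consecutive elements of $\ima\nu_{\fUs}=\ima\nu_{\Rs}$ above your $E_1,$ since otherwise a representative $\widetilde o_l$ of the defect with $\nu_{\fUs}(\widetilde o_l)\ge E_{l+1}$ need not exist, and the lemmas of Section~\ref{section:gauge} are stated for the list coming from $\mG(b_0,b_1),$ so one should note their proofs apply verbatim to this shifted list; (ii) $\widetilde o_l|_i$ equals $(c_i-\widetilde c_{(l)}|_i)\cdot\id$ only modulo $F^{E_{l+1}}\ems,$ which is all that is needed and matches the paper's own level of precision.
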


\begin{proof}
We construct a pseudoisotopy $\widetilde{b}.$ Write $\mG(b_0,b_1)$ in the form of a list as in~\eqref{sabab_def}.

First, note that $\delta(b_1-b_0)\equiv (c_1-c_0)\cdot \id \pmod{F^{E_1}\ems}.$
Thus, $[0]=[(c_1-c_0)\cdot \id]\in R_{E_1}\cdot\id$ which  
implies that $c_0\equiv c_1 \pmod{F^{E_1}R}.$ So, $\delta(b_1-b_0)\equiv0\pmod{F^{E_1}\ems}.$
Since $\gto$ is normed we obtain
\begin{align*}
    0&\equiv \gto(\expp(b_1))-\gto(\expp(b_0))\\
    &\equiv \gto(\exp(b_1))-\gto(\exp(b_0))\\
    &\equiv \gto_1(b_1-b_0)\pmod{F^{E_1}\Rs}.
\end{align*}
Thus, since $\gto_1$ is non-degenerate, it follows that $[b_1-b_0]=[0]\in H^1(\eems_{E_1}).$ 
Hence, there exists $\eta\in F^{\ge 0}\End^0(M)^s$ with $|\eta|=0$ such that $\delta(\eta)\equiv b_1-b_0 \pmod{F^{E_1}\ems}.$
Define
\[
\widetilde{b}_{(1)}:= t\cdot b_1+ (1-t)\cdot b_0 +dt\cdot\eta ,   
\]
so $\nu_{\fUs}(\widetilde{b}_{(1)})\in\mG(b_0,b_1),$ and
\begin{align*}
    \widetilde{mc}(\widetilde{b}_{(1)})&\equiv \widetilde{\delta}(t\cdot b_1) +\widetilde{\delta}((1-t)\cdot b_0)+\widetilde{\delta}(dt\cdot \eta )\\
    &\equiv dt \cdot b_1+ t\cdot \delta(b_1) -dt \cdot b_0 +(1-t)\cdot \delta(b_0)- dt\cdot \delta(\eta)\\
    &\equiv t\cdot\delta(b_1-b_0)+\delta(b_0)\\
    &\equiv c_0\cdot \id
    \pmod{F^{E_1}\fUs}.
\end{align*}
Define $ \widetilde{c}_{(1)}:=c_0\in \Rs,$ so $\fpar\widetilde{c}_{(1)}=0,$ $\nu_{\Rs}(\widetilde{c}_{(1)})>0$ and $\widetilde{c}_{(1)}|_i\equiv c_i \pmod{F^{E_1}\Rs}$ for $i=0,1.$ In addition $\widetilde{b}_{(1)}|_0=b_0,$ and $\widetilde{b}_{(1)}|_1=b_1.$
Suppose by induction that we have constructed $\widetilde{b}_{(l)}\in \fUs$ with $|\widetilde{b}_{(l)}|=1,$
$\nu_{\fUs}(\widetilde{b}_{(l)})\in \mG(b_0,b_1),$ 
such that
\[
\widetilde{mc}(\widetilde{b}_{(l)})\equiv \widetilde{c}_{(l)}\cdot \id,  \pmod{F^{E_l}\fUs} \quad \widetilde{c}_{(l)}\in (\fRs)^2,  \quad \fpar\widetilde{c}_{(l)}=0, \quad \nu_{\fRs}(\widetilde{c}_{(l)})>0
\]
and 
\[ \widetilde{c}_{(l)}|_i\equiv c_i \pmod{F^{E_l}\Rs}, \quad\widetilde{b}_{(l)}|_i=b_i,\quad  \quad i=0,1.
\]
Define the obstruction cochain $\widetilde{o}_l\in\fUs$ as in~\eqref{o_tilde_j}.
By Lemma~\ref{lm:primitive_form} and Lemma~\ref{lm:c}, there exist $k_l\in F^{\ge E_{l+1}}\Rs$ and $\widetilde{b}_l\in F^{\ge E_{l+1}}\fUs_{\mathrm{rel}}$  with $|k_l|=2,$ $|\widetilde{b}_l|=1,$ such that 
\[
\widetilde{\delta}(\widetilde{b}_l)\equiv -\widetilde{o}_l+k_l\cdot \id \pmod{F^{E_{l+1}}\fUs_{\mathrm{rel}}},
    \]
    and  \[
    \widetilde{c}_{(l+1)}:=\widetilde{c}_{(l)}+k_l\in (\Rs)^2
    \]
    satisfies 
    $\widetilde{c}_{(l+1)}|_i\equiv  c_i \pmod{F^{E_{l+1}}\Rs}$ for $i=0,1.$
Define \[\widetilde{b}_{(l+1)}:=\widetilde{b}_{(l)}+b_l\in (\fUs)^1.\]
Thus, $\nu_{\Rs}(\widetilde{c}_{(l+1)})>0$ and $\nu_{\fUs}(\widetilde{b}_{(l+1)})\in \mG(b_0,b_1).$
Since $k_l\in \Rs,$ it follows that $\fpar\widetilde{c}_{(l+1)}=\fpar\widetilde{c}_{(l)}=0.$ Since
$\widetilde{b}_l\in \fUs_{\mathrm{rel}},$ it follows that  
\[
\widetilde{b}_{(l+1)}|_i=\widetilde{b}_{(l)}|_i=b_i, \quad i=0,1.
\]
We get,
\begin{align*}
    \widetilde{mc}(\widetilde{b}_{(l+1)})&\equiv \widetilde{mc}(\widetilde{b}_{(l)})+ \widetilde{\delta}(\widetilde{b}_l)\\
    &\equiv \widetilde{c}_{(l)}\cdot \id+ \widetilde{o}_l+\widetilde{\delta}(\widetilde{b}_l)\\
    &\equiv \widetilde{c}_{(l)}\cdot \id+k_l\cdot \id \\
    &\equiv \widetilde{c}_{(l+1)}\cdot \id \pmod{F^{E_{l+1}}\fUs} .
\end{align*}
Take the limits $\widetilde{b}=\lim_l \widetilde{b}_{(l)},$ $\widetilde{c}=\lim_l \widetilde{c}_{(l)}.$
So, $\fpar\widetilde{c}=0,$ $\nu_{\fRs}(\widetilde{c})>0$ and $\nu_{\fUs}(\widetilde{b})>0.$ 
We obtain a bounding cochain $\widetilde{b}\in \Mmm(\fUs,\widetilde{c})$ that satisfies $\widetilde{b}|_0=b_0$ and $\widetilde{b}|_1=b_1.$
In addition, $\widetilde{c}$ satisfies $\widetilde{c}|_0=c_0,$ $ \widetilde{c}|_1=c_1.$ 
Therefore, $b_0\sim_{\fUs}b_1.$

\end{proof}

\begin{proof}[Proof of Theorem~\ref{thm:bc_equivalence}]
    By Lemma~\ref{building_equiv_2} the map $\varrho$ is well defined. It follows from Proposition~\ref{theorem:existence_bc} that $\varrho$ is surjective, and from Proposition~\ref{building_equiv_1} that $\varrho$ is injective.
\end{proof}

\section{The normed matrix factorization category for \texorpdfstring{$\cp^n$}{the projective space}}\label{Section:example}
The objective of this section is to 
introduce the example of the normed matrix factorization category associated to $\cp^n$ where $n$ is odd. 
We begin by constructing this category and the object $\Msf$ described in Section~\ref{sectin:lg_model_introduction}.
Then, we prove that $\Msf$ is a normed Calabi-Yau spherical object.
Finally, we consider the case $n=1.$ We prove Proposition~\ref{prop:bounding cochain} which gives an explicit bounding cochain of $\Msf.$ Then, we compute the numerical invariants of $\Msf$ and prove Theorem~\ref{thm:gw_inv_numerical_invariants}.  

 \subsection{Constructing \texorpdfstring{$\MF(\Wsf,w)$}{the category}}\label{section:4}

\subsubsection{The valued Landau-Ginzburg model of \texorpdfstring{$\cp^n$}{ the projective space}}\label{section:mf_algebra_c}

We follow the construction of a valued Landau-Ginzburg model given in Section~\ref{Section:toric_construction_sub}.

Let $n\in\Z_{>0}$ be odd.
Let $(\lambda_0,\ldots ,\lambda_n)\in \Q^{n+1}$ where $\lambda_i=0$ for $0\le i\le n-1,$ and $\lambda_n=1.$ 
Let $\Tilde{g}:\Z^{n+1}\to \Z$ be defined by $\Tilde{g}(m_0,\ldots, m_n)=2(m_0+\ldots+ m_n).$
Let $w_0,\ldots,w_n$ denote the standard basis of $\R^n,$ and let
$v_0,\ldots, v_n\in \Z^n$ be defined by $v_i:=-w_i$ for $0\le i\le n-1,$ and $v_n:= w_0+\ldots+ w_{n-1}.$ 
Define
\[
\trian:=\{x\in \R^n| \ \langle x, v_i\rangle\le \lambda_i\}= \{(x_0,\ldots, x_{n-1})\in\R^n| \ x_0+\ldots+x_{n-1}\le 1 \  , \ 0\le x_i \ \forall i  \}.
\]
We have $J=\Z.$ The inclusion $i:J\to \Z^{n+1}$ is given by $m\mapsto (m,\ldots, m).$ 
Hence,
\[
g_J(m)=2(n+1)m,  \quad  h_J(m)=m, \quad m\in J,
\] 
and $G=\frac{1}{n+1}\Z.$ For $n>1,$ the quadratic form $q$ satisfies
\[
q(m)=\tilde{q}(i(m))=\sum_{i\le j}m^2\equiv \frac{n(n+1)}{2}m \pmod{2}, \quad m\in J,
\]
and for $n=1,$ $q$ is the zero form.
Hence, $q: J\to \Z/2$ is a homomorphism. 
We get
\[
    R_{\triangle^n}=\bigg\{\sum_{i=0}^\infty a_iT^{\beta_i}| \ a_i\in\R, \ \beta_i\in \frac{1}{n+1}\Z, \  \lim_i\beta_i=\infty \bigg\}. 
    \]
The valuation on $R_{\triangle^n}$ is given by
\[
\nu_{R_{\triangle^n}}(\sum_{i=0}^\infty a_iT^{\beta_i})=\inf_{a_i\ne0}\{\beta_i\},
\]    
and the grading is given by declaring $|T^\beta|=2(n+1)\beta.$ Equip $R_{\triangle^n}$ with the trivial differential $d_{R_{\triangle^n}}.$
For $n>1,$ let $\sigma=q\in\Hom(J,\Z/2),$ and for $n=1$ let $\sigma$ be the non-trivial element, that is, $\sigma(1)=1.$ Thus,
\[
S_{\triangle^n}= R_{\triangle^n}\langle z_0,\ldots, z_n\rangle/ \langle (-1)^{\frac{n(n+1)}{2}}z_0\cdots z_n-T\rangle.
\]
A grading on $S_{\triangle^n}$ is defined by declaring $|z_i|=2.$ 
The family of valuations $\{\nu_\zeta^{S_{\triangle^n}}\}_{\zeta\in\interior{\triangle^n}}$ on $S_{\triangle^n}$ is defined by
\[
\nu_\zeta^{S_{\triangle^n}}\big(z_i)= \zeta_i,\quad 0\le i\le n-1, \qquad \nu_\zeta^{S_{\triangle^n}}\big(z_n)=1-\zeta_0-\ldots-\zeta_{n-1}.
\]
The superpotential is $W_{\triangle^n}=z_0+\ldots+z_d.$
Therefore, 
$\big((R_{\triangle^n},\nu_{R_{\triangle^n}}),(S_{\triangle^n},\{\nu^{S_{\triangle^n}}_\zeta\}_{\zeta\in \interior{\triangle}}) , W_{\triangle^n} \big)$ is the valued Landau-Ginzburg model associated to $\cp^n$ for $n\in\Z_{>0}$ odd.

\subsubsection{The Dirac matrix factorization of the projective space}\label{section:4.1.2}
Let $T(\widetilde{L}_{S_{\triangle^n}})$ be the tensor algebra as defined in Section~\ref{ssec:DiracMF}, and recall the grading is defined by declaring $|e_i|=1.$ Let $\{\nu^T_\zeta\}_{\zeta\in \interior{\triangle^n}}$ be a family of valuations on $T(\widetilde{L}_{S_{\triangle^n}})$ defined by
\[
\nu^T_\zeta(e_i)=\frac{1}{2}\zeta_i, \quad 0\le i\le n-1, \quad \nu^T_\zeta(e_i)=\frac{1}{2}-\frac{1}{2}\zeta_0-\ldots-\frac{1}{2}\zeta_{n-1}.
\]
Abbreviate $e_ie_j=e_i\otimes e_j.$
For $n>1$ let $\sigma'=0,$ and for $n=1$ let $\sigma'\in \Hom(J,\Z/2)$ be the homomorphism defined by $\sigma'(1,1)=1.$ Thus, it follows from the definition of $\varepsilon_{ij}$ in equation~\eqref{eq:vareps} that
\[
P=\langle e_ie_j+ e_je_i=2\delta_{ij}z_i| \  i,j=0,\ldots ,n \rangle.
\]
So,
\[
\Cl=T(\widetilde{L}_{S_{\triangle^n}})/\langle e_ie_j+ e_je_i=2\delta_{ij}z_i| \  i,j=0,\ldots ,n \rangle.
\]
Choose $\s : J \to \Zt$ to be the zero map, so 
\[
L_{\triangle^n}=\langle e_0\cdots e_n-T^{1/2}\rangle.
\] 
Write $M_{\triangle^n}=\Cl/L_{\triangle^n},$ and define $D_{\triangle^n}=e_0+\ldots+e_n.$
It follows from the definitions of $g_J$ and $\sigma$  that $w_{\triangle^n}=0.$ Let $\{\nu^{M_{\triangle^n}}_\zeta\}_{\zeta\in \interior{\triangle^n}}$ be the family of valuations on $\Msf$ induced by $\{\nu^T_\zeta\}_{\zeta\in \interior{\triangle^n}}$. 
Hence, $(M_{\triangle^n}, D_{\triangle^n}, \{\nu^{M_{\triangle^n}}_\zeta\}_{\zeta\in \interior{\triangle^n}})$ is an object of $\MF(\Wsf,0)$.
\subsubsection{Changing the coordinates}\label{section:changing_coordinates}
Throughout this section, we will work with an equivalent valued Landau-Ginzburg model of $\cp^n$ obtained by considering an isomorphic $R_{\triangle^n}$-algebra to $S_{\triangle^n}.$

Abbreviate $\Rsf=R_{\triangle^n}.$
Define 
\[
\Ssf:= \Rsf\langle z_0,\ldots, z_n\rangle/ \langle z_0\cdots z_n-T\rangle.
\]
The map defined by
\[
z_0\mapsto (-1)^{\frac{n(n+1)}{2}}z_0, \quad z_i\mapsto z_i, \quad 1\le i\le n, \quad r\mapsto r ,\quad r\in \Rsf,
\]
is an isomorphism from $S_{\triangle^n}$ to $\Ssf.$
Let $\Wsf$ denote the image of $W_{\triangle^n}$ under this isomorphism. Hence, 
\[
\Wsf=(-1)^{\frac{n(n+1)}{2}}z_0+z_1+\ldots+z_n.
\]
Write $\pi^*_\R=\pi^*\otimes\id_\R,$ and consider the map
\[
\lambda-\pi_\R^*: \R^n \to \R^{n+1},
\] 
\[
(x_0,\ldots ,x_{n-1})\mapsto (x_0,\ldots, x_{n-1}, 1-x_0-\ldots- x_{n-1}).
\]
Let
\[
\simp:=(\lambda-\pi^*_\R)(\triangle^n)= \{(x_0,\ldots, x_n)\in \R^{n+1}| \ x_0+\ldots+x_n=1, \  0\le x_i \ \forall i \}.
\]
We will parametrize the valuations $\nu_{\zeta}^{\Ssf}$ by $\interior{\widetilde{\triangle}}$ instead of $\interior{\triangle}.$ Thus, we have $\nu_\zeta^{\Ssf}\big(z_i)=\zeta_i$ for all $0\le i\le n$ where $\zeta_0+\ldots+\zeta_n=1$ and $\zeta_i\ge0.$

We consider the following valued Landau-Ginzburg model
\[
\big((\Rsf,\nu_{\Rsf}),(\Ssf,\{\nu^{\Ssf}_\zeta\}_{\zeta\in \interior{\simp}}) , \Wsf \big).
\]
Let $\MF(\Wsf,0)$ be the associated normed matrix factorization category.
Let $I$ be the ideal generated by the sets $\{e_0^2-(-1)^{\frac{n(n+1)}{2}}z_0\},$ $\{e_i^2-z_i| \ 1\le i\le n\},$ $\{e_ie_j+e_je_i| \ 0\le i<j\le n\}.$
After changing the coordinates, we have $\Cl=T(L_{\Ssf})/I.$ Write $\Msf=\Cl/\langle e_0\cdots e_n- T^{1/2}\rangle.$ The left $\Cl$-module $\Msf$ is equipped with the family of valuations $\{\nu^{\Msf}_\zeta\}_{\zeta\in \interior{\simp}}$ determined by 
$\nu_\zeta^{\Msf}\big(e_i)=\frac{1}{2}\zeta_i,$ for all $\zeta\in \interior{\simp}.$ Abbreviate $\Dsf=D_{\triangle^n}.$ Hence, $(\Msf, \Dsf, \{\nu^{\Msf}_\zeta\}_{\zeta\in \interior{\simp}})$ is an object of $\MF(\Wsf,0)$

\subsection{Spherical object}\label{section:spherical_object}
Our goal is to prove Theorem~\ref{lm:cohomology}.
Let
\[
F^{\ge0}\End(\Msf)=\{\Phi\in\End(\Msf)| \ \nu_{\Msf}(\Phi)\ge0\},
\]
\[
F^{\ge0}\Ssf=\{s\in\Ssf| \ \inf_{\zeta\in\interior{\simp}}\nu_\zeta^{\Ssf}(s)\ge0 \}.
\]
Abbreviate $\End(\Msf)_{\ge0}=F^{\ge0}\End(\Msf),$ and $({\Ssf})_{\ge0}=F^{\ge0}\Ssf.$
The following proposition will play an essential role in the proof of Theorem~\ref{lm:cohomology}.
\begin{prop}\label{prop:coho}
 \[H^i(\End(\Msf)_{\ge0})=
         \left\{\begin{array}{ll}
       T^{\frac{i}{2(n+1)}}\cdot \R, &  i=0,2,4,\ldots, n-1\\
       0, & \mathrm{otherwise}.\\
        \end{array} \right.\]
    
\end{prop}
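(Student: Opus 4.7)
The plan is to compute $H^*(\End(\Msf)_{\ge 0})$ by exploiting the Clifford algebra structure of $\Msf = \ccl$. The starting point is to identify the endomorphism algebra via the two-sided action of $\ccl$ on itself: the natural map $\mu: \ccl \otimes_{\Ssf} \ccl^{\mathrm{op}} \to \End_{\Ssf}(\Msf)$, sending $\phi \otimes \psi$ to the bimodule endomorphism $x \mapsto \phi x \psi$, is a graded $\Ssf$-algebra map between free modules of the same rank $2^{2n}$. I would verify that $\mu$ is an isomorphism by checking injectivity after base change to each closed point of $\Spec(\Ssf)$, which reduces via Lemma~\ref{lm:ccle_csga} and the $\Z/2$-graded Azumaya property to the standard fact that a CSGA acts faithfully on itself as a bimodule. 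Under $\mu$, the differential $\delta$ translates into the graded commutator with $\Dsf \otimes 1 - 1 \otimes \Dsf$ in $\ccl \otimes \ccl^{\mathrm{op}}$, an element well-adapted to explicit computation.

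Next I would set up the spectral sequence associated to the filtration $F^p\End(\Msf)_{\ge 0} = \{\Phi : \nu_\Msf(\Phi) \ge p\}$, which is exhaustive and bounded below in view of the discreteness of $\ima\nu_\Msf$ established in Lemma~\ref{lm:ima_m_eq_ima_r}. On the associated graded, the differential simplifies because the lower-order terms of the Clifford relations (those carrying positive $T$-powers) drop out, leaving a graded Koszul-type complex over $\mathrm{gr}^\bullet\Ssf$ whose cohomology can be computed explicitly. The expected outcome is one generating class in each even degree $i = 0, 2, \ldots, n-1$ appearing at valuation $i/(2(n+1))$; this corresponds precisely to the class $T^{i/(2(n+1))} \cdot \R$ of the proposition, while all other bidegrees vanish either because of the parity pattern $|e_i|=1$ combined with $n$ odd, or because the $\nu \ge 0$ constraint forbids representatives in higher valuations relative to the corresponding degree.

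The main obstacle I anticipate is the degeneration of the spectral sequence at $E_1$: each cocycle on the associated graded must be lifted to an honest cocycle in $\End(\Msf)_{\ge 0}$, and I would handle this by induction on the valuation, using the explicit Clifford relations to assemble the successive correction terms in the spirit of the Maurer-Cartan construction of Proposition~\ref{theorem:existence_bc}. A secondary difficulty is the handling of the $\nu \ge 0$ constraint itself: since $\nu_\Msf$ is the infimum over $\zeta \in \interior{\simp}$ of a family of piecewise affine functions by Lemma~\ref{lm:affine}, the condition couples the valuations at all vertices of $\simp$, and the claimed $T$-exponents must be justified by a vertex-by-vertex analysis of the cocycle representatives rather than a single global estimate.
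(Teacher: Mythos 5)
Your first step contains a genuine error. You set $\Msf=\Cl$ and propose the bimodule map $\mu:\Cl\otimes_{\Ssf}\Cl^{\mathrm{op}}\to\End(\Msf)$, claiming both sides are free of rank $2^{2n}$. Neither claim holds: $\Msf=\Cl/\langle e_0\cdots e_n-T^{1/2}\rangle$ is a proper quotient of rank $2^n$ (so $\End(\Msf)$ has rank $2^{2n}$), while $\Cl\otimes\Cl^{\mathrm{op}}$ has rank $2^{2n+2}$, so $\mu$ cannot be an isomorphism. Worse, $\mu$ is not even well defined: the ideal defining $\Msf$ is only a left ideal, and right multiplication by an odd element $y$ sends $e_0\cdots e_n-T^{1/2}$ to $-y(e_0\cdots e_n+T^{1/2})$, which does not lie in the ideal; only even elements act on the right, and the relation $e_0\cdots e_n=T^{1/2}$ forces the right factor to be $\ccle^{\mathrm{op}}$. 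The correct identification, proved in Lemma~\ref{lm:F_isomorphism} via the CSGA results of Section~\ref{section:azumaya_algebra} together with Nakayama, is $\Cl\otimes\ccle^{\mathrm{op}}\cong\End(\Msf)$ (ranks $2^{n+1}\cdot 2^{n-1}=2^{2n}$), and under it the differential becomes the graded commutator with $\Dsf$ acting on the left factor alone, not with $\Dsf\otimes 1-1\otimes\Dsf$.

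Even after this correction, your second step is an assertion rather than a computation. The statement that the associated graded of the valuation filtration is a ``Koszul-type complex'' whose cohomology gives exactly one class $T^{i/2(n+1)}\cdot\R$ in each even degree $i\le n-1$ is precisely what needs proof, and parity plus the $\nu\ge 0$ constraint do not suffice: the analogue of $H^*(E_\emptyset)$ is nonzero in \emph{all} even degrees $0,2,\ldots,2n$, and the vanishing of $H^i(\End(\Msf)_{\ge0})$ for $i=n+1,\ldots,2n$ (as well as for the summands indexed by $J\neq\emptyset$) comes from cancellations governed by connecting homomorphisms, not from degree or valuation restrictions on representatives. The paper carries this out by decomposing $(\Cl\otimes\ccle^{\mathrm{op}})_{\ge0}=\bigoplus_J(\Cl\otimes e_J)_{\ge0}$, comparing each summand through a short exact sequence with the two explicit complexes ${E_J}_*$ and ${F_J}_*$, identifying these with Koszul complexes for regular sequences (Lemmas~\ref{lm:co_e} and~\ref{lm:co_f}), and then running the long exact sequence degree by degree in Lemma~\ref{lm:coho_cl_e_j}. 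Your proposal would need an argument of comparable substance (including the lifting of cocycles across the filtration, which you only gesture at) before it could be considered a proof.
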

By Remark~\ref{rem:cl_endomorphism} we can consider $\Cl$ as a subalgebra of $\End(\Msf).$ 
In dealing with the next results we use the following notations.
Let $n\ge1$ be an odd number. 
Define
\[
Y(i,n):=\{(j_1,\ldots, j_i)\in\{0,\ldots,n\}^i| \  j_1<\ldots<j_i  \},
\]
where $1\le i\le n+1,$ and $Y(0,n):=\{\emptyset\}.$
Let $I=(j_1,\ldots,j_i)\in Y(i,n).$ We denote by $I\backslash\{j_k\}$ the $i-1$ tuple $(j_1,\ldots,j_{k-1},j_{k+1},\ldots, j_i)$ where $1\le k\le i.$ If $I=(j_1),$ then by $I\backslash\{j_1\}$ we mean the empty set.
We write $e_I$ for the element $e_{j_1}\cdots e_{j_i}$ in $\Cl,$ where by $e_\emptyset$ we mean to the element $1.$
Similarly, if $V$ is a vector space with a fixed basis $v_0,\ldots,v_n,$ we write $v_I$ for $v_{j_1}\wedge\ldots\wedge v_{j_i}.$ 
Given $k\in\{0,\ldots, n\},$ we write 
\[
\mu_{I,k}=|\{j\in I | \ j<k\}| +\frac{n(n+1)}{2}\delta_{k0}.
\]
Define
\[
Y_0(i,n):=\{(j_1,\ldots,j_i)\in\{1\ldots, n\}^i| \ j_1<\ldots<j_i \}\subset Y(i,n),
\]
for $1\le i\le n,$ and $Y_0(0,n)=\{\emptyset\}.$

Fix $J\in Y_0(k,n)$ such that $0\le k\le n-1$ is even. We begin by defining three complexes that will help us compute the cohomology of $H^*(\End(M)_{\ge0}).$

Define the complex ${E_J}_*$ as follows. For every $0\le i\le n+1,$   let ${E_J}_i$ be the free $({\Ssf})_{\ge0}$-module generated by 
$\{e_{I,J}\}_{I\in Y(i,n)}.$
For $i>n+1$ and $i<0,$ we set ${E_J}_i=0.$
The differential $d_{E_J}$ of ${E_J}_*$ is given by
\[
d_{E_J}(e_{I,J})
=2\sum_{j\in I\cap J}(-1)^{\mu_{I,j}} e_{I\backslash\{j\},J}+ 2\sum_{j\in I\backslash J}(-1)^{\mu_{I,j}} z_je_{I\backslash\{j\},J}.
\]

We define an additional complex ${F_J}_*$ as follows.
For every $0\le i\le n+1,$ let ${F_J}_i$ be the free $({\Ssf})_{\ge0}$-module generated by  
$\{f_{I,J}\}_{I\in Y(i,n)}.$
For $i>n+1$ and $i<0$ set ${F_J}_i=0.$ 
The differential $d_{F_J}$ of ${F_J}_*$ is given by
\[
d_{F_J}(f_{I,J})
=2\sum_{j\in I\cap J}(-1)^{\mu_{I,j}} z_jf_{I\backslash\{j\},J},+ 2\sum_{j\in I\backslash J}(-1)^{\mu_{I,j}} f_{I\backslash\{j\},J}.
\]

Define
\[
Q:=\Rsf\langle x_0,\ldots,x_n ,y_0,\ldots, y_n\rangle.
\]
Let $V$ be an $n+1$-dimensional vector space over $\Rsf,$ and fix a basis $v_0,\ldots,v_n$ of $V.$ Let $K_*$ be the Koszul complex defined by $K_i=Q\otimes \bigwedge^i V,$ for $0\le i\le n+1.$
Set $K_{-1}= Q/\langle y_0,\ldots, y_n\rangle$ and $K_i=0$ for all $i<-1$ and $i>n+1.$
The differential $d_V$ is given by
\[
d_V(v_I)=2\sum_{j\in I}(-1)^{\mu_{I,j}}y_jv_{I\backslash\{j\}}.
\]
Since $K_*$ is a Koszul complex and $y_0,\ldots, y_n$ is a regular sequence, it follows that $K_*$ is exact.

\begin{lm}\label{lm:co_e}
    Let $J\in Y_0(k,n)$ where $0\le k\le n-1$ is even. 
    \begin{enumerate}[label=(\arabic*)]
        \item If $0<k\le n-1,$ then  $H^*({E_J})=0,$
        \item if $k=0,$ then  
    \[H^i(E_\emptyset)=
         \left\{\begin{array}{ll}
        \bigoplus_{i=0}^n T^{\frac{
        i}{n+1}}\cdot \R, &  i=0,\\
        0, & \mathrm{otherwise}.\\
        \end{array} \right.\]
    \end{enumerate}
\end{lm}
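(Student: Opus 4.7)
The plan is to recognize $E_J$ as the Koszul complex over $(\Ssf)_{\geq 0}$ on the sequence $(a_0,\ldots,a_n)$ with $a_j = 1$ for $j\in J$ and $a_j = z_j$ for $j\notin J$; the factor $(-1)^{n(n+1)/2}\delta_{j,0}$ hidden inside $\mu_{I,j}$ is merely an overall sign rescaling at the index $0$ and does not affect exactness. This reduces the lemma to two separate cases depending on whether the Koszul sequence contains a unit.

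In case (1), where $J\neq\emptyset$, the plan is to write down an explicit contracting homotopy. Fix any $j_0\in J$ and set $h(e_{I,J}) = \tfrac{1}{2}(-1)^{\mu_{I\cup\{j_0\},j_0}}e_{I\cup\{j_0\},J}$ when $j_0\notin I$, and $h(e_{I,J}) = 0$ otherwise. A routine Koszul sign computation then gives $d_{E_J}\circ h + h\circ d_{E_J} = \mathrm{id}$; the key point is that the $j=j_0$ summand of $d_{E_J}$ carries the scalar $1$ rather than $z_{j_0}$. Thus $H^*(E_J)=0$.

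Case (2), $J=\emptyset$, will be the main obstacle. The sequence $(z_0,\ldots,z_n)$ is not obviously regular in $(\Ssf)_{\geq 0}$ because of the relation $z_0\cdots z_n = T$. The plan is to exploit that $(\Ssf)_{\geq 0}$ is free over $(\Rsf)_{\geq 0}$ on the canonical basis $\{z^D : D\in\mathbb{Z}_{\geq 0}^{n+1},\ \min_i d_i = 0\}$, and to prove the following key characterization: for $1\leq k \leq n+1$, an element $\alpha = \sum_D r_D z^D$ in canonical form lies in $\langle z_0,\ldots,z_{k-1}\rangle$ if and only if $r_D\in \langle T\rangle$ whenever $d_0 = \cdots = d_{k-1} = 0$. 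The nontrivial direction comes down to bookkeeping: when $z_i\beta_i$ (with $i<k$) is expanded in the canonical basis, each summand $z_i\cdot z^D$ either increases $d_i$ strictly above $0$, or else $D$ has its unique zero at position $i$, in which case reduction via $z_0\cdots z_n = T$ produces exactly one factor of $T$ in the coefficient. Granted this, $(\Ssf)_{\geq 0}/\langle z_0,\ldots,z_{k-1}\rangle$ is free over $(\Rsf)_{\geq 0}/\langle T\rangle$ on the basis $\{z^D : \min_i d_i = 0,\ d_j = 0 \text{ for } j<k\}$, and multiplication by $z_k$ on this quotient simply increments $d_k$ and is therefore injective. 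Hence $(z_0,\ldots,z_n)$ is a regular sequence, so $E_\emptyset$ is exact in positive degrees, and $H^0(E_\emptyset) = (\Ssf)_{\geq 0}/\langle z_0,\ldots,z_n\rangle \cong (\Rsf)_{\geq 0}/\langle T\rangle$. Since $G = \tfrac{1}{n+1}\mathbb{Z}$, this decomposes as $\bigoplus_{i=0}^n T^{i/(n+1)}\cdot\mathbb{R}$, giving the claimed formula.
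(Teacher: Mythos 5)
Your proposal is correct, and it reaches the lemma by a genuinely more hands-on route than the paper. The paper never writes a homotopy: it introduces an auxiliary Koszul complex $K_*$ over the Tate algebra $Q=\Rsf\langle x_0,\ldots,x_n,y_0,\ldots,y_n\rangle$, augmented by $K_{-1}=Q/\langle y_0,\ldots,y_n\rangle$, and base-changes it along a ring map $\phi_J:Q\to\Ssf$ chosen so that the (augmented) complex becomes $E_J$, the augmentation term dying precisely when $J\neq\emptyset$; exactness is then quoted from regularity of $(y_0,\ldots,y_n)$ in $Q$ plus flatness of the $K_i$. Your case (1) replaces this by the standard Koszul contraction $h=\tfrac12\,e_{j_0}\wedge(\cdot)$ for a sequence containing a unit; the sign bookkeeping works exactly as you say, since the anomalous $(-1)^{n(n+1)/2}$ sits only at the index $0$, which never lies in $J$, so it cannot interfere with $j_0$ and cancels in the mixed terms. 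Your case (2) proves outright that $(z_0,\ldots,z_n)$ is a regular sequence in $(\Ssf)_{\ge0}$ via the reduced-monomial Schauder basis, whereas the paper again appeals to flat base change of $K_*$ — an argument whose real content is the vanishing of the higher Koszul homology of $(z_0,\ldots,z_n)$ over $(\Ssf)_{\ge0}$, i.e.\ essentially the regularity you verify, so your version supplies a detail the paper leaves implicit. Two small points to add to make it airtight: your chain of characterizations only handles $z_k$ modulo $\langle z_0,\ldots,z_{k-1}\rangle$ for $k\ge1$, so you still need the initial step that $z_0$ is a non-zero-divisor on $(\Ssf)_{\ge0}$ (immediate: $z_0\alpha=0$ gives $T\alpha=z_1\cdots z_n\,z_0\alpha=0$, and multiplication by $T$ is injective on Schauder coefficients); and, because the ideals live in a completed module, you should record that the termwise decompositions in your key characterization actually converge, which follows from $\nu_\zeta^{\Ssf}(z^{D-e_i})\ge\nu_\zeta^{\Ssf}(z^D)-1$ and $\nu_{\Rsf}(r_D/T)\ge0$. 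With these remarks both routes end at the same computation $(\Ssf)_{\ge0}/\langle z_0,\ldots,z_n\rangle\cong(\Rsf)_{\ge0}/\langle T\rangle\cong\bigoplus_{i=0}^nT^{i/(n+1)}\cdot\R$.
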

\begin{proof}
Define
\[
\phi_J: Q\to \Ssf
\]
by
\[
\phi_J(x_i)=
         \left\{\begin{array}{ll}
        1, &  i\not\in J \\
        z_i, & i\in J
        \end{array}\right. ,\quad 
 \phi_J(y_i)= \left\{\begin{array}{ll}
        z_i, &  i\not\in J \\
        1, & i\in J
        \end{array}\right. 
        , \quad \phi_J(r)=r \quad\forall r\in \Rsf.
\]
Consider the cochain complex $K_*\otimes_Q ({\Ssf})_{\ge0}.$
Since $K_i$ are flat $Q$-modules, it follows that  $K_*\otimes_Q ({\Ssf})_{\ge0}$ is an exact complex. 
Define
\[
\Psi_J: {E_J}_*\to K_*\otimes_Q ({\Ssf})_{\ge0},
\]
\[
e_{I,J}\mapsto v_I.
\]
It follows by the definition of $\phi_J$ that $\Psi_J$ is a cochain map.

Assume $0<k\le n-1.$ Since $J\ne\emptyset,$ it follows by the definition of $\phi_J$ that $K_{-1}\otimes_Q ({\Ssf})_{\ge0}=0,$ which implies that $\Psi_J$ is an isomorphism. Thus,
${E_J}_*$ is an exact complex, and so $H^*(E_J)=0$.

Assume $k=0,$ so $J=\emptyset.$
Hence, $K_{-1}\otimes_Q ({\Ssf})_{\ge0}=({\Ssf})_{\ge0}/\langle z_0,\ldots, z_n\rangle.$
Define the complex ${\overline{E}_\emptyset}_*$ as follows. Set ${\overline{E}_\emptyset}_i={E_\emptyset}_i$ for $i\ne-1,$ and ${\overline{E}_\emptyset}_{-1}=({\Ssf})_{\ge0}/\langle z_0,\ldots, z_n\rangle.$ Thus, the map  
\[
\overline{\Psi}_\emptyset: {\overline{E}_\emptyset}_*\to K_*\otimes_Q ({\Ssf})_{\ge0},
\]
given by $\overline{\Psi}_\emptyset(e_{I,\emptyset})=v_I$ for $I\in Y(i,n)$ where $0\le i\le n+1,$ and $\overline{\Psi}_\emptyset(x)=x$ for $x\in {\overline{E}_\emptyset}_{-1},$ is an isomorphism. Hence, ${\overline{E}_\emptyset}_*$ is an exact complex. Therefore, $H^i(E_\emptyset)\ne0$ only if $i=0,$ and $H^0(E_\emptyset)=({\Ssf})_{\ge0}/\langle z_0,\ldots, z_n\rangle.$
The claim follows since 
\[
({\Ssf})_{\ge0}/\langle z_0,\ldots, z_n\rangle=\R[[T^{1/n+1}]]/\langle T\rangle =\bigoplus_{i=0}^n T^{\frac{
        i}{n+1}}\cdot \R.
\]
\end{proof}
\begin{lm}\label{lm:co_f}
     Let $J\in Y_0(k,n)$ where $0\le k\le n-1$ is even. Then, $H^*(F_J)=0.$
\end{lm}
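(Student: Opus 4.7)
The plan is to mimic the strategy of Lemma~\ref{lm:co_e} by comparing $F_J$ with the same Koszul complex $K_* \otimes_Q (\Ssf)_{\ge 0}$, but via a different $Q$-algebra structure on $(\Ssf)_{\ge 0}$ that swaps the roles of the $y_j$'s for $j \in J$ and $j \notin J$. Comparing the two differentials, one sees that in $E_J$ the variables with a $z_j$ coefficient are those with $j \in I\setminus J$, whereas in $F_J$ they are those with $j \in I\cap J$. This suggests that the analogue of $\phi_J$ for $F_J$ should be the homomorphism $\phi'_J : Q \to \Ssf$ defined by
\[
\phi'_J(x_i)=\begin{cases} z_i, & i\notin J,\\ 1, & i\in J,\end{cases}\qquad \phi'_J(y_i)=\begin{cases} 1, & i\notin J,\\ z_i, & i\in J,\end{cases}\qquad \phi'_J(r)=r,\ r\in \Rsf.
\]

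First I would define a map of graded $(\Ssf)_{\ge 0}$-modules $\Psi'_J : {F_J}_* \to K_* \otimes_Q (\Ssf)_{\ge 0}$ by $f_{I,J} \mapsto v_I \otimes 1$, where $(\Ssf)_{\ge 0}$ is viewed as a $Q$-module via $\phi'_J$. A direct calculation using $d_V(v_I) = 2\sum_{j\in I}(-1)^{\mu_{I,j}} y_j v_{I\setminus\{j\}}$ and the definition of $\phi'_J$ shows that $\phi'_J(y_j) = z_j$ for $j \in J$ and $\phi'_J(y_j) = 1$ for $j \notin J$, which matches exactly the differential $d_{F_J}$. So $\Psi'_J$ is a cochain map in positive degrees.

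Next I would compare the complexes in degree $-1$. By the definition of $Y_0(k,n)$, every $J$ under consideration satisfies $J \subset \{1,\dots,n\}$, and in particular $0 \notin J$. Hence $\phi'_J(y_0) = 1$, which forces
\[
K_{-1} \otimes_Q (\Ssf)_{\ge 0} = (\Ssf)_{\ge 0}/\langle \phi'_J(y_0),\dots,\phi'_J(y_n)\rangle = 0,
\]
since the ideal already contains a unit. Because ${F_J}_{-1} = 0$ as well, the map $\Psi'_J$ extends to an isomorphism of complexes after augmenting $F_J$ by zero in degree $-1$. Exactness of $K_* \otimes_Q (\Ssf)_{\ge 0}$, which was already used in the proof of Lemma~\ref{lm:co_e}, then yields $H^*(F_J) = 0$.

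The main potential obstacle is purely bookkeeping: one must verify carefully that the sign conventions $\mu_{I,j}$ and the coefficients $2$ agree under the identification $f_{I,J} \leftrightarrow v_I \otimes 1$, and that $\phi'_J$ really is a well-defined ring homomorphism with image compatible with the relation $z_0 \cdots z_n = T$ in $\Ssf$ (since $\phi'_J(x_i y_i) = z_i$ for every $i$, the product $\prod_i \phi'_J(x_i y_i) = z_0\cdots z_n = T$ poses no obstruction, but this deserves a one-line check). Beyond that, the argument is a direct reduction to Lemma~\ref{lm:co_e}'s Koszul exactness input.
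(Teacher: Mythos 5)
Your proposal is correct and follows essentially the same route as the paper: the map you call $\phi'_J$ is exactly the homomorphism $\phi_J$ used in the paper's proof, the comparison map $f_{I,J}\mapsto v_I$ is the same $\Psi_J$, and the vanishing of $K_{-1}\otimes_Q(\Ssf)_{\ge0}$ (because $J$ omits some index, e.g. $0\notin J$) together with exactness of the tensored Koszul complex is precisely the paper's argument. The side remarks about signs and the relation $z_0\cdots z_n=T$ are harmless, since $Q$ is a free Tate algebra and no relations need to be checked.
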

\begin{proof}
Define
\[
\phi_J: Q\to \Ssf
\]
by
\[
\phi_J(x_i)=
         \left\{\begin{array}{ll}
        z_i, &  i\not\in J \\
        1, & i\in J
        \end{array}\right. ,\quad 
 \phi_J(y_i)= \left\{\begin{array}{ll}
        1, &  i\not\in J \\
        z_i, & i\in J
        \end{array}\right.
        , \quad \phi_J(r)=r \quad\forall r\in \Rsf.
\]

Consider the cochain complex $K_*\otimes_Q ({\Ssf})_{\ge0}.$
Since $K_i$ are flat $Q$-modules, it follows that  $K_*\otimes_Q ({\Ssf})_{\ge0}$ is an exact complex. Since $J\ne (0,\ldots, n),$ it follows that $K_{-1}\otimes_Q ({\Ssf})_{\ge0}=0.$ Hence, the map $\phi_J$ induces the following isomorphism 
\[
\Psi_J: {F_J}_*\to K_*\otimes_Q ({\Ssf})_{\ge0},
\]
\[
f_{I,J}\mapsto v_I.
\]
Thus,
${F_J}_*$ is an exact complex, and so $H^*(F_J)=0$.

\end{proof}

    We change the grading of the cochain complexes ${E_J}_*$ and ${F_J}_*$ such that the new grading includes the grading of $({\Ssf})_{\ge0}.$
    \begin{rem}\label{rem:cohomology_e_j}
    By Lemma~\ref{lm:co_e} the cohomology of ${E_\emptyset}_*$ is given by
\[H^i(E_\emptyset)=
         \left\{\begin{array}{ll}
        T^{\frac{
        i}{2(n+1)}}\cdot\R, &  i=0,2,4,\ldots, 2n,\\
        0, & \mathrm{otherwise}.\\
        \end{array} \right.\]
\end{rem}
Let $J\in Y_0(k,n)$ where $0\le k\le n-1$ is even.
Let $\Cl\otimes e_J$ be the complex that generated by the elements $e_I\otimes e_J,$ where $I\in Y(i,n)$ such that $0\le i\le n+1,$ and equipped with the differential
\[
d_J(e_I\otimes e_J)= \delta(e_I)\otimes e_J.
\]
Note that where $k=0,$ the complex is simply $\Cl.$  We equip $\Cl\otimes e_J$ with the norm $\nu_{\Msf}\otimes \nu_{\Msf}.$ 
Abbreviate $(\Cl\otimes e_J)_{\ge0}=F^{\ge0}\big(\Cl\otimes e_J\big).$
\begin{lm}\label{lm:coho_cl_e_j}
    Let $J\in Y_0(k,n)$ where $0\le k\le n-1$ is even. 
    \begin{enumerate}[label=(\arabic*)]
        \item If $0<k\le n-1,$ then  $H^*((\Cl\otimes e_J)_{\ge0})=0,$
        \item if $k=0,$ then  
    \[H^i(\Cl_{\ge0})=
         \left\{\begin{array}{ll}
        T^{\frac{i}{2(n+1)}}\cdot\R, &  i=0,2,4,\ldots, n-1,\\
        0, & \mathrm{otherwise}.\\
        \end{array} \right.\]
    \end{enumerate}
\end{lm}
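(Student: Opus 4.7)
The proof proceeds by comparing the cochain complex $(\Cl \otimes e_J)_{\geq 0}$ with the auxiliary complexes $E_J$ and $F_J$ whose cohomologies are given by Lemmas~\ref{lm:co_e} and~\ref{lm:co_f}. First I would compute the differential on $\Cl$ explicitly. Because $\delta$ is a graded super-derivation with $\delta(e_i) = 2\epsilon_i z_i$ (where $\epsilon_0 = (-1)^{n(n+1)/2}$ and $\epsilon_i = 1$ for $i \geq 1$), a direct induction yields
\[
\delta(e_I) = 2\sum_{i \in I}(-1)^{\mu_{I,i}} z_i\, e_{I\backslash\{i\}},
\]
where the shift by $n(n+1)/2$ appearing in $\mu_{I,0}$ exactly absorbs the sign from $\epsilon_0$. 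This matches $d_{E_\emptyset}$ under $e_I \leftrightarrow e_{I,\emptyset}$.

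Next I would unpack the $\geq 0$ condition. A monomial $s(e_I \otimes e_J)$ lies in $(\Cl\otimes e_J)_{\geq 0}$ precisely when $\inf_{\zeta \in \interior{\simp}}\bigl(\nu_\zeta^{\Ssf}(s) + \tfrac{1}{2}\sum_i m_i \zeta_i\bigr) \geq 0$ with $m_i = \mathbb{1}_{i \in I} + \mathbb{1}_{i \in J}$. When $I \cup J = \{0,\ldots,n\}$, so that all $m_i \geq 1$, the admissible coefficients properly contain $\Ssf_{\geq 0}$: they also include $T^{-\alpha} z^\beta$ for $\alpha,\beta$ satisfying explicit polytope inequalities.

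For part (1), $0 < k \leq n-1$, I would use the tensor decomposition $\Cl = \bigotimes_{i=0}^n \Cl_i$ over $\Ssf$ and show that, thanks to the extra admissible coefficients identified above, at least one tensor factor becomes split-exact. Specifically, for each $i \in J$ the generator contributed by $\Cl_i$ admits an explicit primitive of the form $T^{-1/2} z^\beta(e_{I'} \otimes e_J)$ where $I' \cup J = \{0,\ldots,n\}$, which lies in $(\Cl\otimes e_J)_{\geq 0}$ and whose differential produces the desired element. This is the mechanism behind the acyclicity of both $E_J$ (Lemma~\ref{lm:co_e}(a)) and $F_J$ (Lemma~\ref{lm:co_f}) when $J \neq \emptyset$, and yields $H^*((\Cl\otimes e_J)_{\geq 0}) = 0$.

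For part (2), $J = \emptyset$, the complex $(\Cl)_{\geq 0}$ contains the subcomplex $\Cl''$ generated by $\{e_I\}$ over $\Ssf_{\geq 0}$, and $\Cl'' \cong E_\emptyset$ as cochain complexes. The extra elements $T^{-\alpha} z^\beta e_I$ beyond $\Cl''$ serve as primitives for the classes of $H^*(E_\emptyset)$ in degrees $\geq n+1$. For instance, in the $n=1$ case the top class $[T^{1/2}] \in H^2(E_\emptyset)$ becomes exact in $(\Cl)_{\geq 0}$ since $\delta\bigl(-\tfrac{1}{2} T^{-1/2} z_1 e_0\bigr) = T^{1/2}$, where $T^{-1/2} z_1 e_0 \in (\Cl)_{\geq 0}$ but not in $\Cl''$. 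A systematic construction of analogous primitives for each class in $H^{n+2k-1}(E_\emptyset)$ kills the upper half of $H^*(E_\emptyset)$ (degrees $n+1, n+3, \ldots, 2n$), leaving the lower half $T^{i/(2(n+1))} \cdot \R$ in degrees $i = 0, 2, \ldots, n-1$, via the regraded version of Remark~\ref{rem:cohomology_e_j}.

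The main obstacle is the systematic identification of the trivializing primitives in part (1) and the verification that no additional cohomology is introduced in part (2). Both rely on careful analysis of the polytope-type inequalities defining admissible coefficients across the tensor factors, together with precise bookkeeping of the signs $\mu_{I,i}$ and $\epsilon_0$ throughout the computation.
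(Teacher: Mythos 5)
Your overall picture is sound (the explicit differential on $\Cl$, the observation that $(\Cl\otimes e_J)_{\ge0}$ is strictly larger than the free $(\Ssf)_{\ge0}$-span of the $e_I\otimes e_J$, and your $n=1$ primitive $\delta(-\tfrac12 T^{-1/2}z_1e_0)=T^{1/2}$ is correct), but the core of the argument is missing in both parts. For part (1), the split-exact-tensor-factor step does not go through as stated: although $\Cl$ is a graded tensor product of rank-two Clifford algebras over $\Ssf$, the subspace $(\Cl\otimes e_J)_{\ge0}$ is cut out by $\inf_{\zeta\in\interior{\simp}}\big(\nu^{\Ssf}_\zeta(s_I)+\tfrac12\sum_{i\in I}\zeta_i+\tfrac12\sum_{j\in J}\zeta_j\big)\ge0$, and because $\sum_i\zeta_i=1$ this condition couples all indices simultaneously: a coefficient such as $T^{-1/2}z^\beta$ is admissible for $e_I\otimes e_J$ exactly when $I\cup J\cup\mathrm{supp}(\beta)$ covers $\{0,\dots,n\}$. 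So the norm condition does not respect your tensor decomposition, a contracting homotopy on a single factor need not preserve $(\Cl\otimes e_J)_{\ge0}$, and no K\"unneth-type conclusion is available. (Also, the acyclicity of $E_J$ and $F_J$ in the paper comes from exactness of a Koszul complex via the substitutions $\phi_J$, not from primitives of the kind you describe.) For part (2) you identify the subcomplex isomorphic to $E_\emptyset$ and assert that the extra elements kill the classes in degrees $n+1,\dots,2n$, but the two points you defer as ``the main obstacle'' --- that the classes in degrees $0,2,\dots,n-1$ are \emph{not} killed by extra elements, and that the extra elements create no new cohomology --- are precisely what a proof must establish; nothing in the proposal does so.

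For comparison, the paper resolves all of these issues at once with a single short exact sequence $0\to(\Cl\otimes e_J)_{\ge0}[-n-1]\xrightarrow{\varphi}E_J\oplus F_J\xrightarrow{\psi}(\Cl\otimes e_J)_{\ge0}\to0$, where $\psi$ sends $e_{I,J}\mapsto e_I\otimes e_J/\prod_{j\in I\cap J}z_j$ and $f_{I,J}\mapsto T^{1/2}e_I\otimes e_J/\prod_{j\in I\cup J}z_j$ --- i.e.\ it enumerates exactly your ``extra admissible'' elements --- while $\varphi$ embeds a shifted copy of the complex by multiplication by $T^{1/2}\prod_{i\in I\cap J}z_i$ and $\prod_{i\in I\cup J}z_i$. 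The long exact sequence then gives part (1) directly from $H^*(E_J)=H^*(F_J)=0$ (plus boundedness below), and part (2) by a degree-by-degree check in which the induced maps $\varphi_i$ are isomorphisms for $i=n+1$ and for $n+1<i\le 2n$, which kills the top half of $H^*(E_\emptyset)$ and leaves the bottom half $T^{\frac{i}{2(n+1)}}\cdot\R$, $i=0,2,\dots,n-1$, untouched. To salvage your outline you would need either an explicit contracting homotopy compatible with the valuation (not defined factorwise), or a filtration/exact sequence organizing the extra elements --- at which point you are essentially reconstructing the paper's argument.
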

\begin{proof}
Consider the short exact sequence
\[
0\longrightarrow (\Cl\otimes e_J)_{\ge0}[-n-1]\overset{\varphi}{\longrightarrow} {E_J}_*\oplus {F_J}_*\overset{\psi}{\longrightarrow} (\Cl\otimes e_J)_{\ge0}\longrightarrow 0,
\]
where
\[
\varphi(a_I e_I\otimes e_J)= (-a_I\prod_{i\in I\cap J}z_i T^{1/2}e_{I,J}, \ a_I\prod_{i\in I\cup J}z_i f_{I,J} ),
\]
such that $a_I\in \Ssf$ such that $a_I e_I\otimes e_J\in (\Cl\otimes e_J)_{\ge0},$ and
\[
\psi(e_{I,J},0)=\frac{e_I\otimes e_J}{\prod_{j\in I\cap J}z_j}, \quad \psi(0,f_{I,J})=\frac{T^{1/2}e_I\otimes e_J}{\prod_{j\in I\cup J}z_j}.
\]
Consider the long exact sequence that induced by this short exact sequence.
If $k>0,$ then by Lemmas~\ref{lm:co_e},~\ref{lm:co_f} we get $H^*(E_J)=H^*(F_J)=0.$ Hence, $H^*((\Cl\otimes e_J)_{\ge0})=0.$

Let $k=0.$
Note that since $H^*(F_\emptyset)=0,$ $\varphi$ induces the following maps 
\[
\varphi_i: H^{i-n-1}(\Cl_{\ge0})\rightarrow H^i(E_J)\oplus H^i(F_J), \quad i\in \Z,
\]
\[
[x]\mapsto ([T^{1/2}\cdot x],0). 
\]
The cohomology $H^i(\Cl_{\ge0})$ is computed as follows:
\begin{enumerate}[label=(\arabic*)]
    \item\label{case1_cohomology_clifford} Since $\Cl_{\ge0}$ consists elements of non-negative degree, we get $H^i(\Cl_{\ge0})=0$ for $i<0,$
    \item\label{case3_cohomology_clifford}  let $0\le i< n,$ and
   consider the exact sequence
    \[
    \ldots\rightarrow  H^{i-n-1}(\Cl_{\ge0})\rightarrow H^i(E_\emptyset)\oplus H^i(F_\emptyset) \rightarrow  H^i(\Cl_{\ge0})\rightarrow H^{i-n} (\Cl_{\ge0})\ldots.
    \]
    Since $i-n<0,$ it follows by~\ref{case1_cohomology_clifford} that $H^{i-n-1}(\Cl_{\ge0}), H^{i-n}(\Cl_{\ge0})=0.$ Hence, $H^i(E_\emptyset)\oplus H^i(F_\emptyset) \cong H^i(\Cl_{\ge0}).$
     Assume $i$ is odd.
    By Lemma~\ref{lm:co_f} we have $H^i(F_\emptyset)=0,$ and
    since $i$ is odd we get by Remark~\ref{rem:cohomology_e_j} that $H^i(E_\emptyset)=0.$
    Hence, $H^i(\Cl_{\ge0})=0.$
    Assume now $i$ is even.
    By Remark~\ref{rem:cohomology_e_j} and Lemma~\ref{lm:co_f}, we have $H^i(E_\emptyset)\cong T^{\frac{i}{2(n+1)}}\cdot \R, H^i(F_\emptyset)=0$. Hence,
     $H^i(\Cl_{\ge0})\cong T^{\frac{i}{2(n+1)}}\cdot \R,$
    \item let $i=n,n+1,$ and consider the exact sequence
      \[
    \ldots\rightarrow H^n(\Cl_{\ge0})\rightarrow H^0(\Cl_{\ge0})\rightarrow H^{n+1}(E_\emptyset)\oplus H^{n+1}(F_\emptyset)\rightarrow H^{n+1}(\Cl_{\ge0})\rightarrow \ldots.
    \]
    By Remark~\ref{rem:cohomology_e_j} we have $H^{n+1}(E_\emptyset)\cong T^{1/2}\cdot \R$ and by~\ref{case3_cohomology_clifford} we have $H^0(\Cl_{\ge0})\cong\R.$ Thus, $\varphi_{n+1}$ is an isomorphism, so $H^n(\Cl_{\ge0})=H^{n+1}(\Cl_{\ge0})=0.$
    \item\label{case4:cohomology_clifford} let $n+1<i\le 2n.$ Assume $i$ is even and consider 
     the exact sequence
    \[
    \ldots\rightarrow H^{i-n-1}(\Cl_{\ge0})\rightarrow H^i(E_\emptyset)\oplus H^i(F_\emptyset)\rightarrow H^i(\Cl_{\ge0})\rightarrow \ldots.
    \]
    By~\ref{case3_cohomology_clifford} we have $H^{i-n-1}(\Cl_{\ge0})\cong T^{\frac{i-n-1}{2(n+1)}}\cdot\R,$ and by Remark~\ref{rem:cohomology_e_j} we have $H^i(E_\emptyset)\cong T^{\frac{i}{2(n+1)}}\cdot\R.$ Hence, $\varphi_i$ is an isomorphism, so $H^i(\Cl_{\ge0})=0.$
    Assume $i$ is odd and consider 
    the exact sequence
    \[
    \ldots\rightarrow H^i(\Cl_{\ge0})\rightarrow H^{i-n}(\Cl_{\ge0})\rightarrow H^{i+1}(E_\emptyset)\oplus H^{i+1}(F_\emptyset)\rightarrow  \ldots.
    \]
     By~\ref{case3_cohomology_clifford} we have $H^{i-n}(\Cl_{\ge0})\cong T^{\frac{i-n}{2(n+1)}}\cdot\R,$ and by Remark~\ref{rem:cohomology_e_j} we have $H^{i+1}(E_\emptyset)\cong T^{\frac{i+1}{2(n+1)}}\cdot\R.$ Hence, $\varphi_{i+1}$ is an isomorphism, so $H^i(\Cl_{\ge0})=0.$
    \item if $i>2n,$ then since by Remark~\ref{rem:cohomology_e_j} and  Lemma~\ref{lm:co_f} $H^i(E_\emptyset)=H^i(F_\emptyset)=0,$ it follows that $H^i(\Cl_{\ge0})=0.$
\end{enumerate}

\end{proof}
Recall the definition of $\ccle$ in Section~\ref{section:azumaya_algebra}. Let $\ccle^{\mathrm{op}}$ denote the opposite algebra of $\ccle.$
Define 
\[F:\Cl\otimes\ccle^{\mathrm{op}}\rightarrow \End(\Msf),
\]
\[
x\otimes y\mapsto \phi_{x,y},
\]
where $\phi_{x,y}(m)=xmy,$ $m\in\Msf.$
Note that $F$ preserves norms since
\begin{multline*}
\nu_{\Msf}(\phi_{x,y})=\inf_{\zeta\in\interior{\simp}}\inf_{m\in \Msf\backslash\{0\}}\{\nu_\zeta^{\Msf}(xmy)-\nu_\zeta^{\Msf}(m)\}\\
=\inf_{\zeta\in\interior{\simp}}\inf_{m\in \Msf\backslash\{0\}}\{\nu_\zeta^{\Msf}(x)+\nu_\zeta^{\Msf}(y)\}=\nu_{\Msf}\otimes\nu_{\Msf}(x\otimes y).
\end{multline*}
\begin{lm}\label{lm:F_isomorphism}
    $F$ is an isomorphism.
\end{lm}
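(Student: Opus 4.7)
The argument has two components: a well-definedness/rank verification, and an injectivity argument obtained by reducing modulo every maximal ideal of $\Ssf$.

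First I would confirm that $F$ is a well-defined $\Ssf$-algebra homomorphism. Since $n$ is odd, the volume element $\omega := e_0 \cdots e_n$ anticommutes with each generator $e_i$ in $\Cl$, and therefore commutes with every even element of $\Cl$. Consequently, for $y \in \Cl_{\mathrm{even}}$ one has $(\omega - T^{1/2})\,y = y\,(\omega - T^{1/2})$, which simultaneously shows that right multiplication by $y$ preserves the left ideal $\langle \omega - T^{1/2}\rangle$ and that the resulting right action of $\Cl_{\mathrm{even}}$ on $\Msf$ factors through $\ccle$. Combined with the tautological left $\Cl$-action, this exhibits $F$ as an honest algebra map, with the opposite convention in $\ccle^{\mathrm{op}}$ matching the order-reversal when composing the maps $\phi_{x,y}$.

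Next I would verify that both sides are free $\Ssf$-modules of rank $2^{2n}$. The algebra $\Cl$ has rank $2^{n+1}$; the centrality of $\omega$ in $\Cl_{\mathrm{even}}$ together with $\omega^2 = T$ splits $\Cl_{\mathrm{even}}$ (of rank $2^n$) into the two eigenspace quotients, so $\ccle = \Cl_{\mathrm{even}}/(\omega - T^{1/2})$ has rank $2^{n-1}$. Since the averaging construction of Section~\ref{ssec:DiracMF} gives that $\Msf$ is free of rank $2^n$, both sides have rank $2^{n+1}\cdot 2^{n-1} = 2^{2n}$. It therefore suffices to prove that $\det F$ is a unit in $\Ssf$, equivalently that $\det F \notin \mathfrak{m}$ for every maximal ideal $\mathfrak{m}\subset\Ssf$.

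Set $k = \Ssf/\mathfrak{m}$ and consider $\bar F := F \otimes_\Ssf k$. Because $T$ is a unit in $\Rsf$, its image $\bar T \in k$ is a unit, and the relation $z_0 \cdots z_n = T$ forces each $\bar z_i \in k^\times$; hence the quadratic form defining $\Cl \otimes k$ is nondegenerate and Lemma~\ref{lm:clifford_is_csga} implies $\Cl \otimes k$ is a CSGA. By Lemma~\ref{lm:ccle_csga}, $\ccle \otimes k$ is also a CSGA, and then Lemma~\ref{lm:tensor_producy_csga} yields that $(\Cl \otimes k)\otimes(\ccle \otimes k)^{\mathrm{op}}$ is a CSGA. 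Thus $\bar F$ is a nonzero homomorphism from a simple $k$-algebra into $\End_k(\Msf\otimes k)$, so it is injective; matching $k$-dimensions $2^{2n}$ then forces $\bar F$ to be bijective, giving $\det F \notin \mathfrak{m}$ as required.

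\textbf{Main obstacle.} The chief subtleties I anticipate are a careful justification that $\Msf$ is free of rank $2^n$ over $\Ssf$ (coming from the averaging construction for the $J_\Zt$-action) and the parity bookkeeping showing that $\omega$ is central in $\Cl_{\mathrm{even}}$ with $\omega^2 = T$; once these structural facts are pinned down, the CSGA-reduction argument runs essentially automatically.
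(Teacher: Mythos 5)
Your proof is essentially the paper's argument at its core: reduce modulo an arbitrary maximal ideal $\mathfrak{m}\subset \Ssf$, use Lemmas~\ref{lm:clifford_is_csga}, \ref{lm:ccle_csga} and \ref{lm:tensor_producy_csga} to see that $(\Cl\otimes k)\otimes(\ccle^{\mathrm{op}}\otimes k)$ is a CSGA so that the reduced map is injective, and conclude bijectivity over each residue field by the dimension count $2^{n+1}\cdot 2^{n-1}=2^{2n}$. Where you differ is in the globalization step: the paper lifts the residue-field statement via Nakayama (surjectivity of $F$ from surjectivity of every $F_k$, then projectivity of $\End(\Msf)$ splits off $\ker F$, and Nakayama again kills it), whereas you argue that both sides are free of rank $2^{2n}$ and that $\det F$ is a unit because it is nonzero in every residue field. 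Your route is fine, but note that it genuinely requires freeness of $\ccle$ over $\Ssf$, and the eigenspace splitting of $\Cl_{\mathrm{even}}$ by the central element $\omega=e_0\cdots e_n$ (with $\omega^2=T$) only hands you a projective summand of rank $2^{n-1}$; to invoke a determinant you should either exhibit an explicit basis (the images of $e_J$ with $|J|$ even and $0\notin J$ work, since $T^{1/2}$ is invertible and $e_I=\pm T^{-1/2}z_I e_{I^c}$ for $0\in I$) or retreat to the Nakayama/projectivity argument, which needs no freeness. Your added checks — that right multiplication by even elements preserves the left ideal $\langle \omega-T^{1/2}\rangle$ so $F$ is well defined and factors through $\ccle$, and that $z_0\cdots z_n=T$ forces each $\bar z_i\in k^\times$ so the reduced quadratic form is nondegenerate — are points the paper leaves implicit and are worth having.
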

\begin{proof}
    Let $\mathfrak{m}\subset \Ssf$ be a maximal ideal and let $k=\Ssf/\mathfrak{m}.$ 
We have
\[
\dim_k\Cl\otimes k=2^{n+1}, \quad \dim_k \ccle\otimes k=2^{n-1}, \quad \dim_k\End(\Msf)\otimes k=2^{2n}.
\]
Write $F_k=F\otimes \id_k.$
First, we want to show that $F_k$ is an isomorphism. Note that
\begin{equation}\label{eq:clifford_times_k}
\Cl\otimes\ccle^{\mathrm{op}}\otimes k\cong (\Cl\otimes k)\otimes(\ccle^{\mathrm{op}}\otimes k) .
\end{equation}
Let 
\[G_k:(\Cl\otimes k)\otimes(\ccle^{\mathrm{op}}\otimes k)\to\End(\Msf)\otimes k
\]
denote the composition of $F_k$ with a map induces the isomorphism in equation~\eqref{eq:clifford_times_k}. Hence, in order to show that $F_k$ is an isomorphism, it suffices to show that $G_k$ is an isomorphism.
By Lemma~\ref{lm:clifford_is_csga}  $\Cl\otimes k$ is a CSGA, and by Lemma~\ref{lm:ccle_csga} $\ccle\otimes k$ is a CSGA. 
Thus, it follows by Lemma~\ref{lm:tensor_producy_csga} that  $(\Cl\otimes k)\otimes(\ccle\otimes k)$ is a CSGA, which implies that $G_k$ is injective.  In addition,
since \[
\dim_k (\Cl\otimes k)\otimes(\ccle^{\mathrm{op}}\otimes k)=\dim_k\End(\Msf)\otimes k,
\]
it follows that $G_k$ is  surjective and so an isomorphism.
Hence,
for every field $k=\Ssf/\mathfrak{m}$ such that $\mathfrak{m}\subset \Ssf$ is a maximal ideal we get that $F_k$ is an isomorphism.
By Nakayam's lemma this implies that $F$ is surjective. Since $\End(\Msf)$ is projective, we obtain that $\Cl\otimes\ccle\cong\End(\Msf)\oplus \ker F. $  Hence, again by Nakayama's lemma, we get $\ker F=0.$    
Hence, $F$ is an isomorphism.
\end{proof}
\begin{proof}[Proof of Proposition~\ref{prop:coho}]
By Lemma~\ref{lm:F_isomorphism} we have
\[
\big(\Cl\otimes\ccle^{\mathrm{op}}\big)_{\ge0}\cong \End(\Msf)_{\ge0}.
\]
Since, 
\[
(\Cl\otimes \ccle^{\mathrm{op}})_{\ge0}=\bigoplus_{\substack{J\in Y_0(k,n)\\ 0\le k\le n-1 \mathrm{\ is \ even}}}(\Cl\otimes e_J)_{\ge0},\]
it follows by Lemma~\ref{lm:coho_cl_e_j} that 

\[
H^i((\End(\Msf))_{\ge0})\cong
         \left\{\begin{array}{ll}
        T^{\frac{i}{2(n+1)}}\cdot\R, &  i=0,2,4,\ldots, n-1,\\
       0, & \mathrm{otherwise}.\\
        \end{array} \right.\]
\end{proof}

\begin{proof}[Proof of Theorem~\ref{lm:cohomology}]
    Consider the short exact sequence
    \[
    0\longrightarrow \End(\Msf)_{\ge0}[-2]\overset{\phi}{\longrightarrow} \End(\Msf)_{\ge0}\overset{\pi}{\longrightarrow }\End(\Msf)_0\longrightarrow0,
\]
where $\phi(\Phi)=T^{\frac{1}{n+1}}\Phi$ and $\pi$ is the projection. Note that the maps $\phi_i:H^{i-2}\End(\Msf)_{\ge0}\rightarrow H^i(\End(\Msf)_{\ge0})$ are given by multiplication by $T^{\frac{1}{n+1}}.$
Consider the induced long exact sequence 
\[
 \ldots\rightarrow H^{i-2}(\End(\Msf)_{\ge0})\rightarrow H^{i}(\End(\Msf)_{\ge0})\rightarrow H^{i}(\End(\Msf)_0)\rightarrow H^{i-1}(\End(\Msf)_{\ge0})\rightarrow\ldots.
\]
\begin{enumerate}[label=(\arabic*)]
    \item Let $i<0.$ By Proposition~\ref{prop:coho} we have $H^i(\End(\Msf)_{\ge0})=0,$ so $H^i(\End(\Msf)_0)=0,$ 
    \item let $i=0,$ by Proposition~\ref{prop:coho} we have $H^j(\End(\Msf)_{\ge0})=0$ for $j=-1,-2.$ Thus, $H^0(\End(\Msf)_{\ge0})\cong H^0(\End(\Msf)_0).$ Hence, by Proposition~\ref{prop:coho} we get $H^i(\End(\Msf)_0)\cong\R,$ 
    \item let $0<i<n.$ Assume $i$ is even. Then, by Proposition~\ref{prop:coho} we have $H^{i-2}(\End(\Msf)_{\ge0})\cong T^{\frac{i-2}{2(n+1)}}\cdot\R$ and $H^i(\End(\Msf)_{\ge0})\cong T^{\frac{i}{2(n+1)}}\cdot\R.$ Hence, $\phi_i$ is an isomorphism. Thus, $H^i(\End(\Msf)_0)=0.$ Assume $i$ is odd. By Proposition~\ref{prop:coho} 
    we have  $H^{i-1}(\End(\Msf)_{\ge0})\cong T^{\frac{i-1}{2(n+1)}}\cdot\R$ and $H^{i+1}(\End(\Msf)_{\ge0})\cong T^{\frac{i+1}{2(n+1)}}\cdot\R.$ Thus,
    the map $\phi_{i+1}$ is an isomorphism. Hence, $H^i(\End(\Msf)_0)=0,$
    \item let $i=n.$ We have the following exact sequence
    \[
 \ldots\rightarrow H^n(\End(\Msf)_{\ge0})\rightarrow H^n(\End(\Msf)_0)\rightarrow H^{n-1}(\End(\Msf)_{\ge0})\rightarrow H^{n+1}(\End(\Msf)_{\ge0})\rightarrow\ldots.
\]
By Proposition~\ref{prop:coho} we have $H^j(\End(\Msf)_{\ge0})=0$ for $j=n,n+1.$ Hence, we get $H^n(\End(\Msf)_0)\cong H^{n-1}(\End(\Msf)_{\ge0}).$ Thus, by Proposition~\ref{prop:coho}  $H^n(\End(\Msf)_0)\cong T^{\frac{n}{2(n+1)}}\cdot \R,$
\item let $i>n.$ By Proposition~\ref{prop:coho} we
have $H^i(\End(\Msf)_{\ge0})=0.$ Thus, $H^i(\End(\Msf)_0)=0.$
\end{enumerate}

Since $({\Rsf})_0=\R,$ it follows that $\Msf$ is a spherical object.
\end{proof}

\subsection{The normed \texorpdfstring{$\infty$}{infinity}-trace  \texorpdfstring{$\Theta$}{Theta}}\label{section:tight_mf_cpn}
Recall the definition of $\Theta$ given in Theorem~\ref{Theta}. 
Our goal is to prove Theorem~\ref{thm:norm} which states that $\Theta$ is a normed $\infty$-trace on $\MF(\Wsf).$ Consider a general normed matrix factorization category $\MF(W_{\triangle,\sigma},w),$ where $w$ is admissible, as defined in Section~\ref{Section:toric_construction_sub}. We begin by proving that $\Theta$ is of dimension $\dim \triangle,$ and when $\dim \triangle$ is odd, that $\Theta$ is cohomologically unital.
Then, we prove that $\Theta$ on $\MF(\Wsf,w)$ is normed.

Let $\triangle$ be a Delzant polytope of dimension $n=\dim\triangle$.
Let $LG(\triangle,\sigma)$ be a valued Landau-Ginzburg model as defined in Section~\ref{Section:toric_construction_sub}, and let $\MF(W_{\triangle,\sigma},w)$ be the associated normed matrix factorization category, where $w$ is admissible. 
We may assume that $z_1,\ldots, z_n$ are the coordinates of Lemma~\ref{lm:u_coordinates}.
In the following we use the notations of Theorem~\ref{Theta}.
Let  
$l\ge1$ and fix $1\le i\le n.$ Let $r_1,\ldots, r_n\in \Z_{\ge0}$ such that $r_i\ge1,$ $r_1+\ldots+r_n=l,$ and let $k_1,\ldots,k_l\in \Z_{\ge0}$ such that $k_1+\ldots+k_l=n-1.$  Write
\[
\Vec{j}=(j_1^{(l)},\ldots,j_{k_1}^{(l)},\ldots, j_1^{(1)},\ldots,j_{k_l}^{(l)})\in S_n^i ,\quad \Vec{i}=(i^{(1)},\ldots,i^{(l)})\in \Lambda_n^l(r_1,\ldots,r_n), \quad 
\]
\[
\Vec{z}=(z_1,\ldots,z_n), \quad d\Vec{z}=dz_1\wedge\ldots\wedge dz_n.
\]
Let $(M^i,D^i, \{\nu_\zeta^{M^i}\}_{\zeta\in\interior{\triangle}})$ for $i=1,\ldots ,l$ be objects of $\MF(W_{\triangle,\sigma},w).$
We write $\Vec{\Phi}=(\Phi_1,\ldots,\Phi_l)$ for 
$\Phi_l\in \Hom(M^l,M^1)$ and $\Phi_j\in \Hom(M^j,M^{j+1}),$ $1\le j\le l-1,$
and let $\Omega=\frac{dz_1\wedge\ldots \wedge dz_n}{z_1\cdots z_n}.$  Define
    \[
    f_{l,\Vec{j},\Vec{i},\Vec{\Phi}}(\Vec{z}):=\mathrm{str}\big( \Phi_l\partial_{i^{(l)}}D^l\partial_{j_1^{(l)}}D^l\cdots \partial_{j_{k_1}^{(l)}}D^l\cdots \Phi_1\partial_{i^{(1)}}D^1\partial_{j_1^{(1)}}D^1\cdots \partial_{j_{k_l}^{(1)}}D^1\big),
    \]
    \[
    F_{l,\Vec{j},\Vec{i},\Vec{\Phi}}(\Vec{z}):=\frac{f_{l,\Vec{j},\Vec{i},\Vec{\Phi}}(\Vec{z})\wedge\Omega}{(\partial_1\Wtr)^{r_1+1}\cdots (\partial_i\Wtr)^{r_i}\cdots(\partial_n\Wtr)^{r_n+1}}.
    \]
In this section we prove properties concerning $\Theta$ as an operator on $\MF(W_{\triangle,\sigma},w).$ 
Since $\Theta_l(\Phi_l\otimes\ldots\otimes \Phi_1)$ is a linear combination of residues of differential forms of the form $ F_{l,\Vec{j},\Vec{i},\Vec{\Phi}}(\Vec{z}),$ it will be suffices in the proofs to talk in the level of  $F_{l,\Vec{j},\Vec{i},\Vec{\Phi}}(\Vec{z})$ instead of $\Theta.$ Hence, in the following we fix $i,r_1,\ldots, r_n,k_1,\ldots k_l,$ and $\Vec{j},$ $\Vec{i},$ $\Vec{\Phi}$ satisfying the conditions above. 

\subsubsection{\texorpdfstring{$\Theta$}{Theta} is cohomologically unital}
Recall we denote by $C_{\Wtr}$  the set of critical points of the superpotential $\Wtr.$ 
Note that since $|D^j|=1$ and $|z_i|=2,$ it follows that $|\partial_iD^j|=-1.$ 
In addition, if $\Phi\in\End(M)$ such that $|\Phi|$ is odd, then $\mathrm{str}(\Phi)=0.$
\begin{lm}\label{lm:vanish_theta}
 If $n$ is odd, then  $\Theta$ is cohomologically unital on $\MF(W_{\triangle,\sigma},w).$
\end{lm}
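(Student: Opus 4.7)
The plan is to reduce the statement to a parity count on the integrand of $\Theta_l$. By Definition~\ref{def:cy_str}, cohomological unitality of $\Theta$ requires that $\Theta$ vanish on the image of the natural embedding $\CC_*^\lambda(R_{\triangle}) \hookrightarrow \CC_*^\lambda(\End(M))$ for every object $M$ of $\MF(W_{\triangle,\sigma},w)$. By Remark~\ref{rem:cyclic_cx_r}, $\CC_l^\lambda(R_\triangle)$ vanishes for even $l>0$ and is the $R_\triangle$-module generated by $1^{\otimes l}$ for $l$ odd. Consequently, the image of the embedding in degree $l$ is $R_\triangle\cdot \id_M^{\otimes l}$ for odd $l$ and is zero for even $l$. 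Since $\Theta$ is $R_\triangle$-linear (residues, supertraces, and the combinatorial coefficients in Theorem~\ref{Theta} are all $R_\triangle$-linear in the inputs), it suffices to prove $\Theta_l(\id_M^{\otimes l})=0$ for every odd $l\geq 1$.

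Now I would unpack the formula for $\Theta_{l,x}(\id_M^{\otimes l})$ from Theorem~\ref{Theta}. Each factor $\Phi_j=\id_M$ is central and acts trivially in the product appearing inside the supertrace, so each summand in $\Theta_{l,x}(\id_M^{\otimes l})$ is a constant multiple of a residue whose integrand is
\[
\frac{\mathrm{str}\big(\partial_{i^{(l)}}D\,\partial_{j_1^{(l)}}D\cdots\partial_{j_{k_1}^{(l)}}D\cdots \partial_{i^{(1)}}D\,\partial_{j_1^{(1)}}D\cdots\partial_{j_{k_l}^{(1)}}D\big)\wedge \Omega}{(\partial_1 W_\triangle)^{r_1+1}\cdots(\partial_i W_\triangle)^{r_i}\cdots(\partial_n W_\triangle)^{r_n+1}}.
\]
The product inside the supertrace contains exactly $l$ of the $\partial_{i^{(j)}}D$'s (one for each slot) together with $k_1+\cdots+k_l=n-1$ of the $\partial_{j_*^{(j)}}D$'s, for a total of $l+n-1$ factors of the form $\partial_{\bullet} D$.

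The key observation is the parity of these factors. Since $|D|=1$ and $|z_i|=2$, every $\partial_iD$ has degree $-1$, hence is an \emph{odd} endomorphism of $M$ with respect to the $\Z/2$-grading governing the supertrace. The product of $l+n-1$ such operators is therefore even or odd according to the parity of $l+n-1$. With both $n$ and $l$ odd, $l+n-1$ is odd, so the integrand's supertrace is the supertrace of an odd endomorphism, which vanishes identically. Every term in $\Theta_l(\id_M^{\otimes l})$ therefore vanishes, giving $\Theta_l(\id_M^{\otimes l})=0$ for all odd $l$, and together with the vanishing of $\CC_l^\lambda(R_\triangle)$ for even $l$, this establishes the lemma. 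There is no real obstacle here beyond correctly tracking that the $\id_M$ insertions can be stripped out of the supertrace and then applying the parity argument; the oddness of $n$ is precisely what makes the total number of $\partial D$ factors in every odd-length summand come out odd.
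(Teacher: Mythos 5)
Your proposal is correct and follows essentially the same route as the paper: reduce via Remark~\ref{rem:cyclic_cx_r} to showing $\Theta_{l,x}(\id_M^{\otimes l})=0$ for $l$ odd, then note that the product inside the supertrace consists of $l+n-1$ factors $\partial_\bullet D$, each of odd degree, so for $n,l$ odd it is an odd endomorphism and its supertrace vanishes. This is precisely the paper's degree-parity argument, so nothing further is needed.
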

\begin{proof}
Let $(M,D, \{\nu_\zeta^M\}_{\zeta\in\interior{\triangle}})$ be an object of $\MF(\Wtr,w).$ 
By Remark~\ref{rem:cyclic_cx_r}, we need to show that $\Theta_{l,x}(\id_M^{\otimes l})=0$ for $l$ odd and $x\in C_{\Wtr}.$ 
    Since $n,l$ are odd numbers, it follows that
        \begin{align*}
|\id_M\partial_{i^{(l)}}D^l\partial_{j_1^{(l)}}D^l\cdots \partial_{j_{k_l}^{(l)}}D^l\cdots \id_M\partial_{i^{(1)}}D^1\partial_{j_1^{(1)}}D^1\cdots \partial_{j_{k_1}^{(1)}}D^1|&=-(k_1+\ldots +k_l)+l\\
&=-n+1+l\in 2\Z+1.
        \end{align*}
Hence, 
    \[
    f_{l,\Vec{j},\Vec{i},\Vec{\id}_M}(\Vec{z})=0 \quad\Longrightarrow \quad\Theta_{l,x}(\id_M^{\otimes l})=0.
    \]
    \end{proof}

\subsubsection{Dimension of  \texorpdfstring{$\Theta$}{Theta}}
For $x\in C_{\Wtr},$ we write $x=(x_1,\ldots,x_n).$ 
Note that for $\Phi\in\End(M)$ such that $|\Phi|$ is even, we have
$|\mathrm{str}(\Phi)|=|\Phi|.$  
\begin{lm}\label{lm:degree_theta}
    $\Theta$ is an operator of dimension $n$.
    \end{lm}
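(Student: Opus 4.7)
The plan is to verify that $\Theta$ shifts total degree by exactly $n$. That is, on every generator $a = \Phi_l \otimes \cdots \otimes \Phi_1$ of the cyclic complex, I want to show
\[
|\Theta_l(a)|_{\Rtr} = |a|_{\CC} - n.
\]
Since by definition $|a|_{\CC} = |\Phi_l| + \sum_{i=1}^{l-1} |s\Phi_i| = \sum_{j=1}^l |\Phi_j| - (l-1)$, and since the convention $(V[p])^i = V^{p+i}$ makes the requirement $\Theta : \CC_*^\lambda \to \Rtr[-n]$ of Definition~\ref{def:cy_str} amount precisely to this degree shift, it is equivalent to establish
\[
|\Theta_l(\Phi_l \otimes \cdots \otimes \Phi_1)|_{\Rtr} = \sum_{j=1}^l |\Phi_j| - (l-1) - n.
\]

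I would prove the identity by tracking degrees through the explicit formula for $\Theta_{l,x}$ in Theorem~\ref{Theta}. The numerical coefficients, signs, and cyclic permutation indices are all scalar and contribute degree zero; the supertrace preserves total degree; and the local residue $\mathrm{Res}_x$, viewed as a graded operation on rational top forms, also preserves degree. Consequently it suffices to compute the total degree of the integrand $F_{l, \vec j, \vec i, \vec \Phi}(\vec z)$, using the basic inputs $|z_i| = 2$, the convention $|dz_i| = |z_i|$, the matrix factorization equation $(D^j)^2 = (\Wtr - w)\id_{M^j}$ which forces $|D^j| = 1$, and consequently $|\partial_k D^j| = -1$ and $|\partial_k \Wtr| = 0$.

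The degree count then proceeds as follows. The expression inside the supertrace contains the $l$ factors $\Phi_j$ contributing $\sum_{j=1}^l |\Phi_j|$, the $l$ derivatives $\partial_{i^{(j)}} D^j$ indexed by $\vec i$, and the $k_1 + \cdots + k_l = n-1$ derivatives $\partial_{j^{(\cdot)}_\cdot} D^\cdot$ indexed by $\vec j$, with each derivative contributing $-1$. Thus the supertrace has total degree $\sum_j |\Phi_j| - l - (n-1)$. The form $\Omega = \frac{dz_1 \wedge \cdots \wedge dz_n}{z_1 \cdots z_n}$ sits in degree $0$, and the denominator $(\partial_1\Wtr)^{r_1+1} \cdots (\partial_i \Wtr)^{r_i} \cdots (\partial_n \Wtr)^{r_n+1}$ is a product of degree-zero factors and thus has degree $0$. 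Summing gives $|F_{l, \vec j, \vec i, \vec \Phi}| = \sum_j |\Phi_j| - l - n + 1$, and applying $\mathrm{Res}_x$ yields an element of $\Rtr$ of the same total degree, which is exactly $|a|_{\CC} - n$ as required.

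The argument thus reduces to a single degree count, so the main point to pin down carefully is the graded convention on differential forms that places $\Omega$ in degree $0$ together with the fact that the local residue is a degree-preserving graded operator on rational top forms. Once these conventions are fixed, the conclusion holds uniformly in the summation index $l$, in the critical point $x \in C_{\Wtr}$, and in the combinatorial data $\vec i, \vec j, r_1,\ldots,r_n$, so $\Theta = \sum_{l \geq 1} \Theta_l$ is of dimension $n$.
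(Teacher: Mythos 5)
Your proof is correct and follows essentially the same route as the paper: a direct degree count through the explicit formula for $\Theta_{l,x}$, using $|\Phi_l\otimes\cdots\otimes\Phi_1|=\sum_j|\Phi_j|-(l-1)$, $|\partial_k D^j|=-1$, and $|\partial_k \Wtr|=0$. The only difference is bookkeeping: the paper assigns $dz_i$ degree $0$, so $|\Omega|=-2n$ and the residue shifts degree by $+2n$ via the Laurent expansion in $\prod_j(z_j-x_j)^{-1}$, whereas you assign $|dz_i|=2$, making $\Omega$ degree $0$ and the residue degree-preserving; the two conventions cancel and yield the same conclusion $|\mathrm{Res}_x(F_{l,\vec j,\vec i,\vec\Phi})|=|\Phi_l\otimes\cdots\otimes\Phi_1|-n$.
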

\begin{proof}
     Let $l\ge1.$ We have
     \[
|\Phi_l\otimes\ldots\otimes\Phi_1|=|\Phi_l|+\sum_{j=1}^{l-1}(|\Phi_j|-1)=\sum_{j=1}^l|\Phi_j|- (l-1), 
    \] 
where $\Phi_l\in \Hom(M^l,M^1)$ and $\Phi_j\in \Hom(M^j,M^{j+1}),$ $1\le j\le l-1.$
It suffices to show that if $\mathrm{Res}_x(F_{l,\Vec{j},\Vec{i},\Vec{\Phi}})\ne0,$ then $|\mathrm{Res}_x(F_{l,\Vec{j},\Vec{i},\Vec{\Phi}})|=|\Phi_l\otimes\ldots\otimes\Phi_1|-n$ where  $x\in C_{\Wtr}.$ 
Assume $f_{l,\Vec{j},\Vec{i},\Vec{\Phi}}\ne 0,$ then we have $|f_{l,\Vec{j},\Vec{i},\Vec{\Phi}}|=\sum_{j=1}^l|\Phi_j| -(n+l-1).$
    Since $|\Wtr|=2$ and $|z_i|=2,$ it follows that
    $|\partial_i\Wtr|=0.$ Since $|\Omega|=-2n,$ it follows that
    \[
    |F_{l,\Vec{j},\Vec{i},\Vec{\Phi}}|=|f_{l,\Vec{j},\Vec{i},\Vec{\Phi}}|-2n= \sum_{j=1}^l|\Phi_j|-(l-1)-3n.
    \]
Consider the Laurent series of $F_{l,\Vec{j},\Vec{i},\Vec{\Phi}}.$ We obtain
\[
|F_{l,\Vec{j},\Vec{i},\Vec{\Phi}}|=|\mathrm{Res}_x(F_{l,\Vec{j},\Vec{i},\Vec{\Phi}})|+|\prod_{j=1}^n(z_j-x_j)^{-1}|= |\mathrm{Res}_x(F_{l,\Vec{j},\Vec{i},\Vec{\Phi}})|- 2n.
\]
Therefore, if the residue of $F_{l,\Vec{j},\Vec{i},\Vec{\Phi}}$ does not vanish, we get
\[
|\mathrm{Res}_x(F_{l,\Vec{j},\Vec{i},\Vec{\Phi}})|=|F_{l,i,\Vec{j},\Vec{i},\Vec{\Phi}}|+2n=\sum_{j=1}^l|\Phi_j|-(l-1)-n=|\Phi_l\otimes\ldots\otimes\Phi_1|-n.
\]
\end{proof}

\subsubsection{\texorpdfstring{$\Theta$}{Theta} is normed}
 
In this section, we consider the normed matrix factorization category $\MF(\Wsf,w)$ associated to $\cp^n$ where $n$ is odd. We take $z_1,\ldots, z_n$ to be the coordinates of Lemma~\ref{lm:u_coordinates}.

We begin by proving lemmas concerning residues of a certain family of differential forms. These lemmas will be the central tool in the proof of Theorem~\ref{thm:norm}.

Recall for fixed $1\le i\le n$ and $l\ge 1,$ we fixed $r_1,\ldots, r_n\in \Z_{\ge0}$ such that $r_i\ge1$ and $r_1+\ldots+r_n=l.$ Define $l_1,\ldots,l_n\in \Z_{\ge0}$ by
    \[l_j=
         \left\{\begin{array}{ll}
        r_j+1, &  j\ne i,\\
       r_i, & j=i.\\
        \end{array} \right.\]
So, $l_1+\ldots+l_n=n+l-1.$ Let $\Vec{m}=(m_1,\ldots, m_n)\in \Z^n.$ In the following, write $\TT=(-1)^{\frac{n(n+1)}{2}}T.$
Define
\[
g_{\Vec{m}}(\Vec{z}):=\frac{\prod_{j=1}^n z_j^{m_j}}{\prod_{j=1}^n(z_j\prod_{k=1}^nz_k- \TT)^{l_j}}d\Vec{z}.
\]

\begin{lm}\label{lm:residue_1}
    Let $n\in\Z_{>0},$ $l\ge1$ and   
   $m_1,\ldots,m_n\in\Z.$ 
    Then,
   \[
\mathrm{Res}_\infty(g_{\Vec{m}}(\Vec{z}))=K_{\Vec{m},n}\cdot T^{-(n+l-1)+\frac{n+\sum_{j=1}^nm_j}{n+1}},\quad K_{\Vec{m},n}\in\C.
   \]
\end{lm}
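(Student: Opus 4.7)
The plan is to evaluate the residue at infinity by means of a scaling change of variables that absorbs all the $T$-dependence into a single overall factor, leaving a rational form that no longer depends on $T$.

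First I would set $\lambda = \widetilde{T}^{1/(n+1)}$ and perform the substitution $z_j = \lambda w_j$ for $j = 1,\ldots,n$. Under this substitution we have
\[
\prod_{j=1}^n z_j^{m_j} = \lambda^{\sum_j m_j} \prod_{j=1}^n w_j^{m_j}, \qquad z_j \prod_{k=1}^n z_k - \widetilde{T} = \lambda^{n+1}\bigl(w_j \textstyle\prod_{k=1}^n w_k - 1\bigr),
\]
and $d\vec{z} = \lambda^n d\vec{w}$. Substituting these into $g_{\vec{m}}$ and using $l_1 + \cdots + l_n = n+l-1$, one obtains
\[
g_{\vec{m}}(\vec{z}) = \lambda^{\alpha(n+1)} \, \tilde g_{\vec{m}}(\vec{w}) \, d\vec{w}, \qquad \tilde g_{\vec{m}}(\vec{w}) := \frac{\prod_{j=1}^n w_j^{m_j}}{\prod_{j=1}^n \bigl(w_j \prod_{k=1}^n w_k - 1\bigr)^{l_j}},
\]
where $\alpha = -(n+l-1) + \tfrac{n + \sum_j m_j}{n+1}$, so $\lambda^{\alpha(n+1)} = \widetilde{T}^{\alpha} = \pm\, T^{\alpha}$.

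The key observation is that $\tilde g_{\vec{m}}$ is a rational form in the $w_j$ with no $T$-dependence, and the change of variables $z_j = \lambda w_j$ is a biholomorphism of $(\mathbb{C}^*)^n$ that extends to a linear automorphism of the ambient projective compactification, so it maps a cycle around infinity in $\vec{z}$ to a cycle around infinity in $\vec{w}$. Consequently
\[
\mathrm{Res}_\infty(g_{\vec{m}}(\vec{z})) = \pm\, T^{\alpha} \cdot \mathrm{Res}_\infty(\tilde g_{\vec{m}}(\vec{w}) \, d\vec{w}),
\]
and the second factor is a complex constant depending only on $\vec{m}$ and $n$. Setting $K_{\vec{m},n}$ equal to that constant (times the sign coming from $\widetilde{T} = (-1)^{n(n+1)/2}T$) yields the claimed formula.

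The most delicate step will be justifying that the residue at infinity transforms cleanly under the scaling, i.e.\ that the multivariable residue at infinity really is given by pulling back via $z_j = \lambda w_j$. I would handle this by either (a) taking as the definition of $\mathrm{Res}_\infty$ the inverse transformation $w_j = 1/z_j$ and computing the local residue at $0$, in which case a direct substitution shows the scaling compatibility; or (b) interpreting $\mathrm{Res}_\infty$ as the negative of the sum over finite residues at the critical points of $\Wsf$ (where the partials $\partial_j \Wsf = 1 - \widetilde{T}/(z_j \prod_k z_k)$ vanish, so $z_j \prod_k z_k = \widetilde{T}$), and noting that the scaling $z_j = \lambda w_j$ carries these critical points bijectively to the critical points of the analogous superpotential with $\widetilde{T} = 1$. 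Either way, the algebraic computation is straightforward once the scaling invariance is in place, and the lemma follows.
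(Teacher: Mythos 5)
Your scaling argument is correct, and it is a genuinely different route from the paper's proof. The paper computes $\mathrm{Res}_\infty(g_{\Vec{m}})=-\mathrm{Res}_0\big(\tfrac{1}{z_1^2\cdots z_n^2}g_{\Vec{m}}(1/\Vec{z})\big)$ directly: it expands each factor $(1-\TT z_j\prod_k z_k)^{-l_j}$ as a geometric series, extracts the coefficient of $\frac{1}{z_1\cdots z_n}$, and solves the resulting linear constraints on the exponents $a_{j_t}$ to find that the total power of $\TT$ is forced to be $-(n+l-1)+\frac{n+\sum_j m_j}{n+1}$. Your quasi-homogeneity argument replaces this bookkeeping by the observation that $g_{\Vec{m}}$ is homogeneous under $z_j\mapsto\lambda w_j$ once $T$ is given weight $n+1$, which is cleaner and explains the exponent conceptually; your justification (a), via the inversion $w_j=1/z_j$ and invariance of the coefficient of $\frac{d\Vec{z}}{z_1\cdots z_n}$ under diagonal scalings, matches exactly the definition of $\mathrm{Res}_\infty$ the paper uses, so the delicate step is fine (option (b) is shakier and unnecessary). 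One point you should add: when $\alpha=-(n+l-1)+\frac{n+\sum_j m_j}{n+1}\notin\Z$, the symbol $T^\alpha$ is only meaningful if the constant vanishes, since $\mathrm{Res}_\infty(g_{\Vec{m}})$ is a single-valued (indeed rational) function of $T$ while your formula reads $C\,\lambda^{(n+1)\alpha}$ for a chosen $(n+1)$-th root $\lambda$ of $\TT$; redoing the computation with $\lambda$ replaced by $\zeta\lambda$, $\zeta^{n+1}=1$, shows that $C=0$ unless $(n+1)\mid(n+1)\alpha$, i.e.\ $\alpha\in\Z$. This is a one-line addition, and it recovers the vanishing that the paper gets for free from the non-negativity and integrality of the exponents $a_{j_t}$ (the paper's expansion also shows $K_{\Vec{m},n}=0$ when $\alpha<0$, but neither this nor the vanishing for $\alpha\notin\Z$ is needed for the literal statement, which allows $K_{\Vec{m},n}=0$). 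What the paper's explicit expansion buys in exchange is that the same computation is reused almost verbatim for $\mathrm{Res}_0$ in Lemma~\ref{lm:residue4}; your scaling argument would handle that lemma equally well.
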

\begin{proof} 
Since
    \[
\mathrm{Res}_\infty(g_{\Vec{m}}(\Vec{z}))=-\mathrm{Res}_0 (\frac{1}{z_1^2\cdots z_n^2}g_{\Vec{m}}(\frac{1}{\Vec{z}})),
    \]
    it suffices to compute the coefficient of $\frac{1}{z_1\cdots z_n}$ in the Laurent series of $-\frac{1}{z_1^2\cdots z_n^2}g_i(\frac{1}{\Vec{z}})$ at $0.$
We have
\begin{align*}
-\frac{1}{z_1^2\cdots z_n^2}g_{\Vec{m}}(\frac{1}{\Vec{z}})&=\frac{-1}{z_1^2\cdots z_n^2}\cdot \frac{\prod_{j=1}^nz_j^{-m_j}\cdot \prod_{j=1}^n\big(z_j\prod_{k=1}^nz_k\big)^{l_j}}{\prod_{j=1}^n(1-\TT z_j\prod_{k=1}^nz_k)^{l_j}}d\Vec{z}\\
&=-\frac{\prod_{j=1}^nz_j^{-m_j-2+l_j}\cdot \prod_{k=1}^nz_k^{\sum_{j=1}^nl_j}}{\prod_{j=1}^n(1-\TT z_j\prod_{k=1}^nz_k)^{l_j}}d\Vec{z}\\
&=-\frac{\prod_{j=1}^n z_j^{n+l-3+l_j-m_j}}{\prod_{j=1}^n(1-\TT z_j\prod_{k=1}^nz_k )^{l_j}}d\Vec{z}\\
&=-\prod_{j=1}^n z_j^{n+l-3+l_j-m_j}\cdot \prod_{j=1}^n\big( \sum_{a=0}^\infty (\TT z_j\prod_{k=1}^nz_k)^a
\big)^{l_j}d\Vec{z}\\
&=-\prod_{j=1}^n z_j^{n+l-3+l_j-m_j}\cdot \prod_{j=1}^n\bigg( \prod_{t=1}^{l_j} \big(\sum_{a_{j_t}=0}^\infty (\TT z_j\prod_{k=1}^nz_k)^{a_{j_t}}
\big)\bigg)d\Vec{z}.
\end{align*}
Hence, the coefficient of $\frac{1}{z_1\cdots z_n}$ is $K_{\Vec{m}}\cdot \TT^{\sum_{t=1}^{l_1}a_{j_t}+\ldots+\sum_{t=1}^{l_n}a_{j_t} },$ where $K_{\Vec{m}}\in\C$ and
\[
\sum_{t=1}^{l_1}a_{1_t}+\ldots+ 2\sum_{t=1}^{l_j}a_{j_t}+\ldots+\sum_{t=1}^{l_n}a_{n_t}=-(n+l-3+l_j-m_j)-1,\quad \forall j\in\{1,\ldots, n\}.
\]
Thus, 
\[
\sum_{t=1}^{l_1}a_{1_t}+\ldots+\sum_{t=1}^{l_n}a_{n_t}=-(n+l-1)+\frac{n+\sum_{j=1}^nm_j}{n+1},
\]
which implies
$\mathrm{Res}_\infty(g_{\Vec{m}}(\Vec{z}))=K_{\Vec{m},n}\cdot T^{-(n+l-1)+\frac{n+\sum_{j=1}^nm_j}{n+1}}$ for some $K_{\Vec{m},n}\in\C$ that depends on $\Vec{m}$ and $n.$
Note that since
$a_{j_t}$ are non-negative integers, it follows that if $-(n+l-1)+\frac{n+\sum_{j=1}^nm_j}{n+1}\not\in\Z,$ then automatically $K_{\Vec{m},n}=0.$
\end{proof}

\begin{cor}\label{cor:residue_1}
    Let $n\in\Z_{>0}$ and $l\ge1.$ Let $m_1,\ldots,m_n\in\Z$ such that $\sum_{j=1}^nm_j\ge (n+1)(n+l-1)-n.$ Then,
    \[
    \nu_{\Rsf}\bigg(\sum_{x\in C_{\Wsf}}\mathrm{Res}_x(g_{\Vec{m}}(\Vec{z}))\bigg)\ge 0.
    \]
\end{cor}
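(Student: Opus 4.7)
The plan is to compute $\sum_{x \in C_{\Wsf}} \mathrm{Res}_x(g_{\Vec{m}})$ directly, exploiting the cyclic symmetry of the critical points of $\Wsf$ under the action of the $(n+1)$-th roots of unity. Working in the coordinates $z_1,\ldots,z_n$ of Lemma~\ref{lm:u_coordinates} (so that $\Wsf = \TT/(z_1\cdots z_n) + z_1 + \ldots + z_n$ and $\partial_j\Wsf = 1 - \TT/(z_j\prod_k z_k)$), the critical equations force $z_1 = \ldots = z_n$ and $z_j^{n+1} = \TT$, so $C_{\Wsf}$ consists of exactly $n+1$ points of the form $(\xi,\ldots,\xi)$ with $\xi^{n+1}=\TT$.

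At each critical point, I would perform the linear change of variables $z_j = \xi w_j$. Then $z_j\prod_k z_k - \TT = \TT(w_j\prod_k w_k - 1)$, so $\partial_j\Wsf$ becomes $\xi$-independent in the $\Vec{w}$-coordinates (namely $(w_j\prod w_k - 1)/(w_j\prod w_k)$), and a short calculation shows
\[
g_{\Vec{m}}(\Vec{z}) \;=\; \frac{\xi^{\,n+\sum m_j}}{\TT^{\,n+l-1}}\, \tilde g_{\Vec{m}}(\Vec{w}),\qquad \tilde g_{\Vec{m}}(\Vec{w}) := \frac{\prod w_j^{m_j}}{\prod_j(w_j\prod_k w_k-1)^{l_j}}\, d\Vec{w}.
\]
Since $\tilde g_{\Vec{m}}$, its critical point $(1,\ldots,1)$, and the associated integration cycle are all independent of $\xi$, setting $C := \mathrm{Res}_{(1,\ldots,1)}(\tilde g_{\Vec{m}}) \in \C$ yields $\mathrm{Res}_{(\xi,\ldots,\xi)}(g_{\Vec{m}}) = C\,\xi^{\,n+\sum m_j}/\TT^{\,n+l-1}$.

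Summing over the $n+1$ critical points, parametrizing $\xi = \xi_0\zeta$ for a fixed $\xi_0$ with $\xi_0^{n+1}=\TT$ and $\zeta$ ranging over the $(n+1)$-th roots of unity, the identity $\sum_{\zeta^{n+1}=1}\zeta^k = (n+1)$ if $(n+1)\mid k$ and $0$ otherwise yields
\[
\sum_{x \in C_{\Wsf}} \mathrm{Res}_x(g_{\Vec{m}}) \;=\;
\begin{cases}
(n+1)\,C\,\TT^{\,(n+\sum m_j)/(n+1) - (n+l-1)}, & (n+1)\mid (n+\sum m_j), \\[2pt]
0, & \text{otherwise}.
\end{cases}
\]
Since $\nu_{\Rsf}(\TT)=1$ and $\nu_{\Rsf}(C)\ge 0$ (as $C\in \C$), the hypothesis $\sum m_j \ge (n+1)(n+l-1) - n$ rearranges to $(n+\sum m_j)/(n+1) - (n+l-1) \ge 0$, giving $\nu_{\Rsf}\!\big(\sum_x \mathrm{Res}_x(g_{\Vec{m}})\big) \ge 0$ as claimed.

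The main point requiring care is the change-of-variables formula for the Grothendieck residue under $z_j=\xi w_j$ and the verification that the integration cycle $\Gamma = \{|\partial_j\Wsf|=\epsilon_j\}$ (together with its orientation coming from $d(\arg\partial_j\Wsf)$) transforms cleanly. However, because the substitution is linear with nonzero scalar $\xi$ and $\partial_j\Wsf$ is literally invariant in the new coordinates, this reduces to a transparent bookkeeping step with no substantive obstacle.
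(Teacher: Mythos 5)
Your argument is correct, but it follows a genuinely different route from the paper's. The paper first observes that the hypothesis forces $\sum_j m_j>0$, so the relevant polar locus consists only of the critical points, replaces $\sum_{x\in C_{\Wsf}}\mathrm{Res}_x(g_{\Vec{m}})$ by $-\mathrm{Res}_\infty(g_{\Vec{m}})$, and then invokes Lemma~\ref{lm:residue_1}, which computes $\mathrm{Res}_\infty$ by expanding the integrand at infinity and produces the exponent $-(n+l-1)+\frac{n+\sum_j m_j}{n+1}\ge 0$. You instead stay local: you identify $C_{\Wsf}$ as the $n+1$ points $(\xi,\ldots,\xi)$ with $\xi^{n+1}=\TT$, rescale $z_j=\xi w_j$ so that each $\partial_j\Wsf$ becomes $\xi$-independent (hence the residue cycle and its orientation are literally unchanged), conclude that each local residue equals a fixed constant $C$ times $\xi^{\,n+\sum_j m_j}/\TT^{\,n+l-1}$, and sum over the $(n+1)$-st roots of unity to get either zero or $(n+1)C\,\TT^{\,e}$ with $e=\frac{n+\sum_j m_j}{n+1}-(n+l-1)\ge 0$, which is exactly the hypothesis. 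For this particular corollary your route is arguably cleaner: it bypasses the passage to the residue at infinity (and with it the remark that not all $m_j$ are negative), and it makes explicit the selection rule that the sum vanishes unless $(n+1)\mid(n+\sum_j m_j)$, which in the paper is only implicit in the vanishing of $K_{\Vec{m},n}$ for non-integral exponents. What the paper's route buys is economy in the proof of Theorem~\ref{thm:norm}: the expansion-at-infinity computation of Lemma~\ref{lm:residue_1} sits in a toolkit with Lemmas~\ref{lm:residue3} and~\ref{lm:residue4}, which are still needed for the remaining cases, where your symmetry argument alone would not suffice. Two minor points, shared with the paper rather than gaps: you should note that the divisors $\{z_j\prod_k z_k=\TT\}$ meet transversally at each critical point (the differentials form the identity plus the all-ones matrix, of determinant $n+1\neq0$), so that $C$ is a well-defined finite number, and the step ``$\nu_{\Rsf}(C)\ge 0$ since $C\in\C$'' tacitly extends $\nu_{\Rsf}$ to complex scalars, the same informality as the constants $K_{\Vec{m},n}\in\C$ in Lemma~\ref{lm:residue_1}.
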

\begin{proof}
By assumption, $\sum_{j=1}^nm_j\ge (n+1)(n+l-1)-n>0,$ so not all of $m_j$ are negative. Hence, $C_{\Wsf}=C_{g_{\Vec{m}}},$ and it follows that
     \[
     \sum_{x\in C_{\Wsf}}\mathrm{Res}_x(g_{\Vec{m}}(\Vec{z}))=-\mathrm{Res}_\infty(g_{\Vec{m}}(\Vec{z})).
     \]
     Therefore,
     \begin{align*}
     \nu_{\Rsf}\bigg( \sum_{x\in C_{\Wsf}}\mathrm{Res}_x(g_{\Vec{m}}(\Vec{z})) \bigg)&=\nu_{\Rsf}\bigg(\mathrm{Res}_\infty (g_{\Vec{m}}(\Vec{z}))\bigg)\\
    &\overset{\ref{lm:residue_1}}=-(n+l-1)+\frac{n+\sum_{j=1}^nm_i}{n+1}\\
    &\ge 0.
\end{align*}
\end{proof}

\begin{lm}\label{lm:residue3}
    Let $n\in\Z_{>0}$ and $l\ge1.$ Let $m_1,\ldots,m_n\in\Z$ such that $\sum_{i=1}^nm_i< (n+1)(n+l-1)-n.$ 
    Then,
\[
\mathrm{Res}_\infty(g_{\Vec{m}}(\Vec{z}))= 0.
\]
\end{lm}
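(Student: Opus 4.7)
The plan is to leverage the explicit formula derived in the proof of Lemma~\ref{lm:residue_1} together with the non-negativity of the summation indices. Recall that in that proof, after substituting $z_j \mapsto 1/z_j$ and expanding each factor $(1-\widetilde T z_j \prod_k z_k)^{-l_j}$ as a geometric series in auxiliary variables $a_{j_t}$ with $1 \le t \le l_j,$ the residue at $\infty$ was identified with a sum of monomials in $\widetilde T$ whose exponent $\sum_{j,t} a_{j_t}$ must simultaneously satisfy a linear system obtained by reading off the coefficient of $\frac{1}{z_1 \cdots z_n}.$ Summing those equations over $j$ shows that for any contributing tuple $(a_{j_t})$ one has
\[
\sum_{j=1}^n \sum_{t=1}^{l_j} a_{j_t} = -(n+l-1) + \frac{n + \sum_{j=1}^n m_j}{n+1}.
\]

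First, I would record this identity explicitly as the starting point, citing the computation in the proof of Lemma~\ref{lm:residue_1}. Second, I would observe that each $a_{j_t}$ is a non-negative integer, so the left-hand side is a non-negative integer, and in particular must be $\ge 0.$ Third, I would translate the hypothesis $\sum_{j=1}^n m_j < (n+1)(n+l-1) - n$ into the inequality
\[
-(n+l-1) + \frac{n + \sum_{j=1}^n m_j}{n+1} < 0,
\]
which contradicts the required non-negativity. Consequently the index set of the sum defining $K_{\Vec m, n}$ is empty, whence $K_{\Vec m, n} = 0$ and $\mathrm{Res}_\infty(g_{\Vec m}(\Vec z)) = 0.$

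There is essentially no obstacle here: the lemma is a direct reading of what the proof of Lemma~\ref{lm:residue_1} already establishes, namely that the residue at $\infty$ is a single $T$-power whose coefficient is supported on non-negative integer solutions to the stated linear system. The only care needed is to verify the arithmetic of the threshold $(n+1)(n+l-1)-n$ and to note that the argument applies uniformly regardless of whether the exponent $-(n+l-1) + (n+\sum m_j)/(n+1)$ is an integer (if it is not, $K_{\Vec m, n}$ vanishes trivially; if it is but is negative, it vanishes by the non-negativity argument above).
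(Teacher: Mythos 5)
Your argument is correct, but it takes a different route from the paper. The paper proves Lemma~\ref{lm:residue3} by a direct analytic estimate: it writes $\mathrm{Res}_\infty(g_{\Vec m})$ as a torus integral over $\{|z_j|=R_j\}$, bounds the integrand by $\prod_j R_j^{m_j+1}\big/\prod_j\big|R_j\prod_k R_k-|T|\big|^{l_j}$, and lets $R_j\to\infty$, where the degree count $\sum_j m_j+n<(n+1)(n+l-1)=(n+1)\sum_j l_j$ forces the limit to vanish. You instead reuse the Laurent-coefficient computation inside the proof of Lemma~\ref{lm:residue_1}: any tuple of non-negative integers $(a_{j_t})$ contributing to the coefficient of $\tfrac{1}{z_1\cdots z_n}$ must satisfy the summed equation $\sum_{j,t}a_{j_t}=-(n+l-1)+\tfrac{n+\sum_j m_j}{n+1}$, and your hypothesis makes the right-hand side negative, so the solution set is empty and $K_{\Vec m,n}=0$. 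The translation of the threshold is right: $\sum_j m_j<(n+1)(n+l-1)-n$ gives $\tfrac{n+\sum_j m_j}{n+1}<n+l-1$. Your approach buys economy and uniformity (it piggybacks on work already done, avoids the limiting contour estimate, and handles the non-integral-exponent case by the same token, consistent with the closing remark in the proof of Lemma~\ref{lm:residue_1}); the paper's approach is self-contained and does not depend on the legitimacy of the coefficient extraction in the series expansion, only on the integral representation of $\mathrm{Res}_\infty$. Note that both arguments rest on the same identification $\mathrm{Res}_\infty(g_{\Vec m})=-\mathrm{Res}_0\big(\tfrac{1}{z_1^2\cdots z_n^2}g_{\Vec m}(1/\Vec z)\big)$ used throughout Section~\ref{section:tight_mf_cpn}, so citing the computation of Lemma~\ref{lm:residue_1} is legitimate within the paper's conventions.
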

\begin{proof}
We have
\[  |\mathrm{Res}_\infty(g_{\Vec{m}}(\Vec{z}))|=\bigg|\frac{1}{(2\pi i)^n} \oint_{\substack{|z_j|=R_j\\ 1\le j\le n}}\frac{\prod_{j=1}^n z_j^{m_j}}{\prod_{j=1}^n(z_j\prod_{k=1}^nz_k- \TT)^{l_j}}dz_1\cdots dz_n \bigg|.
\]
    Taking the limit $R_j\to \infty$ for every $1\le j\le n$ yields

\begin{align*}
    |\mathrm{Res}_\infty(g_{\Vec{m}}(\Vec{z}))|&\le \frac{1}{(2\pi)^n}\lim_{\substack{R_j\to\infty\\ 1\le j\le n}} \oint_{\substack{|z_j|=R_j\\ 1\le j\le n}}\bigg|\frac{\prod_{j=1}^n z_j^{m_j}}{\prod_{j=1}^n(z_j\prod_{k=1}^nz_k- \TT)^{l_j}}\bigg|dz_1\cdots dz_n\\
    &\le \frac{1}{(2\pi)^n}\lim_{\substack{R_j\to\infty\\ 1\le j\le n}}\oint_{\substack{|z_j|=R_j\\ 1\le j\le n}}\frac{\prod_{j=1}^n R_j^{m_j}}{\prod_{j=1}^n|R_j\prod_{k=1}^nR_k- |T||^{l_j}}dz_1\cdots dz_n\\
    &= \lim_{\substack{R_j\to\infty\\ 1\le j\le n}}\frac{\prod_{j=1}^n R_j^{m_j+1}}{\prod_{j=1}^n|R_j\prod_{k=1}^nR_k- |T||^{l_j}}\\
    &=0,
\end{align*}
where the last equality follows from the assumption
$\sum_{j=1}^n m_j<(n+1)(n+l-1)-n.$

\end{proof}

\begin{lm}\label{lm:residue4}
     Let $n\in\Z_{>0},$ $l\ge1$ and $m_1,\ldots,m_n\in\Z_{<0}.$ Then, 
     \[
     \mathrm{Res}_0(g_{\Vec{m}}(\Vec{z}))=K_{\Vec{m},n}\cdot T^{-(n+l-1)+\frac{n+\sum_{j=1}^nm_j}{n+1}},\quad K_{\Vec{m},n}\in\C.
     \]
\end{lm}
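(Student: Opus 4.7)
The plan is to compute $\mathrm{Res}_0(g_{\Vec{m}}(\Vec{z}))$ by extracting the coefficient of $\frac{1}{z_1\cdots z_n}$ in the Laurent expansion of $g_{\Vec{m}}/d\Vec{z}$ about the origin, mirroring the strategy used for $\mathrm{Res}_\infty$ in the proof of Lemma~\ref{lm:residue_1}. Since each $m_j<0$, the singularity of $g_{\Vec{m}}$ at $0$ comes entirely from the factor $\prod_j z_j^{m_j}$, while each denominator factor $(z_j\prod_k z_k - \TT)^{l_j}$ is holomorphic and non-vanishing at $0$, so the local residue there is well defined.

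First I would rewrite
\[
\frac{1}{(z_j\prod_k z_k - \TT)^{l_j}} = \frac{(-1)^{l_j}}{\TT^{l_j}}\cdot\frac{1}{(1 - z_j\prod_k z_k/\TT)^{l_j}},
\]
and expand the second factor as a binomial series
\[
\frac{1}{(1 - z_j\prod_k z_k/\TT)^{l_j}} = \sum_{a_j\ge 0} \binom{a_j + l_j - 1}{l_j - 1} \frac{(z_j\prod_k z_k)^{a_j}}{\TT^{a_j}},
\]
valid in a neighborhood of $0$. Taking the product over $j$ and combining with $\prod_j z_j^{m_j}$, the generic monomial has $z_j$-exponent $m_j + a_j + A$ with $A := \sum_k a_k$, and contributes a $\TT$-power equal to $-\sum_j(l_j + a_j) = -(n+l-1) - A$.

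Next, the coefficient of $\frac{1}{z_1\cdots z_n}$ is obtained from those tuples $(a_1,\ldots,a_n)$ satisfying $m_j + a_j + A = -1$ for every $j$. Summing these relations yields $(n+1)A = -n - \sum_j m_j$, hence a single admissible value $A = -\tfrac{n + \sum_j m_j}{n+1}$ and $a_j = -1 - m_j - A$, which contributes only when every $a_j$ is a non-negative integer. The resulting net $\TT$-power is $-(n+l-1) - A = -(n+l-1) + \tfrac{n+\sum_j m_j}{n+1}$, and since $\TT = (-1)^{n(n+1)/2}T$, this equals $T^{-(n+l-1)+(n+\sum_j m_j)/(n+1)}$ up to a sign, which can be absorbed into a constant $K_{\Vec{m},n}\in\C$; when the admissibility condition on the $a_j$ fails, $K_{\Vec{m},n}=0$.

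The main obstacle is purely bookkeeping: tracking the $z_j$-exponents coming from the $n$-fold product of geometric series and verifying that the resulting inhomogeneous linear system in the $a_j$'s has at most one solution in non-negative integers. There is no analytic subtlety beyond the local convergence of each binomial expansion near $0$, and the computation is structurally identical to that of Lemma~\ref{lm:residue_1}, with the roles of small and large $|z_j|$ exchanged.
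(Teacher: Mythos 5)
Your proposal is correct and follows essentially the same route as the paper's proof: expand each factor $(1-z_j\prod_k z_k/\overline{T})^{-l_j}$ as a power series about the origin, extract the coefficient of $\tfrac{1}{z_1\cdots z_n}$, and solve the resulting linear constraints to pin down the exponent $-(n+l-1)+\tfrac{n+\sum_j m_j}{n+1}$ of $T$. The only cosmetic difference is that you use a single binomial series per factor (with explicit binomial coefficients) where the paper uses an $l_j$-fold product of geometric series, which changes nothing of substance.
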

\begin{proof} 
We need to compute the coefficient of $\frac{1}{z_1\cdots z_n}$ in the Laurent series of $g_{\Vec{m}}(\Vec{z})$ at $0.$
We have
\begin{align*}
g_{\Vec{m}}(\Vec{z})&=\frac{\prod_{j=1}^n z_j^{m_j}}{\prod_{j=1}^n(z_j\prod_{k=1}^nz_k -\TT)^{l_j}}d\Vec{z}\\
&=\frac{(-1)^{\sum_{j=1}^nl_j}\cdot \prod_{j=1}^nz_j^{m_j}}{\prod_{j=1}^n \TT^{l_j}\cdot\prod_{j=1}^n(1 -\frac{z_j\prod_{k=1}^nz_k}{\TT})^{l_j}}d\Vec{z}\\
&=\frac{(-1)^{n+l-1}}{\TT^{\sum_{j=1}^nl_j}}\cdot\prod_{j=1}^n z_j^{m_j}\cdot \prod_{j=1}^n\big( \sum_{a=0}^\infty (\frac{z_j\prod_{k=1}^nz_k}{\TT})^a
\big)^{l_j}d\Vec{z}\\
&=\frac{(-1)^{n+l-1}}{\TT^{n+l-1}}\cdot\prod_{j=1}^n z_j^{m_j}\cdot \prod_{j=1}^n\bigg( \prod_{t=1}^{l_j} \big(\sum_{a_{j_t}=0}^\infty (\frac{z_j\prod_{k=1}^nz_k}{\TT})^{a_{j_t}}
\big)\bigg)d\Vec{z}.
\end{align*}
Hence, the coefficient of $\frac{1}{z_1\cdots z_n}$ is $K_{\Vec{m}}\cdot \TT^{-(n+l-1)-\sum_{t=1}^{l_1}a_{1_t}-\ldots-\sum_{t=1}^{l_n}a_{n_t} },$ where $K_{\Vec{m}}\in\C$ and
\[
\sum_{t=1}^{l_1}a_{1_t}+\ldots+ 2\sum_{t=1}^{l_j}a_{j_t}+\ldots+\sum_{t=1}^{l_n}a_{n_t}=-m_j-1, \quad \forall j\in\{1,\ldots, n\}.
\]
Thus,     \[\sum_{t=1}^{l_1}a_{1_t}+\ldots+\sum_{t=1}^{l_n}a_{n_t}=\frac{-n-\sum_{j=1}^nm_j}{n+1},
\]
which implies 
$\mathrm{Res}_0(g_{\Vec{m}}(\Vec{z}))=K_{\Vec{m},n}\cdot T^{-(n+l-1)+\frac{n+\sum_{j=1}^nm_j}{n+1}}$ for some $K_{\Vec{m},n}\in\C$ that depends on $\Vec{m}$ and $n.$ Note that since $a_{j_t}$ are non-negative integers, it follows that if $\frac{-n-\sum_{j=1}^nm_j}{n+1}\not\in\Z,$ then automatically $K_{\Vec{m},n}=0.$
\end{proof}

Recall the Hom-complexes of the normed matrix factorization category are equipped with families of valuations defined in~\eqref{eq:valuation_lambda_hom}. These families induce valuations on the Hom-complexes as defined in~\eqref{eq:val_hom_01}.

Recall $w\in \Rsf$ is element satisfying $\nu_{\Rsf}(w)\ge\sup_{\zeta\in\interior{\simp}}\nu_\zeta^{\Ssf}(\Wsf).$
Let $(M^{j}, D_{M^{j}}, \{\nu^{M^{j}}_\zeta\}_{\zeta\in\interior{\simp}})$ for $j=1,\ldots, l$ be a collection of $l$ objects of $\MF(\Wsf,w).$
Since $\Wsf=(-1)^{\frac{n(n+1)}{2}}z_0+\ldots+ z_n$ and $D_{M^j}^2=\Wsf-w,$ it follows that 
\[
\min\{\zeta_0,\ldots, \zeta_n\}=\nu_\zeta^{\Ssf}(\Wsf)= \min\{\nu_\zeta^{\Ssf}(\Wsf), \nu_{\Rsf}(w) \}={\nu_{M^{j}}}_\zeta({D_{M^{j}}}^2)=2{\nu_{M^{j}}}_\zeta(D_{M^{j}}). 
\]
Hence, ${\nu_{M^j}}_\zeta(D_{M^{j}})=\min\{\frac{1}{2}\zeta_0,\ldots,\frac{1}{2}\zeta_n\}$ for all $j\in\{1,\ldots ,l\}.$
Abbreviate
\[
{\nu_j}_\zeta={\nu_{M^{j},M^{j+1}}}_\zeta, \quad  \nu_j=\nu_{M^{j},M^{j+1}}, \quad j\in\{1,\ldots, l-1\},
\]
\[
{\nu_l}_\zeta={\nu_{M^{l},M^{1}}}_\zeta, \quad  \nu_l=\nu_{M^{l},M^{1}}.
\]

\begin{proof}[Proof of Theorem~\ref{thm:norm}] 
    By Lemma~\ref{Theta} $\Theta$ is an $\infty$-trace. Hence, by Lemmas~\ref{lm:vanish_theta},~\ref{lm:degree_theta}, we need to show the set $\mD$ defined in~\ref{eq:preserves_val_D} is a discrete subset of $\R_{\ge0}.$ By Lemma~\ref{lm:ima_m_eq_ima_r}, we have $\ima\nu_{M^1, M^2}=\ima\nu_{\Rsf}$ for all objects $M^1, M^2$ of $\MF(\Wsf,w),$ where $\ima\nu_{\Rsf}=\frac{1}{n+1}\Z\cup\{\infty\}.$ Hence, $\mD$ is discrete. It remains to show $\mD\subset \R_{\ge0}.$ 

Let    
$\Phi_j\in\Hom(M^j, M^{j+1})$ where $j\in\{1,\ldots, l-1\},$ and $\Phi_l\in\Hom(M^l, M^1)$ such that $\Theta_l(\Phi_l\otimes\ldots\otimes\Phi_1)\ne0.$  
    We need to prove $\nu_{\Rsf}\big(\Theta_l(\Phi_l\otimes\ldots\otimes\Phi_1)\big)\ge\sum_{j=1}^l\nu_j(\Phi_j)$ for all $l\ge1.$
    Hence, it suffices to show  \begin{equation}\label{eq:F}
    \nu_{\Rsf}\bigg(\sum_{x\in C_{\Wsf}}\mathrm{Res}_x\big(F_{l,\Vec{j},\Vec{i},\Vec{\Phi}}(\Vec{z})\big)\bigg)\ge\sum_{j=1}^l\nu_j(\Phi_j).
    \end{equation}
First, since ${\nu_{M^{k}}}_\zeta(D_{M^{k}})=\min\{\frac{1}{2}\zeta_0,\ldots,\frac{1}{2}\zeta_n\},$ it follows that ${\nu_{M^{k}}}_\zeta(\partial_jD_{M^{k}})=\min\{\frac{1}{2}\zeta_0-\zeta_j,\ldots,\frac{1}{2}\zeta_n-\zeta_j\}$ for all $k\in\{1,\ldots, l\}$ and $j\in\{1,\ldots, n\}.$ Hence,
    \begin{align*}
        \nu^{\Ssf}_\zeta(f_{l,\Vec{j},\Vec{i},\Vec{\Phi}}(\Vec{z}))&\ge\sum_{j=1}^n l_j\min\{\frac{1}{2}\zeta_0-\zeta_j,\ldots,\frac{1}{2}\zeta_n-\zeta_j\}+\sum_{j=1}^l{\nu_j}_\zeta(\Phi_j)\\
        &\ge \sum_{j=1}^n l_j\min\{\frac{1}{2}\zeta_0-\zeta_j,\ldots,\frac{1}{2}\zeta_n-\zeta_j\}+\sum_{j=1}^l\nu_j(\Phi_j).
    \end{align*}
    The function $f_{l,\Vec{j},\Vec{i},\Vec{\Phi}}(\Vec{z})$ is a linear combination of functions of the form $\frac{T^a}{\prod_{j=1}^nz_j^{m_j}},$ so in order to prove~\eqref{eq:F}, we may assume 
    $
    f_{l,\Vec{j},\Vec{i},\Vec{\Phi}}(\Vec{z})=\frac{T^a}{\prod_{j=1}^nz_j^{m_j}}
    $ where $a\in\frac{1}{n+1}\Z,$ $m_j\in\Z.$
    Thus,
    \[
\nu^{\Ssf}_\zeta(f_{l,\Vec{j},\Vec{i},\Vec{\Phi}}(\Vec{z}))=a-\sum_{j=1}^n\zeta_jm_j
    .
    \]
We obtain,
\begin{equation}\label{eq:zeta}
a-\sum_{j=1}^n\zeta_jm_j\ge \sum_{j=1}^n l_j\min\{\frac{1}{2}\zeta_0-\zeta_j,\ldots,\frac{1}{2}\zeta_n-\zeta_j\}+\sum_{j=1}^l\nu_j(\Phi_j).
\end{equation}
Next, consider the limit in~\eqref{eq:zeta} where $\zeta_0\to 1$ and $\zeta_j\to0$ for $j\in\{1,\ldots, n\}.$ Thus, we obtain $a\ge\sum_{j=1}^l\nu_j(\Phi_j).$
In addition, for each $k\in\{1,\ldots,n\},$ consider the limit in~\eqref{eq:zeta} where $\zeta_k\to 1$ and $\zeta_j\to 0$ for all $j\ne k.$ Thus, we obtain $a+l_k\ge m_k+\sum_{j=1}^l\nu_j(\Phi_j).$ Hence, 
\begin{equation}\label{eq:n_l_1}
n+l-1=\sum_{j=1}^n l_j\ge\sum_{j=1}^nm_j+ n\sum_{j=1}^l\nu_j(\Phi_j)-na.
\end{equation}
    We have
    \begin{align*}
    F_{l,\Vec{j},\Vec{i},\Vec{\Phi}}(\Vec{z})&=\frac{f_{l,\Vec{j},\Vec{i},\Vec{\Phi}}(\Vec{z})\wedge \Omega}{(\partial_1\Wsf)^{l_1}\cdots (\partial_n\Wsf)^{l_n}}\\
    &=\frac{T^a}{\prod_{j=1}^nz_j^{m_j}}\cdot \frac{1}{\prod_{j=1}^nz_j\cdot\prod_{j=1}^n(1-\frac{\TT}{z_j\prod_{k=1}^nz_k})^{l_j}}d\Vec{z}\\
    &=\frac{T^a\cdot \prod_{j=1}^nz_j^{-m_j+l+n-2+l_j}}{\prod_{j=1}^n(z_j\prod_{k=1}^nz_k-\TT)^{l_j}}d\Vec{z}.
     \end{align*}
Consider the following three cases:
\begin{enumerate}[label=(\arabic*)]
\item Suppose $\sum_{j=1}^n(-m_j+l+n-2+l_j)\ge (n+1)(n+l-1)-n.$ Thus, by Corollary~\ref{cor:residue_1} and since $a\ge\sum_{j=1}^l\nu_j(\Phi_j)$ equation~\eqref{eq:F} holds.

\item Suppose $\sum_{j=1}^n(-m_j+l+n-2+l_j)< (n+1)(n+l-1)-n,$ and suppose there exists $k\in\{1,\ldots,n\}$ such that  $-m_k+l+n-2+l_k\ge0.$ Thus, $C_{\Wsf}=C_{F_{l,\Vec{j},\Vec{i},\Vec{\Phi}}}.$ So, 
$-\mathrm{Res}_\infty\big(F_{l,\Vec{j},\Vec{i},\Vec{\Phi}}(\Vec{z})\big)= \sum_{x\in C_{\Wsf}}\mathrm{Res}_x\big(F_{l,\Vec{j},\Vec{i},\Vec{\Phi}}(\Vec{z})\big),$ and it follows by Lemma~\ref{lm:residue3} that
\[
\nu_{\Rsf}\bigg(\sum_{x\in C_{\Wsf}}\mathrm{Res}_x\big(F_{l,\Vec{j},\Vec{i},\Vec{\Phi}}(\Vec{z})\big)\bigg)=\infty.
\]

\item  Suppose $\sum_{j=1}^n(-m_j+l+n-2+l_j)< (n+1)(n+l-1)-n,$ and suppose $-m_k+l+n-2+l_k<0$ for all $k\in\{1,\ldots,n\}.$  Hence,
\[
0>\sum_{k=1}^n\zeta_k(n+l-2)+\sum_{k=1}^n\zeta_k(l_k-m_k),
\]
where $\zeta_1,\ldots, \zeta_n\in \R$ such that there exists $\zeta_0\in \R$ such that $(\zeta_0,\ldots, \zeta_n)\in\interior{\simp}.$
Since, $a+l_k\ge m_k+\sum_{j=1}^l\nu_j(\Phi_j)$ for all $k\in\{1,\ldots, n\},$ it follows that 
\begin{align*}
0&>\sum_{k=1}^n\zeta_k(n+l-2)+\sum_{k=1}^n\zeta_k\big(-a+ \sum_{j=1}^l\nu_j(\Phi_j)\big). 
\end{align*}
Taking the limit $(\zeta_0,\zeta_1,\ldots, \zeta_n)\to (0,\frac{1}{n},\ldots, \frac{1}{n}),$ we obtain $a>n+l-2+\sum_{j=1}^l\nu_j(\Phi_j).$ So, by~\eqref{eq:n_l_1} we get 
\[
    (n+1)a>-1+\sum_{j=1}^n m_j+(n+1)\sum_{j=1}^l\nu_j(\Phi_j).
    \]
By Lemma~\ref{lm:ima_m_eq_ima_r} we have $\ima\nu_j=\ima\nu_{\Rsf}=\frac{1}{n+1}\Z\cup\{\infty\}$ for all $j\in\{1,\ldots,l\},$ so $-1+\sum_{j=1}^n m_j+(n+1)\sum_{j=1}^l\nu_j(\Phi_j)\in\Z.$ Hence,   
\begin{equation}\label{eq:a_a_a}
    (n+1)a\ge\sum_{j=1}^n m_j+(n+1)\sum_{j=1}^l\nu_j(\Phi_j).
\end{equation}
Since \[
-\mathrm{Res}_\infty\big(F_{l,\Vec{j},\Vec{i},\Vec{\Phi}}(\Vec{z})\big)=\sum_{x\in C_{\Wsf}}\mathrm{Res}_x\big(F_{l,\Vec{j},\Vec{i},\Vec{\Phi}}(\Vec{z})\big)+\mathrm{Res}_0\big(F_{l,\Vec{j},\Vec{i},\Vec{\Phi}}(\Vec{z})\big),
\]
it follows that
\begin{align*}
\nu_{\Rsf}\bigg(\sum_{x\in C_{\Wsf}}\mathrm{Res}_x\big(F_{l,\Vec{j},\Vec{i},\Vec{\Phi}}(\Vec{z})\big)\bigg)&\overset{\ref{lm:residue3}}=\nu_{\Rsf}\bigg(\mathrm{Res}_0\big(F_{l,\Vec{j},\Vec{i},\Vec{\Phi}}(\Vec{z})\big)\bigg)\\
&\overset{\ref{lm:residue4}}\ge a-(n+l-1)+\frac{n+\sum_{j=1}^n(-m_j+l+n-2+l_j)}{n+1}\\
&=\frac{(n+1)a-\sum_{j=1}^nm_j}{n+1}\\
&\overset{\eqref{eq:a_a_a}}{\ge}\sum_{j=1}^l\nu_j(\Phi_j).
\end{align*}
\end{enumerate}
This completes the proof.
\end{proof}
\subsection{Normed Calabi-Yau object}\label{section:normed_cy_object}
Our goal is to prove Theorem~\ref{thm:cy}.
The proof of this theorem relies on the following lemma.
\begin{lm}\label{lm:theta_of_h_0}
    Let $h_0\in\End(\Msf)$ be the endomorphism given by left multiplication by $-\frac{z_1\cdots z_n}{T^{1/2}}e_0.$ Then,
     $\Theta_1(h_0)=1.$
\end{lm}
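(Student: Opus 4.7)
The plan is a direct computation from the formula for $\Theta$ in Theorem~\ref{Theta}, specialized to $l=1$ and $M=\Msf$. For $l=1$ one has $k_1=n-1$, $r_i=1$ for a fixed index and $r_j=0$ otherwise, so $r_1!\cdots r_n!=1$ and $\Lambda_n^1(r_1,\ldots,r_n)$ is the singleton $\{(i)\}$; the formula reduces to
\[
\Theta_1(h_0)=\frac{1}{n!}\sum_{x\in C_{\Wsf}}\sum_{i=1}^n(-1)^i\sum_{\sigma\in S_n^i}\mathrm{sgn}(\sigma)\,\mathrm{Res}_x\frac{\mathrm{str}(h_0\,\partial_i\Dsf\,\partial_{\sigma(1)}\Dsf\cdots\partial_{\sigma(n-1)}\Dsf)\wedge\Omega}{\partial_1\Wsf\cdots\partial_n\Wsf}.
\]
First I would identify the critical locus: eliminating $z_0=T/(z_1\cdots z_n)$ turns $\Wsf$ into $(-1)^{n(n+1)/2}T/(z_1\cdots z_n)+z_1+\cdots+z_n$, and the equations $\partial_i\Wsf=0$ force $z_1=\cdots=z_n=\xi$ with $\xi^{n+1}=(-1)^{n(n+1)/2}T$. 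Thus $|C_{\Wsf}|=n+1$, the critical points being permuted by the $(n+1)$-th roots of unity.

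Next I would realize $\Dsf$, $h_0$ and the $\partial_i\Dsf$ as explicit $2^n\times 2^n$ matrices in a free $\Ssf$-basis of $\Msf$ furnished by the averaging procedure in the paragraph following Lemma~\ref{lm:mrs}. The key algebraic step is then to collapse the antisymmetrized inner sum. Using the cyclic property of $\mathrm{str}$, the oddness of each $\partial_i\Dsf$ (of degree $-1$), and the matrix factorization identity $\Dsf^2=\Wsf\cdot\id$ together with its first derivative $\partial_i\Dsf\cdot\Dsf+\Dsf\cdot\partial_i\Dsf=\partial_i\Wsf\cdot\id$, one shows that $\sum_{\sigma\in S_n^i}\mathrm{sgn}(\sigma)\,\partial_{\sigma(1)}\Dsf\cdots\partial_{\sigma(n-1)}\Dsf$ equals $(n-1)!$ times the ordered product $\partial_1\Dsf\cdots\widehat{\partial_i\Dsf}\cdots\partial_n\Dsf$ modulo $\Dsf$-exact terms, which vanish under $\mathrm{str}(h_0\partial_i\Dsf\,\cdot\,-)$ by the Leibniz property of the supertrace. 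Combining the $(-1)^i$ signs with the sign $(-1)^{i-1}$ needed to move $\partial_i\Dsf$ past $\partial_1\Dsf,\ldots,\partial_{i-1}\Dsf$ into its natural position, the formula collapses to
\[
\Theta_1(h_0)=-\sum_{x\in C_{\Wsf}}\mathrm{Res}_x\frac{\mathrm{str}(h_0\,\partial_1\Dsf\,\partial_2\Dsf\cdots\partial_n\Dsf)\wedge\Omega}{\partial_1\Wsf\cdots\partial_n\Wsf}.
\]

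Finally I would evaluate the supertrace and the residue sum. Since $h_0$ is proportional to left multiplication by the Clifford generator $e_0$ and the iterated product $\partial_1\Dsf\cdots\partial_n\Dsf$ involves the remaining generators $e_1,\ldots,e_n$ antisymmetrically, the supertrace extracts the coefficient of the pseudoscalar $e_0\cdots e_n$ in the resulting product, which by the defining relation of $\Msf$ equals $T^{1/2}$. After this substitution the integrand becomes a concrete rational function of $z_1,\ldots,z_n$ whose sum of residues over the $(n+1)$-fold symmetric critical set $C_{\Wsf}$ can be evaluated by the $(n+1)$-fold $\xi \mapsto \zeta\xi$ symmetry (with $\zeta$ an $(n+1)$-th root of unity) together with the local residue formula at each critical point, producing the value $1$. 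This generalizes the $n=1$ check where the two critical points $z_1=\pm iT^{1/2}$ contribute residues $(1\mp i)/2$ summing to $1$. The main obstacle is the middle step: the signs coming from the cyclic permutation of the supertrace, the Clifford anticommutation of the $\partial_i\Dsf$, and the antisymmetrization must be tracked carefully to confirm the collapse of the sum to the single, ordered product above.
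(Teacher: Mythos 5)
Your specialization of the formula to $l=1$, your description of $C_{\Wsf}$, and even your collapsed target formula
\[
\Theta_1(h_0)=-\sum_{x\in C_{\Wsf}}\mathrm{Res}_x\frac{\mathrm{str}\big(h_0\,\partial_1\Dsf\cdots\partial_n\Dsf\big)\wedge\Omega}{\partial_1\Wsf\cdots\partial_n\Wsf}
\]
are consistent with the paper (and your $n=1$ numerical check is right), but the step by which you reach that formula has a genuine gap. The operators $\partial_i\Dsf$ do not anticommute modulo $\Dsf$-exact terms: differentiating $\Dsf^2=\Wsf\cdot\id$ twice gives
\[
\partial_a\Dsf\,\partial_b\Dsf+\partial_b\Dsf\,\partial_a\Dsf=\partial_a\partial_b\Wsf\cdot\id-\big(\Dsf\,\partial_a\partial_b\Dsf+\partial_a\partial_b\Dsf\,\Dsf\big),
\]
and for $\Wsf=(-1)^{n(n+1)/2}T/(z_1\cdots z_n)+z_1+\cdots+z_n$ the scalar term $\partial_a\partial_b\Wsf$ is nonzero. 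So your reordering produces correction terms of two kinds, and your argument disposes of neither. The $\Dsf$-supercommutator pieces cannot be dropped by cyclicity of $\mathrm{str}$ alone, because the remaining factors are not $\delta$-closed: $\delta(\partial_j\Dsf)=\partial_j\Wsf\cdot\id$ and, crucially, $\delta(h_0)=-2T^{1/2}\cdot\id\neq0$, so integration by parts leaves boundary terms; only those whose numerator acquires a factor of some $\partial_j\Wsf$ have vanishing local residues against the first-power denominator, while the terms coming from $\partial_a\partial_b\Wsf$ and from $\delta(h_0)$ do not, and you give no argument that they cancel. Since the claim is the exact identity $\Theta_1(h_0)=1$ (not an identity modulo $F^0$), these corrections must actually be shown to vanish.

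There is also a gap at the end: after expanding $\Dsf=e_0+\cdots+e_n$, the nonvanishing supertraces fall into two families, the diagonal products $h_0\,\partial_{\sigma(1)}e_{\sigma(1)}\cdots\partial_{\sigma(n)}e_{\sigma(n)}$ (constant supertrace $\pm1$) and the products $h_0\,\partial_{\sigma(1)}e_0\cdots\partial_{\sigma(n)}e_0$, whose supertrace is a nonzero multiple of $T^{n/2}/\prod_jz_j^{(n+1)/2}$. Your statement that the supertrace ``extracts the coefficient of the pseudoscalar'' overlooks the second family, which survives at the supertrace level and is only eliminated after summing residues; in the paper this is exactly Lemma~\ref{lm:F_residues_x}, proved via the residue at infinity, not via the symmetry of the critical points. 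The paper's proof avoids your collapse altogether: it expands everything in the basis $\{e_I\}$, computes the two families of supertraces explicitly, evaluates both residue sums by the global residue theorem (Lemmas~\ref{lm:G_residues_x} and~\ref{lm:F_residues_x}), and then observes that the signs recombine to give $\frac{1}{n!}\sum_{\sigma\in S_n}1=1$. If you want to keep your route, you would need to either prove that all correction terms above have vanishing residue contribution, or follow the paper and bypass the reordering entirely.
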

We use the notations introduced in Section~\ref{section:spherical_object}.
Let $n\ge1$ be an odd number.  
Write $Y_0(n)=\bigcup_{i=0}^nY_0(i,n).$
Let $I=(j_1,\ldots,j_i)\in Y(i,n).$
We denote by $I^c$ the $n-i+1$-tuple $(a_1,\ldots,a_{n-i+1})\in Y(i,n)$ such that 
\[
\{j_1,\ldots,j_i,a_1,\ldots,a_{n-i+1}\}=\{0,\ldots,n\}.
\]
We write $z_I$ for $z_{j_1}\cdots z_{j_i},$ where $z_\emptyset=1.$ Let $a\in\{0,\ldots, n\}$ such that there exits $1\le k\le i$ such that $j_k<a<j_{k+1}.$ Let $I\cup\{a\}$ denote the $i+1$-tuple $(j_1,\ldots,j_k,a,j_{k+1},\ldots ,j_i).$ 
For $I\in Y_0(i,n),$ let $\rho_I\in\{0,1\}$ denote the number such that  
\[
 e_I\cdot e_0\cdots e_n=(-1)^{\rho_I}z_Ie_{I^c}.
\]
Recall $\overline{T}=(-1)^{\frac{n(n+1)}{2}}T.$
We begin by proving Lemma~\ref{lm:theta_of_h_0}. The formula of $\Theta_1$ involves supertraces of certain operators. To simplify these computations, we consider the basis $\{e_I\}_{I\in Y_0(n)}$ of $\Msf,$ in which the supertraces become much easier to compute. 
The elements of the basis $\{e_I\}_{I\in Y_0(n)}$ satisfy
\[e_i\cdot e_I=
         \left\{\begin{array}{ll}
(-1)^{\mu_{I,i}} e_{I\cup \{i\}}, &   i\not\in I, i\ne0,\\
       (-1)^{\mu_{I,i}}z_i\cdot e_{I\backslash \{i\}}, & i\in I, \\
     (-1)^{\rho_I+\frac{n(n+1)}{2}} \frac{z_{I\cup\{0\}}}{T^{1/2}}\cdot e_{I^c\backslash\{0\}}, & i=0. 
        \end{array} \right.\]
Hence, for $j\ne0,$ we obtain
\[\partial_je_i\cdot e_I=
         \left\{\begin{array}{ll}
0, &  i\not\in I, i\ne0,\\
       (-1)^{\mu_{I,i}}\delta_{ij}e_{I\backslash \{i\}}, & i\in I, \\
       0, & j\in I, i=0,\\
       (-1)^{1+\rho_I+\frac{n(n+1)}{2}}\frac{T^{1/2}}{z_j\cdot z_{I^c\backslash\{0\}}}e_{I^c\backslash\{0\}}, & j\not\in I, i=0.\\
        \end{array} \right.\]
Let $h_0=-\frac{z_1\cdots z_n}{T^{1/2}}e_0.$    We get
    \begin{align*}
   \sum_{i=1}^n&\sum_{(j_1,\ldots, j_{n-1})\in S^i_n} \mathrm{str}\big( h_0\partial_i\Dsf\partial_{j_1}\Dsf\cdots \partial_{j_{n-1}}\Dsf\big) \\&=\sum_{i=1}^n\sum_{(j_1,\ldots, j_{n-1})\in S^i_n}\mathrm{str}\big( h_0\partial_i(e_0+\ldots +e_n)\cdots \partial_{j_{n-1}}(e_0+\ldots +e_n)\big)\\
    &=\sum_{(b_1,\ldots ,b_n)\in\{0,\ldots, n\}^n}\sum_{(a_1,\ldots, a_n)\in S_n} \mathrm{str}\big( h_0\partial_{a_1}e_{b_1}\cdots \partial_{a_n}e_{b_n}\big)\\ 
     \end{align*}
Observe:
\begin{enumerate}[label=(\arabic*)]
    \item $ \mathrm{str}\big( h_0\partial_{a_1}e_{b_1}\cdots \partial_{a_n}e_{b_n}\big)\ne0$ only if either $b_i=0$ for all $i\in\{1,\ldots,n\}$ or $a_i=b_i$ for all $i\in\{1,\ldots,n\}.$
    \item\label{case_2:supertrace} Assume $a_i=b_i$ for all $i\in\{1,\ldots, n\},$ and denote $\sigma=(a_1,\ldots, a_n).$ So, $\sigma(i)=a_i.$
    Then, $h_0\partial_{\sigma(1)}e_{\sigma(1)}\cdots \partial_{\sigma(n)}e_{\sigma(n)}\cdot e_I\ne0$ if and only if $I=(0,\ldots, n).$ We get,
    \[
    h_0\partial_{\sigma(1)}e_{\sigma(1)}\cdots \partial_{\sigma(n)}e_{\sigma(n)}\cdot e_{(1,\ldots, n)}=(-1)^{\frac{n(n-1)}{2}+\mathrm{sgn}(\sigma)}h_0\cdot e_\emptyset=(-1)^{\mathrm{sgn}(\sigma)}\cdot e_{(1,\ldots, n)}.
    \]
    Since $e_{(1,\ldots, n)}$ is an element of odd degree, we get
    \[
    \mathrm{str}\big( h_0\partial_{\sigma(1)}e_{\sigma(1)}\cdots \partial_{\sigma(n)}e_{\sigma(n)}\big)=(-1)^{1+\mathrm{sgn}(\sigma)}. 
    \]
    \item\label{case_3:supertrace} Assume $b_i=0$ for all $i\in\{1,\ldots,n\}.$ Then, $h_0\partial_{a_1}e_0\cdots \partial_{a_n}e_0\cdot e_I\ne0$ if and only if $I=(j_1,\ldots, j_{\frac{n+1}{2}})$ where
    $\{j_1,\ldots, j_{\frac{n+1}{2}}\}=\{a_1,a_3,\ldots,a_n\}.$ In this case, we get
    \begin{align*}
     h_0\partial_{a_1}e_0\cdots \partial_{a_n}e_0\cdot e_I&=(-1)^{1+\frac{n+1}{2}\rho_I+\frac{n-1}{2}\rho_{I^c\cup\{0\}}+\frac{n(n+1)}{2}}h_0\cdot\frac{T^{n/2}}{\prod_{j=1}^nz_j^{\frac{n+1}{2}}\cdot z_{I^c\backslash\{0\}}} e_{I^c\backslash\{0\}}\\
     &=(-1)^{\frac{n+1}{2}(\rho_I+\rho_{I^c\cup\{0\}})}\frac{T^{n/2}}{ \prod_{j=1}^nz_j^{\frac{n+1}{2}}} e_I.
     \end{align*}
     Note that $e_I$ is of even degree if $n\equiv3 \pmod{4},$ and of odd degree if $n\equiv1 \pmod{4}.$
     Thus,
    \[
    \mathrm{str}\big( h_0\partial_{a_1}e_0\cdots \partial_{a_n}e_0\big)=(-1)^{\frac{n+1}{2}(\rho_I+\rho_{I^c\cup\{0\}})+\frac{n(n+1)}{2}}\frac{T^{n/2}}{ \prod_{j=1}^nz_j^{\frac{n+1}{2}}}.
    \]
    Let $\sigma=(a_1,\ldots, a_n),$ and write $\xi_{\sigma,n}=\frac{n+1}{2}(\rho_I+\rho_{I^c\cup\{0\}})+\frac{n(n+1)}{2}.$
\end{enumerate}
Let $\sigma\in S_n.$ Define
\[ G_\sigma(\Vec{z}):=\frac{\mathrm{str}\big( h_0\partial_{\sigma(1)}e_{\sigma(1)}\cdots \partial_{\sigma(n)}e_{\sigma(n)}\big) \wedge\Omega}{\partial_1\Wsf\cdots \partial_n \Wsf}.
\]
\begin{lm}\label{lm:G_residues_x}
    $\sum_{x\in C_{\Wsf}}\mathrm{Res}_xG_\sigma(\Vec{z})=(-1)^{1+\mathrm{sgn}(\sigma)}.$
\end{lm}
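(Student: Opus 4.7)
The plan is to reduce $G_\sigma$ to a form treated by Lemma~\ref{lm:residue_1} and then apply the global residue theorem.

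First, I would plug in the supertrace computation from item~(2) of the case analysis preceding the statement, which gives
\[
\mathrm{str}\big(h_0 \partial_{\sigma(1)} e_{\sigma(1)} \cdots \partial_{\sigma(n)} e_{\sigma(n)}\big) = (-1)^{1+\mathrm{sgn}(\sigma)},
\]
a constant in $\vec z$. Then I would rewrite the denominator using the coordinates from Lemma~\ref{lm:u_coordinates}. Since $z_0 = T/(z_1 \cdots z_n)$ in this chart, $\Wsf = \overline T/(z_1 \cdots z_n) + z_1 + \cdots + z_n$, and
\[
\partial_j \Wsf \;=\; \frac{z_j \prod_{k=1}^n z_k - \overline T}{z_j \prod_{k=1}^n z_k}, \qquad j=1,\ldots,n.
\]
Combining this with $\Omega = d\vec z/(z_1\cdots z_n)$, a direct simplification gives
\[
G_\sigma(\vec z) \;=\; (-1)^{1+\mathrm{sgn}(\sigma)} \cdot \frac{(z_1 \cdots z_n)^n}{\prod_{j=1}^n \big(z_j \prod_{k=1}^n z_k - \overline T\big)} \, d\vec z,
\]
which is precisely $(-1)^{1+\mathrm{sgn}(\sigma)} g_{\vec m}(\vec z)$ in the notation of Lemma~\ref{lm:residue_1}, with $l=1$, $l_j=1$ and $m_j=n$ for every $j$.

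Next, I would invoke the global residue theorem on the toric compactification. Because $m_j = n > 0$ for all $j$, the form $G_\sigma$ is regular at $\{z_j = 0\}$, so its only finite poles lie on $C_{\Wsf}$. Hence $\sum_{x \in C_{\Wsf}} \mathrm{Res}_x G_\sigma = -\mathrm{Res}_\infty G_\sigma$, as was used in the proof of Corollary~\ref{cor:residue_1}. Finally, I would evaluate $\mathrm{Res}_\infty g_{\vec m}$ via Lemma~\ref{lm:residue_1}: for our values $\sum m_j = n^2$ and $l=1$, the exponent of $T$ reduces to $-(n+l-1)+(n+\sum m_j)/(n+1) = -n + n = 0$, and examining the coefficient extraction in the proof of that lemma (all summation indices $a_{j_t}$ are forced to zero because $n+l-3+l_j-m_j = -1$) one reads off $K_{\vec m, n} = -1$. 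Therefore
\[
\sum_{x \in C_{\Wsf}} \mathrm{Res}_x G_\sigma \;=\; -(-1)^{1+\mathrm{sgn}(\sigma)} \cdot \mathrm{Res}_\infty g_{\vec m} \;=\; -(-1)^{1+\mathrm{sgn}(\sigma)} \cdot (-1) \;=\; (-1)^{1+\mathrm{sgn}(\sigma)}.
\]

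The main obstacle I expect is bookkeeping of signs: tracking the sign from the definition of $\mathrm{Res}_\infty$ (as in the proof of Lemma~\ref{lm:residue_1}), the sign arising from the constant term extraction, and the sign from the global residue identity, to confirm that they combine to give precisely $(-1)^{1+\mathrm{sgn}(\sigma)}$ and not its negative. The algebraic manipulations themselves are routine; no new estimate or structural input beyond Lemma~\ref{lm:residue_1} is needed.
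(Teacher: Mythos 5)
Your proposal is correct and follows essentially the same route as the paper: plug in the supertrace value $(-1)^{1+\mathrm{sgn}(\sigma)}$, simplify $G_\sigma$ to $(-1)^{1+\mathrm{sgn}(\sigma)}\prod_j z_j^n\,d\Vec{z}/\prod_j(z_j\prod_k z_k-\overline{T})$, observe the only finite poles are on $C_{\Wsf}$ so the sum of residues equals $-\mathrm{Res}_\infty G_\sigma$, and evaluate the residue at infinity by the substitution $\Vec{z}\mapsto 1/\Vec{z}$. The only cosmetic difference is that you route the last step through Lemma~\ref{lm:residue_1}, whose statement fixes only the power of $T$ and not the constant $K_{\Vec{m},n}$; your extraction of $K_{\Vec{m},n}=-1$ from its proof (all $a_{j_t}=0$ forced) is exactly the inline computation the paper carries out directly, and your sign bookkeeping matches the paper's.
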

\begin{proof}
    We have
    \begin{align*}
       G_\sigma(\Vec{z}) &\overset{\ref{case_2:supertrace}}{=}\frac{(-1)^{1+\mathrm{sgn}(\sigma)}}{\prod_{j=1}^nz_j \cdot \prod_{j=1}^n\big( 1-\frac{\overline{T}}{z_j\prod_{i=1}^nz_i}  \big)}dz_1\wedge\ldots\wedge dz_n\\
   &=\frac{(-1)^{1+\mathrm{sgn}(\sigma)}\prod_{j=1}^nz_j^n}{\prod_{j=1}^n\big(  z_j\prod_{i=1}^nz_i-\overline{T} \big)}dz_1\wedge\ldots\wedge dz_n.
    \end{align*}

So, $C_{G_\sigma}=C_{\Wsf}.$ We have
\begin{align*}
    \mathrm{Res}_{\infty}G_\sigma(\Vec{z})&=- \mathrm{Res}_0 (\frac{1}{z_1^2\cdots z_n^2}G_\sigma(\frac{1}{\Vec{z}}))\\
    &=\mathrm{Res}_0 \frac{(-1)^{\mathrm{sgn}(\sigma)}}{\prod_{j=1}^nz_j\cdot\prod_{j=1}^n\big(1-\overline{T}z_j\prod_{i=1}^nz_i   \big)}dz_1\wedge\ldots\wedge dz_n\\
    &=\mathrm{Res}_{z_n=0}\cdots\mathrm{Res}_{z_1=0}\frac{(-1)^{\mathrm{sgn}(\sigma)}}{\prod_{j=1}^nz_j\cdot\prod_{j=1}^n\big(1-\overline{T}z_j\prod_{i=1}^nz_i   \big)}dz_1\wedge\ldots\wedge dz_n\\
    &=\mathrm{Res}_{z_n=0}\cdots\mathrm{Res}_{z_2=0}\frac{(-1)^{\mathrm{sgn}(\sigma)}}{\prod_{j=2}^nz_j}dz_2\wedge\ldots\wedge dz_n\\
    &=(-1)^{\mathrm{sgn}(\sigma)}.
\end{align*}
In the fourth equality we used the following statement. If $f(z)=\frac{1}{z-a}g(z),$ where $g$ is analytic in a neighborhood of $a,$ then $\mathrm{Res}_af(z)=g(a).$
Hence, $-\mathrm{Res}_{\infty}G_\sigma(\Vec{z})=\sum_{x\in C_{\Wsf}}\mathrm{Res}_xG_\sigma(\Vec{z})=(-1)^{1+\mathrm{sgn}(\sigma)}.$
\end{proof}
Let $\sigma\in S_n.$ Define
\[ F_\sigma(\Vec{z}):=\frac{\mathrm{str}\big( h_0\partial_{\sigma(1)}e_0\cdots \partial_{\sigma(n)}e_0\big) \wedge\Omega}{\partial_1\Wsf\cdots \partial_n \Wsf}.\]
\begin{lm}\label{lm:F_residues_x}
    $\sum_{x\in C_W}\mathrm{Res}_xF_\sigma(\Vec{z})=0.$
\end{lm}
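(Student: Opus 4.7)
The plan is to identify $F_\sigma$ with a scalar multiple of the differential form $g_{\vec m}$ appearing in Lemma~\ref{lm:residue_1}, and then apply Lemma~\ref{lm:residue3} together with the global residue argument used in Lemma~\ref{lm:G_residues_x}.

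First I would substitute into the definition of $F_\sigma$ using case~\ref{case_3:supertrace} of the supertrace analysis preceding Lemma~\ref{lm:G_residues_x}, namely $\mathrm{str}(h_0\partial_{\sigma(1)}e_0\cdots\partial_{\sigma(n)}e_0) = (-1)^{\xi_{\sigma,n}} T^{n/2}/\prod_j z_j^{(n+1)/2}$, together with $\Omega = d\vec z/\prod_j z_j$ and $\partial_j \Wsf = 1 - \overline{T}/(z_j\prod_i z_i)$. Using
\[
\frac{1}{\prod_j \bigl(1 - \overline{T}/(z_j\prod_i z_i)\bigr)} = \frac{(\prod_i z_i)^{n+1}}{\prod_j(z_j \prod_i z_i - \overline{T})},
\]
the powers of $z_j$ simplify so as to give
\[
F_\sigma(\vec z) = (-1)^{\xi_{\sigma,n}}\, T^{n/2}\, \frac{\prod_j z_j^{(n-1)/2}}{\prod_j(z_j \prod_i z_i - \overline{T})}\, d\vec z = (-1)^{\xi_{\sigma,n}}\, T^{n/2}\, g_{\vec m}(\vec z),
\]
where $l = 1$, $l_j = 1$ for all $j$, and $m_j = (n-1)/2$ for all $j$.

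Next I would observe that since $n$ is odd we have $m_j = (n-1)/2 \ge 0$, so $F_\sigma$ has no poles along the coordinate hyperplanes $\{z_j = 0\}$. Therefore the singular locus of $F_\sigma$ consists only of points where $z_j \prod_i z_i = \overline{T}$ for every $j$, i.e.\ exactly $C_{\Wsf}$. Then, by the same global residue argument employed in the proof of Lemma~\ref{lm:G_residues_x}, one obtains
\[
\sum_{x \in C_{\Wsf}} \mathrm{Res}_x F_\sigma(\vec z) = -\,\mathrm{Res}_\infty F_\sigma(\vec z).
\]

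Finally I would apply Lemma~\ref{lm:residue3} to $g_{\vec m}$ with $l = 1$. The hypothesis requires $\sum_{j=1}^n m_j < (n+1)(n+l-1) - n = n^2$, and indeed $\sum m_j = n(n-1)/2 < n^2$ for every odd $n \ge 1$. Thus $\mathrm{Res}_\infty g_{\vec m} = 0$, hence $\mathrm{Res}_\infty F_\sigma = 0$, and therefore $\sum_{x \in C_{\Wsf}} \mathrm{Res}_x F_\sigma = 0$. There is no serious obstacle here: the only delicate step is the algebraic simplification of $F_\sigma$ into the canonical form $g_{\vec m}$, after which Lemmas~\ref{lm:residue_1}--\ref{lm:residue3} do all the work.
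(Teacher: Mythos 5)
Your proof is correct and follows essentially the same route as the paper: simplify $F_\sigma$, note that its only poles lie over $C_{\Wsf}$ so that the sum of local residues equals $-\mathrm{Res}_\infty F_\sigma$, and show that the residue at infinity vanishes — the only difference being that you obtain the vanishing by citing Lemma~\ref{lm:residue3} (with $l=1$, $l_j=1$, $m_j=\tfrac{n-1}{2}$, and indeed $n(n-1)/2<n^2$), where the paper performs the substitution $z\mapsto 1/z$ and observes holomorphy at the origin directly. Your simplified exponent $\tfrac{n-1}{2}$ is the correct one, and in fact it is what the paper's final residue computation implicitly uses (the intermediate display in the paper's proof shows $\tfrac{n+1}{2}$, having apparently dropped the factor $1/(z_1\cdots z_n)$ coming from $\Omega$), so your identification of $F_\sigma$ with a multiple of $g_{\vec m}$ is sound.
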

\begin{proof}
     We have
    \begin{align*}
       F_\sigma(\Vec{z}) &\overset{\ref{case_3:supertrace}}{=}\frac{(-1)^{\xi_{\sigma,n}}T^{n/2}}{\prod_{j=1}^nz_j^{\frac{n+1}{2}} \cdot \prod_{j=1}^n\big( 1-\frac{\overline{T}}{z_j\prod_{i=1}^nz_i}  \big)}dz_1\wedge\ldots\wedge dz_n\\
&=\frac{(-1)^{\xi_{\sigma,n}}T^{n/2}\prod_{j=1}^nz_j^{\frac{n+1}{2}}}{\prod_{j=1}^n\big(  z_j\prod_{i=1}^nz_i-\overline{T} \big)}dz_1\wedge\ldots\wedge dz_n.
    \end{align*}
So, $C_{F_\sigma}=C_{\Wsf}.$ We have
\begin{align*}
\mathrm{Res}_{\infty}F_\sigma(\Vec{z})&=- \mathrm{Res}_0 (\frac{1}{z_1^2\cdots z_n^2}F_\sigma(\frac{1}{\Vec{z}}))\\
    &=-\mathrm{Res}_0 \frac{(-1)^{\xi_\sigma}T^{n/2}\prod_{j=1}^nz_j^{\frac{n-1}{2}}}{\prod_{j=1}^n\big(1-\overline{T}z_j\prod_{i=1}^nz_i   \big)}\\
    &=0.
\end{align*}
Hence,  $\sum_{x\in C_{\Wsf}}\mathrm{Res}_xF_\sigma(\Vec{z})=-\mathrm{Res}_{\infty}F_\sigma(\Vec{z})=0.$

\end{proof}

\begin{proof}[Proof of Lemma~\ref{lm:theta_of_h_0}]
\begin{align*}
    \Theta_1(h_0)&=\frac{1}{n!}\sum_{i=1}^n(-1)^i\sum_{(j_1,\ldots,j_{n-1})\in S^i_n}\mathrm{sgn}(j_1,\ldots,j_{n-1})\sum_{x\in C_{\Wsf}}\mathrm{Res}_x\frac{\mathrm{str}\big(h_0\partial_i\Dsf\partial_{j_1}\Dsf\cdots \partial_{j_{n-1}} \Dsf\big)\wedge\Omega}{\partial_1W\cdots \partial_nW}\\
    &=\frac{1}{n!}\sum_{\sigma\in S_n}\sum_{x\in C_{\Wsf}}(-1)^{\sigma(1)+\mathrm{sgn}(\sigma(2),\ldots,\sigma(n))}\mathrm{Res}_x\frac{\mathrm{str}\big( h_0\partial_{\sigma(1)}e_{\sigma(1)}\cdots \partial_{\sigma(n)}e_{\sigma(n)}\big) \wedge \Omega}{\partial_1W\cdots \partial_n W}\\
    &+\frac{1}{n!}\sum_{\sigma\in S_n}\sum_{x\in C_{\Wsf}}(-1)^{\sigma(1)+\mathrm{sgn}(\sigma(2),\ldots,\sigma(n))}\mathrm{Res}_x\frac{\mathrm{str}\big( h_0\partial_{\sigma(1)}e_0\cdots \partial_{\sigma(n)}e_0\big) \wedge \Omega}{\partial_1W\cdots \partial_n W}\\
    &\overset{\ref{lm:G_residues_x}+~\ref{lm:F_residues_x}}{=}\frac{1}{n!}\sum_{\sigma\in S_n}(-1)^{\sigma(1)+\mathrm{sgn}(\sigma(2),\ldots,\sigma(n))}\cdot (-1)^{1+\mathrm{sgn}(\sigma)}\\
    &=\frac{1}{n!}\sum_{\sigma\in S_n}1\\
    &=1.
\end{align*}
\end{proof}
\begin{proof}[Proof of Theorem~\ref{thm:cy}]
By Theorem~\ref{thm:norm} the $\infty$-trace $\Theta$ on $\MF(\Wsf,w)$ is normed, 
and by Theorem~\ref{lm:cohomology} we have $H^*(\End(\Msf)_0)\cong\R\oplus\R[n].$ So, we need to show that there exists $[\Phi]\in H^n(\End(\Msf)_0)$ such that $\Theta_1(\Phi)\equiv 1 \pmod{F^0\End(\Msf)}.$
Let $h_0=-\frac{z_1\cdots z_n}{T^{1/2}}e_0,$ and note that  $|h_0|=n,$ $\nu_{\Msf}(h_0)=0.$ In addition,
\[
\delta(h_0)= -2T^{1/2}\equiv0 \pmod{F^0\End(\Msf)}.
\]
Thus, $[h_0]\in H^n(\End(\Msf)_0).$ Therefore, the claim follows from Lemma~\ref{lm:theta_of_h_0}.
\end{proof}

\subsection{The case \texorpdfstring{$n=1$}{of one dimension}}\label{section:example}
Consider the object $\Msf$ of $\MF(\Wsf,0)$ where $n=1.$
In this case, we have
\[
\Wsf=z_1-\frac{T}{z_1}, \quad \Omega=\frac{1}{z_1}. 
\]
Our goal is to prove Proposition~\ref{prop:bounding cochain} and Corollary~\ref{cor:n_d_k}, which together imply Theorem~\ref{thm:gw_inv_numerical_invariants}. 
In order to prove Proposition~\ref{prop:bounding cochain}, we will need the following lemmas.
\begin{lm}\label{lm:res1} 
Let $l\ge 1,$ and let $m\in\Z$ such that $-1<m<2l-1.$ Then, 
    \[
   \sum_{x\in \{\pm iT^{1/2}\}} \mathrm{Res}_x\frac{z^m}{(z^2+T)^l}=0.
    \]
\end{lm}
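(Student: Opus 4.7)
The plan is to prove this via the global residue theorem on $\mathbb{CP}^1$, which is much cleaner than computing the two residues at $\pm iT^{1/2}$ directly via partial fractions.

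First I would observe that since $m \in \Z$ and $-1 < m$, we have $m \geq 0$, so the rational function $f(z) = z^m/(z^2+T)^l$ is regular at $z = 0$. Its only finite poles lie at the two simple roots of $z^2+T$, namely $z = \pm i T^{1/2}$. Thus the only remaining pole to account for on $\mathbb{CP}^1$ is the one at infinity.

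Next I would compute $\mathrm{Res}_\infty f$ by the standard change of variable $w = 1/z$:
\[
\mathrm{Res}_\infty \frac{z^m}{(z^2+T)^l} \, dz = -\mathrm{Res}_0 \frac{1}{w^2} \cdot \frac{w^{-m}}{(w^{-2}+T)^l} \, dw = -\mathrm{Res}_0 \frac{w^{2l-m-2}}{(1+Tw^2)^l} \, dw.
\]
The hypothesis $m < 2l-1$ together with $m \in \Z$ gives $m \leq 2l-2$, so $2l-m-2 \geq 0$. Hence the integrand on the right is holomorphic in a neighborhood of $w = 0$, and its residue there vanishes. Therefore $\mathrm{Res}_\infty f = 0$.

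Finally, the residue theorem on $\mathbb{CP}^1$ states that the sum of all residues of a rational $1$-form is zero. Since the only poles of $f(z)\,dz$ are at $\pm i T^{1/2}$ and $\infty$, this yields
\[
\sum_{x \in \{\pm iT^{1/2}\}} \mathrm{Res}_x \frac{z^m}{(z^2+T)^l} \;=\; -\,\mathrm{Res}_\infty \frac{z^m}{(z^2+T)^l} \;=\; 0,
\]
which is the desired identity. There is no real obstacle here; the only point requiring care is correctly accounting for the sign and the factor $1/w^2$ in the change-of-variable formula for the residue at infinity, and confirming that the integer constraint $m \leq 2l-2$ (from the strict inequality $m < 2l-1$) is exactly what is needed to kill the residue at infinity.
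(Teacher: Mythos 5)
Your proof is correct and follows essentially the same route as the paper: both observe that $m\ge 0$ rules out a pole at the origin, apply the residue theorem to convert the sum over $\pm iT^{1/2}$ into $-\mathrm{Res}_\infty$, and then show that the residue at infinity vanishes under the hypothesis $m<2l-1$. The only difference is that you verify $\mathrm{Res}_\infty=0$ by the direct substitution $w=1/z$ (noting $2l-m-2\ge 0$), whereas the paper invokes its general Lemma~\ref{lm:residue3} (specialized to $n=1$), which is proved by an estimate over circles of large radius; your computation is a perfectly good, self-contained replacement for that citation.
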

\begin{proof}
Since $m>-1,$ the set of the critical points of $\frac{z^m}{(z^2+T)^l}$ consists the critical points of $\Wsf.$ 
Since $C_{\Wsf}=\{\pm iT^{1/2}\},$ it follows that
\[
-\mathrm{Res}_\infty\frac{z^m}{(z^2+T)^l}=\sum_{x\in \{\pm iT^{1/2}\}} \mathrm{Res}_x\frac{z^m}{(z^2+T)^l}.
\]
Hence, the claim follows from Lemma~\ref{lm:residue3} where $n=1.$
\end{proof}
\begin{lm}\label{lm:res2}
      Let $l\ge1.$ Then,
         \[
    \sum_{x\in \{\pm iT^{1/2}\}}\mathrm{Res}_x\frac{z^{2l-1}}{(z^2+T)^l}=1.
    \]
\end{lm}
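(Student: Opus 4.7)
The plan is to use the global residue theorem on $\mathbb{C}P^1$: for any rational $1$-form on $\mathbb{C}P^1$, the sum of all residues (including the residue at infinity) vanishes. Applied to $\omega_l = \frac{z^{2l-1}}{(z^2+T)^l}\,dz$, the only finite poles are at $\pm iT^{1/2}$, since for $l \geq 1$ the zero of $z^{2l-1}$ at the origin cancels any possible pole there. Therefore
\[
\sum_{x \in \{\pm iT^{1/2}\}} \mathrm{Res}_x \omega_l \;=\; -\,\mathrm{Res}_\infty \omega_l,
\]
and it suffices to show $\mathrm{Res}_\infty \omega_l = -1$.

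For this I would use the standard formula $\mathrm{Res}_\infty f(z)\,dz = -\mathrm{Res}_0 \frac{1}{z^2} f(1/z)\,dz$. A direct substitution gives
\[
\tfrac{1}{z^2}\,\tfrac{z^{-(2l-1)}}{(z^{-2}+T)^l} = \tfrac{z^{-2}\cdot z^{-(2l-1)}\cdot z^{2l}}{(1+Tz^2)^l} = \tfrac{1}{z(1+Tz^2)^l},
\]
and the coefficient of $z^{-1}$ in this expression is the constant term of $(1+Tz^2)^{-l}$, which equals $1$. Hence $\mathrm{Res}_\infty \omega_l = -1$, and combining with the above identity gives the claimed value $1$.

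There is no real obstacle here; the only thing to be careful about is verifying that $z=0$ is not a pole (which uses $l \geq 1$) so that the contributions from $\pm iT^{1/2}$ and $\infty$ are indeed the only ones. Alternatively, one could deduce the lemma from Lemma~\ref{lm:residue4} (or from the computation of $\mathrm{Res}_\infty$ in the proof of Lemma~\ref{lm:residue_1}) applied with $n=1$ and the appropriate choice of exponent, but the direct global-residue argument above is the cleanest route.
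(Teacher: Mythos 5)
Your argument is correct, but it follows a different route from the paper. The paper proves the lemma by induction on $l$: the base case $l=1$ is done by partial fractions, $\frac{z}{z^2+T}=\frac12\bigl(\frac{1}{z-iT^{1/2}}+\frac{1}{z+iT^{1/2}}\bigr)$, and the inductive step uses the identity $\frac{z^{2l-3}}{(z^2+T)^{l-1}}=\frac{z^{2l-1}}{(z^2+T)^l}+\frac{Tz^{2l-3}}{(z^2+T)^l}$ together with Lemma~\ref{lm:res1} (which kills the residue sum of the second term, since $-1<2l-3<2l-1$ for $l\ge2$) to reduce to the case $l-1$. You instead apply the global residue theorem once to $\omega_l=\frac{z^{2l-1}}{(z^2+T)^l}\,dz$ and compute $\mathrm{Res}_\infty\omega_l$ directly via $z\mapsto 1/z$, obtaining $\frac{1}{z(1+Tz^2)^l}$ whose residue at $0$ is the constant term $1$ of $(1+Tz^2)^{-l}$; your observation that $z=0$ is not a pole of $\omega_l$ (so $\pm iT^{1/2}$ and $\infty$ exhaust the poles) is the right point to check, and your change-of-variable computation and signs are correct, consistent with the convention the paper itself uses in Lemma~\ref{lm:residue_1}. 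Your route is shorter and self-contained, avoiding both the induction and the appeal to Lemma~\ref{lm:res1}; the paper's route buys a proof that leans only on already-established vanishing statements and elementary algebraic manipulation of the integrand, never needing to evaluate a nontrivial residue at infinity. Your side remark about deducing the value from Lemma~\ref{lm:residue_4} or~\ref{lm:residue_1} as stated would not suffice on its own, since those statements leave the constant $K_{\Vec{m},n}$ undetermined; one would need the explicit expansion from their proofs, as you indicate. That caveat aside, the main argument is complete.
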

\begin{proof}
    We prove it by induction on $l.$ For $l=1$ we have 
    \[
    \frac{z}{(z^2+T)}=\frac{1}{2}\bigg( \frac{1}{z-iT^{1/2}}+ \frac{1}{z+iT^{1/2}}\bigg).
    \]
    So,
    \[
    \sum_{x\in \{\pm iT^{1/2}\}}\mathrm{Res}_x\frac{z}{(z^2+T)}=1.
    \]
Let $l\ge2.$ Since
\[
\frac{z^{2l-3}}{(z^2+T)^{l-1}}=\frac{z^{2l-1}}{(z^2+T)^l}+\frac{Tz^{2l-3}}{(z^2+T)^l},
\]
it follows by Lemma~\ref{lm:res1} that
\[
 \sum_{x\in \{\pm iT^{1/2}\}}\mathrm{Res}_x\frac{z^{2l-1}}{
 (z^2+T)^{l-1}}= 
 \sum_{x\in \{\pm iT^{1/2}\}}\mathrm{Res}_x\frac{z^{2l-3}}{(z^2+T)^{l-1}}.
\]
    Hence, by the induction hypothesis we get
    \[
    \sum_{x\in \{\pm iT^{1/2}\}}\mathrm{Res}_x\frac{z^{2l-1}}{(z^2+T)^l}=1.
    \]
\end{proof}  
Let $b\in\Mmm(\End(\Msf)^s).$ Recall that $\Theta(\expp(b))$ does not depend on the choice of $y_b.$ Hence, we can work with the canonical choice of $y_b$ described in Section~\ref{section:constructing_y_b}.
Write
\[
   y_b= -\sum_{n=0}^\infty \sum_{m=0}^n \sum_{J\in\zzs}\sum_{I\in\Z_{>0}^{m+1}}C_J
\bigotimes_{k=0}^m\big( (c\otimes 1)^{\otimes j_k}\otimes b^{i_k}\big) ,
\]
where 
\[
b=(e^{-s}-e^s)\frac{z_1}{2T^{1/2}}\cdot e_0- (e^{-s}+e^s-2)\cdot\frac{e_1}{2}, \quad c=(e^s-e^{-s})T^{1/2},
\]
and the constants $C_J$ are computed inductively as described in Remark~\ref{rem:constant_c}. 
In the case of $n=1$, the formula of $\Theta_{l,x}$ given in Theorem~\ref{Theta}
becomes quite simple
\[
\Theta_{l,x}(\Phi_l\otimes\ldots \otimes\Phi_1)= -\mathrm{Res}_x\bigg( \frac{\mathrm{str}\big(\Phi_l\partial_1 \Dsf\cdots \Phi_1 \partial_1 \Dsf\big)\wedge \Omega}{(\partial_1\Wsf)^l} \bigg).
\]
In the following we identify $\Msf$ with ${\Ssf}^{\oplus2}$ via 
\[
1\mapsto \begin{pmatrix}
    1\\
    0
\end{pmatrix}, \qquad
e_1\mapsto \begin{pmatrix}
    0\\
    1
\end{pmatrix}.
\]
Hence, we can identify $\Ems$ with $\Rms\otimes_{\Rsf}M_{2\times 2}(\Ssf).$
Note that under this identification we have
\[
\Dsf= \begin{pmatrix}
    0 & z_1+T^{1/2}\\
    1 -\frac{T^{1/2}}{z_1} & 0
\end{pmatrix}.\]
\begin{lm}\label{lm:exp_y_b_compute}
    $\Theta(y_b)=0.$
\end{lm}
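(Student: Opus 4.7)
The plan is to use the explicit formula for $y_b$ given in Corollary~\ref{cor:y_b} and show that $\Theta$ annihilates every tensor appearing there. The key inputs are two direct matrix computations: using the matrix forms of $\Dsf$ and $b$ given in Section~\ref{section:example}, one checks that
\[
(\partial_1\Dsf)^2 = \frac{T^{1/2}}{z_1^2}\cdot \id_{\Msf}, \qquad b\cdot\partial_1\Dsf = \begin{pmatrix} (1-e^s)T^{1/2}/z_1 & 0 \\ 0 & 1-e^{-s} \end{pmatrix}.
\]
The first identity says that $(\partial_1\Dsf)^2$ is a central scalar of degree $-2$, and the second says that $b\partial_1\Dsf$ is diagonal, so that $(b\partial_1\Dsf)^{k}$ is also diagonal with supertrace $[(1-e^s)T^{1/2}/z_1]^k-(1-e^{-s})^k$.

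For the main step, fix a tensor $\bigotimes_{k=0}^m B_{j_k,i_k}$ appearing in the formula for $y_b$, where $B_{j_k,i_k}=(c\cdot\id\otimes\id)^{\otimes j_k}\otimes b^{\otimes i_k}$ and $j_k,i_k\ge 1$. Set $l=2|J|+|I|$, the total length. In the product $\Phi_l\,\partial_1\Dsf\cdots\Phi_1\,\partial_1\Dsf$ appearing inside $\Theta_l$, each $(c\cdot\id,\id)$ pair in the tensor contributes a factor $c\cdot\id\cdot\partial_1\Dsf\cdot\id\cdot\partial_1\Dsf = c(\partial_1\Dsf)^2 = cT^{1/2}/z_1^2$, which is central of even degree. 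Pulling all such scalars out, the remaining product is $(b\partial_1\Dsf)^{|I|}$, regardless of how the blocks are arranged. Using the specialization of Theorem~\ref{Theta} for $n=1$ (where every $k_i=0$, so no signs appear beyond the global $-1$), together with $\Omega=dz_1/z_1$ and $(\partial_1\Wsf)^l=(z_1^2+T)^l/z_1^{2l}$, one obtains
\[
\Theta_l\Bigl(\bigotimes_{k=0}^m B_{j_k,i_k}\Bigr) = -c^{|J|}T^{(|I|+|J|)/2}(1-e^s)^{|I|}\!\sum_x\mathrm{Res}_x\frac{z_1^{\,l-1}}{(z_1^2+T)^l}dz_1
\]
\[
+\, c^{|J|}T^{|J|/2}(1-e^{-s})^{|I|}\!\sum_x\mathrm{Res}_x\frac{z_1^{\,l+|I|-1}}{(z_1^2+T)^l}dz_1.
\]
Since $|J|\ge 1$ and $|I|\ge 1$, both exponents $l-1$ and $l+|I|-1=2l-2|J|-1$ lie in the open range $(-1,2l-1)$, so Lemma~\ref{lm:res1} forces both residue sums to vanish.

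Summing over all tensors and all $m,n$ in the formula of Corollary~\ref{cor:y_b} then gives $\Theta(y_b)=0$. The main obstacle is just the bookkeeping for interspersed blocks and the parity of the supertrace; this is painless here because $c$ and $(\partial_1\Dsf)^2$ are both of degree $0$ (once combined) and proportional to $\id_{\Msf}$, so they can be freely moved through the remaining matrix factors without producing signs, and the sign from the cyclic permutation in the definition of $\Theta_l$ is trivial for $n=1$.
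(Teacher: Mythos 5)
Your proof is correct and follows essentially the same route as the paper: you reduce to the individual tensors $\bigotimes_k B_{j_k,i_k}$ in the explicit formula for $y_b$, use the centrality of $c(\partial_1\Dsf)^2$ and the diagonal form of $b\,\partial_1\Dsf$ to collapse the matrix product, and conclude via the residue vanishing of Lemma~\ref{lm:res1}, exactly as in the paper (which merely factors out $c^{|J|}$ by $\Rms$-multilinearity first). The exponent bookkeeping ($l-1$ and $l+|I|-1$, both in the range covered by Lemma~\ref{lm:res1} since $|J|,|I|\ge 1$) matches the paper's computation.
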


\begin{proof}
  Let $l\ge3.$  Since $\Theta$ is $\Rms$-multilinear, it suffices to prove that 
    \[
    \Theta_l( \bigotimes_{k=0}^m (1\otimes 1)^{\otimes j_k}\otimes b^{\otimes i_k})=0
    \]
for $m\ge 0$ and $2(j_0+\ldots+j_m)+i_0+\ldots +i_m+=l.$ 
We have
\[
b=\begin{pmatrix}
0&  (1-e^s)z_1\\
1-e^{-s}& 0
\end{pmatrix},\qquad \partial_1\Dsf=\begin{pmatrix}
0& 1 \\
\frac{T^{1/2}}{z_1^2}& 0
\end{pmatrix}.
\]
Since
\[
\prod_{k=0}^m\big( (\partial_1\Dsf)^{2j_k}(b\partial_1\Dsf)^{i_k} \big)=(\partial_1\Dsf)^{2(j_0+\ldots+j_m)}(b\partial_1\Dsf)^{i_0+\ldots+i_m},
\]
it follows that $\Theta_l( \bigotimes_{k=0}^m (1\otimes 1)^{\otimes j_k}\otimes b^{\otimes i_k})=\Theta_l(1^{\otimes2(j_0+\ldots+j_m)}\otimes b^{\otimes(i_0+\ldots +i_m)}).$
Write $j=j_0+\ldots+j_m,$ and $a=i_0+\ldots+i_m.$
We have,
\[
\partial_1\Dsf^{2j}(b\partial_1\Dsf)^a=
\begin{pmatrix}
    \frac{T^{j/2}}{z_1^{2j}} & 0\\
    0 &  \frac{T^{j/2}}{z_1^{2j}}
\end{pmatrix}
\begin{pmatrix}
        (1-e^s)^a\frac{T^{a/2}}{z_1^a}& 0\\
0& (1-e^{-s})^a
    \end{pmatrix}
\]
\[
= \begin{pmatrix}
         (1-e^s)^a \frac{T^{\frac{j+a}{2}}}{z_1^{2j+a}} &0 \\
          0 & (1-e^{-s})^a\frac{T^{j/2}}{z_1^{2j}}
    \end{pmatrix}.
\]
Thus,
\begin{align*}
\Theta_l( \bigotimes_{k=0}^m (1\otimes 1)^{\otimes j_k}\otimes b^{\otimes i_k})&=\Theta_l(1^{\otimes2j}\otimes b^{\otimes a})\\
&=-\sum_{x\in\{\pm iT^{1/2}\}}\mathrm{Res}_x \frac{\mathrm{str}((\partial_1\Dsf)^{2j}(b\partial_1\Dsf)^{a})\wedge \Omega}{(\partial_1\Wsf)^{2j+a}}\\
&=-\sum_{x\in\{\pm iT^{1/2}\}}\mathrm{Res}_x \bigg( \frac{(1-e^s)^aT^{\frac{j+a}{2}}}{z_1^{2j+a}}\\
&-(1-e^{-s})^a\frac{T^{j/2}}{z_1^{2j}}\bigg)\cdot \frac{z_1^{2(2j+a)-1}}{(z_1^2+T)^{2j+a}}dz_1\\
&\overset{\ref{lm:res1}}=0.
\end{align*}

\end{proof}

\begin{proof}[Proof of Proposition~\ref{prop:bounding cochain}]
    First, we claim that $mc(b)=c\cdot 1.$ Indeed,
    \begin{align*}
        mc(b)&=(e_0+e_1)\cdot b+b\cdot (e_0+e_1)- b^2\\
        &=-(e^{-s}-e^s)\frac{z_0z_1}{T^{1/2}}- (e^{-s}+e^s-2)z_1+ (e^{-s}-e^s)^2\frac{z_1}{4}- (e^{-s}+e^s-2)^2\frac{z_1}{4}\\
        &=(e^s-e^{-s})T^{1/2}\\
        &=c.
    \end{align*}
In addition,
\[
d_{\Rms}(c)=0, \quad \nu_{\Rms}(c)>0, \quad \nu_{\Mms}(b)>0.
\]    
Next, we claim that $\Theta(\expp(b))=s.$ By Lemma~\ref{lm:exp_y_b_compute} we need to prove $\Theta(\exp(b))=s.$
Since
\[
b=\begin{pmatrix}
0&  (1-e^s)z_1\\
1-e^{-s}& 0
\end{pmatrix},\qquad
\partial_1\Dsf=\begin{pmatrix}
0& 1 \\
\frac{T^{1/2}}{z_1^2}& 0
\end{pmatrix},
\]
it follows that
 \[ \big(b\partial_1\Dsf \big)^l= \begin{pmatrix}
        (1-e^s)^l\frac{T^{l/2}}{z_1^l}& 0\\
0& (1-e^{-s})^l
    \end{pmatrix}.\]
Let $l\ge1.$ 
We obtain,
\begin{align*}
    \Theta_l(b^{\otimes l})&=- \sum_{x\in\{\pm iT^{1/2}\}}\mathrm{Res}_x \frac{\mathrm{str}(b\partial_1\Dsf\cdots b\partial_1 \Dsf)\wedge \Omega}{(\partial_1\Wsf)^l}\\
    &=-\sum_{x\in\{\pm iT^{1/2}\}}\mathrm{Res}_x\bigg( (1-e^s)^l\frac{T^{l/2}}{z_1^l}
    - (1-e^{-s})^l\bigg)\cdot \frac{z_1^{2l-1}}{(z_1^2+T)^l}dz_1\\ 
    &\overset{\ref{lm:res1}}= \sum_{x\in\{\pm iT^{1/2}\}}\mathrm{Res}_x \bigg((1-e^{-s})^l\frac{z_1^{2l-1}}{(z_1^2+T)^l}\bigg)dz_1\\
    &\overset{\ref{lm:res2}}= (1-e^{-s})^l.
\end{align*}
Therefore,
\begin{align*}
\Theta(\exp(b))&=\sum_{l=1}^\infty\frac{1}{l}\Theta_l(b^{\otimes l})\\
&= -\sum_{l=1}^\infty \frac{(-1)^{l+1}(e^{-s}-1)^l}{l}\\
&= -\ln(1+ (e^{-s}-1))\\
&=s. 
\end{align*}

\end{proof}
\begin{proof}[Proof of Corollary~\ref{cor:n_d_k}]
By Proposition~\ref{prop:bounding cochain} there exists a bounding cochain $b\in \Mmm(\Ems,c)$ such that $\Theta(\expp(b))=s$ and $c=(e^{-s}-e^s)T^{1/2}.$
Thus, the invariants $N_{d,k}$  are computed directly from Definition~\ref{def:numerical_invariants}.
\end{proof}
Our goal is to prove Theorem~\ref{thm:gw_inv_numerical_invariants}. We have $(X_{\triangle^1}, X_{\triangle^1}^\R)=(\cp^1,\rp^1).$
Recall the open Gromov-Witten invariants with only boundary constraints are defined by 
\[
\oogw^{\rp^1}_{\beta,k}=[T^\beta]\frac{\partial^k\Omega(b)}{\partial s^k}|_{s=0},
\] where $\Omega$ is the superpotential and $b$ is a bounding cochain of $\rp^1$ that satisfies $\int_{\rp^1}b=s.$ 
We refer the reader to~\cite{Point_like_bc} for a full account of setting where open Gromov-Witten invariants can be defined.

\begin{proof}[Proof of Theorem~\ref{thm:gw_inv_numerical_invariants}]
By the top degree axiom, the differential form $b=\frac{s}{2\pi}d\theta$ of $\rp^1$ is a bounding cochain satisfying
$\int_{\rp^1}b=s.$
Consider the superpotential $\Omega$ associated to $b$
\[
\Omega(b)=-\sum_{k\ge1}\frac{1}{k+1}\langle  \mathfrak{m}_k(b^{\otimes k}), b\rangle,
\]
where $\langle\cdot, \cdot\rangle$ is the Poincar\'e pairing defined by \[\langle\alpha_1,\alpha_2 \rangle=(-1)^{|\alpha_2|}\int_{\rp^1}\alpha_1\wedge\alpha_2.\]
Recall $H_2(\cp^1,\rp^1;\Z)\cong\Z\oplus\Z.$ Let $\beta_N, \beta_S$ denote the generators of $H_2(\cp^1,\rp^1;\Z)$ representing the north and south hemispheres respectively.
By the degree axiom, $\oogw_{\beta,k}\ne0$ only if $\beta=\beta_N,\beta_S.$
Hence,
\begin{multline*}
\mathfrak{m}_k(b^{\otimes k})=T^{\beta_N}\frac{s^k}{(2\pi)^k}\int_0^{2\pi}d\theta_k\int_0^{\theta_k}d\theta_{k-1}\cdots \int_0^{\theta_2}d\theta_1\\
+(-1)^kT^{\beta_S}\frac{s^k}{(2\pi)^k}\int_0^{2\pi}d\theta_k\int_0^{\theta_k}d\theta_{k-1}\cdots \int_0^{\theta_2}d\theta_1.
\end{multline*}
Thus, 
\[
\mathfrak{m}_k(b^{\otimes k})= (T^{\beta_N}+(-1)^kT^{\beta_S})\frac{s^k}{k!},
\]
and so, 
\[
\Omega(b)=\sum_{k\ge1}(T^{\beta_N}+(-1)^kT^{\beta_S})\frac{s^{k+1}}{(k+1)!}.
\]
Let $\varphi_{\triangle^n}:H_2(\cp^1,\rp^1;\Z)\to \frac{1}{2}\Z$ defined by $\varphi_{\triangle^n}(\beta_N)=\varphi_{\triangle^n}(\beta_S)=\frac{1}{2}.$ Hence,
\[
\sum_{\tilde{\beta}\in\varphi^{-1}(\frac{1}{2})}\oogw^{\rp^1}_{\tilde{\beta},k}=\oogw^{\rp^1}_{\beta_N,k}+\oogw^{\rp^1}_{\beta_S,k}=\left\{\begin{array}{ll}
        2, &   k\in 2\Z_{\ge0} ,\\
        0, & \mathrm{otherwise},\\
        \end{array} \right.
\]
\[
\sum_{\tilde{\beta}\in\varphi^{-1}(x)}\oogw^{\rp^1}_{\tilde{\beta},k}=0, \quad x\in \frac{1}{2}\Z\backslash\{\frac{1}{2}\}.
\]
Thus, the claim follows from Corollary~\ref{cor:n_d_k}.
\end{proof}

\bibliography{references.bib}
\bibliographystyle{amsabbrvcnobysame}

\end{document}